\newcommand{\xauthor}{Jan Hubi\v cka}
\def\Erdos{Erd\H{o}s}
 \def\dr{\downarrow\!\! }  
\def\Embed#1#2{\Phi^{#1}_{#2}}
\def\HP{{\cal P_{\cal U}}}
\def\HPleq{\leq_{\cal U}}
\def\K{{\cal K}}
\def\LeqUp{\preceq}
\def\U{{\cal U}}
\def\Urysohn{{\mathbb U}}
\def\Rado{{\cal R}}
\def\Convex{{\cal C}}
\def\Words{{\cal W}}
\def\Functions{{\cal F}}
\def\Intervals{{\cal I}}
\def\Bintree{{\cal B}}
\def\Periodic{{\cal S}}
\def\Grammar{{\cal G}}
\def\X{{\cal X}}
\def\Q{\mathbb Q}
\def\Lifts{{\cal L}}
\def\Piece{{\cal P}}
\def\N{\mathbb N}
\def\Z{\mathbb Z}
\def\Poset{\mathbb P}
\def\h{h}
\def\d{d}
\DeclareMathSymbol{\Oddel}
   {\mathbin}{symbols}{"6A}
\def\He{{\cal P_\in}}
\def\HF{{{\cal P}_f}}
\def\Model{{\mathfrak {M}}}
\def\RO{{\overrightarrow {\R}}}
\def\N{{\mathbb N}}
\def\Z{{\mathbb Z}}
\def\Q{{\mathbb Q}}
\def\R{{\cal {R}}}
\def\RD{{\cal {O}}}
\def\T{{\cal T}}
\def\M{{\cal M}}
\def\F{{\cal F}}
\def\Kk{{K_k}}
\def\Poset{{\cal P}}
\def\P{{\cal P}}
\def\Forb{{\mathrm {Forb}}}
\def\Paths{{\overrightarrow{\cal P}}}
\def\Polynoms{{\cal O}}
\def\HE#1,#2,#3{(\,#2\,\Oddel\,#3\,)}
\def\Sur{\mathbb S}
\def\Forb{\mathop{Forb_h}\nolimits}
\def\Forbi{\mathop{Forb_e}\nolimits}
\def\Age{\mathop{Age}\nolimits}
\def\Rel{\mathop{Rel}\nolimits}
\def\CSP{\mathop{CSP}\nolimits}
\def\relsys#1{\mathbf #1}
\def\relS{R_{\mathbf{S}}}
\def\relA{R_{\mathbf{A}}}
\def\extsys#1{\mathbf #1'}
\def\rel#1#2{R_{\mathbf{#1}}^{#2}}
\def\ext#1#2{X_{\mathbf{#1}}^{#2}}
\def\extl#1#2{X_{#1}^{#2}}
\def\Fraisse{Fra\"{\i}ss\' e}
\DeclareMathSymbol{\Oddel}
   {\mathbin}{symbols}{"6A}
\def\HE#1,#2,#3{(\,#2\,\Oddel\,#3\,)}
\def\TV{{\mathcal {TV}}}
\def\K{\mathcal {K}}
\def\apple{\heartsuit}
\def\Collapse{\lower-0.6ex\hbox{$\varphi$}\kern-.3em/\kern-.25em _n}
\def\TA{\HE 1,\emptyset,\emptyset}
\def\TB{\HE 2,{\{\TA\}},\emptyset}
\def\TC{\HE 3,\emptyset,{\{\TB\}}}
\def\TD{\HE 4,\emptyset,{\{\TA,\TB,\TC\}}}
\def\Sur{\mathbb S}
\newtheorem{thm}{Theorem}[chapter]
\newtheorem{fact}[thm]{Fact}
\newtheorem{claim}[thm]{Claim}
\newtheorem{defn}[thm]{Definition}
\newtheorem{corollary}[thm]{Corollary}
\newenvironment{example}[1][Example.]{\begin{trivlist}
\item[\hskip \labelsep {\bfseries #1}]}{\end{trivlist}}
\newtheorem{prop}[thm]{Proposition}
\newtheorem{prob}[thm]{Problem}
\newtheorem{lem}[thm]{Lemma} 
\def\Growth{\mathop{\operator@font {Growth}}}
\providecommand{\remarkname}{Remark}
\newenvironment{remark}[1][\remarkname]{\par
  \normalfont \topsep6\p@\@plus6\p@\relax
  \trivlist
  \item[\hskip\labelsep \bf {#1\ignorespaces.}]\ignorespaces
}{%
\endtrivlist
\par
}
\DeclareRobustCommand{\qed}{%
  \ifmmode \mathqed
  \else
    \leavevmode\unskip\penalty9999 \hbox{}\nobreak\hfill
    \quad\hbox{$\square$}%
  \fi
}
\let\QED@stack\@empty
\let\qed@elt\relax
\newcommand{\pushQED}[1]{%
  \toks@{\qed@elt{#1}}\@temptokena\expandafter{\QED@stack}%
  \xdef\QED@stack{\the\toks@\the\@temptokena}%
}
\newcommand{\popQED}{%
  \begingroup\let\qed@elt\popQED@elt \QED@stack\relax\relax\endgroup
}
\def\popQED@elt#1#2\relax{#1\gdef\QED@stack{#2}}
\newcommand{\qedhere}{%
  \begingroup \let\mathqed\math@qedhere
    \let\qed@elt\setQED@elt \QED@stack\relax\relax \endgroup
}
\providecommand{\proofname}{Proof}
\newenvironment{proof}[1][\proofname]{\par
  \pushQED{\qed}%
  \normalfont \topsep6\p@\@plus6\p@\relax
  \trivlist
  \item[\hskip\labelsep \bf {#1\ignorespaces.}]\ignorespaces
}{%
\popQED\endtrivlist
\par
}
\begin{document}

%

\begin{titlepage}
\begin{center}
\large
Charles University in Prague \\[0mm]
Faculty of Mathematics and Physics \\[0mm]
Department of Applied Mathematics \\[12mm]
{\bf \huge Doctoral Thesis} \\[8mm]
\begin{figure}[h] \centering \scalebox{.5}{\includegraphics{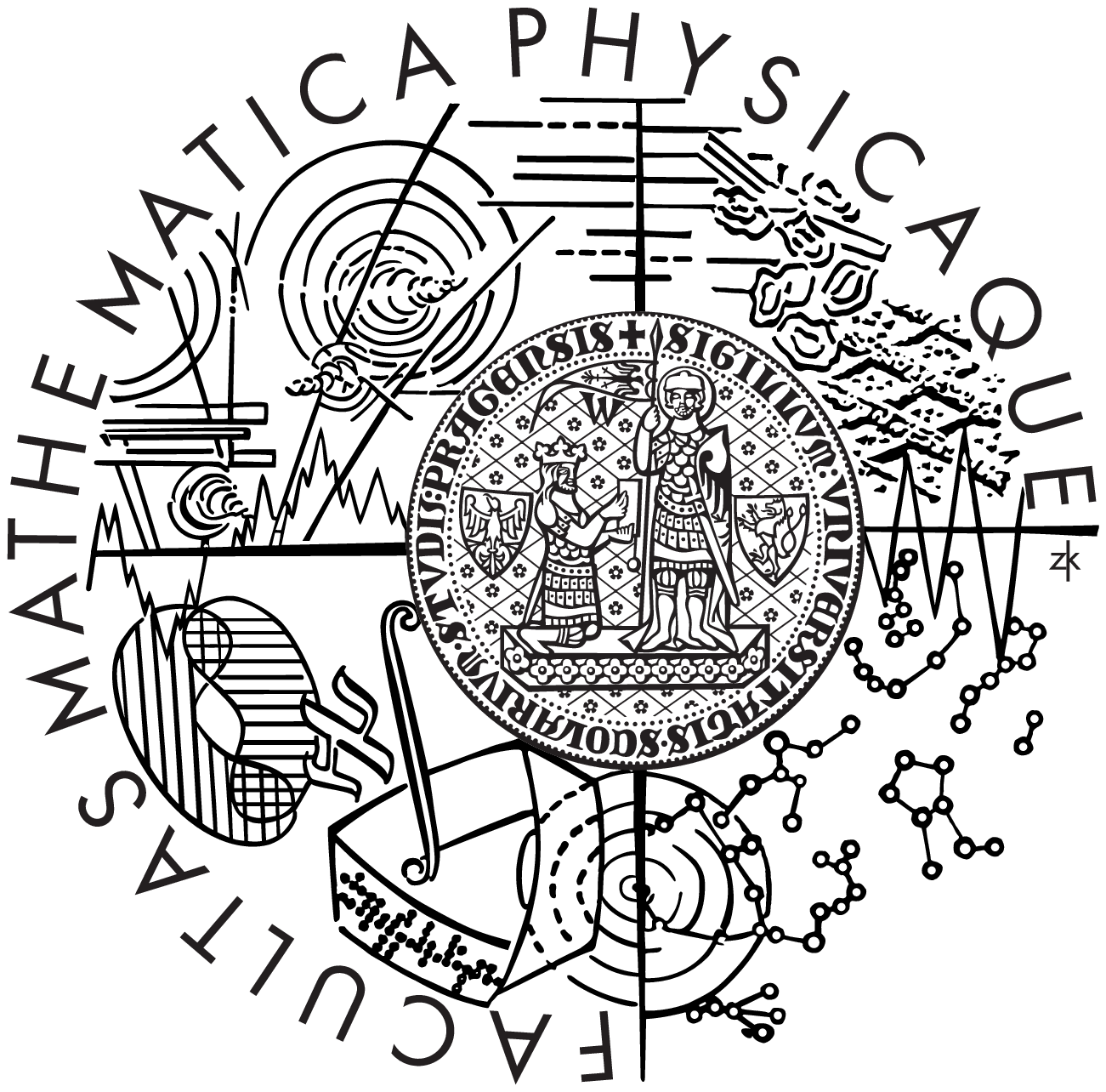}} \end{figure}
\vspace{10mm}
{\huge \xauthor} \\[8mm]
{\huge \textbf{Combinatorial Properties of Finite Models}} \\[15mm]
Supervisor \\[2mm]
{\Large Prof. RNDr. Jaroslav Ne\v set\v ril, DrSc.} \\[10mm]
Study program \\[2mm]
Computer Science \\[0mm]
Discrete Models and Algorithms
\end{center}
\end{titlepage}
\eject
~
\eject
\noindent
\textbf{\Large Acknowledgment}

\vspace{5mm}

\noindent
I would like to thank my advisor, Prof. Jaroslav Ne\v set\v ril, for his
lead, patience and for fixing sometimes unbelievable mistakes I was doing
(seemingly) intentionally and perpetually.  I would like to thank my
consultant Patrice Ossona de Mendez and 
Albert Atserias,
Manuel Bodirsky,
Peter J. Cameron,
David Howard,
Yarred Nigussie,
Ale\v s Pultr,
Robert \v S\'amal,
Norbert W.~Sauer,
William T.~Trotter and
Anatoly M.~Vershik,
for remarks and questions that
motivated my research. Robert \v S\'amal also noticed several issues in
the introduction chapter.  I am very grateful to Andrew Goodall who survived
several horrors in Madrid, Prague, T\'abor and \v Cesk\'y Krumlov and gave a
lot of feedback for several chapters of the thesis.  I also enjoyed
working at the Department of Applied Mathematics and the Institute for
Theoretical Computer Science for eight years.  Finally I would like to thank an
anonymous naked woman who sleep-walking entered our room and stole a pillow in
summer 2001 and inspired me for my first mathematical result.

\vfill
\noindent
I hereby declare that I have written this thesis on my own, and the references include all the sources of information I have exploited. I agree with the lending of this thesis.

\vspace{10mm}
\noindent
Prague, \today{} \hfill{} \xauthor

\eject
~
\eject
\tableofcontents
\cleardoublepage
~
\eject
\noindent {\bf N\' azev pr\' ace:} Kombinatorika kone\v cn\'ych model\accent23u.

\noindent {\bf Autor:} Jan Hubi\v cka

\noindent {\bf Katedra (\'ustav):} Katedra aplikovan\'e matematiky

\noindent {\bf Vedouc\'\i{} doktorsk\'e pr\'ace:} Prof. RNDr. Jaroslav Ne\v set\v ril, DrSc.

\noindent {\bf e-mail vedouc\' \i{}ho:} {\tt nesetril@kam.mff.cuni.cz}

\noindent {\bf Abstrakt:}
 V t\' eto pr\' aci se v\v enujeme univerz\' aln\' \i{}m struktur\' am
pro vno\v ren\' \i{} i homomorfismy a sjednocujeme v\' ysledky t\'ykaj\'\i{}c\'\i{} se obou t\v echto
pojm\accent23u. Uk\'a\v zeme, \v ze mnoh\'e z~univerz\'aln\'i{}ch a
ultrahomogenn\'\i{}ch struktur jsou reprezentovateln\'e pomoc\'i jednoduch\'ych
kone\v cn\'ych technik. O takov\' ych struktur\'ach \v r\'\i{}k\'ame, \v ze
maj\' i{} kone\v cnou prezentaci. Na z\'aklad\v e klasick\'e reprezentace n\'ahodn\' eho grafu
(R. Rado) hled\' ame kone\v cn\'e prezentace pro zn\'am\'e ultrahomogenn\'\i{} struktury.
Podle klasifika\v cn\'\i{}ho programu najdeme prezentace v\v sech ultrahomogenn\'\i{}ch
neorientovan\'ych graf\accent23u, turnaj\accent23u a \v c\'aste\v cn\'ych uspo\v r\'ad\'an\'\i{}.
Uk\'a\v zeme tak\'e prezentaci racion\'aln\'\i{}ho Urysohnova prostoru a n\v ekter\'ych orientovan\'ych graf\accent23u.

V\v enujeme se tak\'e zn\'am\'ym struktur\'am, kter\'e lze pova\v zovat za kone\v cn\'e prezentace.
Uv\'ad\'i{}me p\v rehled struktur, kter\'e popisuj\'\i{} \v c\'aste\v cn\'a uspo\v r\' ad\' an\'\i{}
a u nich\v z m\accent23u\v zeme dok\'azat jejich univerzalitu (nap\v r\'\i{}klad uspo\v r\'ad\'an\'\i{} mno\v zin slov,
geometrick\'ych objekt\accent23u, polynom\accent23u, \v ci homomorfismov\'e uspo\v r\'ad\'an\'\i{} struktur).

Uk\'a\v zeme nov\'y kombinatorick\'y d\accent23ukaz existence univerz\'aln\'ich struktur pro t\v r\'\i{}dy
struktur definovan\'ych pomoc\'\i{} zak\'azan\'ych homomorfism\accent23u. Z tohoto d\accent23ukazu plyne
nov\'a konstrukce homomorfismov\'ych dualit a souvislost s Urysohnov\'ym prostorem. 

\noindent {\bf Kl\'\i{}\v cov\'a slova}: ultrahomogenita, univerzalita, rela\v cn\'\i{} struktury, Urysohn\accent23uv metrick\'y prostor, homomorfismov\' e usp\v r\'ad\'an\'\i{}.
~
\vfill

\noindent {\bf Title:} Combinatorial Properties of Finite Models

\noindent {\bf Author:} Jan Hubi\v cka

\noindent {\bf Department:} Department of applied mathematics

\noindent {\bf Supervisor:} Prof. RNDr. Jaroslav Ne\v set\v ril, DrSc.

\noindent {\bf Supervisor's e-mail address:} {\tt nesetril@kam.mff.cuni.cz}

\noindent {\bf Abstract:} We study countable embedding-universal and
homomorphism-universal structures and unify results related to both of these
notions.  We show that many universal and ultrahomogeneous structures allow a
concise description (called here a finite presentation). Extending classical
work of Rado (for the random graph), we find a finite presentation for each of
the following classes: homogeneous
undirected graphs, homogeneous tournaments and homogeneous partially ordered
sets. We also give a finite presentation of the rational Urysohn metric space and
some homogeneous directed graphs.

We survey well known structures that are finitely
presented.  We focus on structures endowed with natural partial orders and
prove their universality.  These partial orders include partial orders on sets
of words, partial orders formed by geometric objects, grammars, polynomials and
homomorphism orders for various combinatorial objects.

We give a new combinatorial proof of the existence of embedding-universal
objects for homomorphism-defined classes of structures. This relates countable embedding-universal structures
to homomorphism dualities (finite homomorphism-universal structures) and
Urysohn metric spaces.  Our explicit construction also allows us to show
several properties of these structures.

\noindent {\bf Keywords:} ultrahomogeneity, universality, relational structures, Urysohn metric space, homomorphism orders.

~
\vfill


\chapter{Introduction and motivation}
\label{introdukcehomorder}

It is an old mathematical idea  to reduce the study of a particular class of
objects to a certain single ``universal'' object. It is hoped that this object
might be used to study the given (infinite) set of individual problems in a more
systematic and perhaps even more efficient way. For example, the universal object may
have interesting additional properties (such as symmetries and
ultrahomogeneity) which in turn can be used to classify finite problems.
In this thesis we shall study embedding-universal and homomorphism-universal
relational structures. 

A {\em relational structure} $\relsys{A}$ is a pair $(A,(\rel{A}{i}:i\in I))$
where $\rel{A}{i}\subseteq A^{\delta_i}$ (i.e. $\rel{A}{i}$ is a $\delta_i$-ary
relation on $A$). The family $(\delta_i: i\in I)$ is called the {\em
type} $\Delta$. The type is usually fixed and understood from the context.
(Note that we consider relational structures only, and no function symbols.) If
the set $A$ is finite we call {\em $\relsys A$ a finite structure}.  We consider only
countable or finite structures.

A {\em homomorphism} $f:\relsys{A}\to \relsys{B}=(B,(\rel{B}{i}:i\in I))$ is a
mapping $f:A\to B$ such that $(x_1,x_2,\ldots, x_{\delta_i})\in
\rel{A}{i}$ implies $(f(x_1),f(x_2),\ldots,f(x_{\delta_i}))\in \rel{B}{i}$ for each $i\in
I$.
 If $f$ is one-to-one then $f$ is called a {\em monomorphism}. A monomorphism
$f$ such $(x_1,x_2,\ldots, x_{\delta_i})\in \rel{A}{i}$ if and only if $(f(x_1),f(x_2),\ldots,f(x_{\delta_i}))\in \rel{B}{i}$ for each $i\in I$ is called an {\em
embedding}. 

The existence of a homomorphism $f:\relsys{A}\to \relsys{B}$ will be also denoted
by $\relsys{A}\to \relsys{B}$.  

An embedding $f:\relsys{A}\to\relsys{B}$ that is onto is called an {\em isomorphism}.
An isomorphism $f:\relsys{A}\to \relsys{A}$ is called an {\em automorphism}.

A relational structure $\relsys{A}$ is a {\em substructure of the relational
structure $\relsys{B}$} when the identity mapping is a monomorphism from $\relsys{A}$
to $\relsys{B}$.  A relational structure $\relsys{A}$ is an {\em
induced substructure of the relational structure $\relsys{B}$} when the identity mapping is an embedding from $\relsys{A}$ to $\relsys{B}$.

Several well-known mathematical structures will be discussed.  We consider
these structures to be special cases of relational structures. However, when
convenient, we use standard graph-theoretic notation.

An {\em undirected graph} (or simply a {\em graph}) is a tuple $G=(V,E)$ such that
$E$ is a set of subsets of $V$ of size $2$. It corresponds to a
symmetric relational structure $\relsys{A}=(A,\relA)$ of type $\Delta=(2)$ defined by $A=V$ and $(u,v)\in
\relA$ if and only if $\{u,v\}\in E$.

A {\em directed graph} is a tuple $G=(V,E)$, such that $E$ is a set of 2-tuples
of $V$.
  This corresponds to a relational structure
$\relsys{A}=(A,\relA)$ of type $\Delta=(2)$ defined by $A=V$ and $\relA=E$.

A directed graph  $G=(V,E)$ is an {\em oriented graph} if and only if there are
no vertices $v_1,v_2\in V$ such that both edges $(v_1,v_2)$ and $(v_2,v_1)$ are
in $E$.  (An oriented graph can be constructed by assigning an orientation  to
every edge of an undirected graph.) 

Finally, a {\em partially ordered set} is pair $(P,\leq_P)$ such that $\leq_P$
is a reflexive, weakly antisymmetric, and transitive binary relation on $P$.  
It corresponds to a relational structure $\relsys{A}=(A,\relA)$
defined by $A=P$ and $\relA=\hbox{$\leq_P$}$.

For a family $\F$ of finite relational structures, denote by
$\Forbi(\F)$ the class of all (finite or countable) relational structures $\relsys{A}$ for which there is no embedding $\relsys{F}\to \relsys{A}$ for any $\relsys{F}\in \F$. 

Similarly, put $\Forb(\F)= \{\relsys{A}:
\relsys{F}\nrightarrow \relsys{A}\mathrm{~for~}\relsys{F}\in \F\}$.

For a given family $\F$ of finite relational structures, the class $\Forb(\F)$
can be equivalently seen as the class $\Forbi(\F')$ where $\F'$ consists of all
structures $\relsys{A}$ such that there is a structure $\relsys{B}\in \F$ and a
homomorphism $\relsys{B}\to\relsys{A}$ that is onto.  If $\F$ consists of
finitely many structures of finite type then $\F'$ is finite too.

We will also use the same notation when speaking about graphs (or directed)
graphs.  For $\F$ a family of countable or oriented graphs, the classes $\Forb(\F)$
and $\Forbi(\F)$ will consist of countable graphs (or directed graphs) only.

In most cases, when considering classes $\Forbi(\F)$ and $\Forb(\F)$, we will be 
interested in families $\F$ consisting of connected structures only.  A
structure $\relsys{A}$ is {\em connected} if for every proper subset $B$ of
vertices of $\relsys{A}$ there is some tuple $(x_1,x_2,\ldots, x_{\delta_i})\in\rel{A}{i}$,
$i \in I$, containing both vertices in $B$ and vertices in $A\setminus B$.

For a given class $\K$ of relational structures we say 
that the structure $\relsys{U}$ is an {\em embedding-universal} (or, simply {\em universal}) structure for $\K$ if $\relsys{U}\in \K$ and every structure $\relsys A\in \K$ can be found as an induced
substructure of $\relsys U$ (or in other words, there exists an embedding from
$\relsys A$ to $\relsys U$).

Similarly we say the structure $\relsys{U}$ is a {\em homomorphism-universal} (sometimes also called {\em hom-universal}) structure for the class $\K$ if $\relsys{U}\in \K$ and for
every structure $\relsys A\in \K$ there exists a homomorphism $\relsys A \to
\relsys U$.

Universal structures can be seen as a representative of the maximum equivalence
class of the following quasi-orders:
$$\relsys{A}\leq_e\relsys{B} \hbox{ if and only if there exists an embedding from } \relsys{A} \hbox{ to } \relsys{B},$$
$$\relsys{A}\leq_h\relsys{B} \hbox{ if and only if there exists a homomorphism from } \relsys{A} \hbox{ to } \relsys{B}.$$

The partial order $\leq_e$ is called the {\em embedding order} and the partial
order $\leq_h$ is called the {\em homomorphism order}.

The notions of embedding-universality and homomorphism-universality have both
been extensively studied and we shall outline many related results and
applications in this chapter.  We shall also concentrate on similarities between
these terms. This is a novel approach since the two notions have been
traditionally studied in different contexts.  In particular, for both notions
of universality we shall try to answer the following questions:
\begin{itemize}
\item Given a class $\K$ of countable relational structures,  is there a universal structure for the class $\K$?
\item Given a relational structure $\relsys{U}$,  is $\relsys{U}$ a universal structure for some class $\K$?
\item What are the known examples of universal structures? 
\end{itemize}
We also outline some of the numerous applications of these notions.

\section{Ultrahomogeneous and generic structures}
\label{Genericsection}

By far the most extensively studied universal structures are the ones satisfying one
additional property:

\begin{defn}
A structure $\relsys A$ is {\em ultrahomogeneous} (sometimes also called {\em homogeneous})
if every isomorphism of two induced finite substructures of $\relsys A$ can be
extended to an automorphism of $\relsys A$. 

A structure $\relsys{G}$ is {\em generic} for the class $\K$ if it is (embedding-)universal for $\K$ and
ultrahomogeneous.
\end{defn}

\begin{figure}
\centerline{\includegraphics{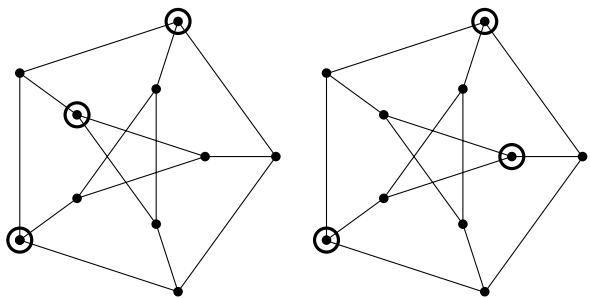}}
\caption{Two kinds of independent set in the Petersen graph.}
\label{petersenhom}
\end{figure}
\begin{figure}
\centerline{\includegraphics{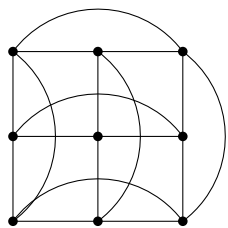}}
\caption{Ultrahomogeneous graph $L(K_{3,3})$.}
\label{homgraf1}
\end{figure}
Ultrahomogeneity of a structure is a very strong property implying a maximal
degree of symmetry.  In particular it implies vertex-transitivity as well as
edge-transitivity.

The strength of ultrahomogeneity can be demonstrated by the example of finite
graphs.  It is easy to see that completely symmetric graphs (complete graphs
and independent sets) are ultrahomogeneous.  Less trivial examples are
difficult to find.  For example, the Petersen graph is known for its symmetry.
It is both vertex-transitive and edge-transitive. In addition, every partial
isomorphism of two 5-cycles in the graph can be extended to an automorphism.
Still it fails to be ultrahomogeneous, because it has two different types of independent
set of size three, as depicted in Figure \ref{petersenhom}.  The first independent set is formed by
neighbors of a single vertex, while the second independent set is not in the neighborhood of any vertex as it can be extended to an independent set of size 4.
Consequently any partial isomorphism mapping the first independent set to the
second cannot be extended to an automorphism.  Still, non-trivial examples of
ultrahomogeneous finite graphs do exist.  Consider the one depicted in Figure
\ref{homgraf1}.

We shall focus almost exclusively on infinite ultrahomogeneous structures.  A
well-known example of a ultrahomogeneous structure is the order of rationals
$(\Q,\leq)$.  The ultrahomogeneity of $(\Q,\leq)$ follows easily from the
definition. Furthermore, every countable linear order can be embedded in
$(\Q,\leq)$ by a monotone embedding (this result is attributed to Cantor).  Consequently, $(\Q,\leq)$ is also the generic structure for the class
of all (countable) linear orders (and all monotone embeddings).

How many structures similar to $(\Q,\leq)$ can we find?
It is important that ultrahomogeneous structures are characterized by properties of finite
substructures.  To show that, we need to first introduce some additional
notions.
\begin{defn}
For a countable relational structure $\relsys{U}$, we denote by $\Age(\relsys{U})$
the class of all finite structures isomorphic to a substructure of $\relsys{U}$.

For a class $\K$ of countable relational structures, we denote by
$\Age(\relsys{\K})$ the class of all finite structures isomorphic to a
substructure of some $\relsys{A}\in \K$.  
\end{defn}

The key property of the age of any ultrahomogeneous structure is described by the
following concept.
\begin{figure}
\centerline{\includegraphics{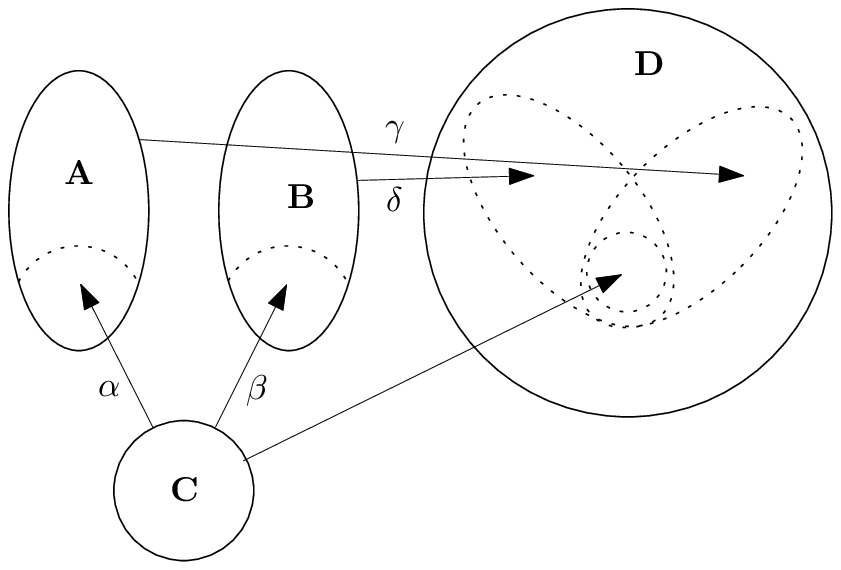}}
\caption{Amalgamation of $(\relsys{A},\relsys{B},\relsys{C}, \alpha, \beta)$.}
\label{amalgamfig0}
\end{figure}
\begin{defn}
\label{amalgamationclassdef}
Let $\relsys{A},\relsys{B},\relsys{C}$ be relational structures, $\alpha$ an embedding of $\relsys{C}$ into $\relsys{A}$, and $\beta$ an embedding of $\relsys{C}$ into $\relsys{B}$.  
An {\em amalgamation of $(\relsys{A}, \relsys{B}, \relsys{C}, \alpha, \beta)$} is any triple $(\relsys{D},\gamma,\delta)$, where $\relsys{D}$ is a relational structure, $\gamma$ an embedding $\relsys{A}\to \relsys{D}$ and $\delta$ an embedding $\relsys{B}\to\relsys{D}$ such that $\gamma\circ\alpha = \delta\circ\beta$.
\end{defn}
Less formally, an amalgamation ``glues together'' the structures $\relsys{A}$ and
$\relsys{B}$ into a single substructure of $\relsys{D}$ such that copies of $\relsys{C}$ coincide.
See Figure \ref{amalgamfig0}.

The age of the generic linear order $(\Q,\leq)$ consists of all
finite linear orders.  It is easy to see that, given finite linear orders
$\relsys{A}$, $\relsys{B}$, $\relsys{C}$ and embeddings $\alpha:\relsys{C}\to
\relsys{A}$ and $\beta:\relsys{C}\to\relsys{B}$, one can construct an amalgamation
$(\relsys{D},\gamma,\delta)$ where $\relsys{D}$ is a linear order on
$|A|+|B|-|C|$ vertices and $\gamma,\delta$ are order-preserving mappings such
that $\gamma\alpha=\delta\beta$ on $\relsys{C}$.

Often the vertex set of structures $\relsys{A}$, $\relsys{B}$ and $\relsys{C}$
can be chosen in such a way that the embeddings $\alpha$ and $\beta$ are identity
mappings.  In this case, for brevity, we will call an amalgamation of
$(\relsys{A},\relsys{B},\relsys{C},\alpha,\beta)$ simply an {\em amalgamation of
$\relsys{A}$ and $\relsys{B}$ over $\relsys{C}$}.  Similarly for an amalgamation
$(\relsys{D}, \gamma, \delta)$ of a given $(\relsys{A}, \relsys{B}, \relsys{C}, \alpha, \beta)$ we are often interested in the structure
$\relsys{D}$ alone.  In this case we shall call the structure $\relsys{D}$ an amalgamation of $(\relsys{A}, \relsys{B}, \relsys{C}, \alpha, \beta)$ (omitting the embeddings $\gamma$ and $\delta$).

The notion of amalgamation gives a lot of freedom in the way the structures can be
combined.  As we shall consider only hereditary (closed for taking induced substructures) classes in all our results, we can assume that $\relsys{D}$
contains only the vertices needed by $\gamma$ and $\delta$.  That is,
$$D=\gamma(A)\cup \delta(B).$$

Sometimes we use more strict versions of amalgamation.  We say that
an amalgamation is {\em strong} when $\gamma(x)=\delta(x')$ if and only if $x\in
\alpha(C)$ and $x'\in \beta(C)$.  Less formally, a strong amalgamation glues together
$\relsys{A}$ and $\relsys{B}$ with an overlap no greater than the copy of $\relsys{C}$ itself.

It is easy to observe that in the case of linear orders a strong amalgamation
is always possible.  However, we can restrict the notion even further. A strong
amalgamation is {\em free} if there are no relations of $\relsys{D}$ spanning both
vertices of $\gamma(A)$ and $\delta(B)$ that are not images of some relations
of structure $\relsys{A}$ or $\relsys{B}$ via the embedding $\gamma$ or $\delta$, respectively.

Obviously, in the case of linear orders a free amalgamation exists only in
very degenerate cases, since new relations need to be introduced between
the vertex sets $\gamma (A\setminus \alpha (C))$ and $\delta (B\setminus \beta(C))$. 

Strong and free amalgamation are important notions used to prove additional
properties of structures.  We shall give numerous examples of uses of 
free amalgamation later.

The ages of ultrahomogeneous structures are described by the following
definition and result.

\begin{defn}
A class $\K$ of finite relational structures is called an {\em
amalgamation class} (sometimes also a \Fraisse{} class) if the following
conditions hold:
\begin{enumerate}
\item ({\em Hereditary property}) For every $\relsys{A}\in \K$ and induced substructure $\relsys{B}$ of $\relsys{A}$ we have $\relsys{B}\in \K$.
\item ({\em Amalgamation property}) 
For $\relsys{A},\relsys{B},\relsys{C}\in \K$ and $\alpha$ an embedding of
$\relsys{C}$ into $\relsys{A}$, $\beta$ an embedding of $\relsys{C}$ into
$\relsys{B}$, there exists $(\relsys{D},\gamma,\delta), \relsys{D}\in \K$, that is an amalgamation of
$(\relsys{A}, \relsys{B}, \relsys{C}, \alpha, \beta)$.
\item $\K$ is closed under isomorphism.
\item $\K$ has only countably many mutually non-isomorphic structures. 
\end{enumerate}
\end{defn}
\begin{thm}[\Fraisse{} \cite{F, Hoges}]
\label{fraissethm}
(a) A class $\K$ of finite structures is the age of a countable ultrahomogeneous structure $\relsys{G}$ if and only if $\K$ is an amalgamation class. 

(b) If the conditions of (a) are satisfied then the structure $\relsys{G}$ is unique up to isomorphism. 
\end{thm}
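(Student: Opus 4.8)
The statement splits into three tasks: the two directions of part (a) and the uniqueness in part (b). The forward direction of (a)---that the age of a countable ultrahomogeneous $\relsys{G}$ is an amalgamation class---is the routine one, so I would dispatch it first. Heredity, closure under isomorphism, and countability are immediate from the definition of $\Age(\relsys{G})$ together with the fact that a countable structure has only countably many finite substructures up to isomorphism. For the amalgamation property, given $\relsys{A},\relsys{B},\relsys{C}\in\Age(\relsys{G})$ with embeddings $\alpha,\beta$ of $\relsys{C}$, I would first embed $\relsys{A}$ and $\relsys{B}$ into $\relsys{G}$ by some $f,g$; then $f\circ\alpha$ and $g\circ\beta$ are two embeddings of $\relsys{C}$, so $g\beta(f\alpha)^{-1}$ is an isomorphism between two finite substructures of $\relsys{G}$, which ultrahomogeneity extends to an automorphism $\psi$. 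The substructure of $\relsys{G}$ spanned by $\psi(f(A))\cup g(B)$, with $\gamma=\psi f$ and $\delta=g$, is then the desired amalgam, since $\gamma\alpha=\psi f\alpha=g\beta=\delta\beta$ and it lies in $\Age(\relsys{G})$ because it is finite.

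The substance is the backward direction. The key device is the \emph{extension property}: I would aim to build $\relsys{G}$ so that for every embedding $e:\relsys{A}\to\relsys{G}$ with $\relsys{A}\in\K$ and every $\relsys{B}\in\K$ containing $\relsys{A}$ as an induced substructure, there is an embedding $\relsys{B}\to\relsys{G}$ extending $e$. The construction is a countable chain $\relsys{G}_0\subseteq\relsys{G}_1\subseteq\cdots$ of members of $\K$ whose union is $\relsys{G}$. At a typical step I have a finite $\relsys{G}_n\in\K$, a substructure $\relsys{C}\subseteq\relsys{G}_n$, and a one-point (or finite) extension $\relsys{C}\subseteq\relsys{B}$ with $\relsys{B}\in\K$; I apply the amalgamation property to $\relsys{G}_n$ and $\relsys{B}$ over $\relsys{C}$ to obtain $\relsys{G}_{n+1}\in\K$ realizing this extension. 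Since $\K$ has only countably many isomorphism types and each $\relsys{G}_n$ is finite, the totality of such demands is countable, and I would dovetail them so that each demand---including those created by vertices added at later stages---is eventually served. Admitting $\relsys{A}=\emptyset$ among the demands ensures, via amalgamation over the empty structure (joint embedding), that every member of $\K$ is reached.

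Two verifications remain. That $\Age(\relsys{G})=\K$ follows from heredity (every finite substructure lies in some $\relsys{G}_n\in\K$) for one inclusion, and from the extension property applied to the empty substructure for the other. Ultrahomogeneity I would prove by back-and-forth: given an isomorphism $\phi$ between finite substructures $\relsys{X},\relsys{Y}$ and an enumeration $v_0,v_1,\dots$ of $\relsys{G}$, I alternately enlarge the domain and the range by one vertex, each time viewing the enlarged structure as a one-point extension and using the extension property to locate the matching vertex; the union of the resulting chain of partial isomorphisms is the required automorphism. The same back-and-forth, run between two models, yields part (b): a countable ultrahomogeneous structure satisfies the extension property (established just as in the forward direction of (a), by correcting an embedding of $\relsys{B}$ with an automorphism), so any two such structures with age $\K$ can be matched vertex by vertex to produce an isomorphism.

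The main obstacle is the bookkeeping in the chain construction. The difficulty is not any single amalgamation step but the scheduling that guarantees fairness: every pair consisting of a substructure of the eventual $\relsys{G}$ and an extension in $\K$ must be handled, yet new substructures keep appearing as the chain grows, so I must interleave a steadily growing set of demands without starving any of them. Getting this dovetailing right---so that the limit genuinely has the full extension property rather than merely each finite stage---is where the care lies.
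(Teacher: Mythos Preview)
Your proposal is correct and follows the standard approach; the paper does not give its own proof of this theorem (it is cited from Fra\"{\i}ss\'e and Hodges), but it remarks that $\relsys{G}$ is built ``by countably many amalgamations and joint embeddings of structures in the class $\K$,'' which is exactly your chain construction, and the back-and-forth argument you use for ultrahomogeneity and uniqueness is the one the paper sketches shortly afterward for Lemma~\ref{extuniv}.
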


An amalgamation class is commonly defined with one additional
property (see \cite{Hoges}). A class $\K$ has the {\em joint embedding property} if
for every $\relsys{A}, \relsys{B}\in \K$ there exists $\relsys{C}\in \K$ such
that $\relsys{C}$ contains both $\relsys{A}$ and $\relsys{B}$ as induced
substructures.  We will allow empty structures and assume that there is a unique
empty structure up to isomorphism (as in \cite{CameronP}).  In this setting, the
joint embedding property is just a special case of the amalgamation property.
For given $\relsys{A}$ and $\relsys{B}$ in $\K$ consider the amalgamation of
$(\relsys{A},\relsys{B},\relsys{C},\alpha,\beta)$ where $\relsys{C}$ is an
empty structure.

We should note that in the proof of Theorem \ref{fraissethm} the structure
$\relsys{G}$ is constructed by induction, i.e., by countably many amalgamations and joint
embeddings of structures in the class $\K$.  No explicit description of the
structure is given. For this reason the structure $\relsys G$ is often called a
{\em \Fraisse{} limit} of $\K$ and denoted by $\lim \K$.

We say that structure $\relsys{A}$ is {\em younger} than structure $\relsys{B}$
if $\Age(\relsys{A})$ is a subset of $\Age(\relsys{B})$.
As we shall show, every ultrahomogeneous structure $\relsys{G}$ has the
property that it is (em\-bed\-ding{}-)universal for the class $\K$ of all countable structures younger
than $\relsys{G}$.  It follows that all ultrahomogeneous structures are also
universal and generic for the class $\K$. (Thus we use the letter $\relsys{G}$ to denote
this structure throughout this section.)

Theorem \ref{fraissethm} (\Fraisse{}'s theorem) has many applications in
proving the existence of ultrahomogeneous (and generic) structures. For a given
class $\K$ it is usually trivial to show that $\K$ is hereditary, isomorphism
closed and countable.   Thus the task of showing the existence of
a particular ultrahomogeneous structure usually reduces to giving a method of
constructing amalgamations.

Even very simple amalgamation classes give rise to very interesting
structures.  A popular example of a generic structure is the graph $\Rado$,
generic for the class of all countable graphs.  The class of all finite graphs
is an amalgamation class (and in fact it is an example of an amalgamation class
where a free amalgamation always exists).  The existence of $\R$ follows from Theorem
\ref{fraissethm} and is surprising in itself --- there are uncountably many
non-isomorphic graphs ``packed together'' as induced substructures in the
single countable object.  The graph $\Rado$, known as the {\em Rado graph}, has
several striking properties. We will use it as our primary motivating
example throughout this chapter. 


We have shown how to find an ultrahomogeneous structure. Now let us focus on the
opposite problem. Given a structure $\relsys{G}$, can we tell if it is
ultrahomogeneous?  Instead of showing that $\Age(\relsys{G})$ is an
amalgamation class, it is often easier to use the following alternative
characterization of ultrahomogeneous structures.
\begin{defn}
\label{extensionprop}
A structure $\relsys{A}$ has the {\em extension property} if the following holds.  If
structures $\relsys{B}$ and $\relsys{C}$ are members of the $\Age(\relsys{A})$
such that $\relsys{B}$ is an induced substructure of $\relsys{C}$ and $|C|=|B|+1$,
then every embedding $\varphi:\relsys{B} \to \relsys{A}$ can be extended to an embedding
$\varphi':\relsys{C} \to \relsys{A}$.
\end{defn}
Since the age is always hereditary, it is possible to omit the condition
$|C|=|B|+1$ from Definition \ref{extensionprop}.   This condition is however convenient in
proofs that the given structure $\relsys{G}$ has the extension property.  Observe
that in the case of $(\Q,\leq)$ the extension property is equivalent to
property that for every $a,b\in \Q$ such that $a<b$ there exists $c$ such that
$a<c<b$ (that is, the density of $(\Q,\leq)$) and that there are no maximal or minimal elements in $(\Q,\leq)$.

The extension property can be also seen as a property of a class.  We say that
structure $\relsys{G}$ has the {\em extension property for class $\K$} when
$\relsys{G}$ has the extension property and $\Age(\relsys{G})=\Age(\K)$.

It follows directly from the definitions that all ultrahomogeneous structures
have the extension property.  In the opposite direction, we can show the
following lemma.

\begin{lem}[see e.g. \cite{Hoges}]
  \label{extuniv}
Let $\relsys{G}$ be a structure with the extension property. Then the following holds.
\begin{enumerate}
\item Up to isomorphism, $\relsys{G}$ is uniquely determined by its age (i.e., every countable structure $\relsys{B}$ with the extension property
such that $\Age(\relsys{G})=\Age(\relsys{B})$ is isomorphic to $\relsys{G}$).
\item $\relsys{G}$ is ultrahomogeneous.
\item $\relsys{G}$ is universal for the class of all countable structures younger than $\relsys{G}$.
\end{enumerate}
\end{lem}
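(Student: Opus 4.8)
**The plan is to prove all three claims together using a single back-and-forth (Cantor--Schröder--Bernstein style) argument.** The standard approach for such extension-property lemmas is to build the required maps (isomorphisms or embeddings) by induction on finite stages, using the extension property to extend partial maps one vertex at a time. The key technical tool I would set up first is the following: given a countable structure $\relsys{H}$ whose age is contained in $\Age(\relsys{G})$, and given a finite induced substructure $\relsys{B}$ of $\relsys{H}$ together with an embedding $\varphi:\relsys{B}\to\relsys{G}$, for any single new vertex $v\in H\setminus B$ the restriction of $\relsys{H}$ to $B\cup\{v\}$ lies in $\Age(\relsys{G})$ (since the age is hereditary and contains $\Age(\relsys{H})$), so the extension property applies and $\varphi$ extends to an embedding of $\relsys{B}\cup\{v\}$. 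This lets me grow any finite embedding by one vertex.

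First I would prove (3). Enumerate the vertices of an arbitrary countable structure $\relsys{B}$ younger than $\relsys{G}$ as $b_1,b_2,\ldots$, and construct a chain of embeddings $\varphi_n$ of the substructure induced on $\{b_1,\ldots,b_n\}$ into $\relsys{G}$, each extending the previous, by repeatedly invoking the one-vertex extension step above (the base case uses the empty embedding). The union $\varphi=\bigcup_n\varphi_n$ is an embedding of $\relsys{B}$ into $\relsys{G}$, which is exactly (embedding-)universality. Note that ``younger than $\relsys{G}$'' means $\Age(\relsys{B})\subseteq\Age(\relsys{G})$, so the hypothesis needed for the one-vertex step is met throughout.

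Next I would prove (2), ultrahomogeneity, by a genuine back-and-forth. Given finite induced substructures and an isomorphism $\psi$ between them, I enumerate all of $G$ as $g_1,g_2,\ldots$ and build an increasing chain of finite partial isomorphisms of $\relsys{G}$ extending $\psi$; at odd steps I force $g_i$ into the domain and at even steps I force $g_i$ into the range, each time using the one-vertex extension step (applied to $\relsys{G}$ itself, whose age is trivially contained in its own age). The union is a bijection $G\to G$ preserving the relations in both directions, hence an automorphism extending $\psi$. For (1), the uniqueness claim, I would run the same back-and-forth but between two structures $\relsys{G}$ and $\relsys{B}$ that both have the extension property and share the same age: alternately extending into $G$ and into $B$ and using the extension property on each side in turn yields an isomorphism $\relsys{G}\to\relsys{B}$.

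\textbf{The main obstacle} is bookkeeping rather than any deep idea: I must make sure that at each even (surjectivity-forcing) step the vertex I am trying to add on one side can be matched by applying the extension property correctly on the appropriate side, and that the two halves of the back-and-forth interleave into a map that is simultaneously total, surjective, and relation-preserving in both directions. The one subtlety worth flagging is that the extension property as stated only adds \emph{one} vertex and only guarantees the \emph{existence} of an extension, so I must verify that the finite induced substructure on the current domain together with the target vertex genuinely lies in $\Age(\relsys{G})$ (for (1) and uniqueness, this is where the shared-age hypothesis $\Age(\relsys{G})=\Age(\relsys{B})$ is essential) before each invocation. Once the chain of finite maps is correctly defined, passing to the union and checking it is an isomorphism or embedding is routine.
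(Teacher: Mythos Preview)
Your proposal is correct and matches the paper's approach essentially exactly: the paper also proves all three parts via the back-and-forth (``zig-zag'') method, sketching (1) by alternating extensions between $\relsys{G}$ and $\relsys{B}$, deriving (2) by starting the same procedure from a given finite partial isomorphism, and obtaining (3) by running the construction in one direction only. The only cosmetic difference is that you handle the parts in the order (3), (2), (1) and spell out the age-containment check before each invocation, whereas the paper's sketch goes (1), (2), (3) and leaves those details implicit.
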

\begin{proof}[Proof (sketch)]
We outline an argument proving 1. to demonstrate the model-theoretic tool
known as {\em zig-zag} (or back-and-forth).

Fix relational structures $\relsys{G}$ and $\relsys{B}$ with the extension property such that $\Age(\relsys{G})=\Age(\relsys{B})$.
The procedure to build an isomorphism $\varphi:\relsys G\to \relsys B$ is as
follows:

Assume that the vertices of both $\relsys G$ and $\relsys B$ are natural numbers
(or equivalently enumerate vertices of both vertex sets).  First set
$\varphi(0)=0$.  In the next step, construct an preimage of $1$ in $\relsys B$
using the extension property of $\relsys G$ (that is, find a vertex $v$ in $\relsys G$ such that
the tuple consisting of elements $0$ and $v$ is in $\rel{G}{i}$ if and only if
the corresponding tuple consisting of $0$ and $1$ is in $\rel{B}{i}$). Put
$\varphi(v)=1$.  In the next step choose the first vertex $v'$ in $\relsys G$
such that $\varphi(v')$ is not defined yet and use the extension property of
$\relsys B$ to define an image $v''$.  Put $\varphi(v')=v''$ and continue
analogously.  By alternating $\relsys G$ and $\relsys B$ the process exhausts
both the vertices of $\relsys G$ and of $\relsys B$, thereby constructing an isomorphism. 

To prove 2.~one can build an isomorphism in the same way as above.  The only difference is
that it is necessary to start with a partially given isomorphism instead of
an empty one.

To show 3.~one can use a similar argument: just build the isomorphism in one
direction.
\end{proof}

We illustrate the usefulness of Lemma \ref{extuniv} by proving the following famous result:

\begin{thm}[\Erdos{} and R\'enyi \cite{ErdosReni}]
\label{randomgr}
There is a countable graph $\Rado'$ with the property that a countable random
graph (edges chosen independently with probability $1\over 2$) is almost surely
isomorphic to $\Rado'$.
\end{thm}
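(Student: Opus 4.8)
The plan is to invoke Lemma~\ref{extuniv} and reduce the entire statement to verifying that the almost-sure random graph has the extension property. By part~1 of that lemma, any two countable graphs with the extension property and the same age are isomorphic; since every finite graph embeds into a sufficiently large random graph, the age of the countable random graph is (almost surely) the class of all finite graphs. Thus if I can show that the random graph almost surely has the extension property, then it is almost surely isomorphic to the unique graph $\Rado'$ determined by that age, and the theorem follows.

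The core step is therefore a probabilistic estimate. Concretely, I would reformulate the extension property for graphs in the following convenient ``one-point'' form: for every pair of disjoint finite vertex sets $U$ and $W$, there exists a vertex $z$ adjacent to every vertex of $U$ and to no vertex of $W$. (This is exactly the extension property specialized to graphs, where $\relsys{B}$ is the configuration on $U\cup W$ and $\relsys{C}$ adds one new vertex with the prescribed adjacencies; hereditarity lets me restrict to adding a single vertex.) I would fix $U$ and $W$ with $|U\cup W|=k$ and estimate the probability that a given candidate vertex $z$ fails to realize the pattern: each of the $k$ required adjacency conditions holds independently with probability $\tfrac12$, so a single $z$ succeeds with probability $2^{-k}$ and fails with probability $1-2^{-k}$.

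Next I would take a countable supply of further vertices as candidates. Since the edges are chosen independently, the events that successive candidate vertices fail to realize the pattern on $U\cup W$ are mutually independent, so the probability that \emph{all} of the first $n$ candidates fail is $(1-2^{-k})^n$, which tends to $0$ as $n\to\infty$. Hence, for this fixed pair $(U,W)$, the pattern is almost surely realized by some vertex. The final bookkeeping step is a countable union bound: there are only countably many pairs of disjoint finite vertex sets in a countable graph, and a countable union of null events is null, so almost surely the extension property holds for \emph{every} choice of $U$ and $W$ simultaneously.

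The main subtlety, rather than any deep obstacle, is the quantifier management in this last step: I must be careful that ``almost surely the extension property holds'' means a single event of probability one obtained by intersecting countably many probability-one events (one per pair $(U,W)$), and that each such event is itself a probability-one event obtained as a monotone limit over the candidate vertices. Once this is set up correctly, the independence of edge choices makes every individual estimate routine, and the conclusion is immediate from Lemma~\ref{extuniv}. I would also remark that the same back-and-forth uniqueness underlying Lemma~\ref{extuniv} shows $\Rado'$ coincides with the Rado graph $\Rado$ introduced earlier, so the ``random'' and ``generic'' descriptions agree.
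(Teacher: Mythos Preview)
Your proposal is correct and follows essentially the same approach as the paper: fix a pair of disjoint finite sets, compute the failure probability $(1-2^{-k})^n\to 0$ for $n$ independent candidate vertices, then take a countable union of null events over all such pairs, and conclude via Lemma~\ref{extuniv} (the paper also notes, as you do, that this identifies $\Rado'$ with the Rado graph $\Rado$).
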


Countable random graphs can be constructed inductively by
adding vertices one at a time. When vertex $v$ is added, the edge to any
older vertex $v'$ is added independently of all the other vertices with probability $1\over 2$.

The theorem claims that this random process of constructing a graph almost surely
leads to the same result after countably many steps.  Compared to the finite case,
this result is very counter-intuitive.  In fact Theorem \ref{randomgr} allows us to speak
about ``the countable random graph.''

We use the extension property to prove Theorem \ref{randomgr} and moreover show
that $\Rado'$ is generic for the class of all countable graphs and thus is
isomorphic to $\Rado$.  The class of all finite graphs a is a very simple class
allowing the following convenient reformulation of the extension property.

\begin{fact}
\label{grafext}
A graph $G=(V,E)$ has the {\em extension property} for the class of all finite graphs
if for every $J,D$ finite disjoint subsets of $V$, there exists a vertex $v\in V$
joined by an edge to every vertex in $J$ and no vertex in $D$.
\end{fact}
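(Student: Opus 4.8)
The plan is to prove the stated one-point condition is sufficient for $G$ to have the extension property for the class of all finite graphs (Definition \ref{extensionprop}), which by the definition of ``extension property for a class'' means two things: $\Age(G)$ is exactly the class of all finite graphs, and $G$ has the extension property. Both will follow from the hypothesis by the same greedy mechanism. I would also note at the end that the converse holds, so that the ``reformulation'' is genuinely an equivalence.

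The key preliminary step is to upgrade the hypothesis to a \emph{freshness} statement: for all finite disjoint $J,D\subseteq V$ there are in fact \emph{infinitely many} vertices $v$ adjacent to every vertex of $J$ and to no vertex of $D$. Indeed, if $v_1,\dots,v_m$ were the only such vertices, I would apply the hypothesis to the pair $J$ and $D\cup\{v_1,\dots,v_m\}$; this pair is finite and disjoint, since each $v_i$ is adjacent to every vertex of $J$ and graphs are loopless, so no $v_i$ can lie in $J$. The resulting witness is adjacent to all of $J$, to none of $D$, and is distinct from every $v_i$, a contradiction. In particular $V$ is infinite and I may always choose a witness avoiding any prescribed finite set.

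With this in hand, $\Age(G)$ equals all finite graphs: given a finite graph $H$ on $h_1,\dots,h_n$, I build an embedding $\psi$ inductively, having embedded $h_1,\dots,h_k$, by setting $J=\psi(\{h_j: j\le k,\ h_jh_{k+1}\in E(H)\})$ and $D=\psi(\{h_j: j\le k,\ h_jh_{k+1}\notin E(H)\})$ and choosing a fresh $v\notin\psi(\{h_1,\dots,h_k\})$ adjacent to all of $J$ and none of $D$. The core extension step is the same move: let $B$ be an induced subgraph of $C$ with $|C|=|B|+1$, write $C=B\cup\{c\}$, and let $\varphi\colon B\to G$ be an embedding. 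Put $J=\varphi(\{b: bc\in E(C)\})$ and $D=\varphi(\{b: bc\notin E(C)\})$; these are finite and disjoint (as $\varphi$ is injective and the two sets partition $B$), with union $\varphi(B)$. Choosing a fresh $v\notin\varphi(B)$ adjacent to all of $J$ and none of $D$ and setting $\varphi'(c)=v$ yields an embedding of $C$, since for each $b\in B$ the edge $\varphi(b)v$ is present exactly when $\varphi(b)\in J$, i.e.\ exactly when $bc\in E(C)$.

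The main obstacle is the bookkeeping around injectivity: a witness satisfying the raw hypothesis could a priori coincide with $\varphi(b)$ for a \emph{non}-neighbour $b$ of $c$ (a loop rules out the neighbour case but not this one), which would destroy injectivity of $\varphi'$. This is precisely why I isolate the ``infinitely many witnesses'' observation first, so that a fresh $v$ is always available. For the converse, given finite disjoint $J,D$ I would take $B=G[J\cup D]$ and let $C$ add one vertex $c$ adjacent to exactly $J$; since $\Age(G)$ is all finite graphs, $C\in\Age(G)$, and applying the extension property to the identity embedding of $B$ recovers the desired vertex $v$.
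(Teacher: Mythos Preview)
The paper does not prove this statement; it records it as a Fact without argument. Your outline is the right one and most steps are fine, but there is a genuine gap in your ``infinitely many witnesses'' lemma. From the hypothesis applied to the pair $J$ and $D\cup\{v_1,\dots,v_m\}$ you obtain a vertex $w$ that is adjacent to all of $J$ and non-adjacent to every $v_i$; you then assert $w$ is \emph{distinct} from each $v_i$. That does not follow: in a loopless graph every vertex is non-adjacent to itself, so nothing prevents $w=v_j$ for some $j$. Your contradiction therefore does not go through as written, and this is exactly the step on which the rest of your proof (both the $\Age$ computation and the extension step) depends.

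The claim is true, but the repair takes a little more work. One clean fix: given finite disjoint $J,D$, first build $u_1,\dots,u_{|D|+1}$ pairwise distinct and outside $J\cup D$ by iterating the hypothesis with $J'=J\cup D\cup\{u_1,\dots,u_k\}$ and $D'=\emptyset$ (here looplessness \emph{does} give freshness, since the witness is adjacent to every previously chosen vertex). Then for each $i$ apply the hypothesis to $\bigl(J\cup\{u_i\},\,D\cup\{u_j:j\neq i\}\bigr)$ to obtain $v_i$; each $v_i$ is a witness for $(J,D)$, and $v_i\neq v_{i'}$ for $i\neq i'$ because $v_i$ is adjacent to $u_i$ while $v_{i'}$ is non-adjacent to $u_i$. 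You now have $|D|+1$ distinct witnesses for $(J,D)$, so at least one lies outside $D$, hence outside $J\cup D$. With a fresh witness available for every finite disjoint pair, the remainder of your argument (and the iteration to infinitely many witnesses, if desired) goes through unchanged.
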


\begin{proof}[Proof of Theorem \ref{randomgr}]
We consider random graphs on vertex sets formed by the set $\N$ of natural numbers.

First we show that with probability 1 a countable random graph has the
extension property.  To apply Fact \ref{grafext} we need to prove that, for
every choice of $J$ and $D$ (disjoint and finite subsets of $\N$), with
probability 1 there is vertex $v$ joined to every vertex in $J$ and no vertex
in $D$. 

First fix the choice of $J$ and $D$ and we prove that with probability 0
there is no such vertex $v$.
The probability that a given vertex $v$ will be joined to every vertex in $J$
and no vertex in $D$ is
 $${1\over {2^{|J|+|D|}}}=c.$$ Since edges are
constructed independently, the probability that $k$ vertices will all fail to
satisfy the extension property is $(1-c)^k$. Since there are infinitely many
choices of the vertex $v$, the probability that all fail is
$\lim_{k\to\infty}(1-c)^k = 0$.

It follows that for every feasible choice of $J$ and $V$, a countable random
graph fails with probability 0 and there are only countably many choices. By
standard probabilistic reasoning (that the union of countably many null sets is
null) it follows that a countable random graph fails to have the extension property
with probability 0.  

By Lemma \ref{extuniv} we know that with probability 1 a countable random graph
is generic for the class of all countable graphs.  By Theorem \ref{fraissethm}
there is up to isomorphism a unique such graph $\Rado$.  We put
$\Rado'$=$\Rado$.
\end{proof}

The correspondence between random structures and generic structures can be
carried beyond the class of undirected graphs.  Precisely the same argument can
be used for oriented and directed graphs. See also \cite{V} for the
construction of the random metric space and proof of its equivalence with the
generic metric space.  In general it can be shown that if $\relsys{G}$ is a
countable ultrahomogeneous relational structure then almost all countable
structures younger than $\relsys{G}$ are isomorphic to $\relsys{G}$ (see
\cite{cameron}).

\subsection{Known ultrahomogeneous structures}
\label{classection}




%
It is natural to ask which ultrahomogeneous structures exist.  This leads to
the celebrated classification programme of ultrahomogeneous structures that we
outline now. 

The first important result in the area was the classification of ultrahomogeneous
partial orders (given by Schmerl in 1979 \cite{Schmerl}, see also \cite{Cherlin} for a simple proof).

\begin{thm}[Schmerl \cite{Schmerl}]
\label{schmerlthm}
Every countable ultrahomogeneous partial order is isomorphic to one of the
following: 

\begin{enumerate}
\item A (possibly infinite) antichain.
\item A (possibly infinite) union of copies of the ordered rationals $(\Q,\leq)$, elements in distinct copies being incomparable (antichain of chains).
\item A union indexed by $(\Q,\leq)$ of antichains $A_q$ all of the same (finite or countably infinite) size, and ordered by $x\leq y$ if and only if there is some $q<r,x\in A_q$ and $y\in A_r$ (chain of antichains). 
\item The generic partial order for the class of all countable partial orders.
\end{enumerate}
\end{thm}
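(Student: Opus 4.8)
The plan is to use the \Fraisse{} correspondence (Theorem \ref{fraissethm}) to reduce the classification of the ultrahomogeneous poset $P$ to a classification of its possible ages: since $P$ is determined up to isomorphism by $\Age(P)$, it suffices to show that $\Age(P)$ is one of the four amalgamation classes corresponding to the listed structures. I would organize the whole argument around two reflexive symmetric relations on $P$: the comparability relation $\sim$ (where $x\sim y$ means $x<y$, $y<x$, or $x=y$) and its complement, incomparability. The classification is then driven by a single dichotomy: each of these two relations is either transitive, hence an equivalence relation, or not. The key preliminary observation I would establish is the forbidden-configuration meaning of these two transitivities. Transitivity of incomparability is equivalent to $P$ omitting the three-element poset consisting of one comparable pair together with a point incomparable to both; and transitivity of comparability is equivalent to $P$ omitting both ``fork'' posets (one element lying below, respectively above, two incomparable elements). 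Each equivalence is a direct three-element check.

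Next I would dispose of the degenerate and intermediate cases. If both relations are transitive, then, viewing comparability as a graph, both this graph and its complement are disjoint unions of cliques, which forces the comparability graph to be complete or edgeless; hence $P$ is a single chain or a single antichain, giving case~1 (or a degenerate instance of cases~2 and~3). If comparability is transitive but incomparability is not, then the comparability classes are chains and any two elements in distinct classes are incomparable, so $P$ is a disjoint union of chains; ultrahomogeneity forces all the chains to be mutually isomorphic and each infinite chain to be a copy of $(\Q,\le)$, yielding case~2. Symmetrically, if incomparability is transitive but comparability is not, the incomparability classes are antichains of a common size, linearly ordered by the induced order, and ultrahomogeneity makes this quotient a countable dense linear order without endpoints, i.e.\ $(\Q,\le)$, yielding case~3. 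These cases are routine once the forbidden-configuration characterizations are in hand; the only care needed is to verify that ultrahomogeneity forces the uniformity (all chains isomorphic, all antichains of equal size, the quotient order dense without endpoints).

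The remaining case --- neither relation transitive --- is the main obstacle, and here the goal is to show that $\Age(P)$ is the class of \emph{all} finite posets, so that $P$ is the generic partial order of case~4. The strategy is to verify the extension property of Definition \ref{extensionprop} for the class of all finite posets and then invoke Lemma \ref{extuniv} together with the uniqueness part of Theorem \ref{fraissethm}. Concretely, given a finite induced subposet $B\subseteq P$ and a prescribed one-point extension (a set of elements to lie below the new point, a set to lie above it, and a set incomparable to it, subject to the obvious consistency conditions that make the extension a poset), I must produce a vertex of $P$ realizing exactly this pattern. The tool I would use is that, since $P$ is ultrahomogeneous, its age is an amalgamation class, so amalgamations of age-members can be performed \emph{inside} $P$ itself; the presence of the two forks and of the comparable-pair-plus-isolated-point poset supplies the basic gadgets for placing a new point simultaneously above some elements, below others, and incomparable to the rest. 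I expect the hard part to be the bookkeeping in this amalgamation: one must first bootstrap from ``neither relation is transitive'' to the fact that all five three-element posets already embed in $P$, and then build the desired one-point extension by iteratively amalgamating these gadgets over growing subconfigurations while preserving antisymmetry and transitivity of the order. Once the extension property for all finite posets is secured, Lemma \ref{extuniv} identifies $P$ with the generic poset, completing the classification.
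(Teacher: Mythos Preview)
The paper does not prove Theorem \ref{schmerlthm}; it is stated as a classical result due to Schmerl, with a reference to \cite{Schmerl} and a remark that a simpler proof appears in \cite{Cherlin}. There is therefore no ``paper's own proof'' to compare against.

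That said, your proposed strategy is essentially the standard one and is sound in outline. The case split on transitivity of comparability and incomparability, together with the identification of the corresponding forbidden three-element posets, is exactly how the classification is usually organized. The routine cases (1--3) go through as you describe. One point to watch in the main case: ``neither relation is transitive'' only gives you directly that the $2$-chain-plus-isolated-point embeds and that \emph{at least one} of the two forks embeds. You still need to argue that \emph{both} forks embed (and that the $3$-chain and $3$-antichain embed) before you can claim that all five three-element posets lie in $\Age(P)$. This bootstrap step is genuine and uses amalgamation over two-element subposets; it is not automatic from the failure of the two transitivities alone. Once all five three-element posets are present, the induction showing that $\Age(P)$ contains every finite poset is the expected amalgamation bookkeeping, and your plan to realize one-point extensions inside $P$ via ultrahomogeneity is the right mechanism.
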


The classification of all ultrahomogeneous graphs was given by Lachlan and
Woodrow in 1984 \cite{lachlan}.  This classification is a lot more difficult result than
Theorem \ref{schmerlthm}.  The reason is that graphs are very free structures
and the increased freedom leads to more possibilities on how an
ultrahomogeneous structure can be constructed. Given the complexity of
the arguments, the resulting statement is surprisingly simple.

\begin{thm}[Lachlan and Woodrow \cite{lachlan}]
\label{lach}
Every countable ultrahomogeneous undirected graph is isomorphic to one of the
following:
\begin{enumerate}
\item Finite cases:
\begin{enumerate}
\item 5-cycle,
\item the graph $L(K_{3,3})$ depicted in Figure \ref{homgraf1},
\item finitely many disjoint copies of a complete graph $K_r$,
\item complements of graphs listed in $(c)$.
\end{enumerate}
\item The disjoint union of $m$ complete graphs of size $n$, where $m,n\leq \omega$ and at least one of $m$ or $n$ is $\omega$.
\item Complements of graphs listed in 2.
\item The generic graph for the class of all countable graphs not containing $K_n$ for a given $n\geq 3$.
\item Complements of graphs listed in 4.
\item The Rado graph $\Rado$ (generic graph for the class of all countable
graphs).
\end{enumerate}
\end{thm}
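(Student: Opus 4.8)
The plan is to use the correspondence of Theorem \ref{fraissethm}: a countable ultrahomogeneous graph $G$ is determined up to isomorphism by its age $\Age(G)$, which is an amalgamation class of finite graphs. So the statement is equivalent to classifying all amalgamation classes of finite undirected graphs, and the whole problem becomes finite combinatorics of graphs and their amalgams. Two symmetries cut the work down. First, $G$ is ultrahomogeneous if and only if its complement $\overline{G}$ is, and complementation permutes the proposed list (it exchanges family 2 with 3, family 4 with 5, and 1(c) with 1(d), while fixing $C_5$, the self-complementary graph $L(K_{3,3})$ of Figure \ref{homgraf1}, and the Rado graph), so it suffices to treat each graph up to complementation. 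Second, I would split according to whether $G$ is \emph{primitive}, meaning both $G$ and $\overline{G}$ are connected, or \emph{imprimitive} otherwise.

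For the imprimitive case, suppose $G$ is disconnected (disconnected $\overline{G}$ follows by complementation). Ultrahomogeneity forces all connected components to be pairwise isomorphic and each component to be ultrahomogeneous, since any isomorphism between one-vertex substructures extends to an automorphism. The key is a \emph{common-neighbour} argument: if some component contained vertices $a,x$ at distance two, with common neighbour $w$, then the non-edge $\{a,x\}$ would extend to a path $P_3$; but a non-edge $\{a,c\}$ with $a,c$ in different components has no common neighbour and so cannot extend to $P_3$. Since $\{a,x\}$ and $\{a,c\}$ are isomorphic substructures, the automorphism sending $a\mapsto a,\ x\mapsto c$ would have to send $w$ to a common neighbour of $a$ and $c$, which does not exist. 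Hence no component has a pair at distance two, every component has diameter one, and $G$ is a disjoint union of equal-sized complete graphs. This yields families 2 and 1(c), and complementation yields 3 and 1(d).

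The primitive case is the heart of the theorem. If $\Age(G)$ is the class of \emph{all} finite graphs, then by the uniqueness of Lemma \ref{extuniv}(1) and Theorem \ref{fraissethm}(b) we get $G\cong\Rado$, giving case 6. Otherwise some finite graph is omitted, and the goal is to show that, up to complementation, $\Age(G)$ is a \emph{free} amalgamation class whose minimal forbidden induced subgraph is a clique. The enabling observation is that a graph $F$ can be written as a nontrivial free amalgam over a common subgraph precisely when $F$ is not complete: any non-adjacent pair can serve as the two sides with the remaining vertices as the overlap, and conversely a complete graph admits no such splitting. Thus the free-amalgam-irreducible graphs are exactly the complete graphs, so once one knows $\Age(G)$ is closed under free amalgamation, every minimal forbidden subgraph must be complete and $\Age(G)=\Forbi(\{K_n\})$ for some $n\geq 3$, giving the generic $K_n$-free graph (case 4), with complements giving case 5.

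The main obstacle is exactly the step just invoked: proving that a primitive ultrahomogeneous graph omitting some finite graph must, after possibly passing to the complement, have an age closed under free amalgamation. This is the genuinely hard combinatorial core of Lachlan and Woodrow's argument, where one rules out all ``intermediate'' constrained amalgamation classes and shows primitivity forces the amalgam to add \emph{no} edges between the glued sides (or dually \emph{all} edges); I expect it to require a delicate analysis of how a single vertex sees a finite configuration, propagated through ultrahomogeneity and the amalgamation property. Finally, the two sporadic finite graphs $C_5$ and $L(K_{3,3})$ sit outside the infinite framework and would be isolated by a separate finite analysis: among primitive finite ultrahomogeneous graphs one checks, using regularity and the local structure of neighbourhoods, that only these two occur. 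I would expect this finite verification together with the free-amalgamation dichotomy to absorb essentially all of the difficulty.
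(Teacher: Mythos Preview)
The paper does not prove this theorem. Theorem \ref{lach} is stated in Section \ref{classection} as a cited classification result due to Lachlan and Woodrow \cite{lachlan}, with the remark that ``this classification is a lot more difficult result than Theorem \ref{schmerlthm}'' and that ``given the complexity of the arguments, the resulting statement is surprisingly simple.'' No proof or proof sketch appears in the paper; the theorem is used only as background for the classification programme and for the discussion of Ramsey classes. So there is no paper proof to compare your proposal against.

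As for the proposal itself: what you have written is a reasonable high-level outline of the Lachlan--Woodrow strategy, and your treatment of the imprimitive case is correct and complete. But you yourself identify that the entire substance of the theorem lies in the step you have not carried out: showing that a primitive countable ultrahomogeneous graph which is not $\Rado$ has (up to complementation) an age closed under free amalgamation. Saying ``I expect it to require a delicate analysis'' is an honest acknowledgement that this is a gap, not a proof. The actual Lachlan--Woodrow argument for this step is long and intricate, proceeding through a case analysis on which small configurations (paths, claws, their complements) lie in the age, and it does not reduce to a single clean lemma. Your proposal is therefore a correct \emph{plan} but not a proof, and since the paper offers none either, there is simply nothing further to compare.
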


In 1987 Lachlan \cite{La1} continued this line of research with the classification of ultrahomogeneous
tournaments.

Recall that a {\em tournament} is an oriented graph obtained by assigning an orientation for each edge of an undirected complete graph.


Denote by $S(2)$ the following tournament. The vertices of $S(2)$ are all
rational numbers  $q$ with an odd denominator, $0\leq q<1$.  There is an edge
$(a,b)$ in $S(2)$ if and only if either $a<b<a+\frac 1 2$ or $a-1<b<a-\frac 1 2$.

\begin{figure}
\centerline{\includegraphics{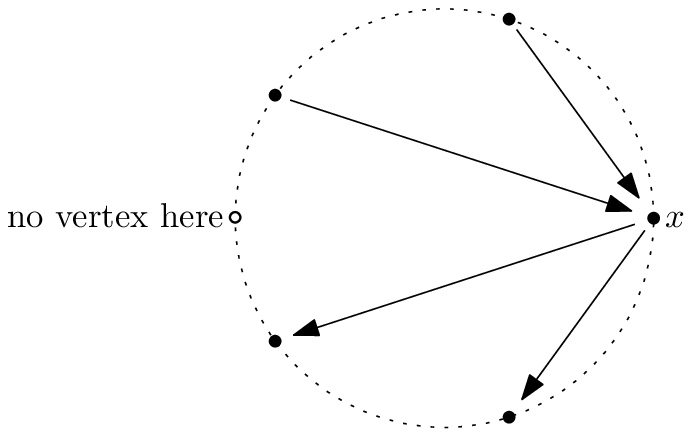}}
\caption{Relations among vertex $x$ in the dense local order $S(2)$.}
\label{s3d}
\end{figure}
Intuitively, the tournament $S(2)$ can be seen as a circle with edges forming
a dense countable set of chords.  The orientation is chosen in such a way that
shorter chords are oriented clockwise.  For this reason $S(2)$ is also called
the {\em dense local order} (see Figure~\ref{s3d}).  

\begin{thm}[Lachlan \cite{La1}]
\label{gentournaments}
Every countable ultrahomogeneous tournament is isomorphic to one of the
following:
\begin{enumerate}
\item Finite cases:
\begin{enumerate}
\item the singleton one-point tournament,
\item the oriented cycle of length 3, $\overrightarrow{C}_3$.
\end{enumerate}
\item $(\Q,\leq)$ (rationals with the usual ordering).
\item The dense local order $S(2)$.
\item The generic tournament for the class of all countable tournaments.
\end{enumerate}
\end{thm}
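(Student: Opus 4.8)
The plan is to analyze $T:=\relsys{U}$, a countable ultrahomogeneous tournament, through its age $\K=\Age(T)$, which by \Fraisse{}'s theorem (Theorem \ref{fraissethm}) is an amalgamation class of finite tournaments with $T$ its \Fraisse{} limit. The whole argument would organize around which small tournaments lie in $\K$, and the first and cleanest dividing line is whether the directed triangle $\overrightarrow{C}_3$ embeds into $T$. First I would dispose of the triangle-free case. A tournament with no directed $3$-cycle is transitive, hence (as a relational structure) a linear order. A finite linear order on at least two points is never ultrahomogeneous --- the partial isomorphism $1\mapsto 2$ of the chain $1<2<3$ does not extend --- so here $T$ is either the one-point tournament (case 1a) or infinite. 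In the infinite case, ultrahomogeneity forces density and the absence of endpoints exactly as observed earlier for $(\Q,\leq)$, and Cantor's theorem (equivalently Lemma \ref{extuniv}(1)) identifies $T$ with $(\Q,\leq)$, giving case 2.

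The substance lies in the case $\overrightarrow{C}_3\in\K$. Here the main tool I would exploit is that neighborhoods inherit homogeneity: for any vertex $v$, the out-neighborhood $O(v)$ and in-neighborhood $I(v)$, with their induced structure, are again countable ultrahomogeneous tournaments. Indeed, any partial isomorphism between subsets of $O(v)$ extends by $v\mapsto v$ to a partial isomorphism of $T$ (the edges to $v$ are automatically respected, since both domain and range lie in $O(v)$), which then extends to an automorphism fixing $v$ and hence stabilizing $O(v)$. This lets me split according to the local-order property, i.e.\ whether $O(v)$ and $I(v)$ are transitive for every $v$ --- equivalently whether $T$ omits the two $4$-vertex tournaments consisting of a vertex dominating, respectively dominated by, a directed triangle.

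If $T$ is a local order, then every $O(v)$ is a homogeneous transitive tournament, hence a point or $(\Q,\leq)$; a short analysis shows that a finite such $T$ containing a $3$-cycle must be $\overrightarrow{C}_3$ (case 1b), while an infinite one is pinned down, via the extension property and Lemma \ref{extuniv}, to the dense local order $S(2)$ (case 3). If $T$ is not a local order, the goal is to show that $\K$ is the class of all finite tournaments, so that $T$ is the generic tournament (case 4); since all finite tournaments do form an amalgamation class (the cross-edges between the two copies may be oriented arbitrarily), it suffices to prove that every finite tournament embeds into $T$.

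I expect this last step --- the non-local-order case --- to be the main obstacle. One must show that a single ``bad'' configuration (an $O(v)$ containing a directed triangle) bootstraps, via repeated amalgamation and the extension property, to the realization of every finite tournament, with no intermediate homogeneous tournament surviving between the local orders and the fully generic one. I would attack it with an extension-property argument: given a finite induced sub-tournament $\relsys{B}\subseteq T$ and a prescribed pattern of in/out edges from a new vertex, I would use the richness of the non-transitive neighborhood together with amalgamation in $\K$ to realize the one-point extension inside $T$, and then iterate. Verifying that the non-local-order hypothesis really supplies enough freedom to realize every such pattern --- thereby ruling out any sporadic homogeneous tournament, and likewise confirming that $S(2)$ is the \emph{only} infinite homogeneous local order carrying a $3$-cycle --- is the delicate combinatorial heart of Lachlan's theorem.
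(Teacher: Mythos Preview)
The paper does not prove Theorem~\ref{gentournaments}; it is stated in Section~\ref{classection} purely as a citation of Lachlan's classification result \cite{La1}, with no proof or sketch given. There is therefore nothing in the paper to compare your proposal against.

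That said, your outline is a faithful sketch of how Lachlan's argument actually proceeds: the dichotomy on $\overrightarrow{C}_3\in\K$, the reduction of the transitive case to Cantor's theorem, the use of neighborhood-inherited homogeneity to isolate the local-order case, and the identification of the non-local-order case as the generic tournament. You are also right that the last step is the substantive one. Your sketch stops short of the actual mechanism there: Lachlan's proof does not simply ``bootstrap'' from one bad configuration via generic amalgamation, but rather carries out a careful case analysis on the four tournaments on three vertices and the small tournaments on four and five vertices, showing step by step that once both a $3$-cycle dominating a point and a $3$-cycle dominated by a point occur, every finite tournament must appear in the age. Likewise, pinning down $S(2)$ in the infinite local-order case requires verifying a specific extension property (density plus a ``circular'' condition), not just invoking Lemma~\ref{extuniv} abstractly. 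So your plan is correct in shape, but the two places you flag as ``delicate'' are genuinely where all the work lies, and your proposal does not yet supply the combinatorial arguments needed to close them.
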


Finally, all ultrahomogeneous directed graphs were classified by Cherlin in 1998
\cite{Cherlin}.  This is a very complex result which we do not state in detail.
Undirected graphs, tournaments and partial orders are just special cases of directed graphs. 

It is important to notice that, unlike the previous cases, there are
uncountably many non-isomorphic ultrahomogeneous directed graphs. For every set $\F$ of finite tournaments (considered as directed graphs), the class
$\Forb(\F)$ has a generic directed graph. There are uncountably many choices of $\F$ which lead to different classes $\Forb(\F)$.

There are a number of other characterization theorems for special classes of
relational structures, but a full characterization of ultrahomogeneous structures
of more complex types than $\Delta=(2)$ seems out of reach of current
techniques.  We only make a simple generalization of the argument showing
that there are uncountably many directed graphs. 

\begin{defn}
The relational structure $\relsys{A}$ is {\em irreducible} if for every pair of vertices
$v_1,v_2$ of $\relsys{A}$ there is a tuple $\vec{v}\in \rel{A}{i}$ (for some $i$)
such that both $v_1$ and $v_2$ are in the tuple $\vec{v}$.
\end{defn}
In the other words, the structure $\relsys{A}$ is irreducible if it cannot
be constructed as a free amalgamation of two proper substructures.

It is easy to observe that for $\F$ a family of finite irreducible structures,
the age of the class $\Forbi(\F)$ is an amalgamation class (the class allows free amalgamations)
and thus there is always a generic structure for the class $\Forbi(\F)$.
The same holds for the class $\Forb(\F)$.


\subsection{Ultrahomogeneous structures and Ramsey theory}
\label{ramseysection}
Generic structures have been extensively studied in modern model theory (see
\cite{Hoges} or recent survey \cite{Macpherson}) and have applications to (the classification of) dynamical systems,
group theory and Ramsey theory.  We outline briefly the last connection
to also demonstrate the utility of the classification programme.

Given a set $X$ and a natural number $k$, denote by ${X}\choose{k}$ the set of all $k$-element
subsets of $X$.  The classical Ramsey theorem can be stated as follows:
\begin{thm}
\label{ramseythm}
For every choice $p,k,n$ of natural numbers there exists $N$ with the following property: If $X$ is a set of
size $N$ and ${X \choose p}=\mathcal A_1\cup \mathcal A_2\cup\ldots\cup \mathcal A_k$ is any partition of the set of $p$-subsets of $X$ then there exists $i,1\leq i\leq k$, and $Y\subseteq X$, $|Y|\geq n$, such that ${Y\choose p}\subset \mathcal A_i$.
\end{thm}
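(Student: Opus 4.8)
The plan is to prove Theorem \ref{ramseythm} by induction on $p$, carrying the number of colors $k$ and the target size $n$ as free parameters throughout, so that no separate reduction to the two-color case is needed. For the base case $p=1$ the statement is just the pigeonhole principle: if we partition a set $X$ of size $N$ into $k$ classes and $N \geq k(n-1)+1$, then some class contains at least $n$ elements, and any such class serves as $Y$.

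For the inductive step I would assume the theorem for $p-1$ (for all values of $k$ and $n$, so that a finite $N$ exists in each case) and deduce it for $p$. The heart of the argument is a construction that builds a sequence of elements $a_1, a_2, \ldots, a_m$ together with a nested chain $X = X_0 \supseteq X_1 \supseteq X_2 \supseteq \cdots$. Having formed $X_{i-1}$, I pick $a_i \in X_{i-1}$ and consider the coloring of the $(p-1)$-subsets of $X_{i-1}\setminus\{a_i\}$ defined by $c'(S) = c(S \cup \{a_i\})$, where $c$ is the given assignment of $p$-sets to the classes $\mathcal A_1, \ldots, \mathcal A_k$. By the induction hypothesis, provided $X_{i-1}$ is large enough, this induced coloring has a large monochromatic set $X_i \subseteq X_{i-1}\setminus\{a_i\}$; I record its color $\gamma_i \in \{1,\ldots,k\}$ and continue. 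The crucial property this secures is that for indices $j_1 < j_2 < \cdots < j_p$ the later elements $a_{j_2},\ldots,a_{j_p}$ all lie in $X_{j_1}$, so $\{a_{j_1},\ldots,a_{j_p}\} \in \mathcal A_{\gamma_{j_1}}$; that is, the color of a $p$-set drawn from the $a_j$ is determined by its least-indexed element.

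Once the sequence has length $m$, I apply pigeonhole to the colors $\gamma_1, \ldots, \gamma_m$: if $m \geq k(n-1)+1$ then at least $n$ of them coincide, say they all equal $\gamma$. Taking $Y$ to be the corresponding elements, every $p$-subset of $Y$ has color equal to the $\gamma_j$ of its least element, namely $\gamma$, so ${Y \choose p} \subseteq \mathcal A_\gamma$ and $|Y| \geq n$, as required.

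The remaining work, and the point requiring the most care, is the bookkeeping on sizes that guarantees the construction runs for all $m$ steps. I would fix the required sizes backwards: demand that $X_{m-1}$ be nonempty (so $a_m$ can be chosen) and that $X_i$ have size $s_i$, where $s_{i-1}$ is set to be one more than the finite Ramsey number for $p-1$, $k$ colors, and target size $s_i$ (the extra $+1$ accounting for the removed element $a_i$). Taking $N = s_0$ then makes every step feasible, and $N$ is finite precisely because each $(p-1)$-dimensional Ramsey number is finite by the induction hypothesis. The main obstacle is therefore not a single clever idea but the correct organization of this double recursion: tracking how the sizes propagate down the chain and verifying that the least-element color property holds uniformly across all the nested sets.
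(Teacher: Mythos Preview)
Your argument is the standard Erd\H{o}s--Szekeres proof of the finite Ramsey theorem and is correct: the induction on $p$ with the pigeonhole base case, the nested-chain construction producing a sequence whose $p$-sets are colored by the least index, and the backward recursion on the required sizes are all laid out accurately. Note, however, that the paper does not actually prove Theorem~\ref{ramseythm}; it is quoted as the classical Ramsey theorem and used only as background motivation for the notion of a Ramsey class, so there is no ``paper's own proof'' to compare against.
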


Variants of the Ramsey theorem exist for different kinds of structure.
Consider for example the formulation for finite vector spaces \cite{GLR}.
\begin{thm}
For every finite field $\F$ and for every choice $p,k,n$ of natural numbers there exists $N$ with the following property:
If $\relsys{A}$, $\relsys{B}$, $\relsys{C}$ are vector spaces of dimensions $p,n$ and $N$ respectively and if ${\relsys{C}\choose \relsys{A}}=\mathcal A_1\cup \mathcal A_2\cup\ldots\cup \mathcal A_k$
is any partition of the set $\relsys{C}\choose \relsys{A}$ of all $p$-dimensional vector subspaces of $\relsys{C}$ then there exists $i,1\leq i\leq k$, and a subspace $\relsys{B}'$
of $\relsys{C}$, $\mathrm{dim} \relsys{B}'=\mathrm{dim} \relsys{B}=n$, such that ${\relsys{B}'\choose \relsys{A}}\subset \mathcal A_i$.
\end{thm}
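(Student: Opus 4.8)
The plan is to recognise this as the Graham--Leeb--Rothschild theorem \cite{GLR} and to prove it by induction on $p$, the engine of the induction being a Hales--Jewett-type pigeonhole principle for the field $\F$. Throughout I fix $q=|\F|$ and identify $\relsys{C}$ with the coordinate space $\F^N$, so that ${\relsys{C}\choose \relsys{A}}$ becomes the set of $p$-dimensional linear subspaces of $\F^N$. The base case $p=0$ is trivial: there is a single $0$-dimensional subspace, so any $N\geq n$ works. The content lies in the inductive step, which I would organise exactly like the ``stepping up'' used in the usual proof of Theorem \ref{ramseythm}, but with lines and subspaces in place of points and sets.

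First I would set up an encoding of subspaces by \emph{parameter words}. A $d$-dimensional subspace of $\F^N$ can be described by distinguishing $d$ coordinate positions as ``free'' and prescribing every remaining coordinate as a fixed $\F$-linear combination of the free ones; substituting field values for the free coordinates sweeps out the subspace. In this language, passing from a larger subspace to a $p$-dimensional subspace inside it corresponds to a substitution (freezing some free coordinates into linear functions of the others), so the containment relation ${\relsys{B}'\choose\relsys{A}}\subseteq {\relsys{C}\choose \relsys{A}}$ becomes the natural substructure relation on parameter words. This is the translation that makes a combinatorial pigeonhole available.

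The core step is a single-level Ramsey statement: for any $k$-colouring of the $p$-subspaces of $\F^M$ with $M$ large enough, there is a $(p+1)$-subspace all of whose $p$-subspaces sharing the newly distinguished direction receive one colour. This is precisely the linear form of the Hales--Jewett / Graham--Rothschild parameter-set theorem, proved by iterating a one-coordinate pigeonhole over the rapidly growing index set; the point is that the relevant family of parameter words does not degenerate under substitution, so a colour class can be forced along a whole new free direction. Given this, the inductive step combines it with the hypothesis for dimension $p-1$ in a product-Ramsey fashion: decompose the choice of a $p$-subspace of $\relsys{C}$ as a choice of a line $L$ (a direction) together with a transverse $(p-1)$-subspace $W$ with $L\cap W=0$, use the inductive hypothesis to make the ``transverse'' data monochromatic on a large subspace, and then apply the single-level step to fix the direction. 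Iterating absorbs all $p$ dimensions and produces the desired $n$-dimensional $\relsys{B}'$.

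The main obstacle is the core step, that is, the Hales--Jewett machinery itself, together with the bookkeeping that certifies that the monochromatic combinatorial object is a genuine \emph{linear} subspace $\relsys{B}'$ of $\relsys{C}$ rather than merely an affine or set-theoretic configuration. Getting the substitution calculus for parameter words to respect $\F$-linearity, and tracking how the required length $N$ inflates at each of the $p$ iterations (a tower-type growth, though only its finiteness matters here), is where essentially all the work lies; once that is in place the induction on $p$ is routine.
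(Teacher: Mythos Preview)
The paper does not prove this theorem. It is stated in Section~\ref{ramseysection} with a citation to Graham--Leeb--Rothschild \cite{GLR} and used purely as an illustration: the point of quoting it is that its statement has the same shape as the classical Ramsey theorem (Theorem~\ref{ramseythm}) and the Ne\v set\v ril--R\"odl theorem for ordered relational structures, motivating the abstract notion of a Ramsey class. No proof, sketch, or argument is given in the paper.

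Your outline is a reasonable high-level description of how the Graham--Leeb--Rothschild theorem is actually proved, via the Hales--Jewett / Graham--Rothschild parameter-set machinery. One point to be careful about: your inductive step proposes to ``decompose the choice of a $p$-subspace of $\relsys{C}$ as a choice of a line $L$ together with a transverse $(p-1)$-subspace $W$ with $L\cap W=0$'', but this decomposition is highly non-unique and does not straightforwardly reduce the $p$-subspace colouring to a product colouring of lines and $(p-1)$-subspaces. The standard argument does not work this way; rather, one proves the full Graham--Rothschild theorem for $n$-parameter sets over the alphabet $\F$ (with the linear substitution monoid), and the vector-space statement drops out directly because $p$-parameter words in that sense correspond to $p$-dimensional subspaces. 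The double induction is on the pair $(p,n)$, not a clean reduction of $p$ to $p-1$ via a line-plus-complement splitting. Your identification of the ``main obstacle'' is correct, but the bookkeeping you describe for the inductive step would not go through as written.
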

The formulation is strikingly similar to the formulation of Theorem
\ref{ramseythm}.  This is the case for the variant of the theorem for linearly
ordered relational structures too.
  We say that a structure $\relsys{A}$ is
{\em linearly ordered} if it is endowed with an additional linear order on
$A$.
\begin{thm}[Ne\v set\v ril, R\"odl \cite{NR1}]
For every choice of natural number $k$, of type $\Delta$, and of linearly ordered structures $\relsys{A},\relsys{B}\in \Rel(\Delta)$, there exists a structure $\relsys{C}\in \Rel(\Delta)$ with the following property: For every partition ${\relsys{C}\choose \relsys{A}}=\mathcal A_1\cup \mathcal A_2\cup\ldots\cup \mathcal A_k$ there exists $i,1\leq i\leq k$, and a substructure $\relsys{B}'\in {\relsys{C}\choose \relsys{B}}$ such that ${\relsys{B}'\choose \relsys{A}}\subset \mathcal A_i$.
\end{thm}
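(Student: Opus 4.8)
Write $\relsys{C}\longrightarrow(\relsys{B})^{\relsys{A}}_{k}$ as shorthand for the desired conclusion: every partition $\binom{\relsys{C}}{\relsys{A}}=\mathcal{A}_1\cup\dots\cup\mathcal{A}_k$ admits $\relsys{B}'\in\binom{\relsys{C}}{\relsys{B}}$ with $\binom{\relsys{B}'}{\relsys{A}}$ contained in a single $\mathcal{A}_i$. Put $a=|A|$, $b=|B|$. The goal is to produce such a $\relsys{C}$ inside the class $\Rel(\Delta)$ of finite \emph{linearly ordered} $\Delta$-structures; note that for $\Delta=\emptyset$ this is exactly Theorem \ref{ramseythm}, which I would use both as the base case and as an internal ingredient. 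The linear order is essential: it makes every member of $\Rel(\Delta)$ rigid, so that between isomorphic ordered structures the isomorphism is unique and individual copies of $\relsys{A}$ and $\relsys{B}$ can be tracked unambiguously through repeated gluings. It also keeps the construction inside $\Rel(\Delta)$, which is an amalgamation class: one glues over a common induced substructure, introduces no new tuples across the glued parts beyond the order, and extends the two orders to a common linear order.

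The plan is the \emph{partite construction}. Fix $b$ and work with \emph{$b$-partite systems}: ordered $\Delta$-structures whose vertices are split into $b$ consecutive linearly ordered parts so that every tuple is \emph{transversal} (meets each part at most once); a copy of $\relsys{B}$ is transversal if it meets each part exactly once. The engine is the \textbf{Partite Lemma}: for transversal $\relsys{A}$ (one vertex per part) and any $b$-partite system $\relsys{P}$ there is a $b$-partite system $\relsys{Q}$ with $\relsys{Q}\longrightarrow(\relsys{P})^{\relsys{A}}_{k}$ when only transversal copies are coloured. I would prove it by indexing the transversal copies of $\relsys{A}$ lying over a fixed $a$-tuple of parts by tuples in the Cartesian product of those parts and applying the Hales--Jewett theorem (the finite Ramsey theorem, Theorem \ref{ramseythm}, being the empty-type special case): a monochromatic combinatorial subspace translates back into a monochromatic transversal copy of $\relsys{P}$. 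This is the one genuinely hard combinatorial input, and where I expect most of the work to sit.

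With the Lemma available I would run the construction. Let $\relsys{P}_0$ be the $b$-partite blow-up of $\relsys{B}$, one part per vertex of $\relsys{B}$ and the order copied from $\relsys{B}$; since any copy of $\relsys{A}$ inside a copy of $\relsys{B}$ uses distinct vertices, hence distinct parts, all copies of $\relsys{A}$ we must track are transversal, and any transversal copy of $\relsys{B}$ has all of its $\relsys{A}$-subcopies transversal as well, so controlling transversal copies suffices even though the theorem colours every copy. Enumerate the relevant copies $\relsys{A}_1,\dots,\relsys{A}_N$ in $\relsys{P}_0$ and build $\relsys{P}_0,\relsys{P}_1,\dots,\relsys{P}_N$, each carrying a projection onto $\relsys{P}_0$; at step $j$ I would amalgamate many copies of $\relsys{P}_{j-1}$ along the ambient structure of $\relsys{A}_j$ and invoke the Partite Lemma so that in $\relsys{P}_j$ any colouring restricts, on some copy of $\relsys{P}_{j-1}$, to one in which the colour of the $j$-th copy of $\relsys{A}$ depends only on its projected position. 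The delicate point, and the second main obstacle, is the invariant: the step-$j$ amalgamation must \emph{preserve} the control already won for $\relsys{A}_1,\dots,\relsys{A}_{j-1}$, which is exactly what the projection together with rigidity of ordered structures is there to guarantee. After $N$ steps every colouring of $\relsys{P}_N$ reduces, on some copy of $\relsys{P}_0$, to a colouring factoring through the projection, i.e.\ to a $k$-colouring of $\binom{\relsys{B}}{\relsys{A}}$; choosing $\relsys{P}_0$ large enough at the outset via Theorem \ref{ramseythm} lets this reduced colouring be made constant on a transversal, hence genuine, copy of $\relsys{B}$. Finally I would forget the partition of $\relsys{P}_N$ and take $\relsys{C}$ to be the underlying ordered structure, which lies in $\Rel(\Delta)$ and satisfies $\relsys{C}\longrightarrow(\relsys{B})^{\relsys{A}}_{k}$.
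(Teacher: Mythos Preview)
The paper does not contain a proof of this theorem: it is quoted as a result of Ne\v set\v ril and R\"odl with a citation to \cite{NR1}, and the surrounding discussion only uses it to motivate the notion of a Ramsey class. There is therefore no ``paper's own proof'' to compare against.

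That said, your plan is the right one and matches the original approach. The partite construction with a Partite Lemma powered by Hales--Jewett, followed by an amalgamation sweep over the finitely many transversal positions of $\relsys{A}$ in the picture $\relsys{P}_0$, is exactly the method of \cite{NR1,NRo,NR7} (all cited in this thesis). Your identification of the two genuine difficulties --- proving the Partite Lemma and maintaining the invariant that earlier steps are not destroyed by later amalgamations --- is accurate, and the use of rigidity of linearly ordered structures to make the bookkeeping work is the standard and correct device. One small wording issue: $\relsys{P}_0$ is not ``large enough via Theorem~\ref{ramseythm}'' in the sense of containing many copies of $\relsys{B}$ already; rather, the classical Ramsey theorem is applied to the \emph{projected} colouring of $\binom{[b]}{[a]}$ at the end, and $\relsys{P}_0$ need only be a single $b$-partite copy of $\relsys{B}$ (or more precisely a picture built from one). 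This does not affect the correctness of the overall scheme.
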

In this case we mean by ${\relsys{B}\choose \relsys{A}}$ the class of all substructures $\relsys{A}'$ of $\relsys{B}$ which are isomorphic to $\relsys{A}$.

The similarity between formulations of different variants of Ramsey's theorem
motivates the following notion of a Ramsey class (see e.g. \cite{N3,N4}).
%

Let $\cal K$ be a class of objects which is isomorphism-closed and endowed with subobjects.
Given two objects $\relsys A, \relsys B \in {\cal K}$ we denote by ${\relsys B}\choose{\relsys A}$ the set of all subobjects $\relsys A'$ of $\relsys B$
which are isomorphic to $\relsys A$. We say that the class $\cal K$ has the {\it $\relsys A$-Ramsey property} if the following statement holds:
For every positive integer $k$ and for every $\relsys B \in {\cal K}$ there exists $\relsys C \in {\cal K}$
such that $\relsys C \longrightarrow (B)_k^\relsys A$. 
Here the last symbol ({\em Erd\H os--Rado partition arrow}) has the following meaning:
For every partition ${{\relsys C}\choose{\relsys A}} = {\cal A}_1 \cup {\cal A}_2 \cup\ldots \cup{\cal A}_k$
there exists $\relsys B' \in {{\relsys C}\choose{\relsys B}}$ and an $i, 1 \leq i \leq k$ such that ${{\relsys B'}\choose{\relsys A}} \subset {\cal A}_i$.

In the extremal case that a class $\cal K$ has the $\relsys A$-Ramsey property for every one of its objects $\relsys A$ we say that
$\cal K$ is a {\it Ramsey class}.
The notion of a Ramsey class is highly structured and in a sense it is the top
of the line of the Ramsey notions (``one can partition everything in  the any number
of classes to get anything ultrahomogeneous'', see also  \cite{N3,N4}).
Consequently there are not many (essentially different) examples of Ramsey
classes known. 

The key connection for us is the following result that relates two seemingly
unrelated things: Ramsey classes and ultrahomogeneous structures. 

%


\begin{thm}[Ne\v set\v ril \cite{HN-Posets}]
\label{ramseyhomo}
Let ${\cal K}$ be a Ramsey class (with ordered embeddings as subobjects) 
which is hereditary, isomorphism-closed  and with the joint embedding property.
Then ${\cal K}$ is the age of a generic (ultrahomogeneous and universal) structure.
\end{thm}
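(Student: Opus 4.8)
The plan is to show that the stated hypotheses force $\K$ to be an amalgamation class in the sense of Definition~\ref{amalgamationclassdef}, and then simply to invoke \Fraisse{}'s theorem (Theorem~\ref{fraissethm}) together with Lemma~\ref{extuniv} to produce the generic structure. Three of the four conditions of an amalgamation class come for free: $\K$ is hereditary and isomorphism-closed by assumption, and since we work with finite structures of a fixed type there are only countably many of them up to isomorphism. The entire content of the proof therefore lies in deriving the amalgamation property from the Ramsey property and the joint embedding property; this is the step I expect to be the main obstacle.

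To amalgamate $\relsys{A}$ and $\relsys{B}$ over a common substructure $\relsys{C}$, with embeddings $\alpha\colon\relsys{C}\to\relsys{A}$ and $\beta\colon\relsys{C}\to\relsys{B}$, I would first use the joint embedding property to obtain a single $\relsys{E}\in\K$ into which both $\relsys{A}$ and $\relsys{B}$ embed. Next I would apply the Ramsey property to the pair $\relsys{C}\le\relsys{E}$ to get $\relsys{D}\in\K$ with $\relsys{D}\longrightarrow(\relsys{E})^{\relsys{C}}_2$. The key idea is the choice of colouring: colour a copy of $\relsys{C}$ inside $\relsys{D}$ \emph{red} if it is the $\relsys{C}$-part of some copy of $\relsys{A}$ in $\relsys{D}$, and \emph{blue} otherwise. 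Let $\relsys{E}'$ be a copy of $\relsys{E}$ in $\relsys{D}$ whose copies of $\relsys{C}$ are all one colour. Since $\relsys{E}'$ contains a copy of $\relsys{A}$, the $\relsys{C}$-part of that copy is red, so the monochromatic colour must be red. But $\relsys{E}'$ also contains a copy $\relsys{B}''$ of $\relsys{B}$; its $\relsys{C}$-part is therefore red as well, hence extends to a copy $\relsys{A}''$ of $\relsys{A}$ somewhere in $\relsys{D}$. The substructure of $\relsys{D}$ induced on $\relsys{A}''\cup\relsys{B}''$ contains $\relsys{A}$ and $\relsys{B}$ glued along their shared copy of $\relsys{C}$, and it lies in $\K$ by heredity, giving the desired amalgam.

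The delicate point, and the place where the hypothesis that subobjects are \emph{ordered} embeddings is used, is verifying that this really yields an amalgam in the sense of Definition~\ref{amalgamationclassdef}, i.e.\ that the two embeddings into the amalgam satisfy $\gamma\circ\alpha=\delta\circ\beta$. Because ordered structures are rigid, a copy of $\relsys{C}$ determines a unique isomorphism onto it, so the restriction to $\relsys{C}$ of the embedding realising $\relsys{A}''$ and of the embedding realising $\relsys{B}''$ are forced to coincide on the common image; without this rigidity the two identifications of $\relsys{C}$ could disagree and the gluing would fail. Once the amalgamation property is established, $\K$ is an amalgamation class, so by Theorem~\ref{fraissethm} there is a unique countable ultrahomogeneous $\relsys{G}$ with $\Age(\relsys{G})=\K$, and by Lemma~\ref{extuniv} this $\relsys{G}$ is universal for all countable structures younger than itself; that is, $\relsys{G}$ is generic for $\K$, as required.
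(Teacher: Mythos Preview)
Your argument is correct and is precisely the standard proof of this result (originally due to Ne\v set\v ril, see \cite{N3}). Note, however, that the present paper does not actually supply a proof of Theorem~\ref{ramseyhomo}: the theorem is quoted in the introductory chapter with a citation, and no argument is given in the text. So there is no ``paper's own proof'' to compare against; your proposal simply fills in the missing proof with the intended one.

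One small remark on presentation: when you define the colouring, it is worth making explicit that ``the $\relsys{C}$-part of a copy of $\relsys{A}$'' means the image of the \emph{fixed} embedding $\alpha$ under the embedding realising that copy, and likewise that the copy of $\relsys{C}$ you locate inside $\relsys{B}''$ is the image of $\beta$. You do address this in your final paragraph via rigidity, but stating it at the moment the colouring is introduced makes the verification of $\gamma\circ\alpha=\delta\circ\beta$ transparent rather than something to be patched afterwards.
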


This allows one to use known results about ultrahomogeneous structures (in the
cases when their classification programme have been completed) and to check whether
the corresponding classes (i.e., their ages) are Ramsey. This classification
programme of Ramsey classes was proposed by Ne\v set\v ril \cite{N3}.

The ultrahomogeneous structures listed in Section \ref{classection} were all
examined one by one and it was either proved or disproved that their age forms
an Ramsey class.  Proving the fact that a given age of an ultrahomogeneous structure
is a Ramsey class is often a difficult task (using ad hoc techniques) and thus
we omit the details.  See \cite{N3} for a full survey of these results
from which we give only a compact summary.

For a class $\K$, denote by $\K_c$ the class of all complements of graphs in $\K$. For an undirected
graph we get the following result.
\begin{thm}[Ne\v set\v ril \cite{N1}]
The following are all Ramsey classes of (undirected) graphs:
\begin{enumerate}
\item The class $\{K_1\}$.
\item The class of all complete graphs.
\item The class of all (linearly ordered) disjoint unions of complete graphs.
(With the complete graphs forming intervals of the linear order.)
\item The classes of all (linearly ordered) finite graphs not containing $K_n$
for a given $n\geq
3$.
\item The class $\mathcal K_c$ for each of the above classes.
\item The class of all (linearly ordered) finite graphs.
\end{enumerate}
\end{thm}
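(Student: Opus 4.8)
The statement bundles a classification (the list exhausts the Ramsey classes) with a verification (each listed class really is Ramsey), so the plan is to treat these two directions separately and, within each, to march through the Lachlan--Woodrow list (Theorem \ref{lach}). The positive direction rests on three inputs already at hand: the classical Ramsey theorem (Theorem \ref{ramseythm}), the Ne\v set\v ril--R\"odl theorem for linearly ordered relational structures (\cite{NR1}), and the \emph{partite construction}. The parenthetical ``(linearly ordered)'' records that the objects generally carry a fixed linear order supplying the rigidity a Ramsey class needs, and ${\relsys{B}\choose\relsys{A}}$ then denotes order-preserving copies.

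First I would dispatch the easy positive cases. The class $\{K_1\}$ is Ramsey vacuously. For the class of all complete graphs the copies of $K_p$ inside $K_N$ are simply the $p$-element subsets (the order is immaterial here, which is why no order is listed), so the arrow $\relsys{C}\longrightarrow(\relsys{B})^{\relsys{A}}_k$ is literally the classical Ramsey theorem (Theorem \ref{ramseythm}). Item~6, the class of all linearly ordered finite graphs, is the Ne\v set\v ril--R\"odl theorem (\cite{NR1}) specialised to type $\Delta=(2)$. The complementary classes $\K_c$ of item~5 then require no new work: complementation is a bijection on ordered graphs that fixes vertex sets and induces a bijection between order-preserving copies of $\relsys{A}$ in $\relsys{C}$ and of $\overline{\relsys{A}}$ in $\overline{\relsys{C}}$, so a witness $\relsys{C}$ for $(\relsys{A},\relsys{B})$ yields the witness $\overline{\relsys{C}}$ for $(\overline{\relsys{A}},\overline{\relsys{B}})$ and the arrow transfers verbatim.

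The genuine obstacle is item~4, the $K_n$-free ordered graphs. The witness produced by the unrestricted Ne\v set\v ril--R\"odl theorem need not itself be $K_n$-free, hence need not lie in the class. The tool I would use is the \emph{partite construction} of Ne\v set\v ril and R\"odl: one first proves a partite lemma yielding Ramsey witnesses among multipartite order-respecting graphs, and then iterates amalgamations, guided by a ``picture'', to assemble a witness inside $\Forbi(\{K_n\})$. What makes this succeed is that $K_n$ is \emph{irreducible}, so the free amalgamations used along the way never create a fresh copy of $K_n$ --- exactly the phenomenon recorded earlier for $\Forbi(\F)$ with $\F$ irreducible. Verifying that the construction stays inside the class is the technical heart of the theorem. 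Item~3 is close in spirit but mildly different: the class of ordered disjoint unions of complete graphs with the cliques as intervals is the Ramsey class of ordered equivalence relations with interval blocks, and since its forbidden configuration (the induced path on three vertices) is \emph{reducible}, I would argue not by free amalgamation but by a two-level product argument --- apply the classical Ramsey theorem inside the blocks and again across the block indices (or, uniformly, run the partite construction in this setting).

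For the exhaustiveness direction I would argue as follows. Assuming, as is standard for the candidate classes, that they are hereditary, isomorphism-closed and have the joint embedding property, Theorem \ref{ramseyhomo} shows that a Ramsey class is the age of a generic structure; forgetting the order, its underlying unordered class is then the age of an ultrahomogeneous graph, hence one of the finitely many families enumerated in Theorem \ref{lach}. It remains to eliminate the families absent from the present list --- the $5$-cycle, the graph $L(K_{3,3})$, and the primitive homogeneous graphs in their order-free guise --- by exhibiting for each a small object and a colouring that no large object can absorb. The uniform reason for failure is a lack of rigidity: without the clique/co-clique or linear-order scaffolding one recolours copies by a nontrivial automorphism and defeats any candidate witness. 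I expect this case analysis, rather than any single hard lemma, to be the fiddly part of the converse, and I would lean on \cite{N3} for the per-family bookkeeping.
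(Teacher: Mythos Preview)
The paper does not prove this theorem at all: it is stated as a cited result of Ne\v set\v ril \cite{N1}, and the surrounding text explicitly says ``Proving the fact that a given age of an ultrahomogeneous structure is a Ramsey class is often a difficult task (using ad hoc techniques) and thus we omit the details. See \cite{N3} for a full survey of these results from which we give only a compact summary.'' So there is no proof in the paper to compare your proposal against.

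That said, your outline is broadly in line with how the result is actually proved in the literature: the positive cases do rest on classical Ramsey, the Ne\v set\v ril--R\"odl theorem, and the partite construction (with irreducibility of $K_n$ being exactly the reason free amalgamation preserves the $K_n$-free property), and the exhaustiveness direction does go through Theorem~\ref{ramseyhomo} followed by the Lachlan--Woodrow classification. One clarification: the theorem as stated is a list of Ramsey classes, not quite a full ``if and only if'' classification in the paper's phrasing, so the converse direction you sketch is somewhat more than the statement strictly asks for here.
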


All these classes are ages of the ultrahomogeneous graphs listed in Theorem
\ref{lach}.  The converse does not hold.  Not every age of an ultrahomogeneous graph produces a Ramsey class.  In particular, the only
finite case is the singleton graph. In the infinite case disjoint unions of
finitely many complete graphs fail to be Ramsey.  See \cite{N1} for details.

For the case of ordered tournaments we get:
\begin{thm}[Ne\v set\v ril \cite{N3}]
The following are Ramsey classes of ordered tournaments:
\begin{enumerate}
\item The class $\{K_1\}$.
\item The class of all linear orders (transitive tournaments).
\item The class of all (linearly ordered) tournaments.
\end{enumerate}
\end{thm}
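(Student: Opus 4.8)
The plan is to dispatch the three classes separately, the third carrying essentially all of the content. For $\{K_1\}$ nothing is needed: the only object is the one-point tournament, so $\relsys{A}=\relsys{B}=K_1$, the set ${\relsys{C}\choose \relsys{A}}$ with $\relsys{C}=K_1$ has a single element, and any partition into $k$ parts leaves it monochromatic; thus $K_1\longrightarrow (K_1)^{K_1}_k$. For the class of linear orders (transitive tournaments), a copy of a linear order $\relsys{A}$ of size $p$ inside a linear order $\relsys{C}$ of size $N$ is exactly a $p$-element subset carrying the induced order, so ${\relsys{C}\choose \relsys{A}}$ is identified with the set of $p$-subsets of an $N$-set and the arrow $\relsys{C}\longrightarrow (\relsys{B})^{\relsys{A}}_k$ becomes precisely the classical Ramsey theorem (Theorem~\ref{ramseythm}); taking $\relsys{C}$ to be the linear order on the corresponding Ramsey number $N$ gives a witness inside the class.

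For the class of all linearly ordered tournaments I would reduce to the Ne\v set\v ril--R\"odl theorem~\cite{NR1} by an encoding that turns the completeness of a tournament into the free choice of a single binary relation. Encode each ordered tournament on $(A,\leq)$ as the ordered relational structure $\relsys{A}^{*}=(A,\leq,R_{\relsys{A}})$ of type $(2)$, where for $u<v$ one puts $(u,v)\in R_{\relsys{A}}$ exactly when the tournament edge runs $u\to v$. Ordered-tournament embeddings $\relsys{A}\to\relsys{B}$ then correspond, on the same underlying injections, to embeddings $\relsys{A}^{*}\to\relsys{B}^{*}$ of ordered structures, and by construction $R_{\relsys{A}}$ always lies below the diagonal, i.e. $R_{\relsys{A}}\subseteq\{(u,v):u<v\}$.

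Given $\relsys{A},\relsys{B}$ and $k$, I would apply the Ne\v set\v ril--R\"odl theorem to $\relsys{A}^{*},\relsys{B}^{*}$ to obtain an ordered structure $\relsys{C}_{0}=(C_{0},\leq_{0},R_{0})$ with $\relsys{C}_{0}\longrightarrow(\relsys{B}^{*})^{\relsys{A}^{*}}_{k}$, and then let $\relsys{C}$ be the ordered tournament encoded by the below-diagonal part $R_{0}\cap\{(u,v):u<_{0}v\}$ (discarding the rest of $R_{0}$). Colour the tournament-copies of $\relsys{A}$ in $\relsys{C}$ with $k$ colours. Every embedding $\relsys{A}^{*}\to\relsys{C}_{0}$ is in particular such a copy, so restricting the colouring to ${\relsys{C}_{0}\choose \relsys{A}^{*}}$ and invoking the arrow produces a copy $\relsys{B}_{0}$ of $\relsys{B}^{*}$ in $\relsys{C}_{0}$ all of whose sub-copies of $\relsys{A}^{*}$ share one colour. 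Since $\relsys{B}_{0}\cong\relsys{B}^{*}$ and $R_{\relsys{B}}$ is below the diagonal, $R_{0}$ has no above-diagonal edge inside $\relsys{B}_{0}$; hence on the vertex set of $\relsys{B}_{0}$ the tournament $\relsys{C}$ restricts exactly to the tournament encoded by $\relsys{B}_{0}$, and there the tournament-copies of $\relsys{A}$ coincide with the $(2)$-copies of $\relsys{A}^{*}$. Thus $\relsys{B}_{0}$ is a monochromatic copy of $\relsys{B}$, and as $\relsys{C}$ is a tournament the witness lies in the class.

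The hard part, and the only place where the completeness of tournaments genuinely bites (it is exactly what forces strong rather than free amalgamation and thereby blocks a naive forbidden-substructure argument), is the passage from $\relsys{C}_{0}$ to $\relsys{C}$: deleting the above-diagonal edges of $R_{0}$ creates new tournament-copies of $\relsys{A}$ that are not embeddings into $\relsys{C}_{0}$ and so are not controlled by the Ramsey arrow. I expect to spend most of the effort verifying rigorously that such spurious copies cannot occur inside the homogeneous block $\relsys{B}_{0}$, where $R_{0}$ is purely below-diagonal, together with checking that the starred encoding is genuinely functorial on embeddings; the remaining steps are routine bookkeeping.
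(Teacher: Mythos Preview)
The paper does not prove this theorem; it is quoted from \cite{N3} and the surrounding text explicitly says ``we omit the details.'' So there is nothing in the paper to compare your argument against, and I evaluate it on its own merits.

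Your argument is correct. Cases 1 and 2 are exactly as trivial as you say. For case 3 your encoding $\relsys{A}\mapsto\relsys{A}^{*}$ is the standard trick: it sets up a bijection between ordered tournaments and ordered $(2)$-structures whose binary relation lies strictly below the diagonal, and this bijection respects embeddings. Applying the Ne\v set\v ril--R\"odl theorem to $\relsys{A}^{*},\relsys{B}^{*}$ and then decoding the below-diagonal part of the resulting $\relsys{C}_0$ is the right move.

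Your last paragraph, however, overstates the remaining work: you have already dispatched the ``spurious copies'' issue in the paragraph before it. A tournament-copy $f$ of $\relsys{A}$ in $\relsys{C}$ fails to be an embedding $\relsys{A}^{*}\to\relsys{C}_0$ only when some above-diagonal $R_0$-edge lies in its image (on below-diagonal pairs the tournament relation and $R_0$ agree by construction, so the only way the full embedding condition $(v,u)\in R_{\relsys{A}}\Leftrightarrow(f(v),f(u))\in R_0$ can break is via an above-diagonal $R_0$-edge). Since $\relsys{B}_0\cong\relsys{B}^{*}$ and $R_{\relsys{B}}$ is below-diagonal, there are no such edges inside $\relsys{B}_0$, and hence every tournament-copy of $\relsys{A}$ landing in $\relsys{B}_0$ is already an $\relsys{A}^{*}$-embedding into $\relsys{C}_0$ and is controlled by the colouring. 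That is a two-line check, not a hard step. The functoriality of $(-)^{*}$ on embeddings is equally immediate for the same reason: matching the below-diagonal pairs forces matching of the (empty) above-diagonal pairs. So what you flagged as ``most of the effort'' is in fact routine, and your proposal is a complete proof sketch.
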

Comparing this with Theorem \ref{schmerlthm}, we see that the ultrahomogeneous
tournament $S(2)$ and oriented cycle $\overrightarrow{C}_3$ fail to produce a Ramsey class. 

For partial orders we have:
\begin{thm}[Ne\v set\v ril \cite{N3}]
The following are Ramsey classes of partially ordered sets:
\begin{enumerate}
\item $\{K_1\}$ ($K_1$ here means the singleton partially ordered set.)
\item The class of all finite linear tournaments.
\item The class of all chain-sums of finite antichains.
\item The class of all ordered antichains.
\item The class of all ordered finite partially ordered sets.
\end{enumerate}
\end{thm}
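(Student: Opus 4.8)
The plan is to treat the five classes one at a time, since a subclass of a Ramsey class need not itself be Ramsey (the witness $\relsys{C}$ must lie in the \emph{same} class), and to isolate the generic poset (item 5) as the only genuinely hard case. Three of the classes are essentially degenerate and reduce directly to the classical Ramsey theorem (Theorem \ref{ramseythm}). For item 1 there is nothing to prove: the singleton is the only object, so $\relsys{C}=K_1$ works. For item 2 (finite linear orders, i.e.\ chains) and item 4 (ordered antichains) the crucial observation is that, for a fixed $p$-element object $\relsys{A}$, every $p$-element subset of an object $\relsys{C}$ in the class induces a copy of $\relsys{A}$ — there being a unique $p$-element chain, respectively a unique $p$-element ordered antichain, and in each case a unique order-preserving embedding. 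Hence ${\relsys{C}\choose \relsys{A}}$ is literally ${C\choose p}$, and colouring copies of $\relsys{A}$ is the same as colouring $p$-subsets. Given $\relsys{B}$ of size $n$ and $k$ colours, I would take $\relsys{C}$ to be the chain (resp.\ ordered antichain) on $N$ points, where $N$ is the Ramsey number furnished by Theorem \ref{ramseythm}; a monochromatic $n$-subset is exactly the required $\relsys{B}'$.

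For item 3 I would first record a structural normal form: in the ordered version the antichains form intervals of the linear order (as in the graph analogue), so a chain-sum of antichains is the same data as a finite linear order together with a partition into consecutive intervals — equivalently a linear order with a convex equivalence relation. Copies of $\relsys{A}$ must now respect this block structure, so the plain Ramsey theorem no longer suffices. I would obtain the Ramsey property by a product argument: use Theorem \ref{ramseythm} to control into which blocks the chosen points fall (the chain of blocks) and a product Ramsey theorem to control the choice of points inside each block. Alternatively one can deduce it from the Ne\v set\v ril--R\"odl theorem \cite{NR1} for ordered relational structures, noting that the convex-equivalence property is preserved under the restriction to the relevant subclass.

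The substantial case is item 5, the class of all ordered finite posets. The linear order is essential here: it rigidifies every copy of $\relsys{A}$, killing nontrivial automorphisms, which is precisely what fails for the unordered homogeneous posets and explains why the ``antichain of chains'' of Theorem \ref{schmerlthm} is absent from the list. My plan is the partite construction of Ne\v set\v ril and R\"odl: one builds the target $\relsys{C}$ as an $\relsys{A}$-partite poset by a finite sequence of partite amalgamation steps, where at each step a partite lemma — itself reducible to Theorem \ref{ramseythm} or to the Hales--Jewett theorem — upgrades the Ramsey property one ``picture'' at a time, and where the partial amalgams are kept inside the class by using the strong (transitive-closure) amalgamation that finite posets admit.

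The main obstacle is exactly this last point. The witness produced by the general ordered-structure theorem \cite{NR1} need not be transitive, so a direct appeal to it cannot establish item 5; the entire difficulty is to preserve transitivity and antisymmetry through every amalgamation step while not destroying the monochromatic copies already secured. This is where the partite machinery is unavoidable, and it is the reason the generic poset is much harder than the three classical cases and than the convex-equivalence case of item 3.
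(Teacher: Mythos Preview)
The paper does not prove this theorem: it is stated in the survey Section~\ref{ramseysection} and attributed to Ne\v set\v ril \cite{N3}, with the surrounding text explicitly saying that proving such ages to be Ramsey ``is often a difficult task (using ad hoc techniques) and thus we omit the details.'' So there is no proof in the paper to compare against.

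Your sketch is nonetheless along the correct lines. Items 1, 2 and 4 do reduce to the classical Ramsey theorem exactly as you say, and for item 3 treating a chain-sum of antichains as a linear order with a convex equivalence is the right encoding; the product-Ramsey argument works. For item 5 you have correctly located the real difficulty: the Ne\v set\v ril--R\"odl theorem for ordered relational structures (the result cited here as \cite{NR1}, and its irreducible-forbidden extension \cite{NR5}) does not apply directly, because the three-point configuration that encodes failure of transitivity is not irreducible. The actual proof in \cite{N3, HN-Posets} is indeed a partite construction, but the nontrivial step you gloss over is that the partite amalgamation must be the \emph{transitive closure} of the free amalgam, and one has to verify that taking transitive closure preserves the monochromatic copies obtained at earlier stages and does not create new unwanted comparabilities inside the partite pictures. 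That verification is the heart of the argument, and your proposal names it as the obstacle without indicating how to overcome it; this is the gap you would need to fill in a full proof.
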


The classification of oriented graphs provided by \cite{Cherlin} has also been
discussed, but it has not been fully determined which classes are Ramsey and which are
not.  We present just an extension of the observation about the classes
$\Forbi(\F)$, where $\F$ is a family of finite irreducible structures. However
this extension is a deep result:
\begin{thm}[Ne\v set\v ril, R\"odl \cite{NR5}]
For a given family $\F$ of finite irreducible relational structures, the class of
all linearly ordered structures $\relsys{A}$ such that $\relsys{A}\in \Forbi(\F)$ is Ramsey.
\end{thm}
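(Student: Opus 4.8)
The plan is to prove the Ramsey property by the \emph{partite construction} of Ne\v set\v ril and R\"odl. Write $\K$ for the class of all linearly ordered structures in $\Forbi(\F)$. The single most important structural fact, already noted above, is that because every $\relsys{F}\in\F$ is irreducible, $\K$ is closed under free amalgamation: if $\relsys{D}$ is a free amalgam of $\relsys{A},\relsys{B}\in\K$ over a common substructure, then any embedding of an irreducible $\relsys{F}$ into $\relsys{D}$ must land entirely in the copy of $\relsys{A}$ or entirely in the copy of $\relsys{B}$, since two vertices lying in the two distinct ``new'' parts could never share a tuple, contradicting irreducibility. Hence no forbidden structure is ever created by free amalgamation, and this is exactly what will let the construction remain inside $\K$.

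First I would isolate the combinatorial core as a \emph{Partite Lemma}. Call a structure \emph{$a$-partite} if its vertex set is split into $a$ classes so that every tuple of every relation meets each class at most once (is ``transversal''), and order it by ordering the classes. The Partite Lemma asserts the Ramsey property inside the class of $a$-partite structures, with the distinguished copy $\relsys{A}$ taken to be a transversal system meeting every class exactly once. This is where the external Ramsey input enters: one regards each transversal copy of $\relsys{A}$ as a single ``letter'' determined by its trace on the classes, and invokes the Hales--Jewett theorem (equivalently, a product version of the ordered finite Ramsey theorem, Theorem~\ref{ramseythm}) to produce a monochromatic combinatorial line, which pulls back to the required monochromatic copy of $\relsys{B}$.

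With the Partite Lemma in hand, I would run the partite construction proper. Set $b=|B|$ and view $\relsys{B}$ as a $b$-partite system (one vertex per class); list the transversal copies $\bar{\relsys{A}}_1,\dots,\bar{\relsys{A}}_t$ of $\relsys{A}$ inside it. Build a tower of $b$-partite systems $\relsys{P}_0,\relsys{P}_1,\dots,\relsys{P}_t$, starting from a $\relsys{P}_0$ rich in transversal copies of $\relsys{B}$, and passing from $\relsys{P}_{i-1}$ to $\relsys{P}_i$ by one application of the Partite Lemma aimed at the class carrying $\bar{\relsys{A}}_i$, so that after stage $i$ every transversal copy of $\relsys{A}$ lying over $\bar{\relsys{A}}_i$ is forced monochromatic in any prescribed $k$-colouring. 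Each stage is realized as a free amalgamation of several copies of $\relsys{P}_{i-1}$ along the relevant classes, so by the structural fact above every $\relsys{P}_i$ stays in $\K$. Projecting $\relsys{P}_t$ onto its classes (identifying classes indexed by the same vertex of $\relsys{B}$) then yields the desired $\relsys{C}\in\K$ with $\relsys{C}\longrightarrow(\relsys{B})^{\relsys{A}}_k$, and one checks that the projection creates no new copy of any $\relsys{F}\in\F$, again by irreducibility.

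The main obstacle is precisely the interaction of the two demands placed on the construction: the amalgamation steps must be \emph{free} (so that $\K$ is preserved) yet rich enough to force monochromatic copies, and the final vertex-identification must neither undo the Ramsey property secured stage by stage nor produce a forbidden configuration. Keeping track of the transversal copies of $\relsys{A}$ across the whole tower --- guaranteeing that a copy ``completed'' at stage $i$ is not spoiled at later stages and that the linear order on classes is maintained coherently throughout --- is the delicate bookkeeping at the heart of the argument, and irreducibility of $\F$ is the one hypothesis that keeps every intermediate object inside $\K$.
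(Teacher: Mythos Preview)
The paper does not actually prove this theorem; it is quoted as a known result with a citation to \cite{NR5} (and implicitly to \cite{NR7,NRo}), so there is no ``paper's own proof'' to compare against. Your outline is essentially the Ne\v set\v ril--R\"odl partite construction that underlies those references, and the key structural observation---that irreducibility of every $\relsys{F}\in\F$ makes $\Forbi(\F)$ closed under free amalgamation, so every intermediate picture $\relsys{P}_i$ stays in $\K$---is exactly right.

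One small inaccuracy: there is no final ``projection'' or identification of classes. The last picture $\relsys{P}_t$, viewed simply as a linearly ordered structure (forgetting the $b$-partition), \emph{is} the desired $\relsys{C}$. Nothing is glued at the end, so the worry you raise about the projection creating a forbidden $\relsys{F}$ does not arise; the only place irreducibility is used is at each amalgamation step $\relsys{P}_{i-1}\mapsto\relsys{P}_i$, where several copies of $\relsys{P}_{i-1}$ are freely amalgamated along the parts prescribed by the Partite Lemma. Apart from this, your sketch matches the standard argument.
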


Kechris, Pestov, and Todor\v cevi\v c \cite{KPT} relate the extreme amenability (of subgroups of
$S_\omega$) to purely combinatorial problems of Ramsey classes.  
Several permutation groups have been shown to be extremely amenable using
combinatorial examples of Ramsey classes (such as the class of all finite
graphs, the class of all finite partial orders or the class of Hales-Jewett cubes)
and thus some further examples of extremely amenable groups have been found \cite{GW,KPT,Pestov98,Pestov02}.
This also provoked some combinatorial questions which led to new examples of Ramsey
classes:
\begin{enumerate}
\item
In particular, Ne\v set\v ril proved that all (ordered) finite metric spaces
form a Ramsey class \cite{N5}, see also \cite{De Prisco}. This
gives \cite{KPT} a simpler new proof that $Aut(\Urysohn)$ ($\Urysohn$ is the Urysohn space) is an extremely amenable group
(originally shown in \cite{Pestov02}).
\item
More recently Farah and Solecki \cite{Farah} isolated in the context of extreme
amenability a new ``group-valued'' Hales-Jewett Theorem in the context of L\'evy groups.
\end{enumerate}



\section{Classes with universal non-homogeneous structures}
\label{univsectioncl}
There are many classes $\K$ with the (embedding-)universal structure $\relsys{U}$
but no generic structure. Take, for example, the class $\Forbi(S_3)$ of all
countable graphs containing no vertex of degree 3. ($S_3$ stands for a graph
forming a star with a single center vertex and 3 vertices connected to the
central vertex by an edge.)  Denote by $C_l$ the graph consisting of a single
cycle of length $l$.  Because the class $\Forbi(S_3)$ consists of paths and
cycles only, we can build a universal graph $\Rado_{\Forbi(S_3)}$
for the class $\Forbi(S_3)$ as the union of infinitely many copies of each
graph $C_l$, for $l=3,4,\ldots$, and infinitely many copies of a doubly
infinite path.

The graph $\Rado_{\Forbi(S_3)}$ is not generic because it contains components
of different sizes.  It is easy to see that any universal graph for the class
$\Forbi(S_3)$ must have the same components as $\Rado_{\Forbi(S_3)}$ and thus
there is no generic graph for $\Forbi(S_3)$.

It may seem that the existence of a universal structure, requiring as it does a
much weaker condition than the existence of a generic structure, is very
often satisfied.  The study of classes containing a universal structure was
however motivated by negative results (see \cite{Hajnal
Komjatk,CherlinKomjath}). For example, the class $\Forbi(C_l)$ of all countable
graphs not containing $C_l$ does not contain a universal graph for any $l>3$.  

Cherlin and Shelah \cite{CherlinShelah} generalized the example of the class
$\Forbi(S_3)$ to the notion of a {\em near-path} --- a graph tree which is not
a path but is obtained by attaching one edge with one additional vertex to a
path.  They show that for a given finite graph tree $T$ there is a universal
graph for the class $\Forbi(T)$ if and only if $T$ is a path or a near-path.
This problem was for several years open as the Tallgren tree conjecture and also
supports the fact that classes with universal structures are 
relatively rare.

Similarly to the definition of an amalgamation class, one may ask what
properties a class $\K$ has to satisfy to contain a universal structure.  This is
an open problem.  In \cite{CherlinShelahShi} and later in \cite{restruct}
the following variant of this question is posed:
\begin{quote}
Is there an algorithm which determines for each finite set $\F$ of finite connected
``forbidden'' subgraphs whether the corresponding universal graph exists,
for the class $\Forbi(\F)$?
\end{quote}
It is suggested in \cite{restruct} that the problem may well be undecidable.
However there are a number of deep and interesting related results.

Before stating some of those results let us the observe similarities to generic structures.
In many cases we can find universal structures that are not ultrahomogeneous,
but have properties that resemble those guaranteed by ultrahomogeneity.
In particular they are $\omega$-categorical:

\begin{defn} A countably infinite structure is called {\em
$\omega$-categorical} (sometimes also {\em countably-categorical}, {\em $\aleph_0$-categorical} or {\em categorical}) if all
countable models of its first order theory are isomorphic.
\end{defn}

To see how the notion of ultrahomogeneity and $\omega$-categoricity are
related, we use the following characterization given by Engeler \cite{Engeler},
Ryll-Nardzewski \cite{Nardzewsky} and Svenonius \cite{Svenonius}.
\begin{thm}
For a countable first order structure $\relsys{A}$, the following conditions are equivalent:
\begin{enumerate}
\item $\relsys{A}$ is $\omega$-categorical.
\item The automorphism group of $\relsys{A}$ has only finitely many orbits on $n$-tuples, for every $n$.
\end{enumerate}
\end{thm}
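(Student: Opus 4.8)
The plan is to route both implications through an intermediate condition: finiteness, for every $n$, of the Stone space $S_n(T)$ of complete $n$-types consistent with the complete theory $T=\mathrm{Th}(\relsys{A})$. This is the classical Ryll-Nardzewski phenomenon, and the three conditions (finitely many orbits on $A^n$, finitely many $n$-types, $\omega$-categoricity) are most cleanly shown equivalent by closing the cycle through the type count.

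First I would prove that condition $(2)$ forces only finitely many types. Since every automorphism of $\relsys{A}$ preserves satisfaction of first-order formulas, any two $n$-tuples lying in the same $\mathrm{Aut}(\relsys{A})$-orbit realize the same complete type; hence finitely many orbits permits only finitely many types $p_1,\dots,p_k$ to be realized in $\relsys{A}$. I would then argue these are the only types consistent with $T$ at all: given a complete $q\in S_n(T)$ with $q\neq p_i$ for all $i$, choose for each $i$ a formula $\chi_i\in q$ with $\chi_i\notin p_i$, and set $\psi=\bigwedge_{i=1}^k\chi_i\in q$. Then $\exists\bar x\,\psi$ is a sentence consistent with the complete theory $T$, hence a member of $T$ and so true in $\relsys{A}$; but any witnessing tuple realizes some $p_j$ with $\psi\in p_j$, contradicting $\psi\notin p_j$. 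Thus $S_n(T)$ is finite, and being a finite $T_1$ Stone space it is discrete, so every type is isolated (principal) by a single formula.

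Next comes the core step, that finitely many (isolated) types yield $\omega$-categoricity. Here I would run a back-and-forth between two arbitrary countable models $\relsys{M},\relsys{N}\models T$. Both are atomic, since all types are isolated, and the back-and-forth extends any finite partial map matching types: given $\bar a\mapsto\bar b$ with equal types and a new element $c$, the type of $\bar a c$ is isolated by some $\theta(\bar x,y)$, whence $\exists y\,\theta(\bar b,y)$ holds and is witnessed by some $d\in N$ with $\mathrm{tp}(\bar b d)=\mathrm{tp}(\bar a c)$; the back step is symmetric. Alternating over enumerations of $M$ and $N$ produces an isomorphism, so all countable models of $T$ are isomorphic, which is exactly $(1)$.

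For $(1)\Rightarrow(2)$ I would reverse through the same intermediate. Assuming $\omega$-categoricity, I first show $S_n(T)$ is finite: were it infinite, compactness of the Stone space would furnish a non-isolated type $p$, which the omitting types theorem lets me omit in one countable model while another countable model realizes it, contradicting categoricity. Finiteness of $S_n(T)$ again makes every type isolated, so the unique countable model $\relsys{A}$ is atomic and hence $\omega$-homogeneous; a partial isomorphism between two same-type tuples extends by back-and-forth to a full automorphism, so the $\mathrm{Aut}(\relsys{A})$-orbits on $A^n$ coincide with the realized $n$-types and are finite in number. The main obstacle is not any single calculation but the dependence on two nontrivial model-theoretic engines, the omitting types theorem for $(1)\Rightarrow(2)$ and the uniqueness (via back-and-forth with isolated types) of countable atomic models for the converse, so the write-up should either cite these as black boxes or carry out the back-and-forth in full, since that is where all the content resides.
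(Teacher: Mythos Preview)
Your argument is the standard Ryll-Nardzewski proof and is correct in outline; the cycle through finiteness of $S_n(T)$, isolation of all types, atomicity, and back-and-forth is exactly how the result is established, and your use of the omitting types theorem for the direction $(1)\Rightarrow(2)$ is the customary route.

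However, the paper does not prove this theorem at all. It is stated in Section~\ref{univsectioncl} purely as a citation to Engeler, Ryll-Nardzewski, and Svenonius, and is used only as background to explain why $\omega$-categoricity is a weakening of ultrahomogeneity for finite relational languages. So there is nothing in the paper to compare your approach against: you have supplied a full (and correct) proof where the thesis merely quotes the result.
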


It follows that for relational structures of finite type $\omega$-categoricity
is really a weaker variant of ultrahomogeneity.  In an automorphism group of an
ultrahomogeneous structure the number of orbits on $n$-tuples is determined by
the number of induced substructures of size $n$. In an $\omega$-categorical
structure the number of orbits on $n$-tuples can be arbitrarily large, but
finite.

In Section \ref{Genericsection} we outlined that every ultrahomogeneous
structure $\relsys{A}$ is universal for the class of all countable structures
younger than $\relsys{A}$.  (This follows from the extension property.)  In the
case of $\omega$-categorical structures the same holds, but the proof is more
complicated. 

\begin{thm}[Cameron \cite{cameron2, cameron}]
If $\relsys{A}$ is $\omega$-categorical then it is universal for the class of all countable
structures younger than $\relsys{A}$.
\end{thm}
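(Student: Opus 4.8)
The plan is to use the orbit characterization of $\omega$-categoricity recalled in the preceding theorem: since $\relsys{A}$ is $\omega$-categorical, the automorphism group $\mathrm{Aut}(\relsys{A})$ has only finitely many orbits on $n$-tuples, for every $n$. Fix a countable structure $\relsys{B}$ younger than $\relsys{A}$ (so $\Age(\relsys{B})\subseteq\Age(\relsys{A})$) and enumerate its vertices as $b_1,b_2,\ldots$. I would build an embedding $\varphi\colon\relsys{B}\to\relsys{A}$ by a K\"onig's Lemma argument on a finitely branching tree of \emph{orbits} of partial embeddings, rather than by the extension property, which $\relsys{A}$ need not possess.

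First I would set up the tree $T$. A node at level $n$ is an $\mathrm{Aut}(\relsys{A})$-orbit of those $n$-tuples $(a_1,\ldots,a_n)$ of vertices of $\relsys{A}$ for which the map $b_i\mapsto a_i$, $1\le i\le n$, is an embedding of the substructure of $\relsys{B}$ induced on $\{b_1,\ldots,b_n\}$ into $\relsys{A}$. I declare the orbit of $(a_1,\ldots,a_{n+1})$ to be a child of the orbit of its restriction $(a_1,\ldots,a_n)$; one checks this parent map is well defined on orbits, since applying an automorphism commutes with forgetting the last coordinate. Two facts make the tree useful: each level is finite, because by $\omega$-categoricity there are only finitely many orbits of $n$-tuples; and each level is nonempty, because $\relsys{B}$ is younger than $\relsys{A}$, so the substructure induced on $\{b_1,\ldots,b_n\}$ embeds into $\relsys{A}$ and its image yields a level-$n$ node. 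Rooted at the orbit of the empty tuple, $T$ is thus an infinite, finitely branching tree, and K\"onig's Lemma supplies an infinite branch $O_1,O_2,\ldots$ in which $O_n$ is the restriction of $O_{n+1}$.

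It remains to turn the branch of orbits into an actual embedding, and this is the step I expect to require the most care. I would choose representatives coherently, one coordinate at a time. Pick $(a_1)\in O_1$. Inductively, suppose $(a_1,\ldots,a_n)\in O_n$ has been chosen and take any representative $(a_1',\ldots,a_{n+1}')\in O_{n+1}$. Its restriction lies in $O_n$, hence in the orbit of $(a_1,\ldots,a_n)$, so there is $g\in\mathrm{Aut}(\relsys{A})$ with $g(a_i')=a_i$ for $i\le n$; then $(a_1,\ldots,a_n,g(a_{n+1}'))\in O_{n+1}$, and I set $a_{n+1}=g(a_{n+1}')$. This produces a single sequence $a_1,a_2,\ldots$ with $(a_1,\ldots,a_n)\in O_n$ for every $n$. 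Defining $\varphi(b_i)=a_i$, for each $n$ the map $b_i\mapsto a_i$ on $\{b_1,\ldots,b_n\}$ is by construction an embedding, hence injective and both preserving and reflecting relations; since every instance of a relation $\rel{B}{i}$ or $\rel{A}{i}$ involves only finitely many vertices, $\varphi$ is a global embedding of $\relsys{B}$ into $\relsys{A}$.

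The main obstacles are the two well-definedness checks---that the parent map descends to orbits and that injectivity survives in the limit---together with the coherent choice of representatives; all three hinge on realigning tuples by automorphisms, which is exactly where the finiteness of orbits (i.e.\ $\omega$-categoricity) is spent. As an alternative I could argue by compactness: form the theory of $\relsys{A}$ together with the atomic diagram of $\relsys{B}$ in a language with a new constant for each vertex of $\relsys{B}$; youngness makes every finite fragment satisfiable inside $\relsys{A}$, so by compactness and downward L\"owenheim--Skolem there is a countable model, which by $\omega$-categoricity is isomorphic to $\relsys{A}$ and into which $\relsys{B}$ embeds. I would nonetheless favor the K\"onig's Lemma proof here, as it applies directly the orbit-counting form of $\omega$-categoricity recalled above.
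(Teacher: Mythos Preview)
The paper does not prove this theorem; it merely cites Cameron's result and moves on. So there is nothing to compare against on the paper's side.

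Your proof is correct and is in fact the standard argument. The K\"onig's Lemma approach is exactly how this is usually done: the Engeler--Ryll-Nardzewski--Svenonius characterization (stated just before this theorem in the paper) gives finitely many orbits on $n$-tuples, which makes the tree of orbits of partial embeddings finitely branching; nonemptiness of each level is the content of $\Age(\relsys{B})\subseteq\Age(\relsys{A})$; and the coherent lifting of the branch of orbits to an actual sequence via automorphisms is routine. Your compactness alternative is also valid. One small remark: you should note explicitly that the case of finite $\relsys{B}$ is immediate (any finite structure in $\Age(\relsys{A})$ embeds by definition), so the K\"onig's Lemma argument is only needed when $\relsys{B}$ is infinite; you implicitly assume this when you speak of an infinite tree.
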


How common are $\omega$-categorical universal objects?  Our introductory
example of the graph $\Rado_{\Forbi(S_3)}$ universal for the class $\Forbi(S_3)$ is clearly not
$\omega$-categorical. It has infinitely many orbits of $1$-tuples.  

This example is however quite a special one and a universal graph exists only
``by accident.'' The class $\Forbi(S_3)$ contains only countably many
non-isomorphic connected graphs and our universal graph is the union of all of
them.  The classes forbidding a near-path (but not a path) are similar cases
(see \cite{CherlinShelah}).  It seems, from the lack of known examples, that
classes with a universal structure but without an $\omega$-categorical universal structure are even more rare.

Consider the class $\Forb(C_5)$ of all graphs not containing a cycle of
length $3$ or $5$.
It is a non-trivial fact that there is a universal graph for
$\Forb(C_5)$ (see \cite{Mekler}).  We show an explicit construction of such
an $\omega$-categorical graph in Chapter \ref{Forbchapter}.  It
is however easy to see that there is no generic graph for the class
$\Forb(C_5)$: two vertices $u$ and $v$ not connected by an edge can be connected
either by a path of length $2$ or a path of length $3$. Connecting the vertices
$u$ and $v$ by both a path of length $2$ and of length $3$ would form a $5$-cycle or a
$3$-cycle.  It follows that there are at least two types of independent sets of
size two in every universal graph for ${\Forb(C_5)}$: those that are connected by a path of
length 2 and those that are connected by a path of length 3. This is not
possible in an ultrahomogeneous structure.

The amalgamation property condition of Theorem \ref{fraissethm} can be relaxed
to a sufficient condition for the existence of an $\omega$-categorical universal
structure for a given class as shown by Covington \cite{Covington}.
\begin{defn}
Let $\K$ be a class of countable relational structures, $\relsys{A},\relsys{B},
\relsys{C}\in \K$, $\alpha$ an embedding of $\relsys{C}$ to $\relsys{A}$ and
$\beta$ an embedding of $\relsys{C}$ to $\relsys{B}$.  A tuple $(\relsys{A},
\relsys{B}, \relsys{C}, \alpha, \beta)$ such that there is no 
$(\relsys{D}, \gamma, \delta)$, $\relsys{D}\in \K,$ that is an amalgamation of $(\relsys{A}, \relsys{B}, \relsys{C}, \alpha, \beta)$ is called an {\em (amalgamation) failure of
$\K$}.

The failure $(\relsys{A}', \relsys{B}', \relsys{C}', \alpha', \beta')$ is a {\em
subfailure} of the failure $(\relsys{A}, \relsys{B}, \relsys{C}, \alpha, \beta)$ if
there are embeddings $a:\relsys{A'}\to \relsys{A}$, $b:\relsys{B'}\to \relsys{B}$ and $c:\relsys{C'}\to
\relsys{C}$ such that for every $x\in C'$ we have
$a(\alpha'(x))=\alpha(c(x))$ and
$b(\beta'(x))=\beta(c(x))$.

The class $\K$ has a {\em local amalgamation failure} if and only if there is a finite set $S$ of
failures of $\K$ such that for every failure $(\relsys{A}, \relsys{B},
\relsys{C}, \alpha, \beta)$ of $\K$ there exists a failure $(\relsys{A}',
\relsys{B}', \relsys{C}', \alpha', \beta') \in S$ that is a subfailure of
$(\relsys{A}, \relsys{B}, \relsys{C}, \alpha, \beta)$.
\end{defn}
\begin{figure}
\centerline{\includegraphics{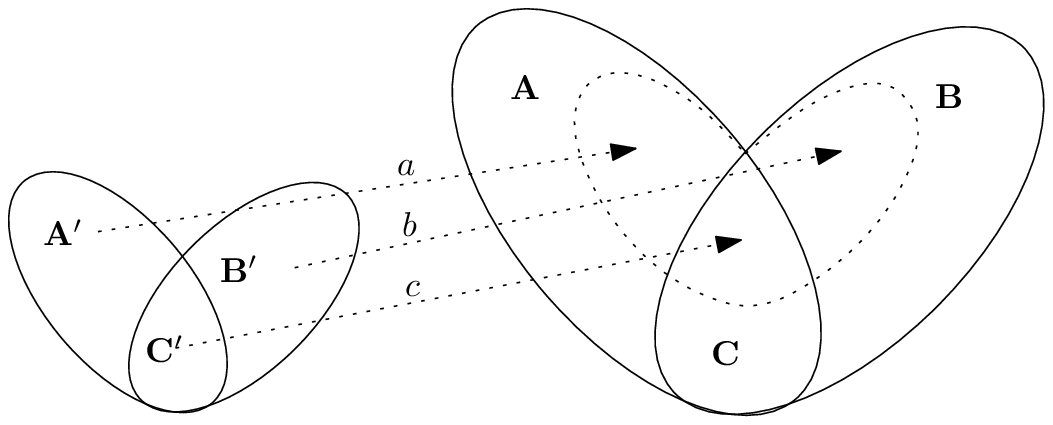}}
\caption{The failure $(\relsys{A}', \relsys{B}', \relsys{C}', \alpha', \beta')$ is subfailure of the failure $(\relsys{A}, \relsys{B}, \relsys{C}, \alpha, \beta)$.}
\label{subamalgamfig0}
\end{figure}

The notion of a subfailure is very intuitive and depicted in Figure
\ref{subamalgamfig0}.

The main result of \cite{Covington} is the following:

\begin{thm}[Covington \cite {Covington}]
\label{Convigtonthm} Let $\F$ be a family of finite structures of finite
type.  Assume that $\Age(\Forbi(\F))$ has the joint embedding property and
that it has local failure of amalgamation. Then $\Forbi(\F)$ contains an $\omega$-categorical
structure $\relsys{U}$ that is universal for $\Forbi(\F)$.
\end{thm}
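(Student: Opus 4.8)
The plan is to reduce the problem to an ordinary \Fraisse{} construction by passing to a finite relational expansion of the language, in the spirit of homogenization. Write $\K=\Age(\Forbi(\F))$. Since amalgamation may fail in $\K$, Theorem \ref{fraissethm} does not apply directly; instead I would introduce finitely many new relation symbols whose role is to record, inside each structure, enough information to route around every amalgamation obstruction. Concretely, let $S=\{(\relsys{A}_1,\relsys{B}_1,\relsys{C}_1,\alpha_1,\beta_1),\dots,(\relsys{A}_m,\relsys{B}_m,\relsys{C}_m,\alpha_m,\beta_m)\}$ be the finite set of failures supplied by the local amalgamation failure hypothesis. Using $S$, I would define an expanded class $\K^+$ of structures in the enlarged finite language such that (i) the reduct of every member of $\K^+$ to the original language lies in $\K$, hence avoids $\F$; (ii) every member of $\K$ admits at least one expansion lying in $\K^+$; and (iii) $\K^+$ is itself an amalgamation class with the joint embedding property.

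Granting such a $\K^+$, the rest is routine. By Theorem \ref{fraissethm} the class $\K^+$ has a \Fraisse{} limit $\relsys{U}^+$, which by Lemma \ref{extuniv} has the extension property and is universal for all countable structures younger than $\relsys{U}^+$. I then let $\relsys{U}$ be the reduct of $\relsys{U}^+$ to the original language. By (i), $\relsys{U}\in \Forbi(\F)$. For universality, given any countable $\relsys{A}\in \Forbi(\F)$ I would expand it---using a version of (ii) that works for countable structures, so that the resulting $\relsys{A}^+$ is younger than $\relsys{U}^+$---embed $\relsys{A}^+$ into $\relsys{U}^+$, and take the reduct of this embedding to obtain $\relsys{A}\hookrightarrow \relsys{U}$. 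Finally $\relsys{U}$ is $\omega$-categorical: $\relsys{U}^+$ is ultrahomogeneous in a finite relational language, so its automorphism group has finitely many orbits on $n$-tuples for each $n$; since $\mathrm{Aut}(\relsys{U})\supseteq \mathrm{Aut}(\relsys{U}^+)$, the orbits of $\relsys{U}$ are unions of orbits of $\relsys{U}^+$ and hence still finite in number, so $\relsys{U}$ is $\omega$-categorical by the Engeler--Ryll-Nardzewski--Svenonius theorem.

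The entire difficulty is therefore concentrated in constructing $\K^+$ and verifying (iii). The idea is that the new relations should tag each occurrence of the ``dangerous'' part $\relsys{C}_j$ of a minimal failure together with which of the two sides ($\relsys{A}_j$ or $\relsys{B}_j$) it was completed to, so that when two structures are glued over a common part, the tags record incompatible completions and force a choice that never synthesizes a forbidden substructure. Here the finiteness of $S$ is essential: it guarantees that only finitely many types of obstruction have to be encoded, keeping the expansion finite (and hence $\relsys{U}^+$ $\omega$-categorical). The main obstacle I anticipate is proving the amalgamation property for $\K^+$: one must show that every failure of the original class is a subfailure of some member of $S$ (this is exactly the local failure hypothesis), and then that the bookkeeping provided by the tags is simultaneously sufficient to carry out an amalgam avoiding all of $\F$---i.e., that resolving the finitely many minimal obstructions resolves all of them at once. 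Establishing that the tagging is consistent under amalgamation, and that it does not accidentally destroy property (ii), is the delicate combinatorial heart of the argument.
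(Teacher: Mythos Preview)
Your proposal is correct and follows exactly the homogenization strategy the paper attributes to Covington: expand the language by finitely many relations encoding the amalgamation obstructions in $S$, apply Theorem~\ref{fraissethm} to the expanded class, and take the reduct. The paper does not give a detailed proof either---it merely records this outline and defers the combinatorics to \cite{Covington}---so your identification of the amalgamation property for $\K^+$ as the crux is accurate and matches the intended argument.
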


Theorem \ref{Convigtonthm} can be seen as variant of Theorem \ref{fraissethm}.
Because type $\Delta$ is finite, there are only countably many mutually
non-isomorphic structures in $\Age(\Forbi(\F))$.  The class $\Age(\Forbi(\F))$ is
also obviously hereditary and isomorphism-closed.  Local failure of
amalgamation is a weaker variant of the amalgamation property. It can also be
easily seen that for a family $\F$ of finite connected structures, the class
$\Age(\Forbi(\F))$ always has the joint embedding property. 

\begin{figure}
\centerline{\includegraphics{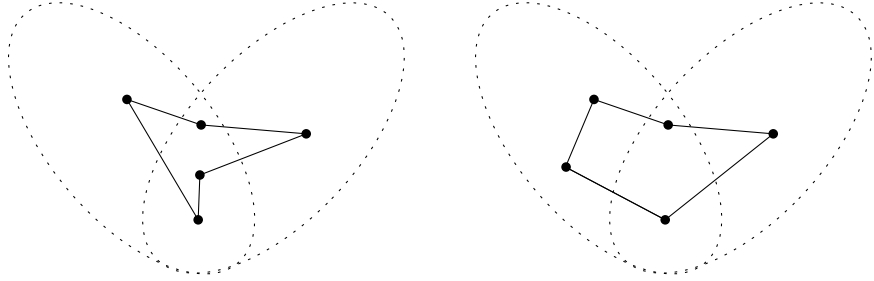}}
\caption{Amalgamation failures of the class $\Forb(C_5)$.}
\label{5cyclefig}
\end{figure}
Theorem \ref{Convigtonthm} directly applies to the class $\Forb(C_5)$: any amalgamation failure must contain a subfailure consisting of
a $3$-cycle or a $5$-cycle.  The class of graphs not containing a $3$-cycle is
an amalgamation class and thus has no amalgamation failures.  It follows that
every amalgamation failure must contain a $5$-cycle. There are just two
amalgamation failures consisting of $5$-cycle alone, depicted in Figure \ref{5cyclefig}.
The second is a subfailure of the first and thus the set $S$ of failures can consist of
a single failure.

Theorem \ref{Convigtonthm} is proved by imposing an additional structure on relational
structures in $\K$ by adding new relations and using Theorem
\ref{fraissethm} to obtain the generic structure in the extended language (so-called homogenization).  As a result, the universal structure $\relsys{U}$ is
$\omega$-categorical and inherits many other properties of the generic structure it
is created from. 

\begin{figure}
\centerline{\includegraphics{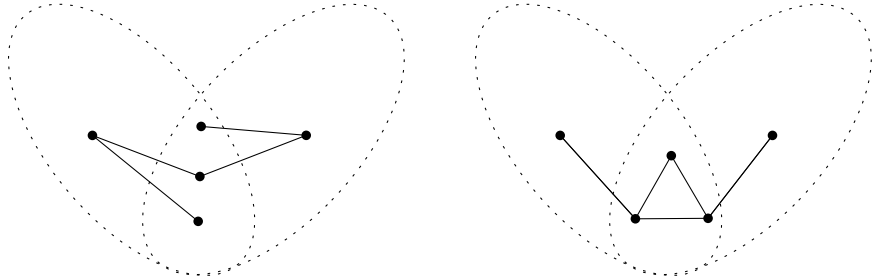}}
\caption{Amalgamation failures of the class of graphs not containing an induced path of length 3.}
\label{obrazky n free}
\end{figure}
The local amalgamation failure property, despite its natural definition, seems
difficult to apply and thus Theorem \ref{Convigtonthm} has found surprisingly little use.
Covington in \cite{Covington2,Covington} shows that the class of all graphs omitting
isomorphic copies of a path of length 3 ($N$-free graphs) has the local
amalgamation failure property.  Amalgams in the set $S$ are depicted in Figure
\ref{obrazky n free}. The class is homogenized by adding a ternary relation to
distinguish one vertex from each triple carrying a null or complete induced
subgraph.

More examples of classes with a universal graph were found in
\cite{Komjath,Mekler,CherlinShi,NRR,NTardif}, mostly by means of amalgamation
techniques developed for the particular structure.

Necessary and sufficient conditions for the existence of an $\omega$-categorical
universal graph are known for classes $\Forbi(\F)$ where $\F$ is a finite family
of finite connected graphs.  This is a deep model-theoretic result of Cherlin,
Shelah and Shi \cite{CherlinShelahShi}.  To state the result we need to
introduce the notion of algebraic closure.

\begin{defn}
Let the relational structure $\relsys{A}$ be an induced substructure of a relational
structure $\relsys{B}$. We say that the relational structure $\relsys{A}$ is {\em
existentially complete in $\relsys{B}$} if every existential statement $\psi$
which is defined in $\relsys{A}$ and true in $\relsys{B}$ is also true in
$\relsys{A}$.

For a class $\K$ of relational structures, we say that $\relsys{A}\in \K$ is {\em existentially complete in class $\K$} if $\relsys{A}$ is existentially complete for every structure $\relsys{B}\in \K$ such that $\relsys{A}$ is an induced substructure of $\relsys{B}$.

Let $\relsys{A}$ be an existentially complete relational structure in a class $\K$, set $S\subseteq A$ and vertex $a\in A$.
We say that $a$ is {\em algebraic over $S$} (in $\relsys{A}$) if there is an existential formula $\psi(x,\overline{a})$ with $\overline{a}\in S$ such that the set $\{a'\in A:\varphi(a',\overline{a})\}$ is finite and contains $a$. 

We write {\em $\mathrm{acl}_\relsys{A}(S)$ (algebraic closure)} for the set of $a\in S$ that are algebraic over $S$. We say $S$ is {\em algebraically closed in $\relsys{A}$} if $\mathrm{acl}_\relsys{A}(S)=S$.
\end{defn}

The notion of algebraic closure is a complicated concept. See
\cite{CherlinShelahShi} for further analysis of its behavior.  Informally, a
vertex is in the algebraic closure of a given set if the number of vertices of
the same type must be finite in any structure in $\K$.  For example, in the case
of the class $\Forbi(S_3)$, the algebraic closure of a set is the union of its
connected components. 

%
This use of algebraic closure was motivated by its use in earlier proofs of
the non-existence of a universal structure $\relsys{U}$ for certain special classes
$\K$ \cite{Hajnal Komjatk,CherlinKomjath}.  As was indicated in those proofs, when algebraic closure is not
bounded it is possible to find uncountably many structures in $\K$ with the
property that their isomorphic copies in any structure $\relsys{U}\in \K$ have
just a small overlap.  From this it follows that a universal graph cannot be
countable.  The main result of Cherlin, Shelah and Shi is the following theorem,
showing that this is the only obstacle to the existence of an $\omega$-categorical
universal graph for the class $\Forbi(\F)$.

Note that \cite{CherlinShelahShi} states the results in the context of 
graphs.  Most of the results can be extended to relational structures. 

\begin{thm}[Cherlin, Shelah, Shi \cite{CherlinShelahShi}]
\label{cherlinthm}
Let $\F$ be a finite set of connected graphs.
Denote by $T^{*}_{\F}$ the theory of all existentially complete graphs in $\Forbi(\F)$.
Then the following conditions are equivalent:
\begin{enumerate}
\item $T^{*}_{\F}$ is $\omega$-categorical.
\item When $A$ is a finite subset of a model $\relsys{M}$ of $T^{*}_{\F}$, $acl_\relsys{M}(A)$ is finite.
\end{enumerate}
These conditions imply:

\begin{enumerate}
\item[3.] The class $\Forbi(\F)$ contains an $\omega$-categorical universal graph.
\end{enumerate}
\end{thm}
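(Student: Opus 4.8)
The plan is to route everything through the Engeler--Ryll-Nardzewski--Svenonius characterization of $\omega$-categoricity stated above: a countable structure is $\omega$-categorical precisely when, for each $n$, only finitely many complete $n$-types are realized. First I would record the two facts that make the existential-completeness machinery available. Since every $\relsys{F}\in\F$ is connected and finite, the property ``no induced copy of any $\relsys{F}$'' is preserved under unions of increasing chains, so $\Forbi(\F)$ is an inductive, isomorphism-closed class; consequently every member embeds into an existentially complete member, e.c. models exist, and $T^{*}_{\F}$ is a genuine theory (complete by Vaught's test, all its models being infinite). I would also note that, because the forbidden graphs are merely connected rather than irreducible, free amalgamation need not stay inside $\Forbi(\F)$ --- which is exactly why a generic structure can fail to exist and why the finer algebraic-closure analysis is forced upon us.

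For the implication $(1)\Rightarrow(2)$ I would work in the countable $\omega$-categorical model $\relsys{M}$ of $T^{*}_{\F}$, which is $\omega$-homogeneous, so orbits of $\mathrm{Aut}(\relsys{M})$ coincide with first-order types. Fix a finite $A\subseteq M$; naming $A$ by constants preserves $\omega$-categoricity, so by Ryll-Nardzewski $\mathrm{Aut}(\relsys{M}/A)$ has only finitely many orbits on single elements, and the first-order algebraic closure of $A$ --- the union of its finite orbits --- is a finite union of finite sets, hence finite. The existential algebraic closure $\mathrm{acl}_\relsys{M}(A)$ of the Definition is contained in it and is therefore finite as well. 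This direction is routine.

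The substance is $(2)\Rightarrow(1)$, where I would prove two things. First, a structure-theory step: in an e.c. model the complete type of a tuple $\bar a$ over $\emptyset$ is determined by the isomorphism type of the finite algebraically closed set $\mathrm{acl}(\bar a)$ together with the marked tuple $\bar a$. The back-and-forth argument for this rests on amalgamating two e.c. extensions over a common algebraically closed finite configuration while staying inside $\Forbi(\F)$ --- the analogue of the genericity exploited in Lemma \ref{extuniv}, but performed relative to algebraic closures rather than arbitrary finite sets. Second, a uniformity step: condition (2) only gives finiteness of $\mathrm{acl}(A)$ for each individual $A$, and I would upgrade this to a bound $N(n)$ on $|\mathrm{acl}(\bar a)|$ valid for all $n$-tuples, by a compactness argument (were the size unbounded over $n$-tuples, a sufficiently saturated model would contain an $n$-tuple with infinite algebraic closure, contradicting (2)). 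Combining the two steps, there are only finitely many isomorphism types of $(\mathrm{acl}(\bar a),\bar a)$ for $n$-tuples, hence finitely many $n$-types, and Ryll-Nardzewski delivers $\omega$-categoricity. I expect this direction --- specifically the amalgamation-over-algebraically-closed-sets lemma together with the passage from pointwise to uniform finiteness --- to be the main obstacle, since it is where the geometry of $\Forbi(\F)$ genuinely enters and where one typically passes to an enriched language that names the algebraic closure (homogenization).

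Finally, for $(1)/(2)\Rightarrow(3)$ I would take $\relsys{U}$ to be the countable model of $T^{*}_{\F}$, now unique up to isomorphism by $\omega$-categoricity, hence (isomorphic to) the countable e.c. model and so a member of $\Forbi(\F)$. To see it is universal I first check that $\Age(\relsys{U})$ is exactly the class of finite graphs in $\Forbi(\F)$: any such finite $G$ embeds into some e.c. model $M$, and the existential sentence asserting the presence of an induced copy of $G$ transfers from $M$ to $\relsys{U}$ because both satisfy the complete theory $T^{*}_{\F}$; thus $G\in\Age(\relsys{U})$, while conversely $\Age(\relsys{U})\subseteq\Forbi(\F)$ by heredity. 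Every countable graph in $\Forbi(\F)$ is therefore younger than $\relsys{U}$, and Cameron's theorem (an $\omega$-categorical structure is universal for all countable structures younger than it) supplies the required embeddings. Hence $\relsys{U}$ is an $\omega$-categorical universal graph for $\Forbi(\F)$.
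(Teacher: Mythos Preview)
The paper does not prove Theorem~\ref{cherlinthm}; it quotes it as a deep result of Cherlin--Shelah--Shi and only uses it (together with the cited Lemma~\ref{cherlinlema}) to derive Corollary~\ref{1thm}. So there is no in-paper proof to compare against.

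Your outline is essentially the architecture of the original Cherlin--Shelah--Shi argument: $(1)\Rightarrow(2)$ is the standard Ryll-Nardzewski consequence you describe; $(2)\Rightarrow(1)$ hinges on amalgamation in $\Forbi(\F)$ over algebraically closed finite sets (the combinatorial content behind Lemma~\ref{cherlinlema}) plus a uniform bound on $|\mathrm{acl}(\bar a)|$; and $(1)\Rightarrow(3)$ is Cameron's theorem applied to the unique countable model. Two points are worth tightening. First, your appeal to Vaught's test for completeness of $T^{*}_{\F}$ is circular (Vaught needs categoricity, which is what you are proving); completeness of the model companion here follows instead from the joint embedding property, which holds because disjoint union preserves $\Forbi(\F)$ when $\F$ consists of connected graphs. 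Second, the compactness step from pointwise to uniform finiteness of algebraic closure is the genuinely delicate spot: ``$|\mathrm{acl}(\bar a)|>N$'' is not a single first-order condition, so a naive compactness argument does not immediately produce a tuple with infinite closure in an e.c.\ model. In the original paper this is handled via the explicit combinatorial description of algebraic closure (cf.\ Lemma~\ref{cherlinlema}), which yields the uniform bound directly from the finitely many forbidden graphs; you have correctly identified this as the main obstacle, but the route through it is combinatorial rather than pure compactness.
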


See \cite{CherlinShelahShi} for a list of families $\F$ where the existence of
a universal graph $\Forbi(\F)$ is known as well as a proof of the finiteness of
the corresponding algebraic closure.  We only show the following example to illustrate
how rare and irregular those cases can be.

\begin{figure}
\centerline{\includegraphics{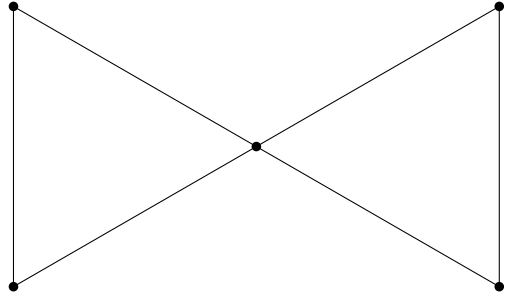}}
\caption{The ``bow tie'' graph $B_{3,3}$.}
\label{kravata}
\end{figure}
Consider the graph $B_{n,m}$ constructed by taking the union of complete graphs $K_n$ and $K_m$, where a single
vertex from $K_n$ is identified with a single vertex of $K_m$ and other vertices
in the union are disjoint. As was shown by Komj\'ath \cite{Komjath}, there is a
universal graph in the class $\Forbi(B_{3,3})$ ($B_{3,3}$ is a ``bow tie graph'', see Figure
\ref{kravata}).  The result was generalized by Cherlin and Shi in \cite{CherlinShi2}: there is
a universal graph in the class $\Forbi(B_{m,n})$ if and only if $\min(m,n)\leq 5$
and $(m,n)\neq (5,5)$. This shows how delicate the conditions for the
existence of a universal structure can be.

The algebraic closure is related to amalgamation, as shown by the following lemma.
\begin{lem}[Cherlin, Shelah, Shi \cite{CherlinShelahShi}]
\label{cherlinlema}
Let $\F$ be a finite family of finite graphs, $G$ an existentially complete graph in $\Forbi(\F)$, and $A\subseteq G$. The following statements are equivalent:
\begin{enumerate}
\item $A$ is not algebraically closed in $G$.
\item There is some graph $F\in \F$, $F'$ an induced subgraph of $G$ and a homomorphism $F\to F'$ so that $F$ embeds in the free amalgamation of $|F|$ copies of $F'$ over $A$.
\end{enumerate}
\end{lem}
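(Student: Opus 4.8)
The plan is to prove the equivalence of statements 1 and 2 by establishing both contrapositives, working directly with the definition of algebraic closure and the combinatorics of free amalgamation. The connection that makes this work is that a vertex $a$ being algebraic over $A$ amounts to a bound on how many copies of $a$'s ``type over $A$'' can coexist in any structure of the class, and such a bound is exactly what is violated when many copies of a local configuration can be freely glued along $A$ without creating a forbidden $F\in\F$.

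First I would prove the direction $\neg 1 \Rightarrow \neg 2$ (equivalently $2 \Rightarrow 1$), which I expect to be the more routine half. Suppose $A$ is algebraically closed in $G$, so that for every existential formula the corresponding solution set is either infinite or already pinned down inside $A$. I would take an arbitrary induced subgraph $F'$ of $G$ together with a homomorphism $F\to F'$, form the free amalgamation of $k=|F|$ copies of $F'$ over $A$, and argue that because $A$ is algebraically closed we may find $k$ genuinely disjoint realizations of the relevant configuration spread out in $G$ (using existential completeness of $G$ to realize each copy and algebraic closedness to guarantee we are not forced into overlaps). The point is that no copy of $F$ can be forced to embed, since the vertices outside $A$ in distinct copies carry no relations between them in a free amalgamation, and $G\in\Forbi(\F)$ already forbids $F$; this contradicts the embedding asserted in statement 2.

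For the main direction $1 \Rightarrow 2$, suppose $A$ is not algebraically closed, so there is a vertex $a\in G\setminus A$ (realizing some existential type over $A$) with infinitely many realizations in $G$, yet such that assembling enough of these realizations into a free amalgamation over $A$ is obstructed. Here I would use existential completeness of $G$ in $\Forbi(\F)$: the failure of $a$ to be algebraic means the defining configuration can be repeated without bound, and I would take $F'$ to be the induced subgraph of $G$ spanned by $A$ together with one realization of this type. Freely amalgamating $|F|$ copies of $F'$ over $A$ produces a graph in which, if no forbidden $F\in\F$ embedded, existential completeness of $G$ would let us realize arbitrarily many pairwise-free copies inside $G$ — contradicting that $G$ is in $\Forbi(\F)$ or that the type was non-algebraic in the required finite way. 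Thus some $F\in\F$ must embed into the free amalgamation, and the homomorphism $F\to F'$ records how the vertices of $F$ distribute among the copies.

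The hard part will be the precise bookkeeping in this last step: one must show that the \emph{number} of copies needed is exactly $|F|$, and that the embedding of $F$ into the free amalgamation induces a well-defined homomorphism $F\to F'$ by collapsing copies back onto the single configuration $F'$. The subtlety is that an embedded copy of $F$ may use vertices from several of the $|F|$ copies of $F'$ as well as from $A$; the homomorphism $F\to F'$ arises by projecting each used copy onto $F'$, and one must check this projection respects edges. I would handle this by noting that edges of $F$ either lie within a single copy (hence map to edges of $F'$ by the projection) or span the shared part $A$ (hence are preserved since $A$ is common to all copies) — the freeness of the amalgamation guarantees there are no other edges to worry about, which is precisely why the free amalgamation, rather than an arbitrary one, is the correct object here.
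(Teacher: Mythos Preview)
First, a remark on the paper: this lemma is not proved in the thesis at all. It is quoted from Cherlin--Shelah--Shi \cite{CherlinShelahShi} and then used as a black box in the derivation of Corollary~\ref{1thm}. So there is no ``paper's own proof'' to compare against; what follows is a critique of your proposal on its own merits.

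Your outline has the right overall shape but contains a genuine confusion of definitions that breaks the $1\Rightarrow 2$ direction. You write that ``$A$ is not algebraically closed, so there is a vertex $a\in G\setminus A$ \ldots\ with infinitely many realizations in $G$'', and two lines later speak of ``the failure of $a$ to be algebraic''. This is backwards. Non-closure of $A$ means there exists $a\in\mathrm{acl}_G(A)\setminus A$, i.e.\ $a$ \emph{is} algebraic over $A$: it satisfies an existential formula over parameters from $A$ whose solution set is \emph{finite}. The argument you sketch (``repeat the configuration without bound and then an $F$ must appear'') is really the argument for the \emph{other} direction, and as written it proves nothing here. The correct $1\Rightarrow 2$ argument starts from a finite solution set, takes the witnesses of the existential formula to build a finite $F'$, and shows that the finiteness bound forces some forbidden $F$ to embed once enough free copies of $F'$ are glued over $F'\cap A$; pigeonhole on $|F|$ copies is what makes $|F|$ the right number. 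A second problem in the same paragraph: you propose $F'$ to be ``the induced subgraph of $G$ spanned by $A$ together with one realization''. Since $A$ is an arbitrary (possibly infinite) subset of $G$, this $F'$ need not be finite; you must take $F'$ on the finitely many parameters and witnesses only.

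Your $\neg 1\Rightarrow\neg 2$ direction is closer to correct but still has a gap. You want to show that if $A$ is algebraically closed then no $F$ embeds in the free amalgamation of $|F|$ copies of any $F'$ over $A$. Your plan is to realize the free amalgamation inside $G$ (using existential completeness and the fact that each vertex of $F'\setminus A$ has infinitely many realizations) and then invoke $G\in\Forbi(\F)$. The missing step is why infinitely many realizations of each vertex suffice to produce $|F|$ copies of $F'$ that are pairwise \emph{free} over $F'\cap A$ (no cross-edges). Infinitely many realizations of a single-vertex type does not automatically give an independent family of whole configurations; one needs to argue, via existential completeness, that the free amalgamation of $G$ with a fresh copy of $F'$ over $F'\cap A$ still lies in $\Forbi(\F)$, and iterate. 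Your final paragraph on projecting an embedded $F$ back to a homomorphism $F\to F'$ is correct and is indeed the reason free amalgamation is the right construction.
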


We turn our attention to the following corollary that captures what we consider
to be the most interesting case.

\begin{corollary}[Cherlin, Shelah, Shi \cite{CherlinShelahShi}]
\label{1thm}
For every finite family $\F$ of finite connected graphs there is a universal
graph for the class $\Forb(\F)$.
\end{corollary}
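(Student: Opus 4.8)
The plan is to reduce $\Forb(\F)$ to an embedding-forbidding class and then apply Theorem~\ref{cherlinthm}. As noted at the beginning of the chapter, $\Forb(\F)=\Forbi(\F')$, where $\F'$ is the family of all graphs that arise as a surjective homomorphic image of some member of $\F$. Since $\F$ is finite and of finite type, $\F'$ is finite; and since the homomorphic image of a connected graph is connected (the image of a walk joining two preimages is a walk joining their images), every member of $\F'$ is a finite connected graph. Hence $\F'$ satisfies the hypotheses of Theorem~\ref{cherlinthm}, and it suffices to verify its condition~(2): that for every model $\relsys{M}$ of $T^{*}_{\F'}$ and every finite $A\subseteq M$ the algebraic closure $\mathrm{acl}_\relsys{M}(A)$ is finite.

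First I would prove the stronger statement that the algebraic closure is trivial, $\mathrm{acl}_\relsys{M}(A)=A$. Let $G$ be an existentially complete graph in $\Forbi(\F')$ (these are the structures to which Lemma~\ref{cherlinlema} and the notion of algebraic closure apply, namely the models of $T^{*}_{\F'}$) and let $A\subseteq G$ be finite. Suppose, for contradiction, that $A$ is not algebraically closed. By Lemma~\ref{cherlinlema} applied to $\F'$, there are a graph $F\in\F'$, an induced subgraph $F'$ of $G$ and a homomorphism $F\to F'$ such that $F$ embeds into the graph $H$ obtained as the free amalgamation of $|F|$ copies of $F'$ over $A$.

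The hard part, and the heart of the argument, is to show that $H$ nonetheless belongs to $\Forb(\F)$; this contradicts the embedding $F\hookrightarrow H$, because $F\in\F'$ while $\Forb(\F)=\Forbi(\F')$ forbids embeddings of members of $\F'$. The class $\Forb(\F)$ is hereditary and, crucially, closed downwards in the homomorphism order: if $B\to B'$ and $B'\in\Forb(\F)$ then $B\in\Forb(\F)$, since any homomorphism $F_0\to B$ with $F_0\in\F$ would compose to a homomorphism $F_0\to B'$. Now $H$ is the free amalgamation of mutually isomorphic copies of the single graph $F'$, all sharing $A$; folding each copy back onto a fixed one by the isomorphism that is the identity on $A$ defines a map $r\colon H\to F'$. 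Because the amalgamation is free, every edge of $H$ lies inside a single copy, so $r$ preserves edges and is a homomorphism. As $F'$ is an induced subgraph of $G\in\Forb(\F)$, heredity gives $F'\in\Forb(\F)$, and downward closure along $r$ then gives $H\in\Forb(\F)$ --- the desired contradiction. Hence $\mathrm{acl}_\relsys{M}(A)=A$.

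With condition~(2) established, Theorem~\ref{cherlinthm} produces an $\omega$-categorical graph $\relsys{U}$ that is universal for $\Forbi(\F')=\Forb(\F)$, which is exactly the required universal graph. The only ingredient beyond the quoted results is the elementary retraction $r$; the point to handle with care is the precise reading of the phrase ``free amalgamation of $|F|$ copies of $F'$ over $A$'' in Lemma~\ref{cherlinlema} --- specifically that distinct copies meet only in $A$ and carry no edges between their private parts --- since the homomorphism property of $r$ relies exactly on this freeness.
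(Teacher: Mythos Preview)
Your proof is correct and follows the same overall framework as the paper: pass to $\Forbi(\F')$, show that algebraic closure is trivial in every existentially complete model, and invoke Theorem~\ref{cherlinthm}. The paper's argument is shorter at the key step, however. From Lemma~\ref{cherlinlema} you obtain $F\in\F'$, an induced $F'\subseteq G$, and a homomorphism $F\to F'$; the paper stops right there, since the composite $F\to F'\hookrightarrow G$ is already a homomorphism from a member of $\F$ into $G$ (every $F\in\F'$ is by construction a homomorphic image of some $F_0\in\F$, so $F_0\to F\to G$), contradicting $G\in\Forb(\F)$. You instead invoke the embedding clause of the lemma and build the folding retraction $r\colon H\to F'$ to show $H\in\Forb(\F)=\Forbi(\F')$. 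This is valid, and your handling of the $\F$-versus-$\F'$ distinction is more explicit than the paper's somewhat elliptical phrasing, but the retraction is not needed: the bare existence of the homomorphism $F\to F'$ already yields the contradiction, and the ``embeds in the free amalgamation'' part of Lemma~\ref{cherlinlema} can be ignored entirely for this corollary.
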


\begin{proof}[Proof of Corollary \ref{1thm}]

Take $G$ an existentially complete graph in $\Forb(\F)$. We prove that every finite
subset of vertices of $G$ is algebraically closed in $G$. 

Fix $A$ a finite subset of vertices of $G$ and assume that $A$ is not
algebraically closed in $G$.  By Lemma \ref{cherlinlema} there is a graph $F\in
\F$ such that there is a homomorphism $F\to F'$ and $F'$ is an induced subgraph of
$G$. It follows that $G\notin \Forb(\F)$, a contradiction.

Because every finite subset of $G$ is algebraically closed in $G$, by Theorem
\ref{cherlinthm} there is a universal graph for $\Forb(\F)$.
\end{proof}

Corollary \ref{1thm} stated for relational structures follows also from Theorem
\ref{Convigtonthm}.  It is easy to see that for a finite family $\F$ of finite
connected relational structures the classes $\Forb(\F)$ do have local failure of
amalgamation. Every amalgamation failure contains a homomorphic copy of one of
the forbidden structures.

However both of these approaches (based on \cite{Covington} and \cite{CherlinShelahShi}) use model-theoretic tools.
In Chapter \ref{Forbchapter} we give a new combinatorial proof of this result.
Similarly to the proof of Theorem {\ref{Convigtonthm}}, we also extend the
language by new relations to build an amalgamation class based on the class
$\Forb(\F)$.  We use the notions of lifts and shadows:

Fix type $\Rel(\Delta), \Delta=(\delta_i: i\in I)$, $I$ finite.  Now let
$\Delta'=(\delta'_i:i\in I')$ be a type containing type $\Delta$. (By this we
mean $I\subseteq I'$ and $\delta'_i=\delta_i$ for $i\in I$.) Then every
structure $\relsys{X}\in \Rel(\Delta')$ may be viewed as a structure
$\relsys{A}=(A,(\rel{A}{i}: i\in I))\in \Rel(\Delta)$ together with some
additional relations $\rel{X}{i}$ for $i\in I'\setminus I$. 

We call $\relsys{X}$ a {\em lift} of $\relsys{A}$ and $\relsys{A}$ is called
the {\em shadow} (or {\em projection}) of $\relsys{X}$. 

In Chapter \ref{Forbchapter} we prove the following:
\begin{thm}
\label{mainthmvuvodu}
Let $\F$ be a countable set of finite connected relational structures (of finite type $\Delta$).  Then there exists a class $\Lifts$ of lifts (relational structures of type $\Delta'$) such that the shadow of any $\relsys{X}\in
\Lifts$ is in $\Forb(\F)$. Moreover $\Age(\Lifts)$ is an amalgamation class and
there is a generic structure $\relsys{U}$ for $\Lifts$. The shadow of $\relsys{U}$ is an $\omega$-categorical universal structure for the class $\Forb(\F)$.

For $\F$ finite, there is a finite class of finite connected lifts $\F'$ such that
$\Lifts=\Forbi(\F')$.
\end{thm}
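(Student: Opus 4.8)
The plan is to prove the statement by \emph{homogenization}: I would enrich the language of $\Forb(\F)$ with finitely (or countably) many new relations, define $\Lifts$ as $\Forbi(\F')$ for a suitable family $\F'$ of finite connected lifts, and then obtain the generic lift $\relsys{U}$ from \Fraisse{}'s theorem (Theorem~\ref{fraissethm}), reading off the universal structure for $\Forb(\F)$ as the shadow of $\relsys{U}$. Defining $\Lifts$ as a class of the form $\Forbi(\F')$ is what makes the final sentence of the theorem automatic and, since $\Age(\Forbi(\F'))$ is then hereditary, isomorphism-closed and countable by inspection, it leaves only the amalgamation property to be verified for Theorem~\ref{fraissethm} to apply. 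The substantive work is therefore concentrated in three places: designing the lift together with $\F'$, checking amalgamation, and deriving universality and $\omega$-categoricity.

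For the lift I would add, for every $\relsys{F}\in\F$ and every way of decomposing $\relsys{F}$ into connected \emph{pieces} meeting along a small set of \emph{root} vertices, a relation that marks the tuples carrying a homomorphic image of such a rooted piece. The family $\F'$ would then consist of two kinds of finite lifts: \emph{marker-forcing} obstructions --- the shadow of a piece together with an unmarked root --- and \emph{completion} obstructions --- a root set carrying the markers of pieces that together cover some whole $\relsys{F}\in\F$. Because an embedding is an isomorphism onto an \emph{induced} substructure, it sees the absence of relations, so the marker-forcing obstructions really are forbidden induced configurations. Consequently, inside $\Forbi(\F')$ the markings are automatically honest (every realised piece is recorded, on pain of embedding a marker-forcing obstruction), and the completion obstructions then rule out any full homomorphic image of $\relsys{F}$; this proves that the shadow of every $\relsys{X}\in\Lifts$ lies in $\Forb(\F)$. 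By putting the new relations on every pair of root vertices I would arrange that all members of $\F'$ are \emph{irreducible}.

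Amalgamation is where the construction pays off. Since $\F'$ is a family of finite irreducible structures, the observation recorded earlier --- that $\Age(\Forbi(\F'))$ admits \emph{free} amalgamation --- applies directly: a copy of any irreducible obstruction embedded in a free amalgam of two lifts must lie entirely on one side, contradicting membership of the sides in $\Forbi(\F')$. The only point needing care is that the pieces, hence the shadows appearing in marker-forcing obstructions, are connected, so they cannot straddle a free amalgam; this is exactly what guarantees that the free amalgam stays honest and keeps its shadow in $\Forb(\F)$. With amalgamation established, Theorem~\ref{fraissethm} produces the generic lift $\relsys{U}$.

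Universality of the shadow follows from the \emph{canonical lift}: given any countable $\relsys{A}\in\Forb(\F)$, mark each tuple with exactly the pieces it realises. This lift avoids $\F'$ precisely because $\relsys{A}$ admits no homomorphism from $\F$, so it is younger than $\relsys{U}$ and, by Lemma~\ref{extuniv}(3), embeds into $\relsys{U}$; its shadow $\relsys{A}$ then embeds into the shadow of $\relsys{U}$. For $\omega$-categoricity, when $\F$ is finite the type $\Delta'$ is finite, the ultrahomogeneous $\relsys{U}$ has for each $n$ only finitely many orbits on $n$-tuples (one per isomorphism type of $n$-vertex lifted substructure), and since every automorphism of $\relsys{U}$ is also an automorphism of its shadow, the shadow's automorphism group has finitely many $n$-orbits as well; the Engeler--Ryll-Nardzewski--Svenonius characterisation then yields $\omega$-categoricity, and the finite family $\F'$ witnesses the last sentence. \textbf{The main obstacle} I anticipate is the design in the second step: the obstructions in $\F'$ must be complete enough to force honest markings and to detect every completion, yet of bounded arity and finite in number, which amounts to showing that only boundedly many rooted pieces ever need to be recorded --- reconciling these two demands is the crux of the whole argument.
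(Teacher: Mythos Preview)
Your overall architecture matches the paper's---lift by piece-markers, get the generic lift via \Fraisse{}, read off universality from canonical lifts---but the amalgamation step contains a genuine gap. You propose to make every obstruction in $\F'$ irreducible and then invoke the observation that $\Age(\Forbi(\F'))$ has free amalgamation whenever $\F'$ consists of irreducible structures. The marker-forcing obstructions, however, are not irreducible: such an obstruction carries the whole shadow of a piece $\relsys{P}_i$, and a piece is merely \emph{connected}, not irreducible (think of a path of length~$3$ arising as a piece of $C_5$: its interior vertex shares no tuple with either root). Adding a lifted relation on the root tuple does nothing for the non-root vertices of the piece, so the free-amalgamation shortcut does not apply. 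In fact the paper shows (Section on ``Lifted classes with free amalgamation'') that $\Lifts$ admits free amalgamation \emph{only} when every minimal cut of every $\relsys{F}\in\F$ induces an irreducible substructure---a special case, not the general one.

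The paper's actual amalgamation argument is therefore where the real work sits, and it is different from what you sketch. The class $\Lifts$ is first defined semantically as the hereditary closure of canonical lifts $L(\relsys{A})$, $\relsys{A}\in\Forb(\F)$; to amalgamate $\relsys{X}$ and $\relsys{Y}$ over $\relsys{Z}$ one picks \emph{witnesses} $\relsys{A}=W(\relsys{X})$, $\relsys{B}=W(\relsys{Y})$ in $\Forb(\F)$, freely amalgamates the shadows to get $\relsys{D}$, and takes $\relsys{V}=L(\relsys{D})$. One then checks, by an induction on the size of pieces, that $\relsys{V}$ restricted to $A$ (resp.\ $B$) is exactly $L(\relsys{A})$ (resp.\ $L(\relsys{B})$) and that $\relsys{D}\in\Forb(\F)$; the amalgam is explicitly \emph{not} free in the lifted language. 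Only afterwards (Theorem~\ref{reprezent2}) is $\Lifts$ identified with $\Forbi(\F')$, and $\F'$ consists of homomorphic images of covering lifts rather than single ``piece with unmarked root'' configurations---your marker-forcing obstructions as stated would also miss non-injective homomorphic images of pieces. So the crux you flagged is real, but it is resolved by the witness-and-minimal-piece argument, not by irreducibility.
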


This result gives an explicit construction of $\omega$-categorical universal graphs
for the classes $\Forb(\F)$. Moreover the class $\Lifts$ has a relatively easy
description allowing us to examine those structures for several special classes
$\F$. This has several combinatorial consequences. Particularly we show the connection to homomorphism dualities and Urysohn spaces.

\subsection{On-line embeddings}
In the previous section we gave characterization theorems similar to the
\Fraisse{}
theorem for the existence of universal structures with special properties.  Now we concentrate on the
opposite problem of proving the universality of a known structure $\relsys{U}$.
We introduce a notion for universal structures similar to the extension property for homogeneous structures (and to \Fraisse{}-Ehrenfeucht games).

By an {\em on-line representation} of a class $\K$ of relational structures in a
structure $\relsys{U}$, we mean that one can construct an embedding
$\varphi:\relsys{A}\to \relsys{U}$ of any structure $\relsys{A}$ in the class $\K$ under the
restriction that the elements of $\relsys{A}$ are revealed one by one.  The on-line
representation of a class of a relational structure can be considered as a game
between two players $A$ and $B$ (usually Alice and Bob).  Player $B$ chooses a
structure $\relsys{A}$ in the class $\K$, and reveals the elements of
$\relsys{A}$ one by one to player $A$ ($B$ is a bad guy).  Whenever a vertex
of $x$ of $\relsys{A}$ is revealed to $A$, the relations among $x$ and
previously revealed elements are also revealed.  Player $A$ is required to
assign a vertex $\varphi(x)$---before the next element is revealed---such that
$\varphi$ is an embedding of the substructure induced by  $\relsys{U}$ on the
already revealed vertices of $\relsys{A}$.  Player $A$ wins the game if he
succeeds in constructing an embedding $\varphi$.  The class $\K$ of relational structures
is on-line representable in the structure $\relsys{U}$ if player $A$ has
a winning strategy. 

On-line representation (describing a winning strategy for $A$) is a convenient way
of showing the universality of a relational structure for a given hereditary class
$\K$ of countable relational structures. In particular, it transforms the problem of
embedding countable structures into the finite problem of extending an existing
finite partial embedding to the next element. 

Universal structures in general may or may not allow an on-line representation.
Consider the example of a universal graph $U$ for the class $\Forb(C_5)$ of
all countable graphs not containing a homomorphic image of the cycle on 5 vertices.
It is easy to see that there is no winning strategy for player $A$.

Player $B$ may first embed two vertices $v_1$ and $v_2$ not joined by an
edge.  The images provided by player $A$ in the graph $U$ may be
connected by the path of length 3 or the path of length 2.  Since no two
vertices in $U$ are connected by both the path of length 3 and the
path of length 2, player $B$ may continue by asking $A$ to embed the missing
path and win the game.

This argument is just a variant of the argument we gave about the lack of a generic graph
for the class $\Forb(C_5)$ and thus it may seem that on-line embeddings give a
little help in showing universality of a structure for classes without a generic
one.  As shown in Chapter $\ref{Forbchapter}$, the rules of the game can be
modified to get a variant with winning strategy for $A$.  All we need is to ask
$B$ to announce also the existence of paths of length 2 as well as the
existence of paths of length 3 connecting the already revealed vertices of the
graph. Such a modified game still implies universality. 

In Part II, on-line embedding will be the key tool to show universality of
explicit partial orders.  In this case we do have a generic structure for the
class of all partial orders, yet we are interested in various universal, but
not generic, examples.  The extension property is a stronger form of on-line
representation and thus we get the following simple lemma:

\begin{lem}
Let $\K$ be a class of countable relational structures that contains
a generic structure for $\K$. For every $\relsys{U}\in \K$ the
following conditions are equivalent:
\begin{enumerate}
\item There is an on-line representation of $\K$ in the relational structure $\relsys{U}$.
\item The relational structure $\relsys{U}$ is universal for $\K$.
\end{enumerate}
\end{lem}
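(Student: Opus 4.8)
The plan is to prove the two implications separately, and to notice at the outset that only the direction $(2)\Rightarrow(1)$ actually uses the hypothesis that $\K$ contains a generic structure; the direction $(1)\Rightarrow(2)$ holds for an arbitrary target structure.

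For $(1)\Rightarrow(2)$ I would argue directly. Suppose player $A$ has a winning strategy for the on-line game in $\relsys{U}$. Given any $\relsys{A}\in\K$, I enumerate its (countably many) vertices as $x_1,x_2,\ldots$ and let player $B$ reveal them in this order; since each revealed vertex arrives together with all its relations to the previously revealed vertices, the strategy produces values $\varphi(x_1),\varphi(x_2),\ldots$ in $\relsys{U}$ so that $\varphi$ restricted to every finite initial segment is an embedding. Because every relation has finite arity, a map whose restriction to each finite subset is an embedding is itself an embedding; hence $\varphi\colon\relsys{A}\to\relsys{U}$ is an embedding and $\relsys{U}$ is universal for $\K$.

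For $(2)\Rightarrow(1)$ I would exploit the generic structure $\relsys{G}\in\K$. Since $\relsys{G}$ is ultrahomogeneous it has the extension property, and --- as already observed immediately before the statement --- the extension property is a stronger form of on-line representation, so player $A$ wins the on-line game played \emph{against $\relsys{G}$}: at each move the revealed finite substructure lies in $\Age(\relsys{A})\subseteq\Age(\K)\subseteq\Age(\relsys{G})$, and the extension property lets $A$ enlarge the current partial embedding by the single new vertex. Now universality of $\relsys{U}$ enters only once: because $\relsys{G}\in\K$, I fix an embedding $\psi\colon\relsys{G}\to\relsys{U}$. Player $A$'s strategy in the game in $\relsys{U}$ is to run the strategy above to maintain an on-line embedding $\varphi$ into $\relsys{G}$ and to output $\psi\circ\varphi$; since $\psi$ is a fixed embedding, the composite is again an embedding built one vertex at a time, now into $\relsys{U}$.

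The only genuinely delicate point is the claim that the extension property yields an on-line winning strategy against $\relsys{G}$, together with the bookkeeping that the revealed substructures always stay inside $\Age(\relsys{G})$. The latter is where universality of $\relsys{G}$ for $\K$ is used: it gives $\Age(\K)\subseteq\Age(\relsys{G})$, so the hypotheses of the extension property (a one-vertex extension $\relsys{C}$ of $\relsys{B}$, both members of $\Age(\relsys{G})$) are satisfied at every step. Everything else is routine; in particular the composition step and the whole direction $(1)\Rightarrow(2)$ need no genericity and hold for any $\relsys{U}\in\K$.
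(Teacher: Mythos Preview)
Your proof is correct and follows exactly the approach the paper intends: the paper states this lemma without proof, preceded only by the remark that ``the extension property is a stronger form of on-line representation,'' and you have filled in precisely the details that remark points to --- using the extension property of the generic $\relsys{G}$ to win the game in $\relsys{G}$, then post-composing with a fixed embedding $\relsys{G}\to\relsys{U}$ supplied by universality. Your observation that only $(2)\Rightarrow(1)$ needs the generic structure, and your bookkeeping that $\Age(\relsys{A})\subseteq\Age(\K)\subseteq\Age(\relsys{G})$ (the latter inclusion coming from universality of $\relsys{G}$), are exactly the points that make the argument go through.
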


\section{Homomorphism-universal structures}
\label{homounivsection}
Homomorphism-universality is a weaker notion than embedding-universality: if a
class $\K$ of countable relational structures contains an embedding-universal
structure $\relsys{U}$, the same structure is also homomorphism-universal.

Often also the following notion of universality is considered: for a given
class $\K$ of relational structures we say that the structure $\relsys{U}$ is
an {\em monomorphism-universal} (sometimes also {\em weakly-universal})
structure for $\K$ if $\relsys{U}\in \K$ and every structure $\relsys A\in \K$
can be found as (possibly non-induced) substructure of $\relsys U$.

For countable structures the problems of the existence of monomorphism{}- and em\-bed\-ding-universal
structures coincide.  This has been proved in \cite{CherlinShelahShi}.
 On the
other hand, the notions homomorphism-universal and embedding-universal are
clearly different.  Consider as an example the class of all planar graphs.  In
this case the finite homomorphism-universal graph exists (the graph $K_4$ is
homomorphism-universal by virtue of 4-color theorem) while neither an embedding- nor
a monomorphism-universal graph exists (see \cite{Hajnal Komjatk}). However in
many cases we can prove that not only does an embedding-universal graph not exist, but also that there is no homomorphism-universal graph.
 This is the case for example with forbidding
$C_4$ -- the cycle of length 4. 

New interesting questions arise when we focus on homomorphism-universality
alone.  In particular, it is interesting to ask whether there exists a finite
homomorphism-universal structure $\relsys{D}$.

A {\em finite duality} (for structures of given type) is any equation
$$\Forb(\F)=\{\relsys{A}: \relsys{A}\to \relsys{D}\}$$ where $\relsys{D}$ is
a finite relational structure and $\F$ is a finite set of finite relational
structures. $\relsys{D}$ is called the {\em dual of $\F$}, the pair $(\F,\relsys{D})$ the
{\em dual pair}.  For this case we also say that the class $\Forb(\F)$ has
finite duality.

\begin{figure}
\centerline{\includegraphics{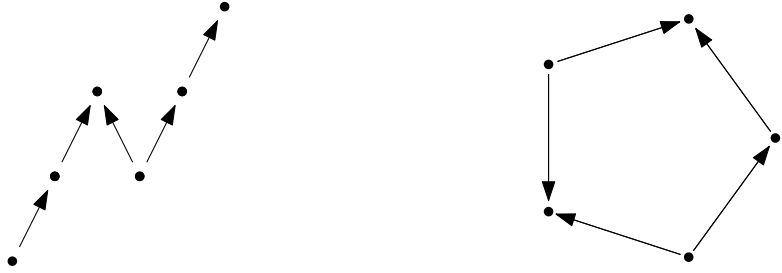}}
\caption{A dual pair.}
\label{dualpair}
\end{figure}
An example of a dual pair is depicted in Figure \ref{dualpair}.

Given that homomorphism-universality is related to embedding-universality it
may be surprising that there is very simple characterization of such families
$\F$.

\begin{thm}[Ne\v set\v ril, Tardif \cite{NTardif}]
For every type $\Delta$ and for every finite set $\F$ of finite relational trees there exists
a dual $\Delta$-structure $\relsys{D}_\F$. Up to homomorphism-equivalence there
are no other dual pairs.
\end{thm}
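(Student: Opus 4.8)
The plan is to prove the two halves separately: first the existence of a dual for any finite family of relational trees, then the converse that every finite duality is, up to homomorphism-equivalence, of this form. For existence I would begin by reducing to a single tree. Writing $\Forb(\F)=\bigcap_{\relsys{F}\in\F}\Forb(\{\relsys{F}\})$, it suffices to produce for each tree $\relsys{F}$ a dual $\relsys{D}_{\relsys{F}}$ and then set $\relsys{D}_\F=\prod_{\relsys{F}\in\F}\relsys{D}_{\relsys{F}}$, the categorical product of relational structures. By the universal property of the product, $\relsys{A}\to\prod_{\relsys{F}}\relsys{D}_{\relsys{F}}$ holds if and only if $\relsys{A}\to\relsys{D}_{\relsys{F}}$ for every $\relsys{F}$; and since a homomorphism into a product projects onto each factor, $\relsys{F}\not\to\relsys{D}_{\relsys{F}}$ forces $\relsys{F}\not\to\prod_{\relsys{F}}\relsys{D}_{\relsys{F}}$, so the product again lies in $\Forb(\F)$. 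Thus the whole problem collapses to the single-tree case.

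For a single tree $\relsys{T}$ I would exploit acyclicity: because the incidence graph of $\relsys{T}$ is a tree, the existence of a homomorphism $\relsys{T}\to\relsys{A}$ is governed by a local, cycle-free consistency condition propagating along $\relsys{T}$ rather than by any global constraint. Concretely I would let the vertices of $\relsys{D}_{\relsys{T}}$ be the finitely many admissible "states" (the partial-embedding types of rooted subtrees of $\relsys{T}$), with relations chosen to permit exactly the legal transitions. One then checks both directions: $\relsys{T}\not\to\relsys{D}_{\relsys{T}}$ holds by construction, and conversely any $\relsys{A}$ with $\relsys{T}\not\to\relsys{A}$ admits the homomorphism sending each vertex to its state, the verification reducing to the acyclic propagation. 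Combined with the product step and the trivial composition argument (a homomorphism $\relsys{F}\to\relsys{A}\to\relsys{D}_\F$ would yield $\relsys{F}\to\relsys{D}_\F$), this gives $\Forb(\F)=\{\relsys{A}:\relsys{A}\to\relsys{D}_\F\}$.

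For uniqueness I would first observe that any two duals of the same $\F$ both lie in $\Forb(\F)$ and are homomorphism-maximal in it, hence are homomorphism-equivalent; so the real content is the converse: if $(\F,\relsys{D})$ is an arbitrary finite duality then $\F$ may be replaced by a finite set of trees. This amounts to showing that the minimal obstructions of the class $\{\relsys{A}:\relsys{A}\to\relsys{D}\}$---the cores $\relsys{B}$ with $\relsys{B}\not\to\relsys{D}$ while $\relsys{B}'\to\relsys{D}$ for every $\relsys{B}'$ strictly below $\relsys{B}$ in $\leq_h$---are trees. Here is where I expect the real difficulty. Suppose a minimal obstruction $\relsys{B}$ is not a tree, so its incidence graph contains a cycle. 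I would invoke the sparse incomparability lemma of Ne\v set\v ril and R\"odl to produce a structure $\relsys{B}^{*}$ of girth exceeding the size of $\relsys{D}$, with $\relsys{B}^{*}\to\relsys{B}$ and $\relsys{B}^{*}\to\relsys{A}$ exactly when $\relsys{B}\to\relsys{A}$ for all $\relsys{A}$ of size at most $|D|$; in particular $\relsys{B}^{*}\not\to\relsys{D}$. Its high girth makes $\relsys{B}^{*}$ locally tree-like, allowing one to extract a genuine tree obstruction lying below it and thereby contradict either the minimality of $\relsys{B}$ or the finiteness of the duality.

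The main obstacle is thus not the explicit construction of $\relsys{D}_\F$, which is a fairly direct consequence of the acyclicity of trees together with the product reduction, but the converse direction: ruling out non-tree minimal obstructions relies on the (non-trivial) high-girth/sparse-incomparability machinery rather than on any direct combinatorial manipulation, and it is this step that I would expect to occupy the bulk of the work.
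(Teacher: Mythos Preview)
The paper does not prove this theorem; it is quoted from \cite{NTardif} as a background result in the introductory chapter, with no proof given. What the paper does contribute is a \emph{new} construction of the dual $\relsys{D}_\F$ for the existence direction only, carried out in Chapter~\ref{Forbchapter} as Corollary~\ref{nasdual}. So your proposal should be compared against that corollary, not against a non-existent proof of the full characterization.

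For the existence direction, your approach and the paper's are genuinely different. You follow what is essentially the original Ne\v set\v ril--Tardif line: reduce to a single tree via products, then build the dual directly from ``states'' (types of rooted subtrees). The paper instead routes through its general lift machinery: since every minimal cut of a relational tree has size~$1$, the canonical $\F$-lift of Theorem~\ref{mainthm} is monadic; the resulting amalgamation class has a generic lift $\relsys{U}'$, and because the forbidden lifts are determined by single tuples, $\relsys{U}'$ has a finite retract whose shadow is the dual. Your construction is more direct and self-contained; the paper's construction is less explicit but situates the dual as a retract of the embedding-universal structure, which is precisely the connection the thesis wants to highlight (finite homomorphism-universal object sitting inside the countable embedding-universal one).

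For the converse direction---that every dual pair arises from trees---your outline via sparse incomparability is the standard route and is correct in spirit, but the paper makes no attempt at this half; it simply cites it. So there is nothing in the paper to compare your converse argument against.
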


A (relational) tree can be defined as follows:
\begin{defn}
\label{reltree}
The {\em incidence graph} $ig(\relsys{A})$ of a relational structure $\relsys{A}$ is the                      
bipartite graph with parts $A$ and                                  
$$                                                                              
Block(A) = \{ (i,(a_1, \ldots, a_{\delta_i})) : i \in I,                       
(a_1, \ldots, a_{\delta_i}) \in \rel{A}{i} \},                                      
$$                                                                              
and edges $\{a, (i,(a_1, \ldots, a_{\delta_i}))\}$                       whenever
$a \in (a_1, \ldots, a_{\delta_i})$. (Here we write                             
$x \in (x_1, \ldots, x_n)$ when there exists an index $k$                       
such that $x = x_k$; $Block(A)$ is a multigraph.)

A relational structure $\relsys{A}$ is called a {\em tree} when                 $ig(\relsys{A})$ is a graph tree (see e.g. \cite{MatN}).
\end{defn}

Finite dualities also correspond to the only first-order-definable CSP (Constraint Satisfaction Problems, Atserias \cite{Atserias}, Rossman \cite{Rossman},
see e.g. \cite{HellNese}).

A number of constructions of duals are known \cite{NTardif2}.  We give a new
construction in Chapter \ref{Forbchapter}.
Strengthening Theorem \ref{mainthmvuvodu} for the special case of relational
trees, we show that the lifted class $\Lifts$ can be constructed in a way
extending the original type by unary relations only.  Such a lift is called
called a {\em monadic lift}.  New unary relations can be seen as colors of
vertices and the generic structure for the class $\Lifts$ can be retracted by
identifying  vertices of the same color thereby giving a finite dual.  We obtain the following theorem (proved in Chapter \ref{Forbchapter}):

\begin{thm}
For $\F$ a finite family of finite relational trees, there exists a structure
$\relsys{U}\in \Forb(\F)$ that is embedding-universal for the class
$\Forb(\F)$. Moreover, there is a finite structure $\relsys{D}\in \Forb(\F)$ that
is a retract of $\relsys{U}$ and is homomorphism-universal for the class
$\Forb(\F)$.
\end{thm}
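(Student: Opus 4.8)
The plan rests on specializing Theorem \ref{mainthmvuvodu} to the case where $\F$ consists of relational trees and showing that the resulting lift can be taken to be monadic, i.e., extending the type $\Delta$ only by unary relations. First I would invoke Theorem \ref{mainthmvuvodu} to obtain the lift class $\Lifts$, its generic structure $\relsys{U}'$ (a structure of the extended type $\Delta'$), and the fact that the shadow of $\relsys{U}'$ is an $\omega$-categorical universal structure for $\Forb(\F)$. Setting $\relsys{U}$ to be this shadow immediately gives the embedding-universal structure for $\Forb(\F)$ claimed in the first sentence; the universality is exactly the content of Theorem \ref{mainthmvuvodu}, so no new work is needed there.

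The substance of the theorem is the second sentence: producing a \emph{finite} structure $\relsys{D}$ that is a retract of $\relsys{U}$ and is homomorphism-universal. Here I would use the strengthening mentioned in the excerpt, that for a family of relational trees the lift $\Lifts$ can be constructed as a \emph{monadic} lift, so the new relations are unary and may be read as a finite palette of vertex colors. The key step is then a retraction argument: in the generic monadic lift $\relsys{U}'$, I would identify all vertices carrying the same color (the same combination of unary relations), obtaining a finite quotient $\relsys{D}'$ whose number of vertices is bounded by the number of color-classes. I would verify that this identification is a retraction of $\relsys{U}'$ onto $\relsys{D}'$ (the color-classes are each independent enough that collapsing them and then re-including a representative is a homomorphism in both directions), and that $\relsys{D}'$ projects to a finite structure $\relsys{D}$ in $\Forb(\F)$ serving as the dual.

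The homomorphism-universality of $\relsys{D}$ would follow by chasing homomorphisms: any $\relsys{A}\in\Forb(\F)$ admits a lift into $\Lifts$ (this is where the tree hypothesis, via the Ne\v set\v ril--Tardif finite duality theorem, guarantees the colors can be consistently assigned), hence maps into $\relsys{U}'$ by universality, hence composes with the retraction $\relsys{U}'\to\relsys{D}'$ and the shadow projection to land in $\relsys{D}$. Conversely $\relsys{D}$ itself lies in $\Forb(\F)$ because it is a homomorphic image of the shadow $\relsys{U}$, and $\Forb(\F)$ is closed under homomorphic images by definition.

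The main obstacle I anticipate is establishing that the color-collapse is genuinely a retraction rather than merely a homomorphism, and in particular that the collapsed structure stays inside $\Forb(\F)$. Collapsing vertices can create new short tuples and thereby a forbidden homomorphic image, so the heart of the argument is checking that the monadic coloring was rich enough to separate vertices whose identification would produce a copy of some $\relsys{F}\in\F$. This is precisely where the restriction to relational \emph{trees} is essential: tree-shaped obstructions correspond to finite dualities, and it is the finiteness of the dual that lets the finitely many colors record exactly the information needed to block every forbidden tree after collapsing. Verifying this compatibility between the coloring scheme of $\Lifts$ and the tree structure of $\F$ is the step I would expect to require the most care.
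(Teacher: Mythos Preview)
Your overall strategy matches the paper's: specialize Theorem~\ref{mainthmvuvodu} to trees, observe the lift is monadic, and collapse the generic lift by color to obtain the finite dual. Two points deserve correction.

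First, your invocation of the Ne\v set\v ril--Tardif duality theorem is both unnecessary and somewhat circular in spirit. The existence of a lift of any $\relsys{A}\in\Forb(\F)$ into $\Lifts$ requires no tree hypothesis whatsoever: the canonical lift $L(\relsys{A})$ works for arbitrary $\F$, by the very definition of $\Lifts$ in Theorem~\ref{mainthmvuvodu}. The tree hypothesis enters only to guarantee (a) that the lift is monadic (because minimal cuts in a relational tree have size~1), and (b) that the color-collapse is a retraction landing in $\Lifts$. Since the theorem is meant to \emph{produce} a new construction of the dual, appealing to Ne\v set\v ril--Tardif defeats the purpose.

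Second, you correctly locate the crux --- showing the collapsed structure remains in $\Forb(\F)$ --- but you do not supply the mechanism. The paper's argument (Corollary~\ref{nasdual}) goes through the explicit description of $\Lifts$ as $\Forbi(\F')$ from Theorem~\ref{reprezent}: for trees, every inclusion-minimal $\Piece_i$-covering sublift is supported on a \emph{single tuple} of the shadow. Hence the forbidden lifts in $\F'$ are just colored edges (tuples), and membership in $\Lifts$ is decided tuple-by-tuple. This is exactly what permits the retraction: one takes one vertex per color-class and includes a tuple between classes if and only if the resulting single-tuple colored structure is not forbidden. Your phrasing (``the coloring was rich enough to separate vertices whose identification would produce a forbidden copy'') gestures at this but does not isolate the single-tuple property, which is the actual reason the construction closes up.
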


In Chapter \ref{Forbchapter} we give the necessary arity of the lift for a given
family $\F$ and show the existence of families $\F$ of structures that are not
relational trees and still there is a lifted class $\Lifts$ with monadic lifts.
It follows that the existence of a finite dual is stronger than the existence
of an embedding-universal structure that is the shadow of its monadic lift.

We have given a characterization of finite families $\F$ having a dual $\relsys{D}$.
In the opposite direction, we can ask when a given finite $\relsys D$ is
homomorphism-universal for some $\Forb(\F)$, $\F$ finite. Or, equivalently, whether $\relsys D$ is the
dual of some finite set $\F$.  An explicit characterization of all structures that are
duals was given by Larose, Loten and Tardif in \cite{charakterizacedualu}.  Note also that Feder
and Vardi \cite{FederVardi} provided a characterization of all structures
$\relsys D$ that are homomorphism-universal for $\Forb(\F)$, where $\F$ is an infinite family
of trees. 

As finite dualities are characterized by these results we can look at the
notion of restricted dualities.  Here we want the duality to hold only for
structures from a given class $\K$. 

For a finite family $\F$ of finite structures and a structure $\relsys D$, we say that $\F$ and
$\relsys D$ establish a {\em $\K$-restricted duality} if the following
statement holds for every $\relsys{A}\in \K$: 
$$(\relsys{F}\not \to \relsys{A}) \hbox {, for every } \relsys{F}\in \F \hbox{, if and only if} (\relsys{A}\to \relsys{D}).$$
%
In the other words, $\relsys {D}$ is an upper bound of the set $\Forb(\F)\cap \cal
\K$ in the homomorphism order.

As the extremal case we make the following definition.
\begin{defn}
We say that the class of relational structures {\em $\K$ admits all restricted dualities} if,
for any finite set of connected structures $\F=\{\relsys{F_1},\relsys{F_2},\ldots, \relsys{F_t}\}$,
there exists a finite structure $\relsys{D}^{\K}_\F$ such that $\relsys{F_i}\not\to \relsys{D}^{\K}_\F$ for $i=1,2,\ldots, t$, and, for all $\relsys{A}\in \K$,
$$(\relsys{F}_i\not\to \relsys{A}), i=1,2,\ldots,t \hbox{, if and only if} (\relsys{A}\to \relsys{D}^{\K}_\F).$$
\end{defn}
The definition can be motivated by the following example (cf.
\cite{NesetrilDualities}).  Gr\"otzsch's theorem (see for example \cite{Grotzsch})
says that every triangle-free planar graph is 3-colorable.  In the language of
homomorphisms this says that for every triangle-free planar graph $G$ there is
a homomorphism $G\to K_3$.  Or in the other words, $K_3$ is an upper bound in
the homomorphism order for the class $\mathcal P_3$ of all planar triangle-free graphs.  The fact that $K_3\notin \mathcal P_3$ motivates a question
(first formulated in \cite{NT}): Is there a yet smaller bound? The
answer, which may be viewed as a strengthening of  Gr\"otzsch's  theorem, is
positive: there exists a triangle-free 3-colorable graph $H$ such that $G\to H$
for every $G\in \mathcal P_3$. 

Examples of classes with all restricted dualities include: planar graphs,
proper minor-closed classes, bounded expansions.  Such classes were recently
characterized by Ne\v set\v ril and Ossona de Mendez \cite{NesetrilPatris}
using limit objects.


\section{Explicit models of universal structures}
\label{explicitmodels}
Let us return to the example of the generic graph $\Rado$ for the class of all countable
graphs.

The existence of the graph $\Rado$ was  proved in Section
\ref{Genericsection} by applying \Fraisse{}'s theorem and also by showing its
isomorphism to the random graph $\Rado'$. However this construction gives little
insight into the structure of the graph itself. 

An explicit representation (or model) $\Rado_\N$ of the generic graph $\R$ was
first given by Rado \cite{Rado} (and this is the reason why $\Rado$ is known as {\em the Rado graph}):

\begin{enumerate}
\item
the vertices of $\Rado_\N$ are all finite 0--1 sequences $(a_1,a_2,\ldots,a_t), t\in \N$,
\item a pair $\{(a_1,a_2,\ldots, a_t),(b_1,b_2,\ldots,b_s)\}$ form an edge of $\Rado_\N$ if and only if $b_a=1$ where $a=\sum^t_{i=1} a_i2^i$ (or vice versa).
\end{enumerate}

It is not difficult to show that $\Rado_\N$ has the extension property and thus
is isomorphic to $\Rado$.  This remarkably simple explicit description of
$\Rado$ has motivated further ones, such that the following:
\begin{enumerate}
  \item $\Rado$ is isomorphic to the following graph $\Rado_\in$:
	the vertices of $\Rado_\in$ are all finite sets (in some countable model of set
        theory) with edges of the form $\{A,B\}$ where either $A\in B$ or $B\in A$.
  \item $\Rado$ is isomorphic to the following graph $\Rado_{QR}$: the vertices of $\Rado_{QR}$ are all prime natural numbers $x \equiv 1 \mod 4$ with $xy$ forming an edge if and only if $({{x}\over{y}})=+1$.
\end{enumerate}

There are other explicit constructions (see the excellent survey by Cameron \cite{cameron}, see also \cite{Cherlin2,La2}). It is remarkable that all these
seemingly unrelated constructions  define the same graph $\Rado$ and moreover the equivalence can be shown as a trivial application of the extension property.  We give a proof of this fact only for the case of $\Rado_\in$. The other
constructions are entirely analogous.

\begin{thm}
\label{runiv}
  The graph $\Rado_\in$ has the extension property for the class of finite undirected graphs.
  Thus $\Rado_\in$ is isomorphic to the generic undirected graph $\R$.
\end{thm}
\begin{proof}
  Let $J$ and $D$ be two disjoint finite sets of vertices of $\Rado_\in$.
  
 To satisfy the extension property (in the formulation given in Fact \ref{grafext}) for $J$ and $D$ we are looking for a vertex
$X$ of $\Rado_\in$ such that
\begin{enumerate}
\item $Y\in X$ for every $Y\in J$,
\item $Y\notin X$ for every $Y\in D$.
\end{enumerate}
 It suffices to put $X=J\cup\{v\}$ with $v$ chosen in a way so that $v\notin D$ and
$J\cup\{v\} \notin Y$ for all $Y\in D$.
  Thus $\Rado_\in$ has the extension property and thus it is generic for the class of
  all countable undirected graphs.
\end{proof}

In this work we study explicit representations of universal structures.  We
call those representations {\em finite presentations}.  Here we broadly
interpret the notion of finite presentation as a succinct representation of an infinite set.  By succinct we mean that the elements
 are finite models with
relations induced by ``compatible mappings'' (such as homomorphisms) between
the corresponding models. This intuitive definition suffices as we are
interested in the (positive) examples of such representations.

\eject
``Concise representations'' of finite structures have been studied from the complexity
point of view for graphs \cite{NarorNan, Turan} and partially ordered sets
\cite{lposet,NP}.

The notion of finite presentation is also related to the concepts of constructive mathematics. 
Our constructions are essentially
constructive (see \cite{constructiveurysohn} for a reformulation of our
construction of the rational Urysohn space in the context of constructive mathematics).
The notion of a finite presentation is stronger than the notion of constructivity.
We want the elements of a structure to be defined in a simple way
that is independent of other elements. Similarly, the relations among the elements are to be defined purely based on a knowledge of those elements participating in the relation.
In particular, a construction given via repeated amalgamation and joint embedding
by the proof of \Fraisse{}'s theorem per se is not a finite presentation.

This work is divided into two parts.

In Part I we consider known ultrahomogeneous structures as provided by
the classification programme outlined in Section \ref{Genericsection} and look for
their finite presentations.  We represent all ultrahomogeneous undirected
graphs (Chapter~\ref{graphschapter}), all partial orders (Chapter~\ref{homposetchapter}) and all ultrahomogeneous tournaments (Chapter~\ref{generictournaments}). The main contribution of Part I is a finite
presentation of an ultrahomogeneous partial order related to Conway's surreal
numbers (Chapter \ref{homposetchapter}) and a finite presentation of the rational
Urysohn metric space (Chapter \ref{homprostorchapter}).

As a result it may seem that ultrahomogeneous structures are very likely to have a finite
presentation.  Even our informal definition of a finite presentation makes it
possible to show that this is not always the case: as discussed in Section
\ref{classection} there are uncountably many different ultrahomogeneous oriented graphs,
but there are only countably many structures with a finite presentation (in
a proper axiomatization of the term). Thus it is not possible to find a finite
presentation for every ultrahomogeneous oriented graph.

In Part II we take the opposite approach.  We look for well-known finitely
presented structures and try to prove their (embedding-)universality. Motivated by
the difficulties in finding a finite presentation of the generic partial order and the lack of
many examples of universal partial orders, we develop a technique of embedding
a universal partial order into new structures, which leads to a number of new finite
presentations, given in Chapter~\ref{posetschapter}.

In the main result of Chapter \ref{cestickychapter} we focus on the homomorphism order
of relational structures and show that even very restricted classes of
relational structures (rooted oriented paths) produce a universal partial
order.

\section{Summary}
Several results in this thesis have been published or accepted for publication.
Chapters \ref{graphschapter}, \ref{homposetchapter} and \ref{generictournaments}
on finite presentations of generic structures are based on the paper
\cite{HN-Posets}. Some of the constructions, including the finite presentation
of the generic partial order, were first given in the author's diploma thesis
\cite{diplomka}.

Chapter \ref{homprostorchapter} giving a finite presentation of the rational Urysohn space is based on the paper \cite{metric}.

Chapter \ref{posetschapter} is based on the paper \cite{HN-Posets2} (accepted for publication).

\eject
Chapter \ref{cestickychapter} presents a new proof of the universality of rooted
oriented paths ordered by homomorphism.  Universality of oriented trees ordered
by homomorphism was the main result of \cite{HN-trees,diplomka}.
The papers \cite{HN-paths,diplomka} contain an earlier and more complex proof of
universality of oriented paths ordered by homomorphism.  The new proof was
accepted for publication as part of \cite{HN-Posets2}.

Finally, Chapter \ref{Forbchapter} combines results of the as yet unpublished papers
\cite{paper2} and \cite{paper3}.

\part{Finite presentations of ultrahomogeneous structures}

\chapter{Ultrahomogeneous graphs}
\label{graphschapter}
It is our aim to show that ultrahomogeneous structures are likely to be
finitely presented.  Intuitively it is plausible that a high degree of symmetry (ultrahomogeneity)
leads to a ``low entropy'' and thus in turn perhaps to a concise
representation.

We begin by developing representations similar to the Rado's representation of the graph $\Rado$ and gradually
progress to more complicated cases.  First we show easy
examples---representations of all ultrahomogeneous undirected graphs.  This
will serve also as a warm-up for more involved representations of partial
orders, the rational Urysohn space and ultrahomogeneous tournaments presented
in subsequent chapters. 

\section{Ultrahomogeneous undirected graphs}

We follow the classification programme outlined in Section \ref{Genericsection}
(in particular the classification of ultrahomogeneous graphs is given by
Theorem \ref{lach}).  All finite structures are obviously finitely presented.
By Theorem \ref{lach} a countably infinite ultrahomogeneous undirected graph is
isomorphic to one of the following graphs:
\begin{enumerate}
\item The disjoint union of $m$ complete graphs of size $n$, where $m,n\leq \omega$ and at least one of $m$ or $n$ is $\omega$ (or the complement of such a graph).
\item The \Fraisse{} limit of the class of countable graphs not containing $K_n$ for given $n\geq 3$ (or the complement of such a graph).
\item The Rado graph $\Rado$.
\end{enumerate}
A finite presentation for 1.~is an easy exercise.  For the Rado graph we gave
several representations in Section \ref{explicitmodels}. When a structure is
finitely presented, its complement is also finitely presented and thus it
remains to look for a finite presentation of generic graphs not containing $K_n$.  We use
similar tools as for the Rado graph and its representation $\R_\in$ (see Section
\ref{explicitmodels}), just in a more general setting.

%
  Throughout this chapter we shall use the following notation for universal graphs.
  By $\R_\K$ we denote the ultrahomogeneous universal (i.~e. generic) graph for the class $\K$ of undirected graphs (if it exists).
  By $\RO_{\overrightarrow \K}$ we denote the ultrahomogeneous universal graph for a class $\overrightarrow{\K}$ of directed graphs (if it exists).

  Recall that by $\Forbi(G)$ we denote the class of all countable graphs not containing graph $G$ as an induced subgraph.

  We now construct graphs $\R_{\Forbi(\Kk),\in}$, $k\geq 3$ which are isomorphic to the generic graph $\R_{\Forbi(\Kk)}$.
  The construction of graph $\R_{\Forbi(\Kk),\in}$, $k\geq 3$, is an
  extension of the construction of $\R_\in$.
  (Recall that a finite set $S$ is called {\em complete} if for any $X,Y\in S$, $X\neq Y$ either $X\in Y$ or $Y\in X$.)

\begin{defn}
 The undirected graph $\R_{\Forbi(\Kk),\in}$, $k\geq 3$, is constructed as
follows:
\begin{enumerate}
\item  The vertices of
  $\R_{\Forbi(\Kk),\in}$ are all (finite) sets which do not contain a complete subset with
  $k-1$ elements.
\item  Two vertices of $S$ and $S'$ form an edge of
  $\R_{\Forbi(\Kk),\in}$ if and only if either $S\in S'$ or $S'\in S$.
\end{enumerate}
\end{defn}

Thus $\R_{\Forbi(\Kk),\in}$ is the restriction of the graph $R_\in$ to the class of all sets without a complete subset of size $k-1$.

\begin{thm}
  \label{rforbkk}
  $\R_{\Forbi(\Kk),\in}$ does not contain an isomorphic copy of $\Kk$ and has the
extension property for the class $\Forbi(\Kk)$. Consequently   $\R_{\Forbi(\Kk),\in}$ is the 
generic graph for the class $\Forbi(\Kk)$.
\end{thm}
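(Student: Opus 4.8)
The plan is to verify the two asserted properties in turn and then conclude genericity from Lemma \ref{extuniv} together with the uniqueness part of Theorem \ref{fraissethm}. Throughout I would exploit the fact that membership $\in$ is well-founded on the (finite) sets used as vertices.

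First I would show that $\R_{\Forbi(\Kk),\in}$ contains no copy of $\Kk$. Suppose $v_1,\ldots,v_k$ are pairwise adjacent vertices, so for each pair at least one of $v_a\in v_b$, $v_b\in v_a$ holds, and not both (the chain $v_a\in v_b\in v_a$ would violate well-foundedness). Let $v_M$ be one of maximal set-theoretic rank. For any other $v_a$, comparability gives $v_a\in v_M$ or $v_M\in v_a$, and the latter forces $\text{rank}(v_M)<\text{rank}(v_a)$, contradicting maximality; hence every $v_a$ with $a\neq M$ is an element of $v_M$. These $k-1$ sets are pairwise comparable, so they form a complete subset of $v_M$ of size $k-1$, contradicting the defining property of the vertices. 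Thus no $\Kk$ occurs, and in particular $\Age(\R_{\Forbi(\Kk),\in})$ consists of finite $K_k$-free graphs.

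Second, I would establish the extension property in the form analogous to Fact \ref{grafext}: for every pair of finite disjoint vertex sets $J,D$ such that $J$ induces no $K_{k-1}$, there is a vertex adjacent to all of $J$ and to none of $D$. (The hypothesis that $J$ induces no $K_{k-1}$ is exactly what keeps us inside $\Forbi(\Kk)$, and it holds automatically when extending an embedding of a $K_k$-free graph, since the new vertex joined to a $K_{k-1}$ would create a $\Kk$.) Mirroring the proof of Theorem \ref{runiv}, I would set $X=J\cup\{v\}$ for a suitable fresh set $v$. Then $Y\in X$ for each $Y\in J$, so $X$ is adjacent to all of $J$; and choosing $v$ so that $v\notin D$ and $J\cup\{v\}\notin Z$ for every $Z\in D$ makes $X$ non-adjacent to every vertex of $D$. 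Requiring in addition that $v\notin Y$ and $Y\notin v$ for each $Y\in J$ keeps $v$ incomparable to all of $J$. Each of these is a finite list of forbidden values, while infinitely many candidate sets remain, so such a $v$ exists.

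The main obstacle, and the genuinely new point compared with $\R_\in$, is verifying that $X$ is itself a legal vertex, i.e.\ that $X=J\cup\{v\}$ contains no complete subset of size $k-1$. Here the incomparability of $v$ with every $Y\in J$ is decisive: it ensures $v$ lies in no complete subset of $X$ of size $\geq 2$, so every complete subset of $X$ is already a complete subset of the elements of $J$. Since $J$ induces no $K_{k-1}$, no $k-1$ of those elements are pairwise comparable, hence $X$ has no complete subset of size $k-1$ and is a vertex of $\R_{\Forbi(\Kk),\in}$. This completes the extension property. Finally, applying it vertex-by-vertex embeds every finite $K_k$-free graph, so $\Age(\R_{\Forbi(\Kk),\in})$ equals the class of all finite $K_k$-free graphs and the extension property holds for $\Forbi(\Kk)$. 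Lemma \ref{extuniv} then yields that $\R_{\Forbi(\Kk),\in}$ is ultrahomogeneous and universal for all countable graphs younger than it, namely $\Forbi(\Kk)$, and the uniqueness in Theorem \ref{fraissethm} identifies it with the generic graph $\R_{\Forbi(\Kk)}$.
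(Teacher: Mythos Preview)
Your proof is correct and follows essentially the same approach as the paper: both arguments pick out a top vertex among a putative $K_k$ (you via maximal rank, the paper via the linear order forced on a well-founded tournament) and both realise the extension by $X=J\cup\{v\}$ with $v$ chosen fresh and $\in$-incomparable to every member of $J$. Your write-up is in fact a bit more explicit than the paper's about why $X$ contains no complete subset of size $k-1$ and about the role of the $K_{k-1}$-freeness hypothesis on $J$.
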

\begin{proof}
  $\R_{\Forbi(\Kk),\in}$ does not contain $\Kk$: For a contradiction, let us suppose that
  $V_1$, $V_2,\dots,V_k$ are vertices of a complete graph.  Without loss of generality we
  may assume that $V_i\in V_{i+1}$ for each $i=1,2,\ldots,k-1$.  Since $\Kk$ is
  a complete graph, $V_i\in V_k$ for each $i=1,2,\ldots,k-1$.
  It follows that $\{V_1,\ldots,V_{k-1}\}$
  is a prohibited complete subset of $k-1$ elements. Thus $V_k$ is not a vertex of
  $\R_{\Forbi(\Kk)}$.

  To show the extension property of $\R_{\Forbi(\Kk),\in}$ we use the following
reformulation of the extension property for class $\Forbi(\Kk)$, similar to Fact \ref{grafext}:
\begin{quote}
For every $J,D$ finite disjoint subsets of vertices of $\R_{\Forbi(\Kk),\in}$, there exists either a vertex $X\in V$
joined by an edge to every vertex in $J$ and no vertex in $D$ or there is
$J'\subseteq J$ such that the graph induced on $J'$ by $\R_{\Forbi(\Kk),\in}$ is isomorphic to $K_{k-1}$.
\end{quote}

Fix $J$ and $D$ finite disjoint subsets of $V$ and assume that there is no
$J'\subseteq J$ such that the graph induced on $J'$ by $\R_{\Forbi(\Kk),\in}$
is isomorphic to $K_{k-1}$.  Similarly as in proof of Theorem \ref{runiv}
we put $X=J\cup\{v\}$ with $v$ chosen in such a way so that $v\notin D$, 
$J\cup\{v\} \notin Y$ for all $Y\in D$, and (additionally) so that $v\cup J$ is empty.

It is easy to verify that $X$ is a vertex of $\R_{\Forbi(\Kk),\in}$.
\end{proof}
To summarize the representations in this section, we have the following corollary.
\begin{corollary}
\label{ografp}
All ultrahomogeneous undirected graphs are finitely presented.
\end{corollary}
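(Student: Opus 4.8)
The plan is to invoke the Lachlan--Woodrow classification (Theorem \ref{lach}) to reduce the statement to a finite list of cases, and then exhibit or cite a finite presentation for each. Every finite graph is trivially finitely presented, so it suffices to treat the countably infinite ultrahomogeneous graphs. By Theorem \ref{lach} these fall, up to complementation, into three families: disjoint unions of complete graphs, the generic $K_n$-free graphs, and the Rado graph $\Rado$. The argument then proceeds case by case through this list.

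First I would dispose of the disjoint union of $m$ complete graphs of size $n$ (with at least one of $m,n$ equal to $\omega$). Such a graph admits an obvious presentation: take as vertices the pairs $(i,j)$ with $i$ ranging over an index set of size $m$ and $j$ over an index set of size $n$, and declare $(i,j)$ adjacent to $(i',j')$ exactly when $i=i'$ and $j\neq j'$. This qualifies as a finite presentation in the sense of the chapter, since both the vertices and the adjacency relation are described by a simple rule depending only on the participating elements. For the Rado graph I would simply cite one of the explicit models recalled in Section \ref{explicitmodels}, for instance $\Rado_\in$, whose correctness is established by Theorem \ref{runiv}.

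The remaining and only genuinely new case is the generic $K_n$-free graph. Here the construction $\R_{\Forbi(\Kk),\in}$ together with Theorem \ref{rforbkk} provides precisely the required finite presentation, since that theorem shows $\R_{\Forbi(\Kk),\in}$ to be $\Kk$-free with the extension property and hence isomorphic to the generic graph for $\Forbi(\Kk)$. Finally, the complementary cases are handled by the general observation that the complement of a finitely presented graph is again finitely presented: one keeps the same vertex set and negates the adjacency rule. Assembling these cases over the list of Theorem \ref{lach} completes the argument.

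The main point to stress is that, as a plan for the corollary itself, there is no substantive obstacle remaining: all the real work is carried out in Theorem \ref{rforbkk} for the $K_n$-free case, with the disjoint-union and Rado cases being elementary or already recorded. The corollary is therefore a matter of bookkeeping over the classification, the only subtlety being to confirm that each of the three model constructions, and the complementation operation, genuinely conform to the chapter's informal notion of a finite presentation (elements defined independently, relations defined purely from the participating elements), which in each case is immediate from the explicit adjacency rules.
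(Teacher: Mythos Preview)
Your proposal is correct and mirrors the paper's own treatment exactly: the paper presents the corollary as a summary of the section, having already noted in the introductory paragraphs that finite cases are trivial, disjoint unions of complete graphs are an easy exercise, the Rado graph was handled in Section~\ref{explicitmodels}, complements preserve finite presentability, and Theorem~\ref{rforbkk} supplies the remaining $K_n$-free case. Your case analysis and the supporting citations match the paper's reasoning step for step.
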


\section{Ultrahomogeneous directed graphs}
Directed graphs are among the most complicated structures for which the classification
programme had been solved.  As already discussed in Section \ref{explicitmodels}, it
is too ambitious to ask for finite representations of all directed graphs:
there are only countably many finite representations, but uncountably many
ultrahomogeneous directed graphs.  

In this section we restrict ourselves to simple cases that help us to develop
the background for representations of partial orders.  Additional examples of
finite presentations of directed graphs will be given in subsequent chapters;
both partially ordered sets and tournaments are special cases of directed
graphs.

First we construct the directed graph $\RO$ generic for
the class of all countable directed graphs.  
In the rest of this chapter we will use a fixed standard countable model of set theory $\Model$ containing a
single atomic element $\apple$. This allows us to use the following definition of the ordered pair.

\begin{defn}
  For every set $M$ we put
  $$M_L=\{A:A\in M,\apple\notin A\},$$
  $$M_R=\{A:A\cup{\{\apple\}}\in M,\apple\notin A\}.$$

  For any two sets $A$ and $B$ we shall denote by $\HE 0,A,B$
  the set $$A\cup \{M\cup\{\apple\}:M\in B\}.
  $$
\end{defn}

For any set $M$ not containing $\apple$ the following holds: $\HE 0,M_L, {M_R} = M$.
Thus for the model $\Model$, the class of sets not containing $\apple$ represents the
universe of  recursively nested ordered pairs.

\begin{defn}
\label{rodef}
  The directed graph $\RO_\in$ is constructed as follows:
\begin{enumerate}
 \item The vertices of $\RO_\in$ are all finite sets in $\Model$ not containing $\apple$.
 \item $(M,N)$ is an edge of $\RO_\in$ if and only if either $M\in N_L$ or $N\in M_R$.  
\end{enumerate}
\end{defn}

\begin{thm}
  \label{ROuniv}
The directed graph $\RO_\in$ is isomorphic to $\RO$ (the generic directed graph for the class of all countable directed graphs).
\end{thm}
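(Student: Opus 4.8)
The plan is to reduce everything to the extension property and then invoke Lemma~\ref{extuniv}, exactly as in the proof of Theorem~\ref{runiv} for $\R_\in$. It suffices to show two things: that $\Age(\RO_\in)$ is the class of all finite directed graphs, and that $\RO_\in$ has the extension property for that class. The relevant reformulation of the extension property (the directed analogue of Fact~\ref{grafext}) is the following: given finite, pairwise disjoint sets of vertices $O,I,B,Z$ of $\RO_\in$, there is a vertex $X$ such that for $u\in O$ only the edge $(X,u)$ is present, for $u\in I$ only $(u,X)$, for $u\in B$ both edges, and for $u\in Z$ neither. These four classes encode the four possible relationships of a new vertex to an old one in a directed graph. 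Once this is established, Lemma~\ref{extuniv} gives that $\RO_\in$ is ultrahomogeneous and universal, and its uniqueness clause (together with Theorem~\ref{fraissethm}(b)) identifies it with $\RO$.

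First I would decode the edge relation. Since every vertex is a finite set avoiding $\apple$, one checks directly that $M\in N_L$ is equivalent to $M\in N$, and $N\in M_R$ is equivalent to $N\cup\{\apple\}\in M$. Hence $(M,N)$ is an edge precisely when $M\in N$ or $N\cup\{\apple\}\in M$. This gives two independent ``handles'' on the two directions: to force an in-edge $(u,X)$ I put $u$ itself into $X$, and to force an out-edge $(X,u)$ I put the marked copy $u\cup\{\apple\}$ into $X$. Accordingly I would set
$$X=\{\,u : u\in I\cup B\,\}\cup\{\,u\cup\{\apple\} : u\in O\cup B\,\}\cup\{W\},$$
where $W$ is an auxiliary element whose sole purpose is explained below.

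The main obstacle is to ensure that no unintended edges are created, i.e.\ that the only adjacencies between $X$ and the target vertices are the ones introduced by hand. An unwanted edge could arise only through the ``other'' mechanism: an out-edge $(X,u)$ coming from $X\in u$, or an in-edge $(u,X)$ coming from $X\cup\{\apple\}\in u$. Both are membership statements with $X$ (or $X\cup\{\apple\}$) on the left, so I would kill them with a rank/foundation argument: if $X$ is chosen of set-theoretic rank strictly greater than that of every target vertex $u$, then by the axiom of foundation neither $X\in u$ nor $X\cup\{\apple\}\in u$ can hold. This is exactly what $W$ is for: I take $W$ to be any set with $\apple\notin W$, of rank exceeding $\max_u \mathrm{rank}(u\cup\{\apple\})$, and distinct from every $u$ and every $u\cup\{\apple\}$; such $W$ exists because there are sets of arbitrarily large rank. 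Without $W$ the all-$Z$ case could leave $X$ empty and of too low a rank. Adding $W$ introduces no edge to the targets, since $W$ is not of the form $u$ or $u\cup\{\apple\}$, and it keeps $X$ a legitimate vertex (a finite set not containing $\apple$). With this, a short case check over $O,I,B,Z$ confirms that $X$ realizes exactly the prescribed pattern.

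Finally, the age computation follows from the same construction applied repeatedly: starting from the empty structure and inserting the vertices of an arbitrary finite directed graph one at a time, each insertion realized by a witness $X$ as above, shows that every finite directed graph embeds into $\RO_\in$; hence $\Age(\RO_\in)$ is the class of all finite directed graphs, which is also $\Age(\RO)$. Combining the extension property with Lemma~\ref{extuniv}(1) then yields $\RO_\in\simeq\RO$, completing the proof.
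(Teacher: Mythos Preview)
Your proof is correct and follows essentially the same route as the paper: verify the extension property by constructing a witness vertex of the form $\HE 0,{(I\cup B)\cup\{W\}},{O\cup B}$ and invoke Lemma~\ref{extuniv}. Your treatment is in fact a bit more careful than the paper's sketch---you make the four-way split explicit (the paper's ``three disjoint sets'' glosses over the bidirectional case) and you justify freshness via a rank argument rather than the paper's terse choice of $x\notin\bigcup_m m$---but the underlying idea is identical.
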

\begin{proof}
  We proceed analogously to the proof of Theorem \ref{runiv}.
  To show that $\RO_\in$ has the extension property let 
  $M_-$, $M_+$ and $M_0$ be three disjoint sets of vertices, where $M_0\cap (M_-\cup M_+)$ is empty.
  We need to find vertex $M$ with following properties:
  \begin{enumerate}
     \item[I.] For each $X\in M_-$ there is an edge from $X$ to $M$.
     \item[II.] For each $X\in M_+$ there is an edge from $M$ to $X$.
     \item[III.] For each $X\in (M_-\cup M_+\cup M_0)$ there are no other edges from $X$ to $M$ or $M$ to $X$ than the ones given by I. and II.
  \end{enumerate}
  Fix any $$x\notin \bigcup_{m\in M_-\cup M_+\cup M_0}m.$$
  Obviously, the vertex $M=\HE 0,M_-\cup\{x\},{M_+}$ has the required properties I.,II.,III..
\end{proof}

Consequently, generic graphs (for both the class of all undirected and the class of all directed graphs) are finitely presented.  We can extend these presentations to other ultrahomogeneous structures.  
We illustrate this by 
the construction of the generic directed graphs $\RO_{\Forbi(T),\in}$ not containing a given finite tournament $T$.
This is slightly more technical (although it parallels the undirected case).

Put $T=(V,E)$ and for each $v\in V$ put 
$$L(v)=\{v'\in V:(v',v)\in E\},$$
$$R(v)=\{v'\in V:(v,v')\in E\}.$$
(Observe that $L(v)\cup R(v)=V-\{v\}$.)

The vertices of $\RO_{\Forbi(T),\in}$ are sets $M$ which satisfy the following condition $C_v(M)$ (for each $v\in V$).
\medskip

  \noindent
  $C_v(M)$:
\begin{quote}
  There are no sets $X_{v'}$, $v'\in L(v)\cup R(v)$, satisfying the following
  \begin{enumerate}
    \item[I.] $X_{v'}\in M_L$ for $v'\in L(v)$,
    \item[II.] $X_{v'}\in M_R$ for $v'\in R(v)$,
    \item[III.] for every edge $(v',v'')\in E$, $v',v''\in L(v)\cup R(v)$,
	        either $X_{v'}\in(X_{v''})_L$ or $X_{v''}\in (X_{v'})_R$.
  \end{enumerate}
\end{quote}
In the other words, $C_v(M)$ holds if the sets $X_{v'}$, $v'\in L(v) \cup R(v)$,
do not represent the tournament $T-\{v\}$ in $\RO_\in$.
\begin{defn}
  Denote by $\RO_{\Forbi(T),\in}$ the directed graph $\RO_\in$ restricted to the
  class of all sets $M$ which satisfy the condition $C_v(M)$ for every $v\in V$.
\end{defn}

\begin{thm}
\label{ROFORB}
  $\RO_{\Forbi(T),\in}$ is isomorphic to $\RO_{\Forbi(T)}$.\\
 Explicitly, $\RO_{\Forbi(T),\in}$ is the generic graph for the class of all directed
  graphs not containing $T$.
\end{thm}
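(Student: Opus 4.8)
The plan is to mirror the proof of Theorem \ref{ROuniv} exactly, checking two things: that $\RO_{\Forbi(T),\in}$ omits $T$ and that it has the extension property for the class $\Forbi(T)$ of directed graphs. By Lemma \ref{extuniv} together with \Fraisse{}'s theorem, these two facts force $\RO_{\Forbi(T),\in}$ to be isomorphic to the (unique) generic directed graph $\RO_{\Forbi(T)}$. The structure of the argument is therefore the same two-part skeleton used for the undirected $\R_{\Forbi(\Kk),\in}$ in Theorem \ref{rforbkk}.

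For the first part, I would show $T$ does not embed into $\RO_{\Forbi(T),\in}$. Suppose for contradiction that vertices $\{M_v : v\in V\}$ of $\RO_{\Forbi(T),\in}$ realize a copy of $T=(V,E)$, so that $(M_{v'},M_{v''})$ is an edge of $\RO_\in$ precisely when $(v',v'')\in E$. Pick any vertex $v\in V$ and look at the remaining sets $\{M_{v'} : v'\in L(v)\cup R(v)\}$. Because these realize $T$, the edge from each such $M_{v'}$ to $M_v$ has the correct orientation, which by Definition \ref{rodef} means $M_{v'}\in (M_v)_L$ for $v'\in L(v)$ and $M_{v'}\in (M_v)_R$ for $v'\in R(v)$; likewise the edges among the $M_{v'}$ themselves give exactly the incidences required in clause III. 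Taking $X_{v'}=M_{v'}$ then witnesses a violation of $C_v(M_v)$, contradicting the assumption that $M_v$ is a vertex of $\RO_{\Forbi(T),\in}$. Hence no copy of $T$ exists.

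For the extension property I would proceed as in Theorem \ref{ROuniv}, but with the correct reformulation for $\Forbi(T)$ (analogous to the reformulation used in Theorem \ref{rforbkk}): given disjoint finite sets $M_-,M_+,M_0$ of vertices, either there is a vertex $M$ realizing the prescribed in-edges from $M_-$, out-edges to $M_+$, and no other edges to $M_-\cup M_+\cup M_0$, or the prescribed neighborhood already forces a copy of $T$ minus a vertex, so that no extension is required. Concretely I would fix $x\notin\bigcup_{m\in M_-\cup M_+\cup M_0} m$ and set $M=\HE 0,{M_-\cup\{x\}},{M_+}$, exactly as in the unrestricted case; properties I, II, III of Theorem \ref{ROuniv} hold verbatim, so the only new obligation is to verify that this $M$ is genuinely a vertex of $\RO_{\Forbi(T),\in}$, i.e. that $C_v(M)$ holds for every $v\in V$.

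The main obstacle is precisely this last verification: a violation of $C_v(M)$ would require witnesses $X_{v'}\in M_L$ for $v'\in L(v)$ and $X_{v'}\in M_R$ for $v'\in R(v)$, and by construction of $M$ these $X_{v'}$ must lie among the elements of $M_-$ (for the left part) and $M_+$ (for the right part). Thus any violating configuration of witnesses lives entirely inside $M_-\cup M_+$ and, together with clause III, realizes a copy of $T-\{v\}$ among already-present vertices. This is exactly the case the reformulated extension property permits us to discharge: if such a configuration exists then the neighborhood $J=M_-\cup M_+$ already contains the forbidden substructure $T-\{v\}$ and player $A$ needs no new vertex, whereas if no such configuration exists then $C_v(M)$ holds for all $v$ and $M$ is a legitimate vertex with the required adjacencies. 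Assembling these, $\RO_{\Forbi(T),\in}$ has the extension property for $\Forbi(T)$, and the desired isomorphism with $\RO_{\Forbi(T)}$ follows.
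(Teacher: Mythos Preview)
Your approach is exactly what the paper does: its proof literally says to follow Theorem~\ref{rforbkk} analogously and then omits the details. So there is no difference in strategy to discuss.

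There is, however, a real gap in your first part. You write ``Pick any vertex $v\in V$'' and then claim that for $v'\in L(v)$ the edge $(M_{v'},M_v)$ forces $M_{v'}\in (M_v)_L$. But Definition~\ref{rodef} only gives a disjunction: $(M_{v'},M_v)$ is an edge of $\RO_\in$ iff $M_{v'}\in (M_v)_L$ \emph{or} $M_v\in (M_{v'})_R$. For an arbitrary $v$ the second alternative may hold, and then $X_{v'}=M_{v'}$ need not lie in $(M_v)_L$. The fix is the same device used in Theorem~\ref{rforbkk}: by well-foundedness of $\in$, among the sets $\{M_{v'}:v'\in V\}$ there is one, say $M_w$, which is not an element of any other $M_{v'}$. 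For this particular $w$ the second alternative is ruled out (since $M_w\notin M_{v'}$ kills both $M_w\in (M_{v'})_L$ and $M_w\in (M_{v'})_R$), and your argument goes through with $v=w$.

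A smaller issue in the second part: the witnesses $X_{v'}$ for a hypothetical failure of $C_v(M)$ range over $M_L=M_-\cup\{x\}$, not just $M_-$, so one of them could in principle be $x$. You should either choose $x$ with the additional harmless constraint that no element of $M_-\cup M_+$ lies in $x_L\cup x_R$ (so $x$ cannot participate in clause~III), or dispose of this case explicitly.
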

\begin{proof}
  We can follow analogously  the proof of Theorem \ref{rforbkk}.
  First show that $\RO_{\Forbi(T),\in}$ does not contain an isomorphic copy of $T$
  and then show the extension property of $\RO_{\Forbi(T),\in}$.  We omit the details.
\end{proof}

This can be extended to classes $\Forbi(\T)$ for any finite set of finite tournaments
(but clearly not to all classes $\Forbi(\T)$ where $\T$ is an infinite set of finite tournaments).  In Chapter~\ref{generictournaments} we shall 
prove that all ultrahomogeneous tournaments are also finitely presented.

\chapter{Ultrahomogeneous partial orders}
\label{homposetchapter}

Ultrahomogeneous partial orders an pose interesting problem for finite
presentations.  The condition of transitivity can be easily axiomatized by forbidding a special
configuration of three vertices.  In a finite presentation, however, any binary
relation needs to be derived from two vertices alone.  In the case of the universal
graph for the class $\Forbi(\Kk)$ we solved the problem by explicitly representing all simpler
vertices connected by an edge within the representation of every vertex. This does
not suffice for partial orders.  The forbidden configuration is not irreducible
(contains two vertices not connected by an edge) and thus the representation of
vertex must encode more than just neighboring vertices. This is the main problem
we need to solve in giving a finite presentation of the generic partial order.

Several examples of (not necessarily ultrahomogeneous) finitely presented
linear orders and partially ordered sets are easy to find:
\begin{itemize}
 \item the set of all natural numbers $(\N,\leq)$ (according to von Neumann one can define an ordinal as a well founded complete set and the order $\leq$ is identified with $\in$),
 \item the set $(\Q,\leq)$ 
 (see \cite{ehrlich} where a variant of surreal numbers \cite{conway} is presented which implies a finite representation of $\Q$, also see Section \ref{conwaysur}),
 \item $(P,\leq_P)\times (P',\leq_{P'})$ for finitely presented structures $(P,\leq_P)$ and $(P',\leq_{P'})$,
 \item the lexicographic product of $(P,\leq_P)$ and $(P',\leq_{P'})$ for finitely presented $(P,\leq_P)$ and $(P',\leq_{P'})$  (In fact any ``product'' defined ``coordinate-wise'' is finitely presented).
\end{itemize}
\label{genericpo}

In this chapter we show finite presentations of all ultrahomogeneous partial
orders.  Recall the classification given by Theorem \ref{schmerlthm}.  A finite
presentation of an antichain is trivial.  Using a finite presentation of $(\Q,\leq)$, it
is easy to construct a finite presentation of an antichain of chains as well as
a chain of antichains.  The only remaining ultrahomogeneous partially ordered
set is the generic one.  This is an interesting and not obvious case.

\section{The generic partial order}
The main result of this chapter is a finite presentation of the generic partial order
$(\He,\leq_\in)$.  We shall proceed in two steps.  In this section we first
define a partially ordered set $(\He,\leq_\in)$ which extends the definition of $\RO_\in$.
The definition of $\He$ is recursive and thus it may not be considered to be a finite presentation (depending on precise axiomatization of the term).   However
it is possible to modify the construction of $\He$ to a finite presentation
$\HF$. This is done in the last part of this section (see Definition
\ref{hfdef} and Theorem \ref{hfthm}).

  We use the same notation as in Chapter \ref{graphschapter}. In particular we work in a fixed
countable model $\Model$ of the theory of finite sets extended by a single atomic
set $\apple$.  Also recall the following notations:
  $$M_L=\{A:A\in M,\apple\notin A\},$$
  $$M_R=\{A:(A\cup{\{\apple\}})\in M,\apple\notin A\}.$$

Here is the recursive definition of $(\He,\leq_\He)$.

\begin{defn}
  \label{Hdef}
  The elements of $\He$ are all sets $M$
  with the following properties:
   \begin{enumerate}
     \item(correctness)
	\begin{enumerate}
	\item $\apple\notin M$,
	\item $M_L\cup M_R\subset \He$,
	\item $M_L\cap M_R = \emptyset$.
	\end{enumerate}
     \item(ordering property) $(\{A\}\cup A_R)\cap(\{B\}\cup B_L)\neq \emptyset$ for each $A\in M_L, B\in M_R$,
     \item(left completeness) $A_L\subseteq M_L$ for each $A\in M_L$,
     \item(right completeness) $B_R\subseteq M_R$ for each $B\in M_R$.
   \end{enumerate}
  The relation of $\He$ is denoted by $\leq_\in$ and it is defined as follows:  We put $M<_\in N$ if
  $$(\{M\}\cup M_R)\cap (\{N\}\cup N_L)\neq \emptyset.$$
  We write $M\leq_\in  N$ if either $M<_\in N$ or $M=N$.
\end{defn}

The class $\He$ is non-empty  (as  $M=\emptyset=
\HE 0,\emptyset,\emptyset \in \He$). (Obviously the correctness property
holds. Since $M_L=\emptyset$, $M_R=\emptyset$, the ordering property and
completeness properties follow trivially.)

Here are a few examples of non-empty elements of the structure $\He$:
$$
  \begin{array}{c}
    \TA,\\
    \TB,\\
    \TC.\\
  \end{array}
$$

It is a non-trivial fact that $(\He,\leq_\in)$ is a partially ordered set.  This will be proved after
introducing some auxiliary notions:

\begin{defn}
  Any element $W\in (A\cup A_R)\cap(B\cup B_L)$ is called a {\em witness}
of the inequality $A<_\in B$.
\end{defn}
\begin{defn}
  The {\em level of $A\in \He$} is defined as follows:
\begin{eqnarray}
l(\emptyset)&=&0,\nonumber \\
l(A)&=&max(l(B):B\in A_L\cup A_R) + 1 \hbox{ for } A\neq \emptyset.\nonumber
\end{eqnarray}
\end{defn}

We observe the following facts (which follow directly from the
definition of $\He$):
\begin{fact}
 \label{lpmnoziny}
 $X <_\in A <_\in Y$ for every $A\in \He$, $X\in A_L$ and $Y\in A_R$.
\end{fact}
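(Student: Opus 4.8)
The plan is to prove both inequalities by simply unfolding the definition of $<_\in$ from Definition \ref{Hdef} and exhibiting an explicit witness in each case; no auxiliary machinery is needed, which is consistent with the remark that these facts ``follow directly from the definition of $\He$.''

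First I would establish $X <_\in A$. By Definition \ref{Hdef} this reduces to checking that $(\{X\}\cup X_R)\cap(\{A\}\cup A_L)\neq\emptyset$. By hypothesis $X\in A_L$, so $X\in\{A\}\cup A_L$; and trivially $X\in\{X\}\subseteq\{X\}\cup X_R$. Hence $X$ itself lies in the intersection and serves as a witness of $X<_\in A$. Symmetrically, for $A<_\in Y$ I would verify $(\{A\}\cup A_R)\cap(\{Y\}\cup Y_L)\neq\emptyset$: since $Y\in A_R$ we have $Y\in\{A\}\cup A_R$, and $Y\in\{Y\}\subseteq\{Y\}\cup Y_L$, so $Y$ is a witness of $A<_\in Y$. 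Combining the two gives $X<_\in A<_\in Y$, as claimed.

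There is essentially no obstacle to overcome here; the content of the fact is exactly the observation that the ``endpoint'' elements of $A_L$ and $A_R$ are their own witnesses. The only points deserving a moment's care are the mild asymmetry between the two halves—for the lower inequality the witness is the smaller element $X$ (via its membership in $A_L$), whereas for the upper inequality it is the larger element $Y$ (via its membership in $A_R$)—and the bookkeeping of reading the defining intersection with the singletons $\{X\}$, $\{A\}$, $\{Y\}$ rather than the underlying sets. This fact will presumably be used as a building block toward showing that $\leq_\in$ is genuinely a partial order on $\He$.
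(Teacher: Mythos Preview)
Your proof is correct and is exactly what the paper intends: the fact is stated without explicit proof, merely as following ``directly from the definition of $\He$,'' and your argument is precisely the one-line unfolding of the definition of $<_\in$ with the endpoint elements $X$ and $Y$ serving as their own witnesses.
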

\begin{fact}
 \label{fact2}
$A\leq_\in W^{AB}\leq_\in B$ for any $A<_\in B$ and witness $W^{AB}$ of $A<_\in B$.
\end{fact}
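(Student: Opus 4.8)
The plan is to unwind the definition of a witness and then reduce the two desired inequalities $A\leq_\in W^{AB}$ and $W^{AB}\leq_\in B$ to a short case analysis, appealing in each case either to reflexivity of $\leq_\in$ or to Fact \ref{lpmnoziny}. Recall that $W^{AB}$ being a witness of $A<_\in B$ means $W^{AB}\in(\{A\}\cup A_R)\cap(\{B\}\cup B_L)$, i.e. $W^{AB}$ lies in both $\{A\}\cup A_R$ and $\{B\}\cup B_L$. Before anything else I would check that $W^{AB}$ is genuinely an element of $\He$, so that the relation $\leq_\in$ is even meaningful on it: if $W^{AB}=A$ this is clear, and otherwise $W^{AB}\in A_R\subseteq\He$ by the correctness property $M_L\cup M_R\subset\He$ from Definition \ref{Hdef}.

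For the first inequality $A\leq_\in W^{AB}$ I would split on the condition $W^{AB}\in\{A\}\cup A_R$. If $W^{AB}=A$, then $A\leq_\in W^{AB}$ holds by reflexivity (the case $M=M$ in the definition of $\leq_\in$). If instead $W^{AB}\in A_R$, then Fact \ref{lpmnoziny}, applied with the role of $Y$ played by $W^{AB}$, gives $A<_\in W^{AB}$ and hence $A\leq_\in W^{AB}$. Symmetrically, for $W^{AB}\leq_\in B$ I would split on $W^{AB}\in\{B\}\cup B_L$: the case $W^{AB}=B$ is again reflexivity, while $W^{AB}\in B_L$ yields $W^{AB}<_\in B$ directly from Fact \ref{lpmnoziny} (now with $W^{AB}$ in the role of $X$). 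Chaining the two inequalities then gives $A\leq_\in W^{AB}\leq_\in B$.

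I do not expect a genuine obstacle here; essentially all of the content is already carried by Fact \ref{lpmnoziny}, which records exactly the ``sandwiching'' behaviour of the sets $A_L$ and $A_R$ around $A$. The only points deserving care are bookkeeping: confirming $W^{AB}\in\He$ so that the assertion is well posed, and keeping the four subcases straight so that reflexivity is invoked precisely in the degenerate situations $W^{AB}=A$ and $W^{AB}=B$. I would also flag the harmless typo in the definition of a witness, where $(A\cup A_R)\cap(B\cup B_L)$ should read $(\{A\}\cup A_R)\cap(\{B\}\cup B_L)$ in order to agree with the definition of $<_\in$.
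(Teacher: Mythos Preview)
Your proposal is correct and is exactly the routine unwinding the paper has in mind: the paper states Fact~\ref{fact2} without proof, remarking only that it ``follows directly from the definition of $\He$,'' and your case analysis together with Fact~\ref{lpmnoziny} is precisely that direct verification. Your observation about the typo in the witness definition is also accurate.
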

\begin{fact}
\label{level}
Let $A<_\in B$ and let $W^{AB}$ be a witness of $A<_\in B$.\\ Then
$l(W^{AB})\leq \min(l(A), l(B))$, and either $l(W^{AB})<l(A)$ or $l(W^{AB})<l(B)$.
\end{fact}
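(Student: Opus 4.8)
The plan is to reduce everything to a single monotonicity property of the level function and then read off the statement from the two membership conditions defining a witness. The core observation is that the level strictly decreases when one passes from a set to any of its left- or right-elements: if $X\in\He$ is nonempty and $Y\in X_L\cup X_R$, then the defining recursion $l(X)=\max\{l(Z):Z\in X_L\cup X_R\}+1$ immediately gives $l(Y)\leq l(X)-1<l(X)$. This is the only arithmetic fact I will need.

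Now recall that $W^{AB}$ being a witness of $A<_\in B$ means exactly $W^{AB}\in(\{A\}\cup A_R)\cap(\{B\}\cup B_L)$, so I would split on the two coordinates. From $W^{AB}\in\{A\}\cup A_R$ there are two cases: either $W^{AB}=A$, in which case $l(W^{AB})=l(A)$, or $W^{AB}\in A_R\subseteq A_L\cup A_R$, in which case the monotonicity property gives $l(W^{AB})<l(A)$. In either case $l(W^{AB})\leq l(A)$, with equality forcing $W^{AB}=A$ (since $W^{AB}\in A_R$ already yields a strict inequality). Symmetrically, from $W^{AB}\in\{B\}\cup B_L$ I obtain $l(W^{AB})\leq l(B)$, with equality forcing $W^{AB}=B$.

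The first assertion, $l(W^{AB})\leq\min(l(A),l(B))$, is then immediate from the two bounds just derived. For the second assertion I argue by contradiction: if both $l(W^{AB})<l(A)$ and $l(W^{AB})<l(B)$ failed, then $l(W^{AB})=l(A)$ and $l(W^{AB})=l(B)$, and by the equality conditions above this would force $W^{AB}=A$ and $W^{AB}=B$, hence $A=B$. Thus for a proper inequality $A<_\in B$ (with $A\neq B$) at least one of the two strict inequalities must hold.

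The only point requiring care is the degenerate self-comparison $A=B$: since $A_L\cap A_R=\emptyset$ by the correctness property, the witness set $(\{A\}\cup A_R)\cap(\{A\}\cup A_L)$ collapses to $\{A\}$, so the sole witness is $A$ itself and the strict clause cannot hold. In every intended application the inequality is proper — for instance in Fact \ref{lpmnoziny} the hypothesis $X\in A_L$ forces $X\neq A$ by well-foundedness of $\in$ on the finite sets of $\Model$ — so this case does not interfere. I expect no genuine obstacle here: the whole statement is a direct consequence of the recursive definition of level, and the only thing to get right is the bookkeeping of the equality cases together with the explicit exclusion of the reflexive case $A=B$.
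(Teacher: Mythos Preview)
Your proof is correct and is exactly the unpacking the paper has in mind when it says this fact ``follows directly from the definition of $\He$'': the level recursion gives $l(Y)<l(X)$ whenever $Y\in X_L\cup X_R$, and the two membership conditions defining a witness then yield the two bounds, with equality only in the cases $W^{AB}=A$ or $W^{AB}=B$. Your handling of the equality case and the observation that $A<_\in B$ is intended as a proper inequality (so $A\neq B$, ruling out $W^{AB}=A=B$) is the right bookkeeping; the paper leaves this implicit.
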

First we prove transitivity.
  \begin{lem}
    \label{ltr}
    The relation $\leq_\in$ is transitive on the class $\He$.
  \end{lem}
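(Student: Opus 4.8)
The plan is to prove transitivity by induction on the level sum $l(A)+l(B)+l(C)$, using the witness machinery of Facts~\ref{lpmnoziny}--\ref{level} together with the defining clauses of $\He$. Since the intersection defining $M<_\in N$ always contains $M$ when $M=N$, the relations $<_\in$ and $\leq_\in$ coincide, so it suffices to show that $A<_\in B$ and $B<_\in C$ imply $A<_\in C$. If $A=B$ or $B=C$ the conclusion is immediate by substitution, so I assume the three elements are distinct. I would then fix a witness $W_1\in(\{A\}\cup A_R)\cap(\{B\}\cup B_L)$ of $A<_\in B$ and a witness $W_2\in(\{B\}\cup B_R)\cap(\{C\}\cup C_L)$ of $B<_\in C$. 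Discarding the excluded equalities, each inequality has one of three ``reasons'': for $A<_\in B$ it is $A\in B_L$, $B\in A_R$, or $A_R\cap B_L\neq\emptyset$, and symmetrically for $B<_\in C$, giving nine combinations to treat.

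First I would dispatch the ``aligned'' combinations, where a witness for $A<_\in C$ is produced outright. If $A\in B_L$ and $B\in C_L$, then left completeness applied to $B\in C_L$ gives $B_L\subseteq C_L$, whence $A\in C_L$ and $A<_\in C$; dually, $B\in A_R$ with $C\in B_R$ yields $C\in A_R$ via right completeness. If $A\in B_L$ and $C\in B_R$, the ordering property of $B$ gives $A<_\in C$ directly. The remaining aligned combinations simply exhibit a common element of $A_R\cap C_L$ (for instance $B\in A_R$ together with $B\in C_L$, or $B\in A_R$ with $w_2\in B_R\cap C_L$, where right completeness puts $w_2\in A_R$), which witnesses $A<_\in C$.

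The substantive work is in the ``crossing'' cases, where I would manufacture an intermediate element of strictly smaller level and invoke the induction hypothesis. The governing point is that any element of $A_R$, of $B_L\cup B_R$, or of $C_L$ has level strictly below $l(A)$, $l(B)$, or $l(C)$ respectively, immediately from the definition of the level function, so replacing one of $A,B,C$ by such an element strictly decreases the measure. Concretely, if $A\in B_L$ and $w_2\in B_R\cap C_L$, the ordering property of $B$ gives $A<_\in w_2$ while $w_2\in C_L$ gives $w_2<_\in C$ by Fact~\ref{lpmnoziny}; as $l(w_2)<l(B)$, the induction hypothesis on the triple $(A,w_2,C)$ yields $A<_\in C$. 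The symmetric case $w_1\in A_R\cap B_L$ with $C\in B_R$ is handled through $(A,w_1,C)$ in the same way.

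The main obstacle is the doubly-crossing case $w_1\in A_R\cap B_L$ and $w_2\in B_R\cap C_L$, which produces the full chain $A<_\in w_1<_\in w_2<_\in C$ (the outer steps by Fact~\ref{lpmnoziny}, the middle step by the ordering property of $B$) and requires two nested applications of the induction hypothesis with careful level bookkeeping. Here I would first combine $w_1<_\in w_2$ and $w_2<_\in C$ on the triple $(w_1,w_2,C)$---legitimate because $l(w_1)<l(A)$ and $l(w_2)<l(B)$ drive the measure down---to obtain $w_1<_\in C$, and then combine $A<_\in w_1$ with $w_1<_\in C$ on $(A,w_1,C)$, legitimate because $l(w_1)<l(B)$. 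Checking that the measure strictly decreases at each of these two steps is the crux; once it is verified the induction closes and transitivity of $\leq_\in$ on $\He$ follows.
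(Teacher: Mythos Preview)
Your argument is correct, but it differs from the paper's in a notable way. The paper gives a \emph{direct} proof with no induction: it takes witnesses $W^{AB}$ and $W^{BC}$, first shows $W^{AB}\leq_\in W^{BC}$ by a short four-case analysis according to whether each witness equals $B$ or lies in $B_L$ (resp.\ $B_R$), and then---in the non-degenerate case---takes a witness $W^{AC}$ of $W^{AB}<_\in W^{BC}$ and verifies, using only the left/right completeness clauses, that $W^{AC}\in(\{A\}\cup A_R)\cap(\{C\}\cup C_L)$, so $W^{AC}$ witnesses $A<_\in C$ outright.

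Your approach trades this witness-of-witnesses trick for an induction on $l(A)+l(B)+l(C)$ together with a nine-fold case split; the three ``crossing'' cases are handled by manufacturing an intermediate element of smaller level and appealing to the induction hypothesis (twice in the doubly-crossing case). This is sound and your level bookkeeping is right. What the paper's route buys is economy: one layer of witnesses suffices, and the completeness axioms do all the work in a single step. What your route buys is perhaps a clearer view of why each axiom of $\He$ is needed, since you invoke each one in its own labelled case. Your opening remark that $<_\in$ is already reflexive (so $<_\in$ and $\leq_\in$ coincide) is correct, though the paper keeps the two symbols separate.
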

  \begin{proof}
   Assume that three elements $A,B,C$ of $\He$ satisfy $A<_\in B<_\in C$. We prove that $A<_\in C$
   holds.  Let $W^{AB}$ and $W^{BC}$ be witnesses of the inequalities $A<_\in B$
   and $B<_\in C$ respectively. First we prove that $W^{AB}\leq_\in W^{BC}$.  We distinguish four cases (depending on the definition of the witness):
   \begin{enumerate}
      \item $W^{AB}\in B_{L}$ and $W^{BC}\in B_{R}$.
      
      In this case it follows from Fact \ref{lpmnoziny} that $W^{AB}<_\in W^{BC}$.
      \item $W^{AB}=B$ and $W^{BC}\in B_{R}$. 
      
      Then $W^{BC}$ is a witness of the inequality $B<_\in W^{BC}$ and thus $W^{AB}<_\in W^{BC}$.
      \item $W^{AB}\in B_{L}$ and $W^{BC} = B$.
      
      The inequality $W^{AB}\leq_\in W^{BC}$ follows analogously to the previous case.
      \item $W^{AB}=W^{BC}=B$ (and thus $W^{AB}\leq_\in W^{BC}$).
   \end{enumerate}
   In the last case  $B$ is a witness of the inequality $A<_\in C$.
   Thus we may assume that $W^{AB}\neq_\in W^{BC}$.  Let $W^{AC}$ be a witness of
   the inequality $W^{AB}<_\in W^{BC}$.  Finally we prove that $W^{AC}$ is a witness
   of the inequality $A<_\in C$. We distinguish three possibilities:
   \begin{enumerate}
      \item $W^{AC}=W^{AB}=A$. 
      \item $W^{AC}=W^{AB}$ and $W^{AC}\in A_R$.  
      \item $W^{AC}\in W^{AB}_R$, then also $W^{AC}\in A_R$ from the completeness property.
   \end{enumerate}
   It follows that either $W^{AC}=A$ or $W^{AC}\in A_R$.  Analogously either
   $W^{AC}=C$ or $W^{AC}\in C_L$ and thus $W^{AC}$ is the witness of inequality
   $A<_\in C$.
  \end{proof}
  \begin{lem}
    \label{lasy}
    The relation $<_\in$ is strongly antisymmetric on the class $\He$.
  \end{lem}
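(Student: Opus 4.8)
The plan is to prove the antisymmetry statement in the form: whenever $A<_\in B$ and $B<_\in A$ hold for $A,B\in\He$, necessarily $A=B$. Together with the reflexivity that is immediate from the definition of $\leq_\in$ and the transitivity already established in Lemma \ref{ltr}, this is exactly what is needed to conclude that $(\He,\leq_\in)$ is a partial order. The whole argument will be driven by the level function $l$, run as a minimal-counterexample argument (equivalently, induction on $l(A)+l(B)$): the level is precisely the well-founded rank that lets one descend through the recursive definition of $\He$.

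So I would suppose, for contradiction, that a counterexample exists and pick $A\neq B$ in $\He$ with $A<_\in B$ and $B<_\in A$ minimizing $l(A)+l(B)$. Choose a witness $W^{AB}$ of $A<_\in B$ and a witness $W^{BA}$ of $B<_\in A$. First I would place these witnesses between the endpoints: by Fact \ref{fact2} we have $A\leq_\in W^{AB}\leq_\in B$ and $B\leq_\in W^{BA}\leq_\in A$, so chaining through Lemma \ref{ltr} gives $W^{AB}\leq_\in W^{BA}$ and $W^{BA}\leq_\in W^{AB}$; thus the two witnesses themselves form a mutually comparable pair.

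Next I would control the levels. Since $A\neq B$, Fact \ref{level} applies to each inequality and yields $l(W^{AB})\leq\min(l(A),l(B))$ and $l(W^{BA})\leq\min(l(A),l(B))$, each with at least one strict inequality. A short case split on whether $l(A)=l(B)$ then shows $l(W^{AB})+l(W^{BA})<l(A)+l(B)$ in all cases: when the levels are equal, the strict clause of Fact \ref{level} forces both witness levels strictly below the common value, while when they differ one already has $2\min(l(A),l(B))<l(A)+l(B)$. Consequently, if $W^{AB}\neq W^{BA}$ then these form a mutually $<_\in$-related pair of strictly smaller level sum, contradicting minimality; hence $W^{AB}=W^{BA}=:W$.

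Finally I would cash this in using disjointness. As a witness of $A<_\in B$ we have $W\in\{A\}\cup A_R$, and as a witness of $B<_\in A$ we have $W\in\{A\}\cup A_L$; since the correctness condition of Definition \ref{Hdef} gives $A_L\cap A_R=\emptyset$, the intersection $(\{A\}\cup A_R)\cap(\{A\}\cup A_L)$ reduces to $\{A\}$, so $W=A$. The symmetric reading through $B$ gives $W=B$, whence $A=W=B$, contradicting $A\neq B$. The point where I expect the main care to be needed is the level bookkeeping in the case $l(A)=l(B)$: there the danger is that an endpoint could serve as its own witness with no drop in level, and it is exactly the strict clause of Fact \ref{level} (which implicitly relies on $A\neq B$) that excludes this and keeps the induction well-founded.
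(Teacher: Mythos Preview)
Your proof is correct and follows essentially the same strategy as the paper: a minimal counterexample in $l(A)+l(B)$, witnesses $W^{AB}$ and $W^{BA}$, the chain from Fact~\ref{fact2} combined with transitivity, and finally the disjointness $A_L\cap A_R=\emptyset$ to force $W=A=B$. The only difference is organizational: where the paper runs through four explicit cases on the positions of the witnesses, you first establish the uniform bound $l(W^{AB})+l(W^{BA})<l(A)+l(B)$ and immediately reduce to the single case $W^{AB}=W^{BA}$, which is a slightly cleaner packaging of the same argument.
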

  \begin{proof}
    Assume that $A<_\in B<_\in A$ is a counterexample with minimal $l(A)+l(B)$.
    Let $W^{AB}$ be a witness of the inequality $A<_\in B$ and $W^{BA}$ a witness of
    the reverse inequality.  From Fact \ref{fact2} it follows that $A\leq_\in W^{AB}\leq_\in B\leq_\in W^{BA}\leq_\in A \leq_\in W^{AB}$.
    From the transitivity we know that $W^{AB}\leq_\in W^{BA}$ and $W^{BA}\leq_\in W^{AB}$.

    Again  we  consider 4 possible cases:
    \begin{enumerate}
      \item $W^{AB}=W^{BA}$.
      
      From the disjointness of the sets $A_L$ and $A_R$ it follows that $W^{AB}=W^{BA}=A$.  Analogously we obtain
      $W^{AB}=W^{BA}=B$, which is a contradiction.
      \item Either $W^{AB}=A$ and $W^{BA}=B$ or $W^{AB}=B$ and $W^{BA}=A$. 
      
      Then
      a contradiction follows in both cases from the fact that $l(A)<l(B)$ and $l(B)<l(A)$ (by Fact \ref{level}).
      \item $W^{AB}\neq A$, $W^{AB}\neq B$, $W^{AB}\neq W^{BA}$.  
      
      Then $l(W^{AB})<l(A)$ and
      $l(W^{AB})< l(B)$. Additionally  we have $l(W^{BA})\leq l(A)$ and $l(W^{BA})\leq
      l(B)$ and thus $A$ and $B$ is not a minimal counter example.
      \item $W^{BA}\neq A$, $W^{BA}\neq B$, $W^{AB}\neq W^{BA}$.
      
      The contradiction follows symmetrically to the previous case from the minimality of $l(A)+l(B)$.
    \end{enumerate}
  \end{proof}
\begin{thm}\label{order}
  $(\He,\leq_\in)$ is a partially ordered set.
\end{thm}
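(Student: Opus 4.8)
The plan is to assemble the three defining conditions of a partial order—reflexivity, weak antisymmetry, and transitivity—from the definition of $\leq_\in$ together with the two lemmas already established. Since the genuinely technical work (the witness-chasing arguments) has been carried out in Lemma \ref{ltr} and Lemma \ref{lasy}, the theorem itself is essentially a short bookkeeping assembly, and I do not expect any serious obstacle here.

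First I would dispatch reflexivity, which is immediate: by the definition of the relation, $M \leq_\in N$ holds whenever $M <_\in N$ or $M = N$, so taking $M = N$ gives $M \leq_\in M$ for every $M \in \He$ with no further argument.

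Next I would deduce weak antisymmetry from Lemma \ref{lasy}. Suppose $M \leq_\in N$ and $N \leq_\in M$ but $M \neq N$. Unfolding the definition of $\leq_\in$, since the two elements are distinct we must have the strict inequalities $M <_\in N$ and $N <_\in M$. This directly contradicts the strong antisymmetry of $<_\in$ established in Lemma \ref{lasy}. Hence $M = N$, as required.

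Finally, transitivity is handled almost verbatim by Lemma \ref{ltr}. That lemma shows $\leq_\in$ is transitive on $\He$; the only point to note is the passage from the strict statement ($A <_\in B <_\in C$ implies $A <_\in C$) to the reflexive closure, which follows by checking the degenerate cases where one of the inequalities is an equality—each of these is trivial since then the conclusion reduces to one of the hypotheses. With reflexivity, weak antisymmetry, and transitivity all in hand, the relation $\leq_\in$ satisfies the definition of a partial order given in the introduction, and therefore $(\He, \leq_\in)$ is a partially ordered set.
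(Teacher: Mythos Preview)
Your proposal is correct and matches the paper's own proof essentially verbatim: the paper also just says that reflexivity is immediate from the definition, while transitivity and antisymmetry follow from Lemma~\ref{ltr} and Lemma~\ref{lasy} respectively. Your additional remarks about handling the degenerate equality cases in transitivity are fine extra detail that the paper leaves implicit.
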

  \begin{proof}
    Reflexivity of the relation follow directly from the definition, transitivity and antisymmetry follow from  Lemmas \ref{ltr} and \ref{lasy}.
  \end{proof}

Now we are ready to prove the main result of this section:

\begin{thm}
  \label{univ2}
  $(\He,\leq_\in)$ is the generic partially ordered set for the class of all countable partial orders.
\end{thm}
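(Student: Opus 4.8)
The plan is to invoke Lemma \ref{extuniv}. Having already shown in Theorem \ref{order} that $(\He,\leq_\in)$ is a partial order, it suffices to check two things: that $\Age(\He,\leq_\in)$ is exactly the class of all finite partial orders, and that $(\He,\leq_\in)$ has the extension property for that class. Genericity then follows from Lemma \ref{extuniv} together with \Fraisse{}'s theorem (Theorem \ref{fraissethm}). The inclusion $\Age(\He,\leq_\in)\subseteq\{\text{finite posets}\}$ is immediate, since every induced substructure of a partial order is again a partial order; thus the real content lies in the extension property, from which universality and the reverse inclusion for the age both follow.

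Following the pattern of Fact \ref{grafext} and of the proofs of Theorems \ref{ROuniv} and \ref{ROFORB}, I would first reformulate the extension property concretely. A finite one-point extension corresponds to a finite already-placed set partitioned as a down-set $D$, an up-set $U$, and an incomparable set $I$, in a way consistent with a poset one-point extension; in particular $A<_\in B$ holds for every $A\in D$ and every $B\in U$, and no element of $I$ lies below a $D$-element or above a $U$-element (otherwise transitivity would force comparability with the new point). The task is to produce a single $M\in\He$ with $A<_\in M$ for all $A\in D$, with $M<_\in B$ for all $B\in U$, and with $M$ incomparable to every element of $I$.

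The construction mirrors $\HE 0,M_L,{M_R}$ from Definition \ref{rodef}: let $M_L$ be the closure of $D$ under $A\mapsto A_L$ and $M_R$ the closure of $U$ under $B\mapsto B_R$ (both finite, since by Fact \ref{lpmnoziny} the level strictly drops along these operations), adjoin to $M_L$ a fresh element $z$ lying $<_\in$-below all of $M_R$ and of level exceeding every placed element (such a $z$ exists, e.g.\ as a deep enough member of a descending chain of common lower bounds of $U$), and set $M=\HE 0,M_L,{M_R}$. Verifying $M\in\He$ is then bookkeeping: left and right completeness hold by construction; disjointness $M_L\cap M_R=\emptyset$ and the ordering property both reduce to the fact that any $X\in M_L$ lies $\leq_\in$-below some $A\in D$ while any $Y\in M_R$ lies $\leq_\in$-above some $B\in U$, so an element of $M_L\cap M_R$ would yield $B\leq_\in A$ with $A\in D$, $B\in U$, contradicting the consistency hypothesis via antisymmetry (Lemma \ref{lasy}) and transitivity (Lemma \ref{ltr}). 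The placement relations $A<_\in M$ (as $A\in M_L$) and $M<_\in B$ (as $B\in M_R$) then hold by exhibiting the obvious witnesses.

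The main obstacle, exactly as in the directed-graph case, is verifying incomparability with $I$. For $c\in I$ one must exclude every witness of $c<_\in M$ or $M<_\in c$. Witnesses other than $M$ itself are ruled out by the consistency hypothesis combined with Fact \ref{lpmnoziny} applied along the closures defining $M_L$ and $M_R$: any such witness would place $c$ below a $D$-element or above a $U$-element. The one remaining danger is that $M$ itself be a set-theoretic member witnessing a relation, i.e.\ $M\in c_L$ or $M\cup\{\apple\}\in c_R$; this is precisely where the fresh element $z$ enters, since it forces $l(M)>l(c)$, and the definition of the level function forbids $M$ from occurring in the left or right part of any placed element. Once these checks are complete, Lemma \ref{extuniv} yields ultrahomogeneity and universality, identifying $(\He,\leq_\in)$ with the generic partial order.
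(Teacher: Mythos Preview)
Your plan and execution are essentially the paper's own argument (Lemma~\ref{ext}): the same partition into $D=M_-$, $U=M_+$, $I=M_0$, the same one-step closures $\overline{M_-}$, $\overline{M_+}$, the candidate $\HE 0,\overline{M_-},\overline{M_+}$, and the same fix of adjoining a fresh lower element to force high level and rule out accidental membership in $c_L$ or $c_R$ for $c\in I$. The incomparability analysis via witnesses is exactly the paper's case check.

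The one place where your sketch is looser than it should be is the choice of $z$. For $M\in\He$ you need left completeness, so adjoining $z$ to $M_L$ requires $z_L\subseteq M_L$; an arbitrary ``deep enough member of a descending chain of common lower bounds of $U$'' need not satisfy this, since its $z_L$ could contain elements outside the closure of $D$. The paper avoids this by taking the explicit element $B=\HE 0,\emptyset,{M'}$ with $M'=\bigcup_{C\in M}C_R\cup M$: then $B_L=\emptyset$ (so left completeness is automatic), $B_R\supseteq\overline{M_+}$ (so the ordering property for $B$ against every $Y\in M_R$ is witnessed by $Y$ itself), and $l(B)$ exceeds the level of every placed element. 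Once you specify $z$ this way, your argument goes through verbatim.
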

First we show the following lemma:
\begin{lem}
  \label{ext}
  $(\He,\leq_\in)$ has the extension property.
\end{lem}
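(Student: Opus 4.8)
The plan is to check the extension property in the one-point form adapted to partial orders. Given a finite set $S\subseteq\He$, the comparability pattern of a new point in a one-point extension of the poset induced on $S$ splits $S$ into $S=D\cup U\cup I$ (the elements that should go below, above, and incomparable to the new point), and these data satisfy the usual compatibility constraints: $D$ is a down-set and $U$ an up-set of $(S,\leq_\in)$, and $A<_\in B$ for all $A\in D$, $B\in U$ (this last is forced by $A<_\in M<_\in B$ together with transitivity, Lemma \ref{ltr}). So the goal is to produce a single $M\in\He$ with $A<_\in M$ for every $A\in D$, with $M<_\in B$ for every $B\in U$, and with $M$ incomparable to every $C\in I$; by Lemma \ref{extuniv} this yields Theorem \ref{univ2}.

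I would construct $M$ by prescribing its left and right parts, in the spirit of Definition \ref{Hdef} and of the construction of $\RO_\in$ in Theorem \ref{ROuniv}. Put $M_R=\bigcup_{B\in U}(\{B\}\cup B_R)$, the $\cdot_R$-closure of $U$, and $M_L=\{v\}\cup\bigcup_{A\in D}(\{A\}\cup A_L)$, the $\cdot_L$-closure of $D$ enlarged by one auxiliary element $v$ (chosen below), and set $M=\HE 0,{M_L},{M_R}$. That $M\in\He$ should follow routinely: correctness and both completeness properties hold because $M_L$ is $\cdot_L$-closed and $M_R$ is $\cdot_R$-closed (using that every $A\in D$ and $B\in U$ already lies in $\He$), while the ordering property reduces, through Lemma \ref{ltr}, to the compatibility hypothesis $A<_\in B$ for $A\in D$, $B\in U$, together with $v<_\in B$ for $B\in U$. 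Disjointness $M_L\cap M_R=\emptyset$ follows from antisymmetry (Lemma \ref{lasy}), since a common element would squeeze some $B\in U$ below some $A\in D$. The two positive comparabilities are then immediate from the definition of $<_\in$: the membership $A\in M_L$ is itself a witness of $A<_\in M$, and $B\in M_R$ is a witness of $M<_\in B$; that these are strict, and that the reverse relations fail, comes from $M$ being distinct from every element of $S$ (freshness, below) and from Lemma \ref{lasy}.

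The delicate step, and the main obstacle, is incomparability with $C\in I$. Unwinding $<_\in$, a spurious relation $C<_\in M$ or $M<_\in C$ can only arise from a common element of $C_R$ and $M_L$ (resp. $C_L$ and $M_R$), or from one of the coincidences $M=C$, $M\in C_R$, $M\in C_L$. The shared-element cases coming from the $\cdot_L$- and $\cdot_R$-closures of $D$ and $U$ are killed by the order combinatorics: such a common element would, by Fact \ref{lpmnoziny} and transitivity, force $C<_\in A$ for some $A\in D$ or $B<_\in C$ for some $B\in U$, hence $C\in D$ or $C\in U$ by the down-set/up-set hypothesis, contradicting $C\in I$. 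The genuinely new difficulty, absent in the graph and digraph cases of Chapter \ref{graphschapter}, is that $M$ cannot be freely perturbed: its set-theoretic code is dictated by $M_L$ and $M_R$, so the coincidences $M=C$ and $M\in C_L\cup C_R$ must be excluded by hand. This is exactly the role of $v$. I would take $v=\HE 0,\emptyset,{M_R\cup\{n\}}$, where $n$ is a von Neumann natural number of rank large enough that $n$ occurs nowhere in the finite data $S$; since $n_R=\emptyset$ we have $n\in\He$ and $M_R\cup\{n\}$ stays $\cdot_R$-closed, so $v\in\He$, and $v$ is a common lower bound of $U$ (keeping the ordering property intact). Because the code of $v$, and hence of $M$, mentions $n$, the element $M$ appears nowhere in $S$; thus $M\neq C$ and $M\notin C_L\cup C_R$ for every $C\in S$, eliminating all coincidences simultaneously. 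This parallels the freshness choice $x\notin\bigcup_m m$ used in the proof of Theorem \ref{ROuniv}, here engineered so as to respect the ordering and completeness axioms of $\He$.
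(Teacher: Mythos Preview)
Your proof is correct and follows essentially the same plan as the paper's: form $M_L$ and $M_R$ as the $\cdot_L$- and $\cdot_R$-closures of $D$ and $U$, check the axioms of $\He$ via transitivity and antisymmetry, and adjoin one extra element to $M_L$ to guarantee that the resulting $M$ is genuinely new. The only real difference is how freshness is engineered. The paper takes as its extra element $B=\HE 0,\emptyset,{M'}$ with $M'=\bigcup_{X\in S}X_R\cup S$, a common strict lower bound of \emph{all} of $S$; freshness then follows from a level comparison, since $l(B)>l(N)$ for every $N\in S$ and hence $l(A')>l(N)$ as well. Your choice $v=\HE 0,\emptyset,{M_R\cup\{n\}}$ with $n$ a high-rank von Neumann natural achieves the same effect by a rank argument instead, and has the minor advantage that the freshness reasoning is explicit rather than implicit in the level function. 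Either device works; the underlying architecture of the two proofs is the same.
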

\begin{proof}
  Let $M$ be a finite subset of the elements of $\He$.  We want to extend the
  partially ordered set induced by $M$ by the new element
  $X$.  This extension can be described by three subsets of $M$: $M_-$ containing
  elements smaller than $X$, $M_+$ containing elements greater than $X$, and $M_0$
  containing elements incomparable with $X$.  Since the extended relation is
  a partial order we have the following properties of these sets:
  \begin{enumerate}
    \item[I.] Any element of $M_-$ is strictly smaller than any element of $M_+$,
    \item[II.] $B\leq_\in A$ for no $A\in M_-$, $B\in M_0$,
    \item[III.] $A\leq_\in B$ for no $A\in M_+$, $B\in M_0$,
    \item[IV.] $M_-$, $M_+$ and $M_0$ form a partition of $M$.
  \end{enumerate}

  Put
  $$\overline{M_-}=\bigcup_{B\in M_-}B_L\cup M_-,$$
  $$\overline{M_+}=\bigcup_{B\in M_+}B_R\cup M_+.$$
  We verify that the properties I., II., III., IV. still hold for sets $\overline{M_-}$, $\overline{M_+}$, $M_0$.

  \begin{enumerate}
    \item [ad I.]
    We prove that any element of $\overline{M_-}$ is strictly smaller than any element of $\overline{M_+}$:
    
    Let $A\in\overline {M_-}, A'\in\overline {M_+}$.  We prove $A<_\in A'$.
    By the definition of $\overline {M_-}$ there exists $B\in M_-$ such that either $A=B$ or $A\in B_L$.
    By the definition of $\overline {M_+}$ there exists $B'\in M_+$ such that either $A'=B'$ or $A'\in B'_R$.
    By the definition of $<_\in $ we have $A\leq_\in B$, $B<_\in B'$ (by I.) and $B'\leq_\in A'$ again by the definition of $<_\in $.  It follows $A<_\in A'$.
    \item [ad II.]
    We prove that $B\leq_\in A$ for no $A\in \overline{M_-}$, $B\in M_0$:

    Let $A\in\overline {M_-}, B\in M_0$ and let
    $A'\in M_-$ satisfy either $A=A'$ or $A\in A'_L$.
    We know that $B\nleq_\in A'$ and as $A\leq_\in A'$ we have also $B\nleq_\in A$.
    \item [ad III.] To prove that $A\leq_\in B$ for no $A\in \overline{M_+}$, $B\in M_0$ we can proceed similarly to ad II.
    \item [ad IV.]
    We prove that $\overline{M_-}$, $\overline{M_+}$ and $M_0$ are pairwise disjoint:
    
    $\overline {M_-}\cap \overline {M_+}=\emptyset$ follows from I.
    $\overline {M_-}\cap M_0=\emptyset$ follows from II.
    $\overline {M_+}\cap M_0=\emptyset$ follows from III.
  \end{enumerate}

  It follows that $A=\HE N,{\overline {M_-}},{\overline {M_+}}$ is an element
  of $\He$ with the desired inequalities for the elements in the sets $M_-$ and $M_+$.

  Obviously each element of $M_-$ is smaller than $A$ and each element of $M_+$ is greater than $A$.

  It remains to be shown that each $N\in M_0$ is incomparable with $A$. However
  we run into a problem here: it is possible that $A=N$.
  We can avoid this problem by first considering the set:
 $$M'=\bigcup_{B\in M}B_R\cup M.$$
 It is then easy to show that $B=\HE 0,\emptyset,{M'}$ is an element of $\He$
 strictly smaller than all elements of $M$.

 Finally we construct the set $A'=\HE 0,{A_L\cup\{B\}},{A_R}$.  The set $A'$ has the same
 properties with respect to the elements of the sets $M_-$ and $M_+$ and differs from any set in
 $M_0$.  It remains to be shown that $A'$ is incomparable with $N$.

 For contrary, assume for example, that $N<_\in A'$ and $W^{NA'}$ is the witness of the
 inequality.  Then $W^{NA'}\in  \overline {M_-}$ and $N\leq_\in W^{NA'}$.
 Recall that $N\in M_0$.
 From IV. above and the definition of $A'$ it follows that $N<_\in W^{NA'}$.
 From $ad$ III. above it follows that there is no
 choice of elements with $N<_\in W^{NA'}$, a contradiction.

 The case $N>_\in A'$ is analogous.
 The case $N>_\in A'$ is analogous.
\end{proof}
\begin{proof}
  Proof of Theorem \ref{univ2} follows by combining Lemma \ref{ext} and fact that extension property imply both universality and ultrahomogeneity of the partial order (Lemma \ref{extuniv}).
\end{proof}
\eject

  \begin{figure}[t]
    \begin{center}
      {\def\IPEfile{poset.ipe}\begingroup
  \catcode`\%=9\catcode`\!=0\catcode`\-=11\input{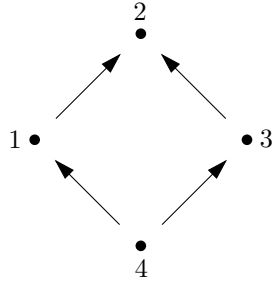}}
    \end{center}
    \caption{Partially ordered set $(P,\leq_P)$.}
    \label{posetr}
  \end{figure}
\begin{example}
Consider partial order $(P,\leq_P)$ depicted in Figure \ref{poset}. The function
$c$ embedding  $(P,\leq_P)$ to $(\He,\leq_\He)$ can be defined as:
  \begin{eqnarray}
c(1)&=&\TA\nonumber \\
c(2)&=&\TB\nonumber \\
c(3)&=&\TC\nonumber \\
c(4)&=&\TD\nonumber
  \end{eqnarray}
\end{example}

\subsection{Finite presentation of the generic partial order}
Definition \ref{Hdef} of $\P_\in$ is recursive and thus may not be considered a finite presentation.  However it can be modified to give a finite presentation of $\P$ which we denote by $\HF$.  After defining carefully the elements of $\HF$ the relation $\leq_\HF$ follows easily.

\begin{defn}
\label{hfdef}
Elements of $\HF$ are all pairs $(P,\leq_P)$ which satisfy the following:

\begin{enumerate}
\item [I.] Axioms for $P$: 

   \begin{enumerate}
     \item[1.](correctness) 
	\begin{enumerate}
	\item[(a)]$\apple\notin M$,
	\item[(b)] $M_L\cup M_R\subset P$,
	\item[(c)] $M_L\cap M_R = \emptyset$.
	\end{enumerate}
     \item[2.](ordering property) $(\{A\}\cup A_R)\cap(\{B\}\cup B_L)\neq \emptyset$ for each $A\in M_L, B\in M_R$,
     \item[3.](left completeness) $A_L\subseteq M_L$ for each $A\in M_L$,
     \item[4.](right completeness) $B_R\subseteq M_R$ for each $B\in M_R$.
   \end{enumerate}
\item [II.] Axioms for $\leq_P$:

\begin{enumerate}
\item[1.] $\leq_P$ is a partial order.
\item[2.] $\leq_P$ is the transitive closure of the set $\{(A,B):A\in B_L\cup B_R,B\in P\}\cup\{(A,A):A\in P\}$.
\item[3.] $(P,\leq_P)$ has a maximum denoted by $m(P,\leq_P)$.
\end{enumerate}
\end{enumerate}

The relation $\leq_\HF$ of $\HF$ is defined by comparison (in $\P_\in$) of the greatest elements:
$$(P,<_P)\leq_\HF(P',<_{P'})\hbox{ if and only if }m(P,<_P)\leq m(P',<_{P'})\hbox{ in }\He.$$
\end{defn}

This definition is a finite presentation.  Note that the maximum, the completeness and the transitive closure are axiomatized by first order formulas.  We next turn to the presentation of $\leq_\HF$.  First we show that $\He$ and $\HF$ are compatible:
\begin{lem}
\label{pkor}
$P\subset \He$ for each $(P,<_P)\in \HF$.
\end{lem}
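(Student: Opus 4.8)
The plan is to prove the inclusion by induction on the set-theoretic rank of the elements of $P$, using the well-foundedness of $\in$ in the fixed model $\Model$ (every element of $P$ is hereditarily finite, so ranks are natural numbers). The guiding observation is that Definition \ref{hfdef} imposes on each $M\in P$ exactly the four conditions of Definition \ref{Hdef}, with a single difference: the correctness clause I.1(b) only asserts $M_L\cup M_R\subset P$, whereas membership in $\He$ demands $M_L\cup M_R\subset \He$. Every other clause --- $\apple\notin M$, $M_L\cap M_R=\emptyset$, the ordering property, and left and right completeness --- is stated verbatim in both definitions, so for a fixed $M$ these transfer directly from Axioms I applied to $M$, without any reference to the children being in $\He$. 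Consequently the entire content of the lemma is to upgrade $M_L\cup M_R\subset P$ to $M_L\cup M_R\subset \He$, and this is precisely what the induction supplies.

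First I would fix $M\in P$ and adopt the induction hypothesis that every $A\in P$ with $\mathrm{rank}(A)<\mathrm{rank}(M)$ already lies in $\He$. Verifying Definition \ref{Hdef} for $M$ then reduces to the trivial clauses, copied from Axioms I of Definition \ref{hfdef}, together with the recursive clause 1(b). For 1(b) I take an arbitrary $A\in M_L\cup M_R$; Axiom I.1(b) gives $A\in P$, so it remains only to check $\mathrm{rank}(A)<\mathrm{rank}(M)$ and then invoke the induction hypothesis to conclude $A\in\He$. Thus $M_L\cup M_R\subset\He$, all conditions of Definition \ref{Hdef} hold, and $M\in\He$; since $M\in P$ was arbitrary, $P\subset\He$.

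The one step that needs genuine care --- and which I expect to be the only (minor) obstacle --- is the rank comparison for the right children. If $A\in M_L$ then $A\in M$, so $\mathrm{rank}(A)<\mathrm{rank}(M)$ at once. If $A\in M_R$, however, $A$ itself need not belong to $M$; instead $A\cup\{\apple\}\in M$. Here I would argue $\mathrm{rank}(A)\le \mathrm{rank}(A\cup\{\apple\})<\mathrm{rank}(M)$, the first inequality because $A\subseteq A\cup\{\apple\}$ and the second because $A\cup\{\apple\}\in M$. Hence both kinds of children strictly decrease the rank, the induction is well-founded, and there is no need to appeal to the level function $l$, which a priori is defined only on $\He$. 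I note finally that Axioms II on $\leq_P$ are not used at all, since the statement concerns only set membership.
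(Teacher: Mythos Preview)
Your proof is correct. The paper's argument is essentially the same minimal-counterexample idea, but it runs the induction along the partial order $<_P$ rather than along set-theoretic rank: one takes an $A\in P\setminus\He$ that is $<_P$-minimal, observes via Axiom II.2 that every $C\in A_L\cup A_R$ satisfies $C<_P A$ (and $C\in P$ by I.1(b)), hence $C\in\He$ by minimality, and then concludes $A\in\He$ exactly as you do.

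The difference is the well-founded relation chosen. The paper uses $<_P$, which is legitimate because $P$ is finite, but this makes Axiom II.2 do real work: it is what guarantees that children in $A_L\cup A_R$ are $<_P$-below $A$. Your route via $\in$-rank sidesteps this entirely, and your observation that Axioms II are unnecessary is genuinely sharper than the paper's proof in this respect. The only point requiring care in your version is the rank drop for right children, and you handle it correctly via $\mathrm{rank}(A)\le\mathrm{rank}(A\cup\{\apple\})<\mathrm{rank}(M)$; this is sound in the ambient model $\Model$ of hereditarily finite sets with the atom $\apple$.
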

\begin{proof}
Suppose on the contrary that
there is $A\in P\in \HF$ such that $A\notin \He$.  Without loss of generality we may
assume that there is no $B\in P$, $B\notin \He$ such that $B<_P A$.  From the
definition of $<_P$ it follows that $C\in \He$ for each $C\in A_L\cup A_R$.  Thus
for $A$ we have 1.(b) in Definition \ref{hfdef} equivalent to the I.1.(b)
from Definition \ref{Hdef}.  The rest of the definition is equivalent too,
so we have $A\in \He$.
\end{proof}

\begin{thm}
\label{hfthm}
$(\HF,\leq_\HF)$ is finitely presented and isomorphic to $(\He,\leq_\HF)$ (as well as to $(\P,\leq_\P)$).
\end{thm}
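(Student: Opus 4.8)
The plan is to exhibit an explicit bijection $\Phi\colon \HF\to\He$ and then observe that, by the very definition of $\leq_\HF$, this bijection is automatically an order isomorphism. Set $\Phi(P,\leq_P)=m(P,\leq_P)$, the maximum guaranteed by axiom II.3. This is well defined with values in $\He$: by Lemma \ref{pkor} we have $P\subset\He$, and $m(P,\leq_P)\in P$. The point to keep in mind throughout is that $\leq_P$ is \emph{not} the order $\leq_\in$ restricted to $P$; rather, axiom II.2 makes $\leq_P$ the reflexive--transitive closure of the ``immediate component'' relation $A\in B_L\cup B_R$, i.e.\ the structural subterm order in which the whole set sits on top and its iterated $L/R$-components lie below. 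Thus $(P,\leq_P)$ is a purely syntactic encoding of the single $\He$-element $m(P,\leq_P)$, while $\leq_\HF$ re-derives the semantic order $\leq_\in$ from these two roots. Once $\Phi$ is shown to be a bijection, the equivalence $(P,\leq_P)\leq_\HF(P',\leq_{P'})\iff m(P,\leq_P)\leq_\in m(P',\leq_{P'})$ is exactly the statement that $\Phi$ preserves and reflects the order, and composing with the isomorphism $(\He,\leq_\in)\cong(\P,\leq_\P)$ from Theorem \ref{univ2} settles all three assertions at once.

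For surjectivity I would construct the inverse directly. Given $M\in\He$, let $P_M$ be the set obtained by closing $\{M\}$ under passage to $L$- and $R$-components; since $M$ is hereditarily finite, $P_M$ is finite, and since an $\He$-element has $\He$-elements as components, $P_M\subset\He$. Equip $P_M$ with the reflexive--transitive closure $\leq_{P_M}$ of the immediate-component relation. I then check $(P_M,\leq_{P_M})\in\HF$: axioms I.1--4 hold for each $A\in P_M$ because $A\in\He$ already satisfies the corresponding clauses of Definition \ref{Hdef}, while closure of $P_M$ supplies $A_L\cup A_R\subset P_M$ as demanded by I.1(b); antisymmetry (axiom II.1) follows from well-foundedness of the component relation via the level function $l$, since a component has strictly smaller level and hence no nontrivial cycles can arise; axiom II.2 holds by construction; and $M$ is the unique maximum, every other element of $P_M$ being a proper iterated component of $M$ and so strictly below it. Then $\Phi(P_M,\leq_{P_M})=M$.

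For injectivity I would show that $(P,\leq_P)$ is recoverable from its root $m=m(P,\leq_P)$, so that $P=P_m$ and $\leq_P$ is forced. Repeated application of closure under components (axiom I.1(b)), starting from $m\in P$, gives $P_m\subseteq P$. Conversely, maximality of $m$ means $A\leq_P m$ for every $A\in P$; by axiom II.2 this produces a chain of immediate component steps from $A$ up to $m$, exhibiting $A$ as an iterated component of $m$, so $A\in P_m$ and $P\subseteq P_m$. Hence $P=P_m$, and axiom II.2 then pins down $\leq_P$ as the subterm order on $P_m$. Thus $\Phi$ is injective, completing the bijection and therefore the isomorphism.

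Finally the finite-presentation claim is really a locality check: each element of $\HF$ is a single finite pair $(P,\leq_P)$, and membership in $\HF$ is decided by the first-order axioms of Definition \ref{hfdef} applied to that pair alone---crucially without the self-reference ``$\subset\He$'' that made Definition \ref{Hdef} recursive, since I.1(b) now speaks only of $P$. The relation $\leq_\HF$ likewise depends only on the two compared elements, through the witness test defining $\leq_\in$ on their roots. I expect the main obstacle to be conceptual rather than computational: one must resist conflating the structural order $\leq_P$ with $\leq_\in$, and the heart of the argument is the recovery identity $P=P_m$, which is precisely what makes the syntactic encoding faithful and hence $\Phi$ injective.
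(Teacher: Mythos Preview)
Your proposal is correct and follows essentially the same route as the paper: define $\Phi(P,\leq_P)=m(P,\leq_P)$, then establish the bijection by proving the recovery identity $P=P_m$ (the paper phrases this as ``$P$ contains exactly the elements appearing in the construction of $m(P,\leq_P)$'', using I.1(b) for one inclusion and the chain from II.2 for the other, just as you do). Your version is more explicit than the paper's---you spell out the inverse construction for surjectivity, verify the $\HF$-axioms for $(P_M,\leq_{P_M})$, and note that $\leq_P$ is forced by II.2 once $P$ is known---but these are elaborations of the same argument rather than a different one.
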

\begin{proof}
For the correctness of the definition of $\HF$ note that $m(P,<_P)$ are elements of $\He$ and $\leq_\in$ in $\P_\in$ is described by  a first order formula. 

We already noted that Definition \ref{hfdef} is a finite presentation of $\He$.  We claim that the correspondence $$\varphi:(P,<_P)\mapsto m(P,<_P)$$
is isomorphism of $\HF$ and $\He$.

Clearly it suffices to prove that $\varphi$ is bijective.  This follows from the following two facts:
\begin{enumerate}
\item For each $(P,<_P)$ the set $P$ contains all the elements of $\He$ which appear in
      the construction of $m(P,<_P)\in \He$.  (This is the consequence of 1.(b)) and both Definition \ref{Hdef} and Definition \ref{hfdef} I.)
\item For each $(P,<_P)$ the set $P$ consists only of elements of $\He$ which appear in the construction of $m(P,<_P)$.

      Let $A^1<_P m(P,<_P)$.  By definition of $<_P$ we have $A^1,A^2,\ldots,A^t=m(P,<_P)$ such that $A^i\in A^{i+1}_L\cup A^{i+1}_R$.
      But as $m(P,<_P)\in \He$ we get also $A\in \He$ by Definition \ref{Hdef} 2.
\end{enumerate}
So for different sets, the maximum
elements are different and each $M\in \He$ can be used as maximum element to
construct an element of $\HF$.
\end{proof}
From the discussion in the introduction of this chapter it follows:
\begin{thm}
All ultrahomogeneous partial orders are finitely presented.
\end{thm}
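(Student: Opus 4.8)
The plan is to use Schmerl's classification (Theorem~\ref{schmerlthm}) to reduce the statement to four cases: a (possibly infinite) antichain, an antichain of chains, a chain of antichains, and the generic partial order. Only the last of these is genuinely hard, and it has already been settled: Theorem~\ref{hfthm} exhibits the explicit finite presentation $(\HF,\leq_\HF)$ of the generic partial order. So the remaining work is only to assemble finite presentations for the three ``degenerate'' families out of ingredients that are already available.

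First I would record that an antichain is finitely presented trivially: take the vertices to be, say, the natural numbers carrying the empty strict order, so that the (non-)comparability of two elements is decidable from the two elements alone. Next I would invoke the finite presentation of $(\Q,\leq)$ referenced at the start of the chapter via the surreal-number representation (Section~\ref{conwaysur}); this is the single nontrivial building block on which the two mixed cases rest.

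The two mixed cases then follow from the closure of finite presentability under ``coordinate-wise'' products noted among the introductory examples. For the antichain of chains I would observe that $\bigsqcup_{i\in I}\Q_i$ is exactly the direct product of an antichain $I$ with $(\Q,\leq)$: in the product order $(i,q)\leq(i',q')$ iff $i\leq_I i'$ and $q\leq q'$, and since $I$ is an antichain this forces $i=i'$, leaving mutually incomparable copies of $(\Q,\leq)$. For the chain of antichains I would observe that a $(\Q,\leq)$-indexed union of antichains $A_q$ of a fixed size is exactly the lexicographic product of $(\Q,\leq)$ with an antichain $A$: $(q,a)\leq(q',a')$ iff $q<q'$, or $q=q'$ and $a\leq_A a'$, where antichain-ness of $A$ collapses the second clause to $a=a'$. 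Each product is defined coordinate-wise from finitely presented factors and is therefore finitely presented.

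Combining the four cases completes the proof. The main obstacle in principle---the generic partial order---is precisely the content of Theorem~\ref{hfthm} and is already discharged; what remains here is only the bookkeeping of checking that the direct-product and lexicographic-product constructions realize the antichain-of-chains and chain-of-antichains orders verbatim (including the case where the antichain factor is countably infinite), which is routine. I would therefore expect the proof to be short: essentially a case analysis appealing to the classification together with the presentations already established.
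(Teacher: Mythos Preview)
Your proposal is correct and follows essentially the same approach as the paper: the paper states the theorem without a formal proof, merely pointing back to the chapter's introduction, which invokes Schmerl's classification, treats the antichain as trivial, builds the antichain-of-chains and chain-of-antichains from a finite presentation of $(\Q,\leq)$ via coordinate-wise product constructions, and defers the generic case to Theorem~\ref{hfthm}. Your explicit identification of the two mixed cases as a direct product and a lexicographic product, respectively, is exactly the kind of routine verification the paper omits.
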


\subsection{Remark on Conway's surreal numbers}
\label{conwaysur}
Recall the definition of surreal numbers, see \cite{conway}. (For a
recent generalization see \cite{ehrlich}). Surreal numbers are defined
recursively together with their linear order.  We briefly indicate how
the partial order $(\He,\leq_\He)$ fits into this scheme.
\begin{defn}
  A surreal number is a pair $x=\{x^L|x^R\}$, where every member of the sets $x^L$ and
  $x^R$ is a surreal number and every member of $x^L$ is strictly smaller than
  every member of $x^R$.
  
  We say that a surreal number $x$ is less than or equal to the surreal number $y$ if and
  only if $y$ is not less than or equal to any member of $x^L$ and any member of $y^R$ is not
  less than or equal to $x$.
  
  We will denote the class of surreal numbers by $\Sur$.
\end{defn}

$\He$ may be thought of as a subset of $\Sur$ (we recursively add $\apple$ to express pairs $x^L$, $x^R$).  The recursive definition of
$A\in\He$ leads to the following order which we define explicitly:

\begin{defn}
  For elements $A,B\in \He$ we write $A\leq_\Sur B$, when there is no $l\in A_L$ such that $B\leq_\Sur l$
  and no $r\in B_R$ such that $r\leq_\Sur A$.
\end{defn}

$\leq_\Sur$ is a linear order of $\He$ and it is the restriction of Conway's order.
It is in fact a linear extension of the partial order $(\He,\leq_\in)$:

\begin{thm}
  For any $A,B\in \He$, $A<_\in B$ implies $A<_\Sur B$.
\end{thm}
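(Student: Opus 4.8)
The plan is to prove, by simultaneous induction on the measure $l(A)+l(B)$, two statements together: (R) reflexivity, that every $A\in\He$ satisfies $A\leq_\Sur A$; and (S) the strengthened implication that $A<_\in B$ entails both $A\leq_\Sur B$ and $B\not\leq_\Sur A$. The theorem will then follow at once from (S): since $A<_\in B$ forces $A\neq B$ by antisymmetry of $\leq_\in$ (Theorem \ref{order}), the two conclusions of (S) are exactly what is meant by $A<_\Sur B$. I carry (R) and the extra clause $B\not\leq_\Sur A$ through the induction because both are consumed by the recursion defining $\leq_\Sur$; as will be seen, reflexivity is precisely the fact needed to dispatch the cases in which a witness of $A<_\in B$ coincides with $A$ or with $B$.

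For the clause $A\leq_\Sur B$, I would unfold the definition: it requires that no $l\in A_L$ satisfy $B\leq_\Sur l$ and no $r\in B_R$ satisfy $r\leq_\Sur A$. Given $l\in A_L$, Fact \ref{lpmnoziny} gives $l<_\in A$, hence $l<_\in B$ by transitivity (Lemma \ref{ltr}); as $l(l)<l(A)$ the pair $(l,B)$ has strictly smaller measure, so the induction hypothesis (S) supplies $B\not\leq_\Sur l$. Symmetrically, for $r\in B_R$ one has $B<_\in r$ and so $A<_\in r$, and since $l(r)<l(B)$, (S) applied to $(A,r)$ gives $r\not\leq_\Sur A$. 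Together these yield $A\leq_\Sur B$.

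For the clause $B\not\leq_\Sur A$, I would fix a witness $W\in(\{A\}\cup A_R)\cap(\{B\}\cup B_L)$ of $A<_\in B$ and argue by contradiction, assuming $B\leq_\Sur A$; unfolding, this assumption says no $l\in B_L$ has $A\leq_\Sur l$ and no $r\in A_R$ has $r\leq_\Sur B$. The witness falls into four cases. The case $W=A=B$ is impossible since $A\neq B$. If $W\in A_R$ and $W\in B_L$, then $W<_\in B$ by Fact \ref{lpmnoziny} with $l(W)<l(A)$, so (S) gives $W\leq_\Sur B$ while $W\in A_R$, contradicting $B\leq_\Sur A$. The two remaining cases are exactly where reflexivity is used: if $W=A$, i.e. $A\in B_L$, then $l(A)<l(B)$, so (R) at $A$ (measure $2l(A)<l(A)+l(B)$) gives $A\leq_\Sur A$, exhibiting an $l\in B_L$ with $A\leq_\Sur l$ and contradicting the assumption; dually, if $W=B$, i.e. $B\in A_R$, then $l(B)<l(A)$ and (R) at $B$ gives $B\leq_\Sur B$, exhibiting an $r\in A_R$ with $r\leq_\Sur B$, again a contradiction. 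Hence $B\not\leq_\Sur A$.

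Finally, (R) itself reduces to (S) at smaller measure: to obtain $A\leq_\Sur A$ one must check that no $l\in A_L$ has $A\leq_\Sur l$ and no $r\in A_R$ has $r\leq_\Sur A$, which are precisely the conclusions of (S) for $l<_\in A$ and $A<_\in r$ (Fact \ref{lpmnoziny}), each invoked at measure below $2l(A)$; the base case $A=B=\emptyset$ is vacuous. I expect the only real obstacle to be the bookkeeping of the induction measure, ensuring that every appeal to (R) and (S)---in particular the two reflexivity appeals in the witness cases $W=A$ and $W=B$---is made at a strictly smaller value of $l(A)+l(B)$; the inequalities $l(A)<l(B)$ and $l(B)<l(A)$ that hold in precisely those cases are what guarantee this.
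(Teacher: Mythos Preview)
Your proof is correct. Both arguments proceed by induction on $l(A)+l(B)$, but the paper's proof is a four-line sketch that leans on the ambient theory of Conway's order: in the generic witness case $W\neq A,B$ it obtains $A<_\Sur W<_\Sur B$ by induction and then invokes transitivity of $<_\Sur$; in the boundary case $A\in B_L$ it simply cites the basic surreal fact $x^L<_\Sur x$ (which, unwound, uses reflexivity of $\leq_\Sur$), and the symmetric case $B\in A_R$ is left implicit. Your approach is genuinely more self-contained: by carrying reflexivity (R) through the same induction and by unfolding the definition of $\leq_\Sur$ directly rather than appealing to its transitivity, you never need to import any external property of Conway's order. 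The price is a longer argument and the bookkeeping of a simultaneous induction with two statements at two different measures ($2l(A)$ for (R), $l(A)+l(B)$ for (S)); the payoff is that your proof stands on its own even for a reader who has not verified that $\leq_\Sur$ is a linear order on $\He$.
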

\begin{proof}
  We proceed by induction on $l(A)+l(B)$.

  For empty $A$ and $B$ the theorem holds as they are not comparable by $<_\in$.

  Let $A<_\in B$ with $W^{AB}$ as a witness.  If $W^{AB}\neq A,B$, then $A<_\Sur W^{AB} <_\Sur B$ by induction.  In the case $A \in B_L$, then $A<_\Sur B$ from the definition of $<_\Sur$.
\end{proof}

\chapter{Ultrahomogeneous tournaments}
\label{generictournaments}
Let us examine generic tournaments characterized by Theorem
\ref{gentournaments}.  Again finite cases are always finitely presented and
thus we focus on infinite ones:
\begin{enumerate}
\item The tournament $(\Q,\leq)$ formed by rationals with usual ordering.
\item The dense local order $S(2)$.
\item The generic tournament $\T$ for the class of all countable tournaments.
\end{enumerate}

The purpose of this short chapter is to show the following perhaps surprising result which parallels the result on partial orders.

\begin{thm}
\label{Tuniv}
All ultrahomogeneous tournaments are finitely presented.
\end{thm}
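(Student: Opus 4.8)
The plan is to invoke the classification of ultrahomogeneous tournaments (Theorem~\ref{gentournaments}) and exhibit a finite presentation for each of the four types on the list. The two finite tournaments (the one-point tournament and $\overrightarrow{C}_3$) are finitely presented trivially, and the linear order $(\Q,\le)$ was already finitely presented in Chapter~\ref{homposetchapter}. Thus only the generic tournament $\T$ and the dense local order $S(2)$ require work, and I would treat them by the same set-theoretic method used in Chapter~\ref{graphschapter} for $\RO_\in$ (Theorem~\ref{ROuniv}) and $\RO_{\Forbi(T),\in}$ (Theorem~\ref{ROFORB}): represent vertices by finite sets of the model $\Model$ and read the orientation of each pair off the two sets alone, then verify the extension property and apply Lemma~\ref{extuniv}.

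For the generic tournament I would define a presentation $\T_\in$ as follows. Fix the auxiliary linear order $\sqsubset$ on finite sets given by $M\sqsubset N$ iff $\mathrm{rank}(M)>\mathrm{rank}(N)$, or $\mathrm{rank}(M)=\mathrm{rank}(N)$ and $M\lessdot N$ for a fixed well-order $\lessdot$ within each rank. The vertices of $\T_\in$ are all finite sets, and for distinct $M,N$ I orient $M\to N$ when $M\in N$, orient $N\to M$ when $N\in M$, and otherwise orient the pair according to $\sqsubset$. Since foundation forbids $M\in N\in M$, exactly one arrow is declared for each pair, so $\T_\in$ is a tournament, and its edge relation depends only on the two sets involved, so it is a finite presentation. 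To get the extension property for the class of all finite tournaments one must, given disjoint finite vertex sets $J$ (the prescribed in-neighbours) and $D$ (the prescribed out-neighbours), produce a realizing vertex; I would take $M=J\cup\{z\}$ with $z$ a fresh set of rank exceeding every rank occurring in $J\cup D$. Then each $X\in J$ satisfies $X\in M$, giving $X\to M$, while each $Y\in D$ satisfies $Y\notin M$ and (by the rank choice) $M\notin Y$ together with $M\sqsubset Y$, giving $M\to Y$. Since every finite tournament then embeds, the age is exactly the finite tournaments, and Lemma~\ref{extuniv} forces $\T_\in\cong\T$.

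For the dense local order $S(2)$ I would first argue that its defining description is already a finite presentation: its vertices are the rationals with odd denominator in $[0,1)$, which carry a finite presentation inherited from that of $\Q$, and the edge rule ``$a\to b$ iff $a<b<a+\tfrac12$ or $a-1<b<a-\tfrac12$'' depends only on the two coordinates $a,b$ through a fixed formula. An alternative, more uniform route is to realize $S(2)$ by restricting $\T_\in$ to the finite sets that do not code one of the two minimal obstructions to local transitivity (a $3$-cycle together with a vertex dominating it, or dominated by it), exactly as $\RO_{\Forbi(T),\in}$ was cut out of $\RO_\in$ in Theorem~\ref{ROFORB}; the age of $S(2)$ then consists of the local orders, i.e.\ the tournaments lying in $\Forbi(\F)$ for this finite family $\F$ of forbidden tournaments.

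The routine points (antisymmetry from foundation, totality, and the fact that $\mathrm{rank}$ and $\lessdot$ are computable from the two sets) I expect to be straightforward. The genuine obstacle is the extension property in the $S(2)$ case along the second route: the class of local orders is not closed under free amalgamation, so unlike the graph cases of Theorems~\ref{rforbkk} and~\ref{ROFORB} one cannot simply append a fresh element with freely chosen relations. Instead the realizing vertex must respect the linearity of the in-set and out-set of every existing vertex, and checking that the restricted set-theoretic construction produces only, and all, local-order-consistent one-point extensions is the delicate step --- which is exactly why falling back on the explicit rational model of $S(2)$ is the safer way to finish.
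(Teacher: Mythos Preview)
Your proposal is correct and follows the same high-level strategy as the paper: invoke Lachlan's classification, dismiss the finite cases, cite the earlier presentation of $(\Q,\leq)$, give an explicit set-theoretic presentation of the generic tournament, and observe that the rational-circle description of $S(2)$ is itself a finite presentation.

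The only notable difference is in the construction for $\T$. The paper routes through the generic \emph{oriented} graph: it first takes $\RD_\in$ (the restriction of $\RO_\in$ to sets with $M_L\cap M_R=\emptyset$), passes to its arithmetic copy $\RD_\N$, and then completes the non-edges of $\RD_\N$ using the natural order on~$\N$. Your $\T_\in$ skips the oriented-graph intermediary and works directly with plain $\in$-membership plus a rank-based linear tiebreak on all finite sets. Both constructions rest on the same idea---a membership-type relation supplies enough edges to realise any prescribed in-set, and a fixed linear order handles the prescribed out-set---and your extension-property check goes through just as you wrote it. Your alternative route to $S(2)$ via forbidden subtournaments is not pursued in the paper, and your diagnosis of the obstacle (failure of free amalgamation for local orders) is accurate; the paper, like you, simply takes the explicit $[0,1)$-model as the presentation.
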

\begin{proof}

We already outlined the representation of $(\Q,\leq_\Q)$ by Conway's surreal numbers.

To build a representation of the generic tournament $\T$ lets briefly consider
oriented graphs (i.~e. antisymmetric relations).  Let $\RD$ denote the generic
oriented graph.  $\RD$ has finite presentation $\RD_\in$ which we obtain as a
variant of $\RO_\in$: we say that $M$ a is vertex of $\RD_\in$ if and only if $M\in\RO_\in$ and satisfies $M_L\cap M_P=\emptyset$.  (see Definition \ref{rodef}).

The finite presentation of the generic oriented graph $\RD$ may be used to construct a finite
presentation of the generic tournament $\T$.

Denote by $\RD_\N$ the arithmetic presentation of $\RD_\in$.  Explicitly, an
integer $n$ is a vertex of $\RD_\N$ if and only if there exists an element $M$ of
$\RD_\in$ such that $n=c(M)$.  Let $n$ and $n'$ be vertices of $\RD_\in$. There is an edge from $n$ to $n'$ if and
only if there are sets $M$ and $M'$ such as $c(M)=n$ and $c(M')=n'$ and there is
edge from $M$ to $N'$ in $\RD_\in$.  Alternatively there is an edge from $n$ to $n'$
if there is 1 on $2n'$-th place of binary representation of $n$ or on
$(2n+1)$-th place of binary representation of $n'$.

We use the finite presentation $\RD_\N$ of generic oriented graph $\RD$ for the
construction of a finite presentation $\T_\N$ of the generic tournament $\T$:
An integer $n$ is vertex of $\T_\N$ if and only if $n$ is a vertex of $\RD_\N$.  The edges
of $\T_\N$ will be all edges of $\RD_\N$ together with pairs $(n,n')$, $n\leq n'$
for which $(n',n)$ is not an edge of $\RD_\N$.

$\T_\N$ is obviously a tournament.  $\T_\N$ has the extension property by the analogous argument as in the proof of Theorem \ref{ROuniv}.

Finally one can check that the description of $S(2)$ given prior statement
of Theorem \ref{gentournaments} is a finite presentation based on the finite presentation of $(\Q,\leq_\Q)$.
%


\end{proof}

\chapter{Finite presentation of the rational Urysohn space}
\label{homprostorchapter}
\section{Introduction (a bit of history)}
\label{uryintro}
In this chapter we focus on metric spaces.  Unlike all our other examples, metric
spaces are relational structures with a function symbol.  The basic notions of
universality and ultrahomogeneity however translate directly.  There is a unique (up
to isometry) separable Polish space $\Urysohn$ which is both universal (for all
separable metric spaces) and ultrahomogeneous.  (Space is ultrahomogeneous if
every isometry between finite subspaces extends to a total isometry.)

This remarkable result is due to Urysohn \cite{urysohn} and it is quoted as
his last paper (written in 1925). The paper was almost neglected until 1986
when Kat\v etov wrote (one of the last papers in his distinguished career) a
paper \cite{katetov} where he gave a new construction of the Urysohn space.  


The recent activity and importance of the Urysohn space, besides being a
beautiful result in topology (see \cite{urysohn,katetov,Uspenskij,V}), stems from several sources:

\subsection{Early limit argument}
The proof of Urysohn uses a construction of a countable metric space with
rational distances $\Urysohn_\Q$ of which $\Urysohn$ is then the Cauchy
completion. This $\Urysohn_\Q$ is a direct limit of the set of all finite
rational metric spaces.  This limit is a special case of \Fraisse{} limit
introduced several years later.  This is a key result of modern model theory.
It appears that Urysohn anticipated this construction in a quite general (and
complicated) case. (It also appears that Kat\v etov was unaware of
Fra\"\i{}ss\'e's work.) 

\subsection{Topological dynamics}
The Urysohn space is not only an important (and generic) space in the context
of topological dynamics. The automorphism group $Aut(\Urysohn)$ is extremely
amenable which in turn is related to triviality of minimal flows.  This
important connections were discovered in \cite{Pestov02,Pestov98} and then on a
very abstract level by \cite{KPT}, see the recent book \cite{Pestov03}.

\subsection {Combinatorial connection}
The Urysohn space is among the most interesting generic structures with applications
already outlined in Section \ref{Genericsection}. 
Other combinatorial aspects of the Urysohn space are related to the concept of divisibility (see e.g. \cite{d9,NRam,Hjorth,d74}).
Sauer \cite{Sauer} summarize known results
about the the age and weak indivisibility with variants of the Urysohn metric space serving
as the most striking examples demonstrating the existence of generic structures with particular indivisibility properties.

All those examples illustrate the broad context of the Urysohn space.
\section{Finite presentation of $\Urysohn_\Q$}
\label{urysohnsection2}

Given the difficulties to represent even a universal and later the generic partial
order, it seemed that the generic rational metric space was out of reach of
finite presentations. This was also the conclusion of discussions held with
Cameron, Vershik and others in St.~Petersburg meeting in 2005. We have been also informed that Urysohn indicated this as a problem \cite{Soleckypriv}.
In this
section we give such representation that in fact builds upon ideas used in
the construction of the generic partial order.

Now we prove the following which may be viewed
as a contribution to Problem 12 of \cite{Pestov03} (about a model of the Urysohn Space $\Urysohn$).
\begin{thm}
The rational Urysohn space $\Urysohn_\Q$ has a finite presentation.
\end{thm}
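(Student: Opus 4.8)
The plan is to transplant the construction of the generic partial order $(\He,\leq_\in)$, and its derecursification $\HF$, from Chapter~\ref{homposetchapter} to the metric setting, viewing a rational metric space (as permitted by the remarks opening this chapter) as a relational structure carrying one binary relation for each positive rational distance. Working in the model $\Model$, I would fix a well-ordering $\prec$ of its sets refining the level function $l$ of Definition~\ref{Hdef}, and encode a point as a finite set $M$ which, using the ordered-pair device of Definition~\ref{rodef} and a fixed set-theoretic coding of the rationals, records a value $d(M,A)\in\Q$, $d(M,A)>0$, for every $\prec$-smaller point $A$ (there are only finitely many, so $M$ stays finite). The recursive family $\Urysohn_\in$ then consists of all such $M$ satisfying correctness axioms (no stray $\apple$, and recorded points again lie in $\Urysohn_\in$), a \emph{completeness} axiom in the spirit of the left/right completeness of Definition~\ref{Hdef} (if $A$ is recorded in $M$ then every point recorded in $A$ is recorded in $M$, with the same label), and a \emph{compatibility} axiom replacing the ordering property, demanding that the recorded labels satisfy all triangle inequalities. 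The distance between two distinct points is then read off as the unique label that the $\prec$-larger records for the $\prec$-smaller; symmetry, positivity and \emph{totality} of $d$ are immediate from this convention.

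The content is to show that $d$ is a metric, and here the triangle inequality is the main obstacle, being the exact metric analogue of the transitivity of $\leq_\in$ and, like it, a genuinely ternary (non-irreducible) condition. I would prove it by the same witness-and-level technique as Lemmas~\ref{ltr} and~\ref{lasy}: given $A,B,C$, the label linking the two of lower $\prec$-rank sits at a strictly smaller level, so the completeness axiom exposes the intermediate distances and one closes an induction on $l(A)+l(B)+l(C)$, the base triangle inequalities being supplied by the compatibility axiom. Pinning the completeness and compatibility axioms down so that $d$ is simultaneously total and a metric --- rather than merely a partial or non-metric labelling --- is the delicate part of the whole argument.

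Next I would verify the extension property in the form used by Lemma~\ref{extuniv}. Given a finite subspace $\{A_1,\dots,A_n\}\subseteq\Urysohn_\in$ and a prescribed one-point extension, i.e.\ positive rationals $q_1,\dots,q_n$ with $|q_i-q_j|\le d(A_i,A_j)\le q_i+q_j$ for all $i,j$ (a Kat\v etov function), I would build a new point $M$ recording each $A_i$ at distance $q_i$; to keep $M$ in $\Urysohn_\in$ one first extends the $q_i$ to a compatible labelling of all $\prec$-smaller points (the ``shortest-path'' formula $\min_i(q_i+d(\cdot,A_i))$ furnishes such an extension), whereupon completeness and compatibility hold for $M$ by the triangle argument above. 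Exactly as in the proof of Lemma~\ref{ext}, the one delicate point is that $M$ might accidentally coincide with an existing point or realise an unintended distance, and I would remove this as there, by first adjoining an auxiliary point built around a fresh atom so as to displace $M$ to a genuinely new set realising precisely the prescribed distances. By Lemma~\ref{extuniv} the structure $\Urysohn_\in$ is then ultrahomogeneous and universal for finite rational metric spaces, hence isometric to $\Urysohn_\Q$.

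Finally, the recursion in the definition of $\Urysohn_\in$ is eliminated verbatim as $\He$ was turned into $\HF$ (Definition~\ref{hfdef}, Theorem~\ref{hfthm}): a point is presented as a pair consisting of the finite rational metric space it generates together with its distinguished $\prec$-maximal element, where closure, compatibility and the existence of a maximal element are first-order axioms, and the distance between two such pairs is read off from their maximal elements inside $\Urysohn_\in$. This gives a genuine finite presentation of $\Urysohn_\Q$ and completes the proof.
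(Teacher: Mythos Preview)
Your proposal has a genuine gap at its very foundation, in the sentence ``records a value $d(M,A)$ for every $\prec$-smaller point $A$ (there are only finitely many, so $M$ stays finite).'' The structure $\Urysohn_\in$ you are building is countably infinite, so under any well-ordering $\prec$ most of its points have infinitely many $\prec$-predecessors. Thus either $M$ must be an infinite set (and you no longer have a finite presentation), or $M$ records distances to only finitely many earlier points --- but then your claim that ``the distance between two distinct points is read off as the unique label that the $\prec$-larger records for the $\prec$-smaller'' fails, because the $\prec$-larger point need not record the $\prec$-smaller one at all. Totality of $d$ is \emph{not} immediate; it is the whole problem. (Reading your $\prec$ as having order type $\omega$ and letting the $n$-th point record distances to points $0,\ldots,n-1$ does not help: that is exactly the step-by-step \Fraisse{} construction, which the paper explicitly excludes as a finite presentation.)

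The paper resolves precisely this tension. A point is a \emph{complete triplet} $\mathbf A=(A,\preceq_{\mathbf A},d_{\mathbf A})$: a finite rational metric space together with a partial order having a maximum and minimum, where each element encodes its own downset. Crucially, $A$ is a finite set chosen freely, not the set of all earlier points. The price is that two triplets $\mathbf A,\mathbf B$ typically do not mention one another, so the distance cannot be read off directly; instead one sets
\[
d_{\U}(\mathbf A,\mathbf B)=\min\{\,d_{\mathbf A}(\max_{\mathbf A},a)+d_{\mathbf B}(\max_{\mathbf B},b):a\in A,\ b\in B,\ a=b\,\},
\]
a minimum over \emph{common} elements (always nonempty since $\min_{\mathbf A}=\min_{\mathbf B}=\emptyset$). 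To make this a metric one needs an extra axiom you do not have, the \emph{path-metric} property: whenever $a,a'$ are $\preceq$-incomparable there is a witness $a''\preceq a,a'$ with $d(a,a')=d(a,a'')+d(a'',a')$. Your ``compatibility'' (triangle inequalities among recorded labels) is necessary but not sufficient; without the PM axiom the min formula above can violate the triangle inequality. This witness mechanism is the genuine new idea beyond the partial-order case, and it is exactly what your transplant of $\He$ is missing.
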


\label{Urysohnsection}


We start to develop the theory for vertices as follows:

\label{defU}
\medskip
\noindent
{\bf 1.} A {\em triplet} $\mathbf A$ is a triple $(A,\LeqUp_\mathbf A,\d_\mathbf A)$ where
\begin{enumerate}
\item[$i.$] $A$ is a finite set,
\item[$ii.$] $(A,\LeqUp_\mathbf A)$ is a partial order on $A$,
\item[$iii.$] $(A,\d_\mathbf A)$ is a rational metric space (i.e. $\d_\mathbf A:A\times A\to \Q$ is a metric).
\end{enumerate}
$\LeqUp_\mathbf A$ is called the {\em standard} order of $\mathbf A$.

Triplets $\mathbf A=(A,\LeqUp_\mathbf A,\d_\mathbf A)$ and
$\mathbf B=(B,\LeqUp_\mathbf B,\d_\mathbf B)$ are said to be {\em isomorphic}
if there exists a bijection $\varphi:A\to B$ which is both isomorphism of partial orders $(A,\LeqUp_\mathbf A)$ and $(B,\LeqUp_\mathbf B)$ and isometry of spaces $(A,\d_\mathbf A)$ and $(B,\d_\mathbf B)$.

Concerning partial orders we use the standard terminology. Particularly any
element $a\in A$ determines a {\em down set} $\dr a=\{b:b\LeqUp_\mathbf A a\}$,
which induces by the restriction of $\LeqUp_\mathbf A$ and $\d_\mathbf A$ the
triplet $\dr a$.  By abuse of the notation this triplet will be also denoted by
$\dr a$.  Let also $\h(\mathbf A)$ ({\em height of $\mathbf A$}) be the maximal
size of a chain in $(A,\LeqUp_\mathbf A)$.

\medskip
\noindent
{\bf 2.} A triplet $\mathbf A$ is said to be {\em proper} if all its down sets (as triplets) are non-isomorphic and if $(A,\LeqUp_\mathbf A)$ has both a greatest element and a smallest element (denoted by $\max_\mathbf A$ and $\min_\mathbf A$).

\medskip
\noindent
{\bf 3.}  A proper triplet $\mathbf A$ is said to be {\em path metric PM} if for every $a, a'\in A$ which are incomparable in $\LeqUp_\mathbf A$ there exist $a''\in A,a''\LeqUp_\mathbf A a, a''\LeqUp_\mathbf A a'$ such that $\d_\mathbf A(a,a')=\d_\mathbf A(a,a'')+\d_\mathbf A(a'',a')$.
Such an $a''$ will be called the {\em witness of $\d_\mathbf A(a,a')$}.

Proper path-metric triplet will be abbreviated as PPM-triplet.  An example of PPM-triplet is in Figure~\ref{PPM}.
\begin{figure}[t]
\includegraphics{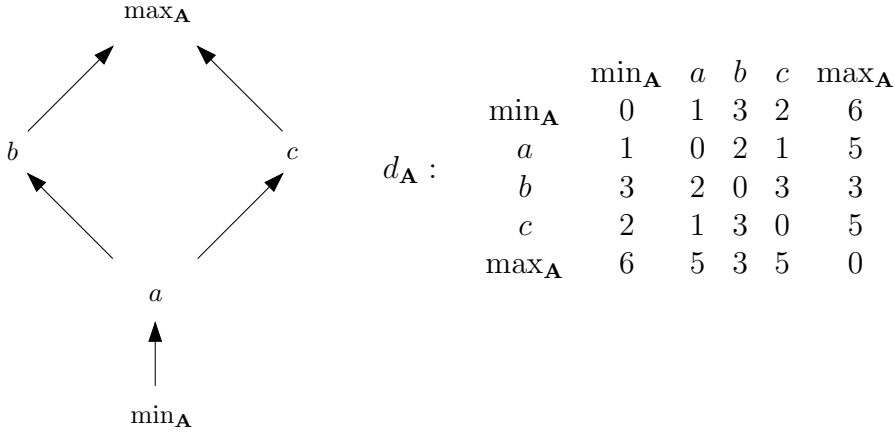}
\vskip -5cm
\hskip 5cm
$
d_\mathbf A:
\begin{array}{lcccccc}
&&\min_\mathbf A&   a& b& c&\max_\mathbf A\\
&\min_\mathbf A& 0& 1& 3& 2& 6\\
&             a& 1& 0& 2& 1& 5\\
&             b& 3& 2& 0& 3& 3\\
&             c& 2& 1& 3& 0& 5\\
&\max_\mathbf A& 6& 5& 3& 5& 0\\
\end{array}
$
\vskip 2cm
\caption{A PPM-triplet $\mathbf A$.}
\label{PPM}
\end{figure}
\begin{figure}[h!t]
\centerline{\includegraphics{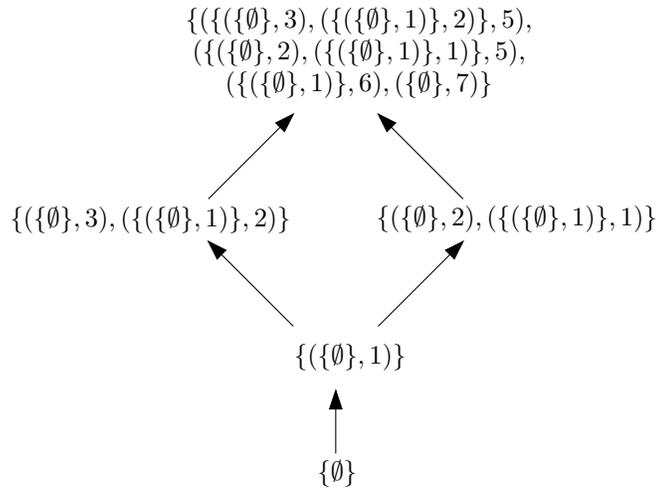}}
\caption{A complete triplet $\mathbf A$.}
\label{complete}
\end{figure}

\medskip
\noindent
{\bf 4.} A PPM-triplet is said to be {\em complete} if the following holds for every $a\in A$:
$$a=\{(b,\d_\mathbf A(a,b)):b\in\dr a,a\neq b\}.$$
Note that $\min_\mathbf A=\emptyset$.

An example of a complete triplet isomorphic to the PPM-triplet of Figure~\ref{PPM} is shown in Figure~\ref{complete}.

Thus the structure of $max_\mathbf A$ encodes the whole complete triplet $\mathbf A$.

Observe also that every downset $\dr a$ is itself a complete triplet. This triplet will also be denoted shortly by $\dr a$. If $b\in A$ then we also say that {\em $\dr b$ is mentioned in $\mathbf A$}. By induction on the $\h(\mathbf A)$ we easily see the following fact (which is the reason why we introduced the notion of complete triplets):

\begin{fact}
Let $\mathbf A$, $\mathbf B$ be isomorphic complete triplets. Then $\mathbf A=\mathbf B$.
\end{fact}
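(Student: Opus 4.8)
The plan is to prove the \textbf{Fact} by showing that any isomorphism $\varphi:\mathbf A\to\mathbf B$ of complete triplets must be the identity on the underlying sets, from which $\mathbf A=\mathbf B$ follows at once: once $\varphi$ is the identity bijection, the fact that it is an order-isomorphism and an isometry forces $\LeqUp_\mathbf A=\LeqUp_\mathbf B$ and $\d_\mathbf A=\d_\mathbf B$. The engine of the argument is the completeness condition $a=\{(b,\d_\mathbf A(a,b)):b\in\dr a,\ b\neq a\}$, which says that each element literally \emph{is} the set coding the distances to the elements strictly below it; this converts a structural isomorphism into set-theoretic equality by a recursive decoding argument.

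Concretely, for $a\in A$ write $\h(a)=\h(\dr a)$ for the height of its down-set, i.e.\ the length of a longest chain ending at $a$. First I would record the monotonicity the induction needs: if $b\LeqUp_\mathbf A a$ with $b\neq a$ then $\h(b)<\h(a)$, since appending $a$ to any chain of $\dr b$ (which cannot already contain $a$) produces a strictly longer chain in $\dr a$. Then I would prove, by strong induction on $\h(a)$, that $\varphi(a)=a$ for every $a\in A$. The base case is $\h(a)=1$: the only such element is $\min_\mathbf A$, and since completeness forces $\min_\mathbf A=\emptyset=\min_\mathbf B$ and $\varphi$ preserves order, $\varphi(\min_\mathbf A)=\min_\mathbf B=\min_\mathbf A$.

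For the inductive step, fix $a$ with $\h(a)=k>1$ and assume $\varphi(b)=b$ whenever $\h(b)<k$. Applying completeness of $\mathbf B$ to the element $\varphi(a)$, and then using that $\varphi$ is an order-isomorphism (so the elements strictly below $\varphi(a)$ are exactly $\{\varphi(b):b\LeqUp_\mathbf A a,\ b\neq a\}$) and an isometry (so $\d_\mathbf B(\varphi(a),\varphi(b))=\d_\mathbf A(a,b)$), I would obtain
$$\varphi(a)=\{(\varphi(b),\d_\mathbf A(a,b)):b\in\dr a,\ b\neq a\}.$$
Every $b$ occurring here is strictly below $a$, hence $\h(b)<k$, so the induction hypothesis gives $\varphi(b)=b$; substituting and invoking completeness of $\mathbf A$ yields $\varphi(a)=\{(b,\d_\mathbf A(a,b)):b\in\dr a,\ b\neq a\}=a$, which closes the induction.

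I do not expect a genuine obstacle: the argument is a clean recursive decoding, and the only points requiring care are verifying that the induction parameter $\h(a)$ strictly decreases along the order and unwinding the completeness identity for $\varphi(a)$ inside $\mathbf B$ without circularity (which the height induction guarantees, since $\varphi(a)$ is decoded only in terms of strictly lower elements). The one step worth stating explicitly at the end is that an identity bijection which is simultaneously an order-isomorphism and an isometry forces the two orders and the two metrics to agree, giving $\mathbf A=\mathbf B$.
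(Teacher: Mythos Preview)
Your proposal is correct and follows essentially the same approach the paper indicates: the paper simply remarks that the fact follows ``by induction on the $\h(\mathbf A)$'' without spelling out details, and your element-by-element induction on $\h(a)$ is exactly the natural way to unpack that remark, exploiting completeness to recursively decode each $\varphi(a)$ in terms of strictly lower elements already known to be fixed.
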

Now  we can state the basic construction of this chapter, a finite presentation of $\Urysohn_\Q$ (which should be compared with inductive constructions of Urysohn and Kat\v etov, see also Section \ref{alternative}):
\begin{defn}[a finite presentation of the Urysohn space $\Urysohn_\Q$]
\label{urysohndef}
~\\
Denote by $\U$ the set of all complete triplets.
The metric $\d_\U$ on $\U$ is defined as follows:
Let $\mathbf A=(A,\LeqUp_\mathbf A,\d_\mathbf A)$,
$\mathbf B=(B,\LeqUp_\mathbf B,\d_\mathbf B)$ 
be complete triplets. We put
$\d_\U(\mathbf A,\mathbf B)=\min (\d_\mathbf A(\max_\mathbf A,a)+\d_\mathbf B(\max_\mathbf B,b))$ where the minimum is taken over all $a\in \mathbf A$, $b\in \mathbf B$ such that $a = b$. 
\end{defn}

If $\max_\mathbf B\in A$ (and thus also $\d_\U(\mathbf A,\mathbf B)=\d_\mathbf A(\max_ \mathbf A,\max_ \mathbf B)$)
we say that {\em $\mathbf B$ is mentioned in $\mathbf A$}.

If neither $\mathbf A$ is mentioned in $\mathbf B$ nor is $\mathbf B$ mentioned in $\mathbf A$ then for $a, b$ reaching the minimum, we call the triplet $\dr a=\dr b$ a
{\em witness of $\d_\U(\mathbf A,\mathbf B)$}.

We will show that this construction yields a finite presentation of $\Urysohn_\Q$.  This will be done in a sequence of
statements formulated as Proposition \ref{metrik}, Proposition \ref{univ}, and Theorem \ref{generic} which is the main result of this chapter.

\begin{prop}
\label{metrik}
$(\U,d_\U)$ is a metric space.
\end{prop}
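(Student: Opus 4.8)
The plan is to verify the four metric axioms in turn, after setting up convenient notation. For a complete triplet $\mathbf A$ and a down set $W$ of $\mathbf A$ (that is, an element of its underlying set) I write $\rho_\mathbf A(W)=\d_\mathbf A(\max_\mathbf A,W)$, so that
$$\d_\U(\mathbf A,\mathbf B)=\min\{\rho_\mathbf A(W)+\rho_\mathbf B(W) : W\text{ is a down set of both }\mathbf A\text{ and }\mathbf B\}.$$
First I would note that the minimum is well defined: since $\min_\mathbf A=\emptyset$ lies in every complete triplet, $W=\emptyset$ is always an admissible common down set, so the minimand is a nonempty finite set. Non-negativity is immediate from non-negativity of $\d_\mathbf A$ and $\d_\mathbf B$; symmetry is built into the definition; and $\d_\U(\mathbf A,\mathbf A)=0$ follows by taking $W=\max_\mathbf A$. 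For the identity of indiscernibles, if $\d_\U(\mathbf A,\mathbf B)=0$ then some common down set $W$ has $\rho_\mathbf A(W)=\rho_\mathbf B(W)=0$; since $\d_\mathbf A,\d_\mathbf B$ are genuine metrics this forces $W=\max_\mathbf A=\max_\mathbf B$, and because the top element of a complete triplet encodes the entire triplet (the Fact preceding Definition \ref{urysohndef}), $\mathbf A=\mathbf B$.

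Before the triangle inequality I would isolate two structural observations. The first (intrinsic metric) is that if $W$ is a down set of $\mathbf A$ then the complete triplet $\dr W$, carrying the restriction of $\d_\mathbf A$, is determined by the set $W$ alone and not by the ambient $\mathbf A$; consequently, whenever $W$ is a common down set of $\mathbf A$ and $\mathbf B$, the metrics $\d_\mathbf A$ and $\d_\mathbf B$ agree on $\dr W$. The second (down-closure) is that the down sets of $\mathbf A$ are closed downward, so a down set of a down set of $\mathbf A$ is again a down set of $\mathbf A$. Both follow from completeness together with transitivity of $\LeqUp_\mathbf A$ and the fact that every $\dr a$ is itself a complete triplet.

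The heart of the proof is the triangle inequality $\d_\U(\mathbf A,\mathbf C)\le \d_\U(\mathbf A,\mathbf B)+\d_\U(\mathbf B,\mathbf C)$. I would fix optimal common down sets $W_1$ realizing $\d_\U(\mathbf A,\mathbf B)=\rho_\mathbf A(W_1)+\rho_\mathbf B(W_1)$ and $W_2$ realizing $\d_\U(\mathbf B,\mathbf C)=\rho_\mathbf B(W_2)+\rho_\mathbf C(W_2)$. Both $W_1,W_2$ are down sets of $\mathbf B$, and the key step is to produce a common down set $W_3$ of $W_1$ and $W_2$ in $\mathbf B$ with $W_3\LeqUp_\mathbf B W_1$, $W_3\LeqUp_\mathbf B W_2$ and
$$\d_\mathbf B(W_1,W_3)+\d_\mathbf B(W_3,W_2)=\d_\mathbf B(W_1,W_2).$$
If $W_1,W_2$ are comparable I take $W_3$ to be the smaller; if incomparable I take $W_3$ to be the witness of $\d_\mathbf B(W_1,W_2)$ guaranteed by the path-metric (PPM) property. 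By down-closure, $W_3$ (a down set of $W_1$, itself a down set of $\mathbf A$) is a down set of $\mathbf A$, and likewise of $\mathbf C$, so it is admissible for $\d_\U(\mathbf A,\mathbf C)$. Replacing $\d_\mathbf A(W_1,W_3)$ and $\d_\mathbf C(W_2,W_3)$ by the ambient-independent values $\d_\mathbf B(W_1,W_3)$ and $\d_\mathbf B(W_2,W_3)$ via the intrinsic-metric observation, and using the triangle inequality inside $\mathbf A$ and inside $\mathbf C$, I get
$$\d_\U(\mathbf A,\mathbf C)\le \rho_\mathbf A(W_3)+\rho_\mathbf C(W_3)\le \rho_\mathbf A(W_1)+\d_\mathbf B(W_1,W_3)+\rho_\mathbf C(W_2)+\d_\mathbf B(W_2,W_3);$$
the displayed identity together with $\d_\mathbf B(W_1,W_2)\le \rho_\mathbf B(W_1)+\rho_\mathbf B(W_2)$ then collapses the middle terms to exactly $\d_\U(\mathbf A,\mathbf B)+\d_\U(\mathbf B,\mathbf C)$.

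The main obstacle is the triangle inequality, and within it two points that make the estimate go through: first, that distances on a shared down set are genuinely intrinsic, so that measurements made separately in $\mathbf A$, $\mathbf B$ and $\mathbf C$ can be compared at all — this is precisely what completeness provides; and second, that the optimal overlaps $W_1,W_2$, which a priori live only in $\mathbf B$, can be pushed down to a common ancestor $W_3$ simultaneously visible in $\mathbf A$ and $\mathbf C$ without losing additivity, for which the PPM-witness property is the decisive tool. Everything else reduces to bookkeeping with the triangle inequality in the finite metric spaces $\d_\mathbf A,\d_\mathbf B,\d_\mathbf C$.
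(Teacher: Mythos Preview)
Your argument is correct and is in fact cleaner than the paper's. The paper proves the triangle inequality by contradiction: it assumes a counterexample $\mathbf A,\mathbf B,\mathbf C$ minimizing $h(\mathbf A)+h(\mathbf B)+h(\mathbf C)$, and then runs a four-case analysis according to which of the three distances $\d_\U(\mathbf A,\mathbf B)$, $\d_\U(\mathbf B,\mathbf C)$, $\d_\U(\mathbf C,\mathbf A)$ admit a witness (in the sense of Definition~\ref{urysohndef}, i.e.\ neither triplet is mentioned in the other). In the witnessed cases it replaces one vertex by a witness of strictly smaller height and invokes minimality; in the unwitnessed case it reduces to the triangle inequality inside a single ambient $\d_\mathbf X$.

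Your approach is direct: you pick optimal overlap points $W_1\in A\cap B$ and $W_2\in B\cap C$, apply the PPM property \emph{inside} $\mathbf B$ to produce a common lower bound $W_3$ with $\d_\mathbf B(W_1,W_3)+\d_\mathbf B(W_3,W_2)=\d_\mathbf B(W_1,W_2)$, and then use the intrinsic-metric and down-closure observations to transport $W_3$ into $A\cap C$ and compare distances across triplets. This avoids the case split entirely and makes transparent exactly which structural hypotheses are doing the work: completeness gives the intrinsic-metric step (so distances on $\dr W_1$ computed in $\mathbf A$ and in $\mathbf B$ agree), and PPM supplies the additive $W_3$. The paper's proof hides these two ingredients inside the induction on height; your version surfaces them explicitly and is arguably the more informative argument.
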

\begin{proof}
Clearly $\d_\U\geq 0$ and $d_\U(\mathbf A,\mathbf B)=0$ if and only if $\mathbf A=\mathbf B$

Assume that the triangle inequality does not hold. Take the triangle $\mathbf A,\mathbf B,\mathbf C\in \U$ such that $\h{}(\mathbf A)+\h{}(\mathbf B)+\h{}(\mathbf C)$ is minimal and the triangle
inequality does not hold for $\mathbf A,\mathbf B,\mathbf C$. Without loss of generality, assume that $$\d_\U(\mathbf A,\mathbf B)>\d_\U(\mathbf B,\mathbf C)+\d_\U(\mathbf C,\mathbf A).$$

We distinguish several cases according to the existence of witness elements:

\medskip
\noindent
{\bf Case 1:} The distances $\d_\U(\mathbf A,\mathbf B)$, $\d_\U(\mathbf B,\mathbf C)$ and $\d_\U(\mathbf C,\mathbf A)$ do not have any witness:
\begin{enumerate}
\item If $\mathbf A$ and $\mathbf B$ are both mentioned in $\mathbf C$, then
  there exist $a\in C, b\in C$ such that $\d_\U(\mathbf B,\mathbf C)=\d_\mathbf
  C(b,\max_ \mathbf C)$, $\d_\U(\mathbf A,\mathbf C)=\d_\mathbf C(a,\max_ \mathbf
  C)$ and thus the triangle $a,b,\max_\mathbf C$ violates
  the triangle inequality in $\d_\mathbf C$.
  Similarly we can proceed for any other vertex of the triangle and thus no
  vertex defines the distances to both remaining vertices.  

\item If $\mathbf A$ is mentioned in $\mathbf B$ and $\mathbf C$ mentioned in
  $\mathbf A$, then there will be some $a \in \mathbf B$ such that $\dr a
= \mathbf A$ and also there will be some $c\in \mathbf A$ such that $c\LeqUp_\mathbf B a\in \mathbf B$ such that $\dr c = \mathbf
C$. Then the triangle $a,c,\max_\mathbf B$ would violate triangle inequality
 of $\d_\mathbf B$.
\end{enumerate}

\medskip
\noindent
{\bf Case 2:}
Assume that $\d_\U(\mathbf C,\mathbf A)$ has witness $\mathbf X$.

\centerline{\includegraphics{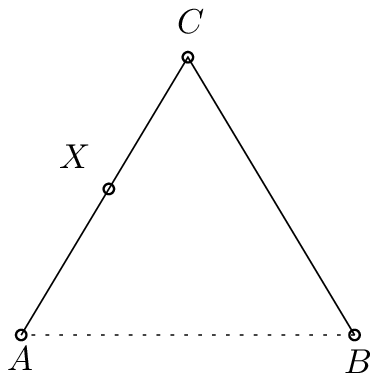}}

Since $\mathbf X$ is a witness:
   $$\d_\U(\mathbf A,\mathbf X)=\d_\U(\mathbf A,\mathbf C)-\d_\U(\mathbf X,\mathbf C).$$ 
The triangles $\mathbf B,\mathbf C,\mathbf X$ and $\mathbf A,\mathbf B,\mathbf X$ do not violate the triangle inequality (since $\h{}(\mathbf A)+\h{}(\mathbf B)+\h{}(\mathbf C)$ would not be minimal):
  $$\d_\U(\mathbf X,\mathbf B)\leq\d_\U(\mathbf X,\mathbf C)+\d_\U(\mathbf C,\mathbf B),$$
$$\d_\U(\mathbf A,\mathbf B)\leq \d_\U(\mathbf A,\mathbf X)+\d_\U(\mathbf X,\mathbf B).$$ 
It follows that:
$$\d_\U(\mathbf A,\mathbf B)\leq \d_\U(\mathbf A,\mathbf C)-\d_\U(\mathbf X,\mathbf C)+\d_\U(\mathbf X,\mathbf C)+\d_\U(\mathbf C,\mathbf B),$$
$$\d_\U(\mathbf A,\mathbf B)\leq \d_\U(\mathbf A,\mathbf C)+\d_\U(\mathbf C,\mathbf B)$$ which is a contradiction.

\medskip
\noindent
{\bf Case 3:} If $\d_\U(\mathbf C,\mathbf B)$ has a witness $\mathbf X$ then
we proceed in a complete analogy with case $2.$
(i.e. exchanging the roles of $\mathbf A$ and $\mathbf B$).

\medskip
\noindent
{\bf Case 4:} Assume that $\mathbf X$ is a witness of $\d_\U(\mathbf A,\mathbf B)$ and that $\d_\U(\mathbf A,\mathbf C)$ and $\d_\U(\mathbf B,\mathbf C)$ have no witness. Thus $\mathbf A$ mentions $\mathbf C$ (resp. $\mathbf B$ mentions $\mathbf C$) or the other way around.

\centerline{\includegraphics{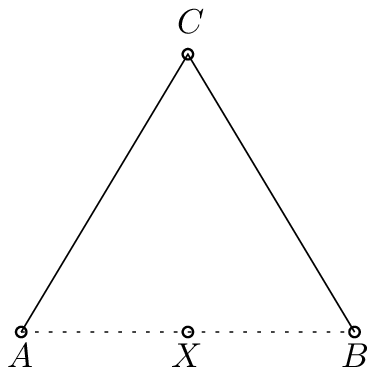}}

Since $\mathbf C$ cannot mention both $\mathbf A$ and $\mathbf B$, we can assume that $\mathbf A$ mentions $\mathbf C$.

If $\mathbf B$ mentioned $\mathbf C$ as well then $\mathbf C$ would be a witness for $\d_\U(\mathbf A,\mathbb B)$. It would follow that $$\d_\U(\mathbf A,\mathbf B)=\d_\U(\mathbf A,\mathbf C)+\d_\U(\mathbf C,\mathbf B).$$ This is a contradiction. 

Assume that $\mathbf C$ mentions $\mathbf B$.  Again from the transitivity property
we have that $\mathbf A$ defines the distances to both $\mathbf B$ and $\mathbf C$ and thus for the triangle $\mathbf A,\mathbf B,\mathbf C$ the triangle inequality holds, a contradiction.
\end{proof}

\begin{prop}
\label{univ}
$(\U,d)$ is a metric space which contains all finite metric spaces.
\end{prop}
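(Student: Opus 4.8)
The plan is to embed any finite metric space isometrically into $(\U,\d_\U)$ by realizing it as a \emph{chain} of complete triplets. Since every triplet carries a rational metric, the distances occurring in $\U$ are rational, so the faithful reading of the statement is that $(\U,\d_\U)$ contains every finite \emph{rational} metric space, and it is these that I would embed. Let $(M,\rho)$ be a finite rational metric space and fix an enumeration $M=\{p_1,\dots,p_n\}$. The key observation is that if the standard order is chosen to be a \emph{linear} order, then the path-metric condition becomes vacuous (a chain has no incomparable pair), so no auxiliary witness points are ever needed; this is exactly what lets an arbitrary metric space be hosted.

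Concretely, I would define set-theoretic codes recursively by $p_1=\emptyset$ and $p_i=\{(p_j,\rho(p_i,p_j)):j<i\}$ for $i>1$, and let $\mathbf A_i$ be the triplet with ground set $\{p_1,\dots,p_i\}$, the chain order $p_1\LeqUp\cdots\LeqUp p_i$, and the metric $\rho$ restricted to it. Each $\mathbf A_i$ is a complete triplet: its down sets $\dr p_1\subset\cdots\subset\dr p_i$ have pairwise distinct cardinalities and so are non-isomorphic, it has a least element $p_1=\emptyset$ and a greatest element $p_i$, hence it is proper; it is path-metric vacuously; and the completeness identity $p_k=\{(p_j,\d_{\mathbf A_i}(p_k,p_j)):p_j\in\dr p_k,\,j<k\}$ holds for every $k\le i$ directly by the definition of the codes. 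Thus $\mathbf A_i\in\U$, and I claim that $p_i\mapsto\mathbf A_i$ is the desired isometry.

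To finish I would compute $\d_\U(\mathbf A_i,\mathbf A_j)$; by symmetry of both metrics assume $j\le i$. Here $\max_{\mathbf A_j}=p_j\in\{p_1,\dots,p_i\}$, so $\mathbf A_j$ is mentioned in $\mathbf A_i$ and therefore $\d_\U(\mathbf A_i,\mathbf A_j)=\d_{\mathbf A_i}(\max_{\mathbf A_i},\max_{\mathbf A_j})=\rho(p_i,p_j)$. That no common element produces a strictly smaller value follows from the triangle inequality established in Proposition \ref{metrik}: any common element $\dr c$ entering the minimum in the definition of $\d_\U$ contributes $\rho(p_i,c)+\rho(c,p_j)\ge\rho(p_i,p_j)$, with equality at $c=p_j$. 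Injectivity is immediate since the $\mathbf A_i$ have pairwise distinct cardinalities, so the map is an isometric embedding and the proposition follows. The construction is transparent; the only real point is the observation that a linear standard order trivialises the path-metric requirement, and the only verification demanding care is the routine check that the recursively built chain is a genuine complete triplet, together with the reduction of the distance formula to the triangle inequality of Proposition \ref{metrik}.
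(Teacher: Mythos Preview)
Your proof is correct and follows essentially the same approach as the paper: both build the embedding by choosing a linear (chain) standard order on the target metric space, define the codes recursively so that completeness holds by construction, observe that the path-metric condition is vacuous on a chain, and compute $\d_\U$ via the ``mentioned in'' relation. Your write-up is in fact slightly more explicit than the paper's, in particular in spelling out why the candidate $a=b=\max_{\mathbf A_j}$ actually realises the minimum in the definition of $\d_\U$ (the paper states this as a parenthetical fact right after Definition~\ref{urysohndef}), and in flagging that only rational metric spaces can be embedded.
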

\begin{proof}
We describe an algorithm for an isometric embedding of a given metric space $(X,d')$ into $\U$.

We fix a linear order of the vertices $x\in X$ by assigning to each vertex a unique natural number
$n(x)\in\{0,1,\ldots,|X|-1\}$.

For a vertex $x\in X$, the triplet $\mathbf U(x)=(U(x),\LeqUp_{\mathbf U(x)},d_{\mathbf U(x)})$ representing $x$ is defined recursively as follows:
\begin{enumerate}
\item
Put:

$\max(x)=\emptyset$ for $n(x)=0$,

$\max(x)=\{(U(y),d'(y,x)): y\in X, n(y)<n(x)\}$ for $n(x)>0$,

$U(x)=\{\max(y): y\in X, n(y)\leq n(x)\}$.

\item The order $\LeqUp_{\mathbf U(x)}$ is the linear order defined by:  $$U(y)\LeqUp_{\mathbf U(x)} U(y')\hbox{ if and only if }n(y)\leq n(y').$$
\item The distance is defined by $\d_{\mathbf U(x)}(U(y),U(y')) = d'(y,y')$.
\end{enumerate}

We verify that $\mathbf U(x)$ is a complete triplet:

Clearly the finite linear order $\LeqUp_{\mathbf U(x)}$ has the smallest element $0$ and the greatest element $\max_{\mathbf U(x)}=\max(x)$ and no two downsets are isomorphic. Thus $\mathbf U(x)$ is a proper triplet.

In the linear order, each pair of elements are comparable, so trivially $\d_{\mathbf U(x)}$ has the path metric property.  From the construction of $U(x)$ it follows that $\mathbf U(x)$ is a complete triplet and thus  $\mathbf U(x)\in \U$.

Consider $x,y\in X$, $n(x)\leq n(y)$. As $\mathbf U(y)$ mentions $\mathbf U(x)$:
$$d'(x,y)=d_{\mathbf U(\mathbf y)}(\mathbf U(x),\mathbf U(y)).$$
\end{proof}

\begin{thm}\label{generic}
$(\U,d)$ is the generic metric space.
\end{thm}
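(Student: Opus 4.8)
The plan is to establish that $(\U,\d_\U)$ has the one-point \emph{extension property} for the class of finite rational metric spaces, and then to invoke the metric analogue of Lemma~\ref{extuniv}. Proposition~\ref{metrik} already gives that $(\U,\d_\U)$ is a rational metric space, and Proposition~\ref{univ} gives that its age is exactly the class of all finite rational metric spaces (every element of $\U$ carries rational distances, and conversely every finite rational metric space embeds). It therefore remains to show that for each finite $S\subseteq\U$ and each prescription $r\colon S\to\Q_{>0}$ satisfying the consistency conditions
$$|r(\mathbf A)-r(\mathbf B)|\le \d_\U(\mathbf A,\mathbf B)\le r(\mathbf A)+r(\mathbf B)\qquad(\mathbf A,\mathbf B\in S),$$
there exists $\mathbf Z\in\U$ with $\d_\U(\mathbf Z,\mathbf A)=r(\mathbf A)$ for all $\mathbf A\in S$. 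Once this is done, the back-and-forth argument of Lemma~\ref{extuniv} (in its metric form) shows that $(\U,\d_\U)$ is ultrahomogeneous and universal, hence is the unique generic rational metric space $\Urysohn_\Q$.

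To construct $\mathbf Z$ I would reuse, rather than re-create, the triplets already present. Set $P=\bigcup_{\mathbf A\in S}A$, the finite set of all complete triplets mentioned in some member of $S$, ordered by $\LeqUp$ inherited from the members of $S$ and equipped with the restriction of $\d_\U$. Because complete triplets are rigid --- isomorphic complete triplets are equal, as recorded in the Fact preceding Definition~\ref{urysohndef} --- the orders and distances coming from different members of $S$ agree on common elements, so $(P,\LeqUp,\d_\U)$ is well defined, its unique minimum is $\emptyset$, and every down set $\dr b$ with $b\in P$ is again contained in $P$. I would then adjoin a single new element $z$ above all of $P$, with distances
$$\d_{\mathbf Z}(z,b)=\min_{\mathbf A\in S}\bigl(r(\mathbf A)+\d_\U(\max_{\mathbf A},b)\bigr)\qquad(b\in P),$$
and take $\mathbf Z$ to be the complete triplet whose greatest element is $z=\{(b,\d_{\mathbf Z}(z,b)):b\in P\}$.

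The verification then splits into three parts. First, $\mathbf Z$ is \emph{complete}: each $b\in P$ is already complete, $z$ is complete by construction, the minimum $\emptyset$ and maximum $z$ exist, and all down sets are pairwise non-isomorphic by rigidity, so $\mathbf Z$ is proper. Second, the \emph{path-metric} property is preserved: every pair involving $z$ is comparable and so imposes no condition, while for incomparable $b,b'\in P$ the witness of $\d_\U(b,b')$ is a common down set that lies in $P$ and below both, exactly as required. Third --- the only genuinely computational point --- one checks that $\d_{\mathbf Z}$ is a metric extending $\d_\U|_P$: the displayed formula is the shortest-path distance in the graph joining $z$ to each $\max_{\mathbf A}$ by an edge of length $r(\mathbf A)$, and the two consistency inequalities are precisely what guarantees that this shortest-path metric neither shortens any existing distance in $P$ (using $\d_\U(\max_{\mathbf A},\max_{\mathbf B})\le r(\mathbf A)+r(\mathbf B)$) nor fails to give $\d_{\mathbf Z}(z,\max_{\mathbf A})=r(\mathbf A)$ (using $r(\mathbf A)\le r(\mathbf B)+\d_\U(\max_{\mathbf A},\max_{\mathbf B})$, so the minimum is attained at $\mathbf A$ itself). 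Since $\mathbf Z$ then mentions each $\mathbf A\in S$ with $\d_\U(\mathbf Z,\mathbf A)=\d_{\mathbf Z}(z,\max_{\mathbf A})=r(\mathbf A)$, the extension property follows. I expect the main obstacle to be organizing this last verification cleanly: one must argue that placing a single apex $z$ over the amalgam $P$ of several overlapping complete triplets preserves both the triangle inequality and the path-metric witness structure, so that $\mathbf Z$ genuinely lands in $\U$ rather than being merely an abstract one-point metric extension.
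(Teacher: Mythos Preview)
Your proposal is correct and is essentially the same construction as the paper's: you build the new triplet by taking $P=\bigcup_{\mathbf A\in S}A$ as the underlying set, adjoin a maximum $z$ with $\d(z,b)=\min_{\mathbf A\in S}\bigl(r(\mathbf A)+\d_\U(\mathbf A,\dr b)\bigr)$, and verify the result is a complete PPM-triplet realizing the prescribed distances. The paper carries out exactly this construction (with $\X,D,m$ in place of your $S,r,z$) and the same verification steps; your shortest-path reading of the new distances and your observation that pairs involving $z$ are automatically comparable (hence vacuous for the PM condition) are precisely the points the paper checks.
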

\begin{proof}
The set $\U$ is obviously countable, since all elements are finite.
By Proposition \ref{metrik} $(\U,d)$ is a metric space.
By a construction similar to the construction performed in the proof of Proposition \ref{univ}, we verify that $(\U,d)$ has the extension property.
Clearly it suffices to verify the extension property in the following form:

Fix $\X$ any finite subset of $\U$ together with a distance function $D:\X\to \Q$
defining a single vertex extension of the metric subspace induced by $\X$ (i.e. the desired distances to the new vertex such that $D$ does not violate the triangle inequality property of $d_\U$ restricted to $\X$). (Remark that Kat\v etov axiomatized all
possible functions $D$. Such functions are now called Kat\v etov functions \cite{Uspen2}, see also \cite{Nguyen}. The Kat\v etov's description is similar to                               
the definition (3.) of a triplet.)
We find a finite triplet $\mathbf M(\X,D)=(M(\X,D),\LeqUp_{\mathbf M(\X,D)},d_{\mathbf M(\X,D)})\in \U$, such that $\d_\U(\mathbf M(\X,D),\mathbf A)=D(\mathbf A)$ for each $\mathbf A\in \X$.

$\mathbf M(\X,D)$ is defined according to the following algorithm:

\begin{enumerate}
\item[(1)]
The vertex set of $M(\X,D)$ is the union of all sets $A$ such that there exists $\mathbf A=(A,\LeqUp_\mathbf A${}$,d_\mathbf A)\in \X$, together
with the single new vertex $m$ which we describe later (in (4)).

\item[(2)]
For $a,b$ in $\mathbf M(\X,D)$ we set $a\LeqUp_{\mathbf M(\X,D)} b$ if and only if $b=m$ or there exists $\mathbf A=(A,\LeqUp_\mathbf A,d_\mathbf A)\in \X$ such that $a,b\in A$ and $a\LeqUp_\mathbf A b$.  

Observe that $m=\max_{\mathbf M(\X,D)}$.

\item[(3)]
For $a,b\in M(\X,D)$ we set:
\begin{enumerate}
 \item[$i.$] $d_{\mathbf M(\X,D)}(a,b) = 0$ when $a=b$.
 \item[$ii.$] $d_{\mathbf M(\X,D)}(a,b) = \d_\U(\dr a,\dr b)$, when $a,b\neq m$.
 \item[$iii.$] $d_{\mathbf M(\X,D)}(m,b) = \min_{\mathbf C \in \X} D(\mathbf C) + d_\U(\mathbf C,\dr b)$. 

 We call $\mathbf C\in\X$ such that $d_{\mathbf M(\X,D)}(m,b) = D(\mathbf C) + d_\U(\mathbf C,\dr b)$ with $\mathbf C\neq\dr b$ a {\em witness} of $d_{\mathbf M(\X,D)}(m,b)$.  
 Observe that $d_{\mathbf M(\X,D)}(m,b)$ has no witness if and only if $\dr b\in \X$ and in that case, $d_{\mathbf M(\X, D)}(m,b)=D(\dr b)$.
 \item[$iv.$] $d_{\mathbf M(\X,D)}(a,m) = d_{\mathbf M(\X,D)}(m,a)$ defined in $iii$.
\end{enumerate}
\item[(4)] $m=\{(a,d_{\mathbf M(\X,D)}(m,a)): a\in \bigcup_{\mathbf B\in\X} B\}$.
\end{enumerate}

We verify that $\mathbf M(\X,D)$ is a complete triplet by verifying conditions {\bf 1.}--{\bf 4.} of the definition.

We first verify {\bf 1.} $ii.$:

$(M(\X,D),\LeqUp_{\mathbf M(\X,D)})$ is a partial order: for $a\LeqUp_{\mathbf M(\X,D)} b\LeqUp_{\mathbf M(\X,D)} c$ either $c=m$ and thus $a\LeqUp_{\mathbf M(\X,D)} c$ holds trivially from the definition or there exists $\mathbf A\in \X$ such that $a,b,c\in A$ and the fact that $a\LeqUp_{\mathbf M(\X,D)} c$ follows from $a\LeqUp_\mathbf A c$.

There is a single maximal element $m$ and a single minimal element $\emptyset$.

$\mathbf M(\X,D)$ is a proper triplet:

The downsets of every $a\in A$, $\mathbf A\in \X$ are preserved (i.e. downset
of $a$ in $\mathbf M(\X,D)$ is equivalent to the downset of $a$ in $\mathbf
A$).  This follows from the fact that $\LeqUp_{\mathbf M(\X,D)}$ is inherited from
$\LeqUp_\mathbf A$ and that the downset of $a\in A$ is identical to the donwset
of $a$ in any $\mathbf B\in \U$ such that $a\in B$.  Since all $\mathbf
A\in \X$ are complete triplets, all the downsets are non-isomorphic.  This verifies {\bf 2.}

Next, we prove that $d_{\mathbf M(\X,D)}$ is a rational metric (condition {\bf 1.} $iii.$ of the definition):

$d_{\mathbf M(\X,D)}(a,b)$ is a positive rational
number for each $a\LeqUp_{\mathbf M(\X,D)} b$.  Observe that for the last part
of the construction of $d_{\mathbf M(\X,D)}$, the shortest path always exists: there
is always a path from any element to the minimal element.
The fact that that $d_{\mathbf M(\X,D)}$ is symmetric directly follows from the construction.

We verify the triangle inequality property for $d_{\mathbf M(\X,D)}$:

Rule $ii.$
merely translates metric $d_\U$ to $d_{\mathbf M(\X,D)}$. Any triplet violating
triangle inequality property must have two distances defined by $iii.$ or
$iv$.
Let $a,b\in {\mathbf M(\X, D)}$ and consider the triangle $a,b,m$.

First assume that $a,b\in \X$ and that $\dr a, \dr b\notin \X$. Thus
let $\mathbf A$ be a witness of $d_{\mathbf M(\X,D)}(a,m)$ and let $\mathbf B$ be a witness of $d_{\mathbf M(\X,D)}(b,m)$.


By expanding the definition of $d_{\mathbf M(\X, D)}$ and using the triangle inequality we have:
\begin{eqnarray}
d_{\mathbf M(\X,D)}(a,m) + d_{\mathbf M(\X,D)}(b,m) &= & D(\mathbf A)+d_\U(\mathbf A,\dr a)+ D(\mathbf B)+d_\U(\mathbf B,\dr b)\nonumber\\
&\geq&d_\U(\mathbf A,\mathbf B)+d_\U(\mathbf A,\dr a)+d_\U(\mathbf B,\dr b)\nonumber\\
&\geq&d_\U(\dr a,\dr b)\nonumber\\
&=&d_{\mathbf M(\X, D)}(a,b).\nonumber
\end{eqnarray}

%
%
Because witness $\mathbf A$ is minimal, we have:
\begin{eqnarray}
d_{M(\X,D)}(a,m)&=&D(\mathbf A)+d_\U(\mathbf A,\dr a)\nonumber\\
&\leq& D(\mathbf B)+d_\U(\mathbf B,\dr a)\nonumber\\
&\leq& D(\mathbf B)+d_\U(\mathbf B,\dr b) + d_U(\dr a,\dr b)\nonumber\\
&=&d_{\mathbf M(\X,D)}(a,b) + d_{\mathbf M(\X,D)}(b,m).\nonumber
\end{eqnarray}
In a complete analogy we have $d_{\mathbf M(\X,D)}(b,m)\leq d_{\mathbf M(\X,D)}(a,b) + d_{\mathbf M(\X,D)}(a,m)$.

The case when $\dr a$ belongs to $\X$ can be handled similarly if we put $\mathbf A=\dr a$.

Now we show that $d_{\mathbf M(\X,D)}$ has the PM property.

Recall that we have to prove that for each $a,b$ incomparable by $\leq_{\mathbf
M(\X,D)}$ there exists $c$ such that:
\begin{enumerate}
\item $c\LeqUp_{\mathbf M(\X,D)} a$, 
\item $c\LeqUp_{\mathbf M(\X,D)} b$, 
\item $\d_\U(a,b)=\d_\U(a,c)+\d_\U(c,b)$. 
\end{enumerate}
The case $a,b\neq m$ follows directly from the definition of $d_{M(\X,D)}$.  For $a=m$ we can put $c=\max_\mathbf A$ (where $\mathbf A$ is the witness of $d_{M(\X,D)}(a,m)$).

The triplet $\mathbf M(\X,D)$ is complete ({\bf 4.}) and thus
$\mathbf M(\X,D)\in \U$.  By construction it directly follows that $\mathbf M(\X,D)$ mentions every $\mathbf A\in \X$ with the desired distance. By $ii.$ we have $d_\U(\mathbf A,\mathbf M(\X,D))=d_{\mathbf M(\X,D)}(\max_\mathbf A,m)=D(\mathbf A)$.

This proves Theorem \ref{generic} of the finite presentation of $\Urysohn_\Q$.

\end{proof}
\section{The generic partial order revisited}
While it is not immediately obvious, the presentation of the Urysohn space really
builds upon ideas of the earlier finite presentation of the generic partial order shown
in Chapter \ref{homposetchapter}. We can restate a finite presentation of
Definition \ref{Hdef} as follows:

\medskip
\noindent
{\bf 1.} Triple $\mathbf A=(A,\LeqUp_\mathbf A,\leq_\mathbf A)$ is a {\em $\HP$-triplet}  if and only if
\begin{itemize}
\item $A$ is a finite set,
\item Relation $\LeqUp_\mathbf A$ is a partial order on A,
\item Relation $\leq_\mathbf A$ is a partial order on A.
\end{itemize}
As in Section \ref{urysohnsection2}, we say that
$\HP$-triplets $\mathbf A=(A,\LeqUp_\mathbf A,\leq_\mathbf A)$ and
$\mathbf B=(B,\LeqUp_\mathbf B,\leq_\mathbf B)$ are said to be {\em isomorphic}
if there exists a bijection $\varphi:A\to B$ which is both isomorphism of partial orders $(A,\LeqUp_\mathbf A)$ and $(B,\LeqUp_\mathbf B)$ and partial orders $(A,\leq_\mathbf A)$ and $(B,\leq_\mathbf B)$. The downset $\{x:x\LeqUp_\mathbf A a\}$ will be denoted by $\dr a$. (There will be no downsets with respect to $\leq_\mathbf A$.)

\medskip
\noindent
{\bf 2.} $\HP$-triplets are {\em proper} if no two downsets considered as a $\HP$-triplets are isomorphic and if $(A,\LeqUp_\mathbf A)$ has both a greatest and a smallest element (denoted by $\max_\mathbf A$ and $\min_\mathbf A$ respectively).

\medskip
\noindent
{\bf 3.} $\leq_A$ is said to be {\em induced by edges of $\LeqUp_A$} if for every
$a, a'\in A$, $a\leq_\mathbf A a'$ which are incomparable in $\LeqUp_\mathbf A$ there exists $a''\in A,a''\LeqUp_\mathbf A a, a''\LeqUp_\mathbf A a'$ such that $a\leq_\mathbf A a''\leq_\mathbf A a'$.

\medskip
\noindent
{\bf 4.} A proper $\HP$-triplet $\mathbf A=(A,\LeqUp_\mathbf A,\leq_\mathbf A)$ where $\leq_\mathbf A$ is induced by edges of $\LeqUp_\mathbf A$ is said to be complete
if the following holds:
\begin{enumerate}
\item $\min_\mathbf A=\emptyset$.
\item For every $a\in A$ holds:
$$a=\{(b,-1):b\in\dr a,a\neq b,b\leq_\mathbf A a\}\cup
\{(b,1):b\in\dr a,a\neq b,a\leq_\mathbf A b\}\cup$$
$$\cup\{(b,0):b\in\dr a,a\neq b,a\nleq_\mathbf A b,b\nleq_\mathbf A a\}.$$
\end{enumerate}

Denote by $\HP$ the set of all complete $\HP$-triplets.
Complete triplets induce a partial order denoted by $\HPleq$:
\begin{defn}
\label{defposet}
For $\mathbf A=(A,\LeqUp_\mathbf A,\leq_\mathbf A)$, $\mathbf B=(B,\LeqUp_\mathbf B,\leq_\mathbf B)\in \HP$
we write $\mathbf A\HPleq \mathbf B$ if and only if there exists $a\in A$ such that $a\in B$ and $\max_\mathbf A\leq_\mathbf A a$, $a\leq_\mathbf B \max_\mathbf B$.
\end{defn}

As in Section \ref{Urysohnsection} we can then prove:
\begin{thm}\label{thm31}~
\begin{enumerate}
\item $(\HP,\HPleq)$ is a partially ordered set.
\item $(\HP,\HPleq)$ is isomorphic to the generic partial order $(\Poset,\leq_\Poset)$.
\end{enumerate}
\end{thm}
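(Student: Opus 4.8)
The plan is to follow the development of the Urysohn space in Section~\ref{Urysohnsection} step by step, letting the second order $\leq_\mathbf{A}$ play the role there occupied by the metric $\d_\mathbf{A}$. The organizing tool is again a notion of \emph{witness}: whenever $\mathbf{A}\HPleq\mathbf{B}$ holds, call the element $a\in A\cap B$ with $\max_\mathbf{A}\leq_\mathbf{A}a$ and $a\leq_\mathbf{B}\max_\mathbf{B}$ a witness of the inequality (the exact analogue of the witnesses of $A<_\in B$ in $\He$). Part 1 then reduces to checking the poset axioms through witness analysis, exactly as for $(\He,\leq_\in)$ in Lemmas~\ref{ltr} and~\ref{lasy}, while Part 2 splits into universality (an explicit embedding algorithm, as in Proposition~\ref{univ}) and the extension property (a one-point extension, as in Lemma~\ref{ext} and Theorem~\ref{generic}); Lemma~\ref{extuniv} then identifies $(\HP,\HPleq)$ with $(\Poset,\leq_\Poset)$ up to isomorphism.

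For Part 1, reflexivity is immediate, since $a=\max_\mathbf{A}$ witnesses $\mathbf{A}\HPleq\mathbf{A}$. Transitivity mirrors the case analysis of Lemma~\ref{ltr}: given $\mathbf{A}\HPleq\mathbf{B}\HPleq\mathbf{C}$ with witnesses $a\in A\cap B$ and $b\in B\cap C$, one has $a\leq_\mathbf{B}\max_\mathbf{B}\leq_\mathbf{B}b$, and the property that $\leq_\mathbf{B}$ is induced by edges of $\LeqUp_\mathbf{B}$ produces an element $c\LeqUp_\mathbf{B}a$, $c\LeqUp_\mathbf{B}b$ with $a\leq_\mathbf{B}c\leq_\mathbf{B}b$ (the $\LeqUp$-comparable subcases being handled by taking $c=a$ or $c=b$). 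Completeness guarantees that $c$ is literally the same set inside the downsets $\dr a$ and $\dr b$, which are shared by $\mathbf{A}$ and $\mathbf{C}$ respectively; hence $\max_\mathbf{A}\leq_\mathbf{A}c$ and $c\leq_\mathbf{C}\max_\mathbf{C}$, so $c$ witnesses $\mathbf{A}\HPleq\mathbf{C}$. Antisymmetry follows the minimal-counterexample scheme of Lemma~\ref{lasy}: from a counterexample $\mathbf{A}\HPleq\mathbf{B}\HPleq\mathbf{A}$ with $\h(\mathbf{A})+\h(\mathbf{B})$ minimal, the witnesses have strictly smaller $\LeqUp$-height unless they already coincide with the respective maxima, and the disjointness forced by completeness then yields $\mathbf{A}=\mathbf{B}$.

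For Part 2, universality is obtained by the recursive coding of Proposition~\ref{univ}: linearly order a given countable poset $(P,\leq_P)$, and build for each $x$ a complete $\HP$-triplet $\mathbf{U}(x)$ whose greatest element records, for every earlier $y$, the sign $-1,0,1$ according to whether $y<_P x$, $y$ is incomparable to $x$, or $x<_P y$. Since $\LeqUp_{\mathbf{U}(x)}$ is linear the induced-by-edges condition is vacuous, each $\mathbf{U}(y)$ with $y$ earlier is mentioned in $\mathbf{U}(x)$, and Definition~\ref{defposet} reads off precisely $\leq_P$. For the extension property I would mimic Lemma~\ref{ext}: given a finite $\X\subset\HP$ partitioned into the prescribed predecessors $\X_-$, successors $\X_+$ and incomparable elements $\X_0$ of a new point, I would adjoin a single new maximal element $m$ whose downset is the $\LeqUp$-closure $\bigcup_{\mathbf{B}\in\X}B$, with the left neighbourhood of $m$ taken as the $\LeqUp$-down\-closure of $\X_-$ and the right neighbourhood as the up-closure of $\X_+$, and verify that the resulting triplet is proper, induced by edges, and complete.

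The main obstacle, exactly as for $\He$, will be the incomparable block $\X_0$. Naively the encoded set $m$ may coincide with, or become accidentally comparable to, a triplet in $\X_0$, since mentioning those triplets forces them into the downset of $m$. The remedy is the padding trick of Lemma~\ref{ext}: first adjoin a dummy element strictly $\LeqUp$-below everything so that $m$ differs as a set from every member of $\X$, and then rule out, using that $\leq_\mathbf{A}$ is induced by edges, any spurious witness of comparability between $m$ and an element of $\X_0$. Checking that all four structural conditions (correctness, properness, the induced-by-edges condition, and completeness) survive these manipulations is the routine but delicate remainder, after which Theorem~\ref{thm31} follows from Lemma~\ref{extuniv} as stated.
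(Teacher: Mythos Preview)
Your proposal is correct and matches the paper's intended approach exactly: the paper does not give a detailed proof of Theorem~\ref{thm31} but simply says ``As in Section~\ref{Urysohnsection} we can then prove,'' and your write-up is precisely the execution of that parallel, carrying over the witness machinery of Lemmas~\ref{ltr}, \ref{lasy} and~\ref{ext} (and the Urysohn-space analogues in Proposition~\ref{univ} and Theorem~\ref{generic}) with $\leq_\mathbf{A}$ in place of $\d_\mathbf{A}$. The one point worth making explicit in the transitivity step is that $c\LeqUp_\mathbf{B}a$ together with completeness forces $c\in\dr a\subseteq A$ and determines the $\leq$-relation between $a$ and $c$ identically in $\mathbf{A}$ and $\mathbf{B}$; you rely on this but could state it once as a standing fact.
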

\section{Alternative representations}
\label{alternative}
The rational Urysohn space and the generic partial order are uniquely determined (up to isometry or isomorphism). Thus also our finite presentations (Theorems \ref{generic} and \ref{thm31}) describe the same
objects. Of course our finite presentation is not unique (as these representations may use different languages). Here is another      
variant motivated by the above presentation and Kat\v etov's construction already mentioned in \ref{uryintro}.
This construction, which we denote by $({\U'}, d_{\U'})$, is perhaps even more ``concise'':

\begin{defn}
The vertices of $\U'$ are functions $f$ such that:
\begin{enumerate}
\item The domain $D_f$ of $f$ is a finite (possibly empty) set of functions.
\item The range of $f$ is a subset of the positive rationals.
\item For every $f'\in D_f$ and $f''\in D_{f'}$, we have $f''\in D_f$.
\item $D_f$ using metric $d_{\U'}$ defined below forms a metric space.
\item The function $f$ defines an extension of metric space on vertices $D_f$ by adding a new vertex as in the proof of Theorem \ref{generic}.
\end{enumerate}
The metric $d_{\U'}(f,g)$ is defined by:
\begin{enumerate}
  \item if $f=g$ then
$d_{\U'}(f,g)=0,$
  \item if $f\in D_g$ then
$d_{\U'}(f,g)=g(f),$
  \item if $g\in D_f$ then 
$d_{\U'}(f,g)=f(g),$
  \item if none of the above hold then
$d_{\U'}(f,g)=min_{h\in D_f\cap D_g} f(h)+g(h).$
\end{enumerate}
\end{defn}

\begin{thm}
$({\U'}, d_{\U'})$ is the generic metric space.
\end{thm}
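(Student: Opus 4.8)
The plan is to exhibit an explicit isometry between $(\U',d_{\U'})$ and the space $(\U,d_\U)$ of complete triplets, and then to invoke Theorem \ref{generic}, which already identifies the latter with the generic (rational Urysohn) metric space. The bridge between the two representations is the remark recorded just before the Fact on complete triplets: a complete triplet $\mathbf A$ is entirely encoded by its top element $\max_\mathbf A$, which by completeness (condition {\bf 4.}) is exactly the set of pairs $(b,\d_\mathbf A(\max_\mathbf A,b))$ with $b$ ranging over $A\setminus\{\max_\mathbf A\}$. Reading this set as a function $b\mapsto\d_\mathbf A(\max_\mathbf A,b)$, whose domain is again a set of recursively defined functions, is precisely the data of a vertex of $\U'$. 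So I would define $\Psi\colon\U\to\U'$ by $\Psi(\mathbf A)=\max_\mathbf A$.

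First I would check that $\Psi$ lands in $\U'$, i.e. that $\max_\mathbf A$ satisfies conditions (1)--(5) of the definition of $\U'$. Conditions (1) and (2) are immediate, since $A$ is finite and distances between distinct elements are positive rationals. Condition (3) (downward closure of the domain) is precisely completeness together with the fact that every downset $\dr a$ is itself a complete triplet: if $f'$ corresponds to $b\in A$ and $f''\in D_{f'}$ corresponds to $c\in\dr b$, then $c\in\dr{\max_\mathbf A}=A$, whence $f''\in D_{\max_\mathbf A}$. Conditions (4) and (5) say that $A\setminus\{\max_\mathbf A\}$ is a metric space and that $\max_\mathbf A$ records a legitimate one-point metric extension of it; both follow from $\mathbf A$ being a proper path-metric complete triplet, and I would confirm by induction on $\h(\mathbf A)$ that $d_{\U'}$ restricted to $D_{\max_\mathbf A}$ coincides with $\d_\mathbf A$.

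Next I would argue that $\Psi$ is a bijection. Injectivity is the Fact that isomorphic complete triplets are equal, combined with the encoding remark: distinct triplets have distinct tops. For surjectivity, given $f\in\U'$ I would reconstruct a triplet $\Phi(f)$ on the downward-closed vertex set $\{f\}\cup D_f$, ordered by $g\LeqUp h$ iff $g\in\{h\}\cup D_h$ and equipped with $d_{\U'}$; the closure axiom (3) makes $\LeqUp$ transitive, well-foundedness of the hereditary domains makes it antisymmetric, $f$ is the greatest element and the empty function the smallest, and conditions (4)--(5) deliver exactly properness, path-metricity and completeness, so $\Phi(f)\in\U$ and $\Psi(\Phi(f))=f$.

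Finally I would verify that $\Psi$ is an isometry by matching the four clauses defining $d_{\U'}$ with Definition \ref{urysohndef}. Writing $f=\max_\mathbf A$ and $g=\max_\mathbf B$: the clause $f=g$ corresponds to $\mathbf A=\mathbf B$ and distance $0$; the clauses $f\in D_g$ and $g\in D_f$ correspond precisely to $\mathbf A$ being mentioned in $\mathbf B$ (or conversely), where $d_\U$ reduces to the single stored value $g(f)=\d_\mathbf B(\max_\mathbf B,\max_\mathbf A)$; and the final clause, the minimum of $f(h)+g(h)$ over $h\in D_f\cap D_g$, is exactly the witness minimum of $\d_\mathbf A(\max_\mathbf A,a)+\d_\mathbf B(\max_\mathbf B,b)$ taken over common elements $a=b=h$ of the strict downsets. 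The main obstacle I anticipate is bookkeeping rather than conceptual: showing that the recursively defined $d_{\U'}$ agrees level-by-level with the triplet metric (the induction in the well-definedness step) and that the case split of $d_{\U'}$ aligns with the mentioned/witness dichotomy of $d_\U$ without gaps. Once this is in place, $(\U',d_{\U'})$ is isometric to $(\U,d_\U)$ and therefore, by Theorem \ref{generic}, is the generic metric space.
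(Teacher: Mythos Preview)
Your proposal is correct and is exactly the approach the paper takes: the paper's proof is the one-line sketch ``This follows from our Definition \ref{urysohndef} by encoding PPM-triplets as functions,'' and you have simply spelled out that encoding $\Psi(\mathbf A)=\max_\mathbf A$ in full, together with the verification that it is a bijective isometry onto $(\U',d_{\U'})$, so that Theorem \ref{generic} applies. There is nothing to add; your level of detail far exceeds what the paper records, but the route is identical.
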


\begin{proof}[Proof (sketch)]
This follows from our Definition \ref{urysohndef} by encoding PPM-triplets as functions. 
\end{proof}
 Note that also our description of the generic partial orders (Definition \ref{defposet}) leads to a similar reformulation.

Urysohn space was also studied in the context of constructive mathematics and
effective constructibility.  \cite{constructiveurysohn} show that techniques
presented here (in language of classical mathematics) are essentially
constructive.

\section{Other metrics, other structures}

It is obvious that the finite presentation given in Section \ref{urysohnsection2} for $\Urysohn_\Q$ can be easily modified for Urysohn spaces with the rational metrics
restricted to some interval (say $\Urysohn \cap [0,1]$ or $\Urysohn \cap [0,a]$).  Such variants of the Urysohn spaces has been thoroughly investigated in \cite{Nguyen, KPT}
where the Urysohn space with $S$-valued metric was denoted by $\Urysohn_S$. $\Urysohn_S$ need not exist as is demonstrated by the failure of the amalgamation property. However this is characterized in \cite{d9} by {\em 4-values condition}.

\begin{defn}
\label{4values}
 Let $S\subseteq[0,+\infty]$. $S$ satisfies the {\em 4-values condition}
when for every $s_0, s_1, s'_0, s'_1 \in S$, if there is $t \in S$ such that:
$$|s_0 - s_1| \leq t \leq s_0 + s_1
\mathrm{~and~}|s'_0-s'_1|\leq t\leq s'_0+s'_1$$
then there is $u \in S$ such that:
$$|s_0 - s'_0| \leq u \leq s_0 + s'_0
\mathrm{~and~}|s_1 - s'_1| \leq u \leq s_1 + s'_1.$$
\end{defn}
\begin{thm}[Delhomm\'e, Laflamme, Pouzet, Sauer \cite{d9}]
Let $S\subseteq[0,+\infty]$.\\The following conditions are equivalent:
\begin{enumerate}
\item There is a countable ultrahomogeneous metric space $\Urysohn_S$ with
distances in $S$ into which every countable metric space with distances in $S$
embeds isometrically.
\item $S$ satisfies the 4-values condition.
\end{enumerate}
\end{thm}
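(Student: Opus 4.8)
The plan is to route the whole statement through \Fraisse{}'s theorem (Theorem \ref{fraissethm}). Let $\mathcal{K}_S$ denote the class of all finite metric spaces whose distances lie in $S$. Since we want a \emph{countable} $\Urysohn_S$ that realizes every distance of $S$ (the two-point space of diameter $s$ must embed for each $s\in S$), condition (1) already forces $S$ to be countable, so I may assume this; then $\mathcal{K}_S$ has only countably many members up to isometry, and it is plainly hereditary and isomorphism-closed. By Theorem \ref{fraissethm}, condition (1) is therefore equivalent to $\mathcal{K}_S$ being an amalgamation class, i.e. to $\mathcal{K}_S$ having the amalgamation property, with universality supplied by Lemma \ref{extuniv}. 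Thus everything reduces to the single claim that $\mathcal{K}_S$ has the amalgamation property if and only if $S$ satisfies the $4$-values condition (Definition \ref{4values}).

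For the direction (1)$\Rightarrow$(2) I would observe that the $4$-values condition is literally one instance of amalgamation. Given $s_0,s_1,s'_0,s'_1$ and a witnessing $t\in S$, build $\mathbf C$ on two points $x,y$ at distance $t$, the space $\mathbf A=\mathbf C\cup\{c\}$ with $\d(x,c)=s_0,\ \d(y,c)=s_1$, and $\mathbf B=\mathbf C\cup\{c'\}$ with $\d(x,c')=s'_0,\ \d(y,c')=s'_1$; the two hypotheses on $t$ are exactly the triangle inequalities that put $\mathbf A,\mathbf B\in\mathcal{K}_S$. An amalgam of $\mathbf A,\mathbf B$ over $\mathbf C$ inside $\mathcal{K}_S$ furnishes a value $u=\d(c,c')\in S$ whose triangle inequalities against $x$ and $y$ are precisely the two inequalities demanded by the $4$-values condition. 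Hence amalgamation forces the $4$-values condition.

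The substantial direction is (2)$\Rightarrow$(1), and I would isolate the following single-edge completion lemma as the engine: if $\mathbf C$ is a finite $S$-metric space and $a,b$ are two points with all distances $\d(a,c),\d(b,c)$ ($c\in C$) defined so that $\mathbf C\cup\{a\}$ and $\mathbf C\cup\{b\}$ are $S$-metric spaces, then there is $u=\d(a,b)\in S$ with $|\d(a,c)-\d(b,c)|\le u\le \d(a,c)+\d(b,c)$ for every $c\in C$. Granting the lemma, the full amalgam of $\mathbf A,\mathbf B$ over $\mathbf C$ is assembled by defining the missing cross-distances $\d(a_i,b_j)$ one at a time: when treating a pair, its base is the set of points already joined to both endpoints, which is an $S$-metric space, and the interval condition guarantees that every newly completed triangle obeys the triangle inequality, so the invariant ``every fully-defined subspace is an $S$-metric space'' is preserved. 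This bookkeeping is routine; the content is entirely in the lemma.

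To prove the lemma — and this is the step I expect to be the crux — set
$$R=\max_{c\in C}|\d(a,c)-\d(b,c)|,\qquad L=\min_{c\in C}(\d(a,c)+\d(b,c)),$$
so that the admissible $u$ are exactly those with $u\in S\cap[R,L]$. First one checks $R\le L$: for $c,c'\in C$ with $\d(a,c)\ge \d(b,c)$, the triangle inequality in $\mathbf C\cup\{a\}$ gives $\d(a,c)\le \d(a,c')+\d(c,c')$ and that in $\mathbf C\cup\{b\}$ gives $\d(c,c')\le \d(b,c)+\d(b,c')$, whence $|\d(a,c)-\d(b,c)|\le \d(a,c')+\d(b,c')$. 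The key idea is then to collapse the arbitrarily many triangle constraints into one application of the $4$-values condition: pick $c_-\in C$ attaining $R$ and $c_+\in C$ attaining $L$, and apply the condition to the base $\{c_-,c_+\}$ with $t=\d(c_-,c_+)\in S$ and $s_0=\d(a,c_-),\ s_1=\d(a,c_+),\ s'_0=\d(b,c_-),\ s'_1=\d(b,c_+)$; the triangles $\{a,c_-,c_+\}$ and $\{b,c_-,c_+\}$ certify that $t$ is admissible, so the condition returns $u\in S$ with $|s_0-s'_0|\le u\le s_0+s'_0$ and $|s_1-s'_1|\le u\le s_1+s'_1$. In particular $u\ge |\d(a,c_-)-\d(b,c_-)|=R$ and $u\le \d(a,c_+)+\d(b,c_+)=L$, so $u\in S\cap[R,L]$, and since $u\ge R$ dominates every lower bound while $u\le L$ is dominated by every upper bound, $u$ satisfies all constraints simultaneously. (The degenerate cases $|C|\le 1$ follow from the same computation with $t=0$, adopting the harmless convention $0\in S$.) Together with Theorem \ref{fraissethm} and Lemma \ref{extuniv} this completes (2)$\Rightarrow$(1); the only genuine difficulty is the insight that the two extremal witnesses $c_-,c_+$ reduce the multi-point amalgamation to a single use of the $4$-values condition.
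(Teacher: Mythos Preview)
The paper does not prove this theorem; it is merely quoted (with attribution to \cite{d9}) in order to situate the discussion of which spaces $\Urysohn_S$ exist before turning to finite presentations. So there is no ``paper's own proof'' to compare against.

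Your argument is the standard route through \Fraisse{}'s theorem and is essentially correct. Two small points deserve tightening. First, the degenerate case is not really handled by ``$t=0$ with the harmless convention $0\in S$'': if $c_-=c_+=c$ (which can happen even when $|C|>1$), you have $s_0=s_1=\d(a,c)$ and $s'_0=s'_1=\d(b,c)$, and you should instead take $t=\min(\d(a,c),\d(b,c))\in S$, which satisfies $0\le t\le 2s_0$ and $0\le t\le 2s'_0$ and feeds the $4$-values condition without any convention. Second, in the iterative bookkeeping you must specify an order on the missing pairs (e.g.\ lexicographic on $(i,j)$) so that at each step the base---the set of points already joined to both current endpoints---is itself a \emph{fully defined} $S$-metric space; with an arbitrary order two points of the base may have their mutual distance still undefined, and your single-edge lemma does not apply. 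With lexicographic order the base at step $(a_i,b_j)$ is $C\cup\{a_1,\ldots,a_{i-1},b_1,\ldots,b_{j-1}\}$, all of whose cross-distances were filled in earlier, and the invariant goes through.

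With those two clarifications, your reduction of the many-constraint completion to a single invocation of the $4$-values condition at the extremal witnesses $c_-,c_+$ is precisely the key idea, and it is sound.
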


One can prove (by a cardinality argument) that the 4-value property does not suffice for the existence of a finite presentation of $\Urysohn_S$. However we have the following:

A class $\cal K$ of rational finite metric spaces is said to be {\em triangle
axiomatized} if $A\in\cal K$ if and only if every $3$-point subspace of $A$
belongs to $\cal K$. An ultrahomogeneous metric space $\X$ is said to be
triangle axiomatized if the class of all finite subspaces is triangle
axiomatized. We can prove \cite{novyclanek}:

\begin{thm}
Every ultrahomogeneous space $\cal X$ which is triangle axiomatized
and where there is formula $\varphi$ deciding whether given metric space
on with 3 vertices is subspace of $\cal X$ has a finite presentation.
\end{thm}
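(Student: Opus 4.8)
The plan is to repeat the complete-triplet construction of Section~\ref{urysohnsection2} while restricting the admissible triplets by means of $\varphi$. I would set $\U_\X$ to be the set of all complete triplets (Definition~\ref{urysohndef}) each of whose three-element subspaces satisfies $\varphi$, with the metric $d_{\U_\X}$ obtained by restricting $d_\U$. Since $\X$ is triangle axiomatized, a finite metric space lies in $\Age(\X)$ precisely when each of its triangles does; hence the underlying spaces of the members of $\U_\X$ range exactly over $\Age(\X)$. This remains a finite presentation: the completeness conditions are local (each element carries its whole downset), and the extra clause ``every triangle satisfies $\varphi$'' is a decidable test on the single triplet, so both membership in $\U_\X$ and the value $d_{\U_\X}(\mathbf A,\mathbf B)$ depend only on the elements in question.

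First I would verify that $(\U_\X,d_{\U_\X})$ is a metric space. The triangle-inequality argument of Proposition~\ref{metrik} manipulates only witnesses and the path-metric structure and never refers to which distances are allowed, so it carries over verbatim to the restricted class; the one genuinely new point is that a triangle $\mathbf A,\mathbf B,\mathbf C\in\U_\X$ must itself be a legal triangle of $\X$, and this is exactly what triangle axiomatization delivers once its three edges are known to be legal. For universality I would rerun the embedding algorithm of Proposition~\ref{univ}: on a finite subspace $(X,d')$ of $\X$ it builds triplets $\mathbf U(x)$ out of the distances $d'$ alone, so every triangle of each $\mathbf U(x)$ is a triangle of $X\in\Age(\X)$ and therefore satisfies $\varphi$, giving $\mathbf U(x)\in\U_\X$.

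Genericity, and with it isomorphism to $\X$, would then follow from Lemma~\ref{extuniv} once the extension property is established. Here I would take a finite subset $\S\subseteq\U_\X$ together with a one-point extension $D$ that is legal for $\X$ (meaning the prescribed extension of the subspace spanned by $\S$ by a single vertex lies in $\Age(\X)$; such $D$ exist because $\Age(\X)$ is an amalgamation class by ultrahomogeneity and Theorem~\ref{fraissethm}) and feed them to the construction $\mathbf M(\S,D)$ of Theorem~\ref{generic}. All the verifications in Theorem~\ref{generic} that concern metric consistency and completeness go through unchanged.

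The main obstacle is showing $\mathbf M(\S,D)\in\U_\X$, i.e.\ that the minimum/witness rule defining the distances from the new vertex $m$ produces only $\varphi$-legal triangles, not merely metrically consistent ones; this is the single place where the Urysohn proof used that \emph{every} metrically consistent triangle is admissible, which now fails in general. I would attack it triangle by triangle: the triangles avoiding $m$ are inherited from members of $\S$ and are legal, while for a triangle $(a,b,m)$ the witness $\mathbf C$ realizing $d_{\U_\X}(m,b)$ forces $m$ onto a geodesic through a legal anchor, so the configuration embeds into an amalgam of the relevant witness triplets with the legal one-vertex extension $D$, which is again in $\Age(\X)$ by the amalgamation property. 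Triangle axiomatization then upgrades ``every triangle of $\mathbf M(\S,D)$ is legal'' to ``$\mathbf M(\S,D)\in\Age(\X)$'', hence $\mathbf M(\S,D)\in\U_\X$; this is precisely the extension property and completes the identification of $(\U_\X,d_{\U_\X})$ with $\X$.
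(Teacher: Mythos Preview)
The paper does not prove this theorem; it is stated and immediately deferred to \cite{novyclanek}. So there is no in-paper proof to compare against, and I assess your proposal on its own terms.

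Your outline has the right shape, but the construction you import from Theorem~\ref{generic} hard-codes the \emph{additive} combination of distances, and this breaks for general triangle-axiomatised $\X$. Take $\X$ to be the generic rational ultrametric space, which the paper explicitly cites as a triangle-axiomatised example. Let $\S=\{\mathbf C\}$ where $\mathbf C$ is the two-point complete triplet with $d_{\mathbf C}(\emptyset,\max_{\mathbf C})=2$, and set $D(\mathbf C)=1$; this is a legal one-point ultrametric extension of the subspace $\{\mathbf C\}\subseteq\U_\X$. Rule~(iii) of the $\mathbf M(\S,D)$ construction then gives
\[
d_{\mathbf M(\S,D)}(m,\emptyset)=D(\mathbf C)+d_\U(\mathbf C,\dr\emptyset)=1+2=3,
\]
so the triangle $(m,\max_{\mathbf C},\emptyset)$ has side lengths $1,2,3$. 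This is metrically consistent but not ultrametric ($3>\max(1,2)$), hence fails $\varphi$, hence $\mathbf M(\S,D)\notin\U_\X$ and the extension property is lost. The same additivity is baked into the PM clause defining complete triplets, so that clause too must be revisited.

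Your ``embeds into an amalgam'' sentence does not close this gap: amalgamation in $\Age(\X)$ guarantees that \emph{some} legal value of $d(m,b)$ exists, not that the particular value $\min_{\mathbf C\in\S}\bigl(D(\mathbf C)+d_\U(\mathbf C,\dr b)\bigr)$ is it. To make the scheme work you have to replace $+$ throughout (both in rule~(iii) and in the PM condition) by the class-specific free-amalgamation operation --- $\max$ for ultrametrics, $+$ for $\Urysohn_\Q$, and in general the extremal $\varphi$-legal third side over the witnessing triangles. With that modification your plan is the natural one; without it, the final paragraph contains a genuine error rather than a routine verification.
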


Triangle axiomatized classes include classes of ultrametric spaces thoroughly investigated recently in \cite{Nguyen}.

\part{Embedding-universal structures}

\chapter{Some examples of universal partial orders}
\label{posetschapter}
In this chapter we present several simple constructions which yield (countably)
universal partial orders. Such objects are interesting on their own and were
intensively studied in the context of universal algebra and categories. For
example, it is a classical result of Pultr and Trnkov\'a \cite{Pultr} that
finite graphs with the homomorphism order forms a quasi-order that embeds a
countably universal partially ordered set. Extending and completing
\cite{HN-Posets} we give here several constructions which yield universal
partial orders.  These constructions include: 
\begin{enumerate}
\item The order $(\Words,\leq_\Words)$ on sets of words in the alphabet $\{0,1\}$.
\item The dominance order on the binary tree $(\Bintree,\leq_\Bintree)$.
\item The inclusion order of finite sets of finite intervals $(\Intervals,\leq_\Intervals)$.
\item The inclusion order of convex hulls of finite sets of points in the plane $(\Convex,\leq_\Convex)$.
\item The order of piecewise linear functions on rationals $(\Functions,\leq_\Functions)$.
\item The inclusion order of periodic sets $(\Periodic,\subseteq)$.
\item The order of sets of truncated vectors (generalization of orders of vectors
of finite dimension) $(\TV,\leq_\TV)$.
\item The orders implied by grammars on words $(\Grammar,\leq_\Grammar)$.
\item The homomorphism order of oriented paths $(\Paths,\leq_\Paths)$.
\end{enumerate}

Note that with universal partial orders we have more freedom (than with the generic
partial order) and as a consequence we give a perhaps surprising variety of finite
presentations.

We start with a simple representation by means of finite sets of binary words. This
representation seems to capture properties of such a universal partial order very
well and it will serve as our ``master'' example.  In most other cases we prove the
universality of some particular partial order by finding a mapping from the
words representation into the structure in question.  This technique will be
shown in several applications in the next sections. While some of these
structures are known be universal, see e.g. \cite{hedrlin,nesu,HN-paths}, in several
cases we can prove the universality in a new, we believe, much easier way.
The embeddings of structures are presented as follows (ones denoted by dotted
lines are not presented in this thesis, but references are given). 

\medskip
~

\centerline{\includegraphics[width=16cm]{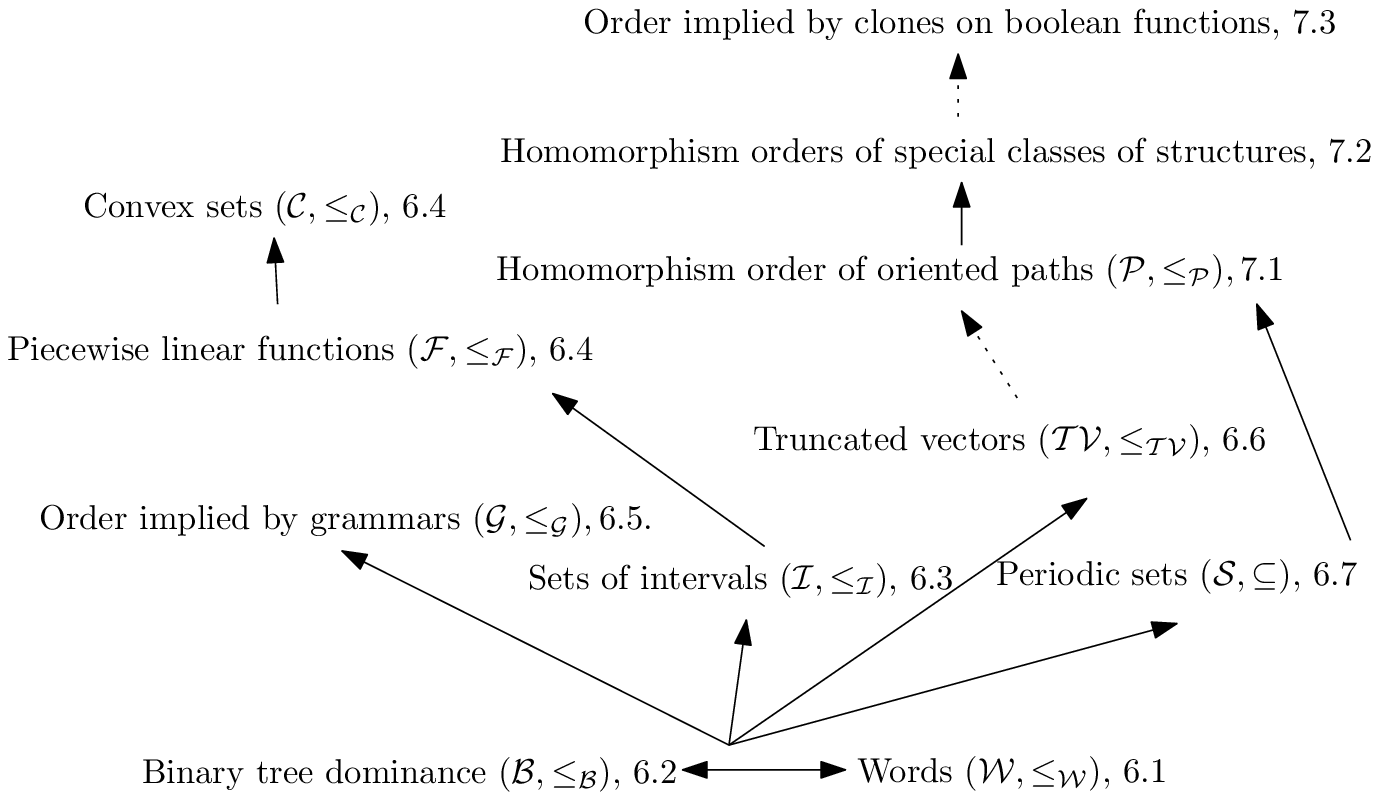}}

\medskip
~

The finite presentation of the generic partial order was given in Chapter
\ref{homposetchapter}.  A bit surprisingly this is the only known one.  The
constructions of universal partial orders are easier, but they are often not
generic. We discuss reasons why other structures fail to be ultrahomogeneous.  In
particular we will look for gaps in the partial order.  Recall that the {\em
gap} in a partial order $(P,\leq_P)$ is a pair of elements $v, v'\in P$ such that
$v<_\Bintree v'$.  A partial order having no gaps is called {\em dense}. We will
show examples of universal partial orders both with gaps  and without gaps but
still failing to be generic.

To prove the universality of a given partially ordered set 
is often a difficult task \cite{hedrlin,Pultr,HN-trees,nesu}.
However, the individual proofs, even if developed independently, use similar tools.
We demonstrate this by isolating a ``master'' construction (in Section \ref{wordsection}). This construction is then embedded into partial orders defined
by other structures (as listed above). We shall see that the representation of
this particular order is flexible enough to simplify further embeddings.

\section{Word representation}
\label{wordsection}

The set of all words over the alphabet $\Sigma=\{0,1\}$ is denoted by $\{0,1\}^*$.
For words $W,W'$ we write $W\leq_w W'$ if and only if $W'$ is an initial segment (left
factor) of $W$. Thus we have, for example, $\{011000\}\leq_w \{011\}$ and
$\{010111\}\nleq_w \{011\}$.

\begin{defn}
\label{Wordsdef}

Denote by $\Words$ the class of all finite subsets $A$ of $\{0,1\}^*$ such that no distinct words $W,W'$ in $A$ satisfy $W\leq_w W'$. For
$A,B\in \Words$ we put $A\leq_\Words B$ when for each $W\in A$ there exists
$W'\in B$ such that $W\leq_w W'$.
\end{defn}

Obviously $(\Words,\leq_\Words)$ is a partial order (antisymmetry follows from the fact that $A$ is an antichain in the order $\leq_w$).

\begin{defn}
For a set $A$ of finite words denote by $\min A$ the set of all minimal words
in $A$ (i.e. all $W\in A$ such that there is no $W'\in A$ satisfying $W'<_wW$).
\end{defn}

Now we show that there is an on-line embedding of any finite partial order to $(\Words,\leq_\Words)$.
Let $[n]$ be the set $\{1,2,\ldots, n\}$.  The partial orders will be restricted to those whose vertex sets are sets $[n]$ (for some $n>1$) and
the vertices will always be embedded in the natural order.  Given a partial order
$([n],\leq_P)$ let $([i],\leq_{P_i})$ denote the partial order induced by
$([n],\leq_P)$ on the set of vertices $[i]$. 

Our main construction is the function $\Psi$ mapping partial orders $([n],\leq_P)$ to elements of $(\Words,\leq_\Words)$ defined as follows:
\begin{defn}
\label{algu}
Let $L([n],\leq_P)$ be the union of all $\Psi([m],\leq_{P_m})$, $m<n$, $m\leq_P n$.

Let $U([n],\leq_P)$ be the set
of all words $W$ such that $W$ has length $n$, the last letter is $0$ and
for each $m<n,n\leq_P m$ there is a $W'\in \Psi([m],\leq_{P_m})$ such that $W$ is an initial
segment of $W'$.

Finally, let  $\Psi([n],\leq_P)$ be $\min (L([n],\leq_P) \cup U([n],\leq_P))$.  

In particular, $L([1],\leq_P)=\emptyset, U([1],\leq_P)=\{0\},\Psi ([1],\leq_P)=\{0\}$.
\end{defn}

The main result of this section is the following:

\begin{thm}
\label{Pembed}
Given a partial order $([n],\leq_P)$ we have:
\begin{enumerate}
\item For every $i,j\in[n]$, $$i\leq_P j \hbox{ if and only if } \Psi([i],\leq_{P_i})\leq_\Words \Psi([j],\leq_{P_j})$$ and $$\Psi([i],\leq_{P_i}) = \Psi([j],\leq_{P_j}) \hbox{ if and only if } i=j.$$ (This says that the mapping $\Phi(i) = \Psi([i],\leq_{P_i})$ is an embedding of $([n],\leq_P)$ into $(\Words,\leq_\Words)$),
\item for every $S\subseteq [n]$ there is a word $W$ of length $n$ such that for each $k\leq n$, $\{W\}\leq_\Words\Psi([k],\leq_{P_k})$ if and only if either $k\in S$ or there is a $k'\in S$ such that $k'\leq_P k$.
\end{enumerate}
\end{thm}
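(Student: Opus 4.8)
The plan is to prove the two assertions simultaneously by induction on $n$, which matches the recursive nature of $\Psi$: the value $\Psi([n],\leq_P)$ is assembled from the values $\Psi([m],\leq_{P_m})$ with $m<n$, and the words produced by $U([n])$ are exactly of the kind whose existence assertion~2 guarantees for the smaller orders. Throughout I would pass to the \emph{cone} of a set $A\in\Words$, namely $C(A)=\{W\in\{0,1\}^*:\{W\}\leq_\Words A\}$, the set of all words having a word of $A$ as an initial segment. The two elementary facts $A\leq_\Words B\iff C(A)\subseteq C(B)$ and $\{W\}\leq_\Words A\iff W\in C(A)$ let me restate assertion~1 as ``$i\leq_P j\iff C(\Phi(i))\subseteq C(\Phi(j))$, and $\Phi$ is injective'' and assertion~2 as ``for every $S$ there is a word of length $n$ lying in $C(\Phi(k))$ for exactly the $k$ in the up-set $\{k:\exists k'\in S,\ k'\leq_P k\}$''. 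The base case $n=1$ is immediate: $\Phi(1)=\{0\}$, assertion~1 is vacuous, and assertion~2 is witnessed by $W=0$ for $S=\{1\}$ and by $W=1$ for $S=\emptyset$.

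For the inductive step I would first record the structural property on which everything rests: every word occurring in any $\Phi(m)$ is one of the length-$m$, last-letter-$0$ words introduced by $U([m])$, and the words introduced at distinct stages are pairwise incomparable under the initial-segment relation (a word born at a later stage is built so as not to extend any earlier representative, and it is strictly longer than every earlier word of its own stage). Consequently $\min$ never discards anything, and one has the clean description $\Phi(n)=\bigcup\{U([m]):m\leq_P n\}$; in particular $\Phi(n)$ owns a word of length exactly $n$, whereas every word of $\Phi(m)$ with $m<n$ is of length less than $n$, which already gives injectivity of $\Phi$. Establishing this prefix-incomparability across all stages — that the freshly created words really avoid every previously used representative, so that taking $\min$ cannot merge the codes of two comparable elements — is the technical heart of the argument and the step I expect to cost the most care.

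Granting that description, monotonicity of $\Phi$ is almost formal: if $i\leq_P n$ then $\{m:m\leq_P i\}\subseteq\{m:m\leq_P n\}$, hence $\Phi(i)\subseteq\Phi(n)$ and so $C(\Phi(i))\subseteq C(\Phi(n))$, i.e. $\Phi(i)\leq_\Words\Phi(n)$ (the cases with both indices below $n$ being the induction hypothesis, and no relation $n\leq_P m$ with $m<n$ occurring because the labelling $1,\dots,n$ is a linear extension of $\leq_P$). For the converse I would invoke assertion~2 for $([n],\leq_P)$, proved next: if $i\not\leq_P j$, the word realizing the up-set of $\{i\}$ lies in $C(\Phi(i))$ but not in $C(\Phi(j))$, whence $\Phi(i)\not\leq_\Words\Phi(j)$; this settles assertion~1 in both directions once assertion~2 is in hand.

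It remains to carry assertion~2 through the induction without circularity, using only the statement for $[n-1]$ and the construction. Given $S\subseteq[n]$, apply the hypothesis to $([n-1],\leq_{P_{n-1}})$ and to the trace on $[n-1]$ of the up-set of $S$ (which is again an up-set there), obtaining a word $V$ of length $n-1$ with $V\in C(\Phi(k))$ for exactly the earlier members of that up-set. Setting $W=V0$ or $W=V1$, membership of $W$ in $C(\Phi(k))$ for $k<n$ is decided by $V$ alone, since the words of $\Phi(k)$ are too short to see the last letter; membership in $C(\Phi(n))$ is controlled by the final letter together with the length-$n$ words of $U([n])$. A short case analysis — whether $n$ lies in the up-set because some strictly smaller element of $S$ is below it (then a short word of $\Phi(n)$ already prefixes $V$), or only because $n\in S$ itself (then the last letter $0$ puts $W$ into $U([n])\subseteq\Phi(n)$), versus $n$ outside the up-set (then the last letter $1$ keeps $W$ out of every code) — produces the required word and closes the induction.
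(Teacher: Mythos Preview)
Your overall architecture --- simultaneous induction on $n$, establishing statement~2 first and then using it for the non-trivial direction of statement~1 --- is exactly what the paper does, and your treatment of statement~2 (extend the length-$(n-1)$ word from the induction hypothesis by one letter, case-split on whether $n$ lies in the up-set and on whether some earlier element already forces the prefix) is essentially the paper's argument.

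The genuine gap is the ``structural property'' you single out as the technical heart: it is false, not merely delicate. The paper's own worked example refutes it already at stage $2$. With $1\leq_P 2$ one has $L([2])=\{0\}$, $U([2])=\{00,10\}$, and $\Psi([2])=\min\{0,00,10\}=\{0,10\}$: so $\Phi(2)$ contains a length-$1$ word, the word $00\in U([2])$ \emph{is} discarded by $\min$, and $\Phi(2)\neq U([1])\cup U([2])$. Worse, the consequence $\Phi(i)\subseteq\Phi(n)$ that you derive also fails. Take $1,2$ incomparable and both below $3$: then $\Psi([2])=\{00,10\}$ while $\Psi([3])=\{0,10,110\}$, and $00\notin\Psi([3])$ --- one still has $\Psi([2])\leq_\Words\Psi([3])$ via the prefix $0$ of $00$, but not by set inclusion. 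You also assume the labelling $1,\dots,n$ is a linear extension of $\leq_P$; the theorem makes no such assumption, the paper's running example has $4\leq_P m$ for $m<4$, and the paper devotes a separate case to $n\leq_P m$ with $m<n$.

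The repair is local. Drop the structural claim and argue the forward implications of statement~1 directly: for $m<n$ with $m\leq_P n$, observe $\Psi([m])\subseteq L([n])$ and that every word of $L([n])\cup U([n])$ has a prefix in $\Psi([n])=\min(L([n])\cup U([n]))$; for $m<n$ with $n\leq_P m$, words of $U([n])$ have a prefix in $\Psi([m])$ by construction, while words of $L([n])$ lie in some $\Psi([m'])$ with $m'\leq_P n\leq_P m$ and inherit a prefix in $\Psi([m])$ from the induction hypothesis. The converses then follow from statement~2 exactly as you outline, and injectivity drops out of the two biconditionals by antisymmetry of $\leq_P$, with no appeal to word lengths needed.
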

The on-line embedding $\Phi$ is illustrated by the following example:
\eject
  \begin{figure}[t]
    \begin{center}
      {\def\IPEfile{poset.ipe}\begingroup
  \catcode`\%=9\catcode`\!=0\catcode`\-=11\input{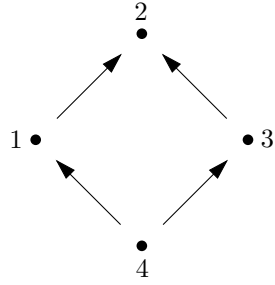}}
    \end{center}
    \caption{The partial order $([4],\leq_P)$.}
    \label{poset}
  \end{figure}
\begin{example}
The partial order $([4],\leq_P)$ depicted in Figure~\ref{poset} has the following values of $\Psi([k],\leq_{P_k}), k=1,2,3,4$:

$$
\begin{array}{lll}
L([1],\leq_{P_1})=\emptyset& U([1],\leq_{P_1})=\{0\}& \Psi([1],\leq_{P_1})=\{0\},\\
L([2],\leq_{P_2})=\{0\}& U([2],\leq_{P_2})=\{00,10\}& \Psi([2],\leq_{P_2})=\{0,10\},\\
L([3],\leq_{P_3})=\emptyset& U([3],\leq_{P_3})=\{000,100\}& \Psi([3],\leq_{P_3})=\{000,100\},\\
L([4],\leq_{P_4})=\emptyset& U([4],\leq_{P_4})=\{0000\}& \Psi([4],\leq_{P_4})=\{0000\}.\\
\end{array}
$$
\end{example}
\begin{proof}[Proof (of Theorem \ref{Pembed})]
We proceed by induction on $n$.

The theorem obviously holds for $n=1$. 

Now assume that the theorem holds for every partial order $([i],\leq_{P_i})$, $i=1,\ldots, n-1$.

We first show that $2.$ holds for $([n],\leq_P)$.  Fix $S\subseteq\{1,2,\ldots, n\}$.
Without loss of generality assume that for each $m\leq n$ such that there is an
$m'\in S$ with $m'\leq_P m$, we also have $m\in S$ (i.e. $S$ is closed upwards).  By the induction hypothesis, there is a word $W$ of length $n-1$ such that for each $n'<n$, $\{W\}\leq_\Words
\Psi([n'],\leq_{P_{n'}})$ if and only if $n'\in S$. Given the word $W$ we can construct a word $W'$ of length $n$ such that $\{W'\}\leq_\Words \Psi([n'],\leq_{P_{n'}})$ if and only if $n'\in S$. To see this, consider the following cases: 
\begin{enumerate}
\item $n\in S$
\begin{enumerate}
\item $\{W\}\leq_\Words \Psi([n],\leq_P)$. Put $W'=W0$. Since $\{W'\}\leq_\Words\{W\}$, $W'$ obviously has the property.

\item $\{W\}\nleq_\Words \Psi([n],\leq_P)$.  In this case we have $m\in S$ for each $m<n,n\leq_P m$, and thus $\{W\}\leq_\Words \Psi([m],\leq_{P_m})$.  By the definition of $\leq_\Words$, for each such $m$ we have $W''\in \Psi([m],\leq_{P_m})$ such that $W''$ is an initial segment of $W$. This implies that $W0$ is in $U([n],\leq_P)$ and thus $\{W\}\leq_{\Words}\Psi([n],\leq_P)$, a contradiction.
\end{enumerate}
\item $n\notin S$
\begin{enumerate}
\item $\{W\}\nleq_\Words \Psi([n],\leq_P)$. In this case we can put either $W'=W0$ or $W'=W1$.
\item $\{W\}\leq_\Words \Psi([n],\leq_P)$.  We have $\{W\}\nleq_{\Words} L([n],\leq_P)$---otherwise we
would have $\{W\}\leq_{\Words} \Psi([m],\leq_{P_m})\leq_{\Words} \Psi([n],\leq_P)$ for some $m<n$ and thus $n\in
S$. Since $U([n],\leq_P)$ contains words of length $n$ whose last digit is 0 putting $W'=W1$ gives $\{W'\}\nleq_\Words U([n],\leq_P)$ and thus also $\{W'\}\nleq_\Words \Psi([m],\leq_{P_m})$.
\end{enumerate}
\end{enumerate}

\bigskip
\noindent
This finishes the proof of property $2.$

Now we prove $1.$  We only need to verify that for $m=1,2,\ldots, n-1$ we have $\Psi([n],\leq_P${}$)\leq_\Words \Psi([m],\leq_{P_m})$ if and only if $n\leq_P m$ and $\Psi([m],\leq_{P_m})\leq_\Words \Psi([n],\leq_P)$ if and only if $m\leq_P n$.  The rest follows by induction. Fix $m$ and consider the following cases:

\begin{enumerate}
\item
{\em $m\leq_P n$ implies $\Psi([m],\leq_{P_m})\leq_\Words \Psi([n],\leq_P)$}: This follows easily from the fact that every word in $\Psi([m],\leq_{P_m})$ is in $L([n],\leq_P)$ and the initial segment of each word in $L([n],\leq_P)$ is in $\Psi([n],\leq_P)$).

\item
{\em $n\leq_P m$ implies $\Psi([n],\leq_P)\leq_\Words \Psi([m],\leq_{P_m})$}: $U([n],\leq_P)$ is a maximal set of words of length $n$ with last digit $0$ such that $U([n],\leq_P)\leq_\Words \Psi([m'],\leq_{P_{m'}})$ for each $m'<n,n\leq_P m'$, in particular for $m'=m$. It suffices to show that $L([n],\leq_P)\leq_\Words \Psi([m],\leq_{P_m})$. For $W\in L([n],\leq_P)$, we have an $m''$, $m''\leq_P n\leq_P m$, such that $W\in \Psi([m''],\leq_{P_{m''}})$.  From the induction hypothesis $ \Psi([m''],\leq_{P_{m''}})\leq_\Words \Psi([m],\leq_{P_m})$---in particular the initial segment of $W$ is in $\Psi([m],\leq_{P_m})$.

\item
{\em $\Psi([m],\leq_{P_m})\leq_\Words \Psi([n],\leq_P)$ implies $m\leq_P n$}:
Since $U([n],\leq_P)$ contains words longer than any word of $m$, we have $\Psi([m],\leq_{P_m})\leq_\Words L([n],\leq_P)$.  By $2.$ for $S=\{m\}$ there is a word $W$ such that $\{W\}\leq_\Words  \Psi([m'],\leq_{P_{m'}})$ if and only if $m\leq_P m'$.  Since $\{W\}\leq_\Words L([n],\leq_P)$, we have an $m'$ such that $m\leq_P m'\leq_P n$.

\item
{\em $\Psi([n],\leq_P)\leq_\Words \Psi([m],\leq_{P_m})$ implies $n\leq_P m$}: We have $\Psi([n],\leq_P)\leq_\Words \Psi([m],\leq_{P_m})$.   By $2.$ for $S=\{n\}$ there is a word $W$ such that $\{W\}\leq_\Words  \Psi([m'],\leq_{P_{m'}})$ if and only if $n\leq_P m'$. Since $\{W\}\leq_\Words \Psi([m],\leq_{P_m})$ we also have $n\leq_P m$.
\end{enumerate}

\end{proof}
\begin{corollary}
The partial order $(\Words,\leq_\Words)$ is universal.
\end{corollary}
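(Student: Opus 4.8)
The plan is to obtain universality as an essentially immediate consequence of the on-line embedding constructed in Theorem \ref{Pembed}, the only substantive point being the passage from finite to countable partial orders. Since the class in question is that of all countable partial orders, and since $\Words$ consists of finite subsets of the countable set $\{0,1\}^*$, the structure $(\Words,\leq_\Words)$ is itself a countable partial order (as noted after Definition \ref{Wordsdef}) and hence belongs to the class; it therefore suffices to embed an arbitrary countable partial order $(P,\leq_P)$ into it.

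Given such a $(P,\leq_P)$, I would fix an enumeration of its vertices (possible as $P$ is countable) and assume without loss of generality that the vertex set is an initial segment of the positive integers, so the order is either $([n],\leq_P)$ for some $n$ or $(\N,\leq_P)$, indexed in the natural order exactly as in Theorem \ref{Pembed}. I would then define $\Phi(i)=\Psi([i],\leq_{P_i})$ for each vertex $i$. The crucial observation, and the reason the construction survives the passage to an infinite order, is that $\Psi([i],\leq_{P_i})$ is built only from $L([i],\leq_P)$ and $U([i],\leq_P)$, which in turn depend only on the values $\Psi([m],\leq_{P_m})$ for $m<i$. Hence $\Phi(i)$ depends solely on the suborder induced on $\{1,\ldots,i\}$ and never on vertices indexed later; this is exactly the on-line character of $\Psi$, and it makes $\Phi$ well-defined even when $P$ is infinite.

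To verify that $\Phi$ is an embedding I would take arbitrary vertices $i,j$, set $n=\max(i,j)$, and apply part $1.$ of Theorem \ref{Pembed} to the finite truncation $([n],\leq_P)$, obtaining $i\leq_P j$ if and only if $\Phi(i)\leq_\Words\Phi(j)$, together with injectivity of $\Phi$. Since $i,j$ were arbitrary, $\Phi$ embeds $(P,\leq_P)$ into $(\Words,\leq_\Words)$, which establishes universality. The main obstacle here is conceptual rather than computational: one must notice that the on-line nature of $\Psi$ forces the finitely defined embeddings on all truncations to cohere into a single embedding of the infinite order, so that no amalgamation or limit argument is required beyond Theorem \ref{Pembed} itself.
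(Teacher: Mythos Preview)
Your proposal is correct and follows essentially the same approach as the paper: the corollary is stated immediately after Theorem \ref{Pembed} without further proof, the implicit argument being precisely that the on-line nature of $\Psi$ lets one pass from finite truncations to an embedding of any countable partial order. You have simply made this passage explicit.
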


Note that $\Words$ fails to be a ultrahomogeneous partial order. For example the empty set is the minimal element. $\Words$ is also not dense as shown by the following example: $$A=\{0\}, B=\{00,01\}.$$

This is not unique gap---we shall characterize all gaps in $(\Words,\leq_\Words)$ after
reformulating it in a more combinatorial setting in Section \ref{dominance}.

\section{Dominance in the countable binary tree}
\label{dominance}
As is well known, the Hasse diagram of the partial order $(\{0,1\}^*, \leq_w)$ (defined in Section \ref{wordsection})
forms a complete binary tree $T_u$ of infinite depth. Let $r$ be its root
vertex (corresponding to the empty word).  Using $T_u$ we can reformulate our
universal partial order as:

\begin{defn}
The vertices of $(\Bintree,\leq_\Bintree)$ are finite sets $S$ of vertices of $T_u$ such that there is no
vertex $v\in S$ on any path from $r$ to $v'\in S$ except for $v'$. (Thus $S$ is a finite antichain in the order of the tree $T$.)

We say that $S'\leq_\Bintree S$ if and only if for each path from $r$ to $v\in S$ there is a vertex
$v'\in S'$.
\end{defn}

\begin{corollary}
The partially ordered set $(\Bintree,\leq_\Bintree)$ is universal.
\end{corollary}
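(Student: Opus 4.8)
The plan is to show that $(\Bintree,\leq_\Bintree)$ is nothing but the order-theoretic dual of the universal order $(\Words,\leq_\Words)$, and then to invoke the fact that the class of all countable partial orders is closed under passing to duals, so that universality transfers automatically. First I would observe that the two orders are built on \emph{the same} ground set. Identifying each vertex of the infinite binary tree $T_u$ with the binary word labelling the path from the root $r$ to it, the relation ``$v$ lies on the path from $r$ to $v'$'' becomes ``$v'\leq_w v$'', i.e.\ $v$ is a prefix of $v'$, equivalently $v$ is an ancestor of $v'$. Hence the admissible sets $S$ in the definition of $\Bintree$ (the finite antichains of $T_u$) are precisely the prefix-free finite sets of words admitted as elements of $\Words$ in Definition \ref{Wordsdef}. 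So the identity map $\iota$ on this common ground set is a bijection between the underlying sets of $\Words$ and $\Bintree$.

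Next I would check that $\iota$ reverses the order. Unwinding the definition of $\leq_\Bintree$, we have $S'\leq_\Bintree S$ exactly when every path from $r$ to a vertex $v\in S$ meets $S'$, that is, when for every $v\in S$ there is $v'\in S'$ with $v\leq_w v'$. Comparing with Definition \ref{Wordsdef}, this last condition is literally the statement $S\leq_\Words S'$. Thus for all admissible $A,B$ we obtain $A\leq_\Words B$ if and only if $B\leq_\Bintree A$; in other words $\iota$ is an anti-isomorphism and $(\Bintree,\leq_\Bintree)$ is isomorphic to the order dual of $(\Words,\leq_\Words)$.

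Finally I would transfer universality across this anti-isomorphism. Let $(P,\leq_P)$ be an arbitrary countable partial order. Its dual $(P,\geq_P)$ is again a countable partial order, so by the universality of $(\Words,\leq_\Words)$ established above there is an embedding $f\colon(P,\geq_P)\to(\Words,\leq_\Words)$, meaning $x\geq_P y$ iff $f(x)\leq_\Words f(y)$. Composing with $\iota$ and using $A\leq_\Words B \iff \iota(B)\leq_\Bintree \iota(A)$, a direct check gives $x\leq_P y \iff (\iota\circ f)(x)\leq_\Bintree (\iota\circ f)(y)$, so $\iota\circ f$ embeds $(P,\leq_P)$ into $(\Bintree,\leq_\Bintree)$. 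Since $P$ was arbitrary, $(\Bintree,\leq_\Bintree)$ is universal. The only point requiring care, and the nearest thing to an obstacle, is the bookkeeping of two order-reversals: the relation $\leq_w$ already runs ``down'' the tree (longer words are smaller) and $\iota$ reverses the order a second time, so one must confirm that the net effect on the embedded copy of $P$ is order-\emph{preserving}. The conceptual content is simply that universality is a self-dual property of the class of countable partial orders.
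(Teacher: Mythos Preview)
Your argument is correct and follows essentially the same idea as the paper: identify vertices of $T_u$ with words and observe that $\Bintree$ and $\Words$ have the same underlying set, so one reduces universality of $\Bintree$ to that of $\Words$. The paper's proof says simply that ``$(\Bintree,\leq_\Bintree)$ is just a reformulation of $(\Words,\leq_\Words)$ and thus both partial orders are isomorphic.'' You are in fact more careful than the paper here: as you verify, the identity map gives $S'\leq_\Bintree S \iff S\leq_\Words S'$, so it is an \emph{anti}-isomorphism rather than an isomorphism (one can check this on the paper's own gap example $\{0\}$ versus $\{00,01\}$: one has $\{00,01\}<_\Words\{0\}$ but $\{0\}<_\Bintree\{00,01\}$). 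Your extra step of passing to the dual poset and noting that the class of countable partial orders is closed under duals patches this cleanly. So your proof is the paper's proof with the bookkeeping done correctly.
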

\begin{proof}
$(\Bintree,\leq_\Bintree)$ is just a reformulation of $(\Words,\leq_\Words)$ and thus both partial orders are isomorphic.
\end{proof}
  \begin{figure}[t]
    \begin{center}
      \includegraphics[width=10cm]{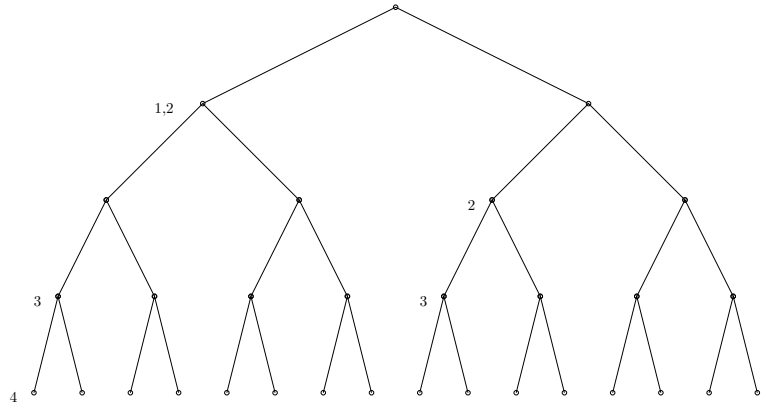}
    \end{center}
    \caption{Tree representation of $([4],\leq_P)$ (Figure~\ref{poset}).}
    \label{strom-plny}
  \end{figure}
Figure~\ref{strom-plny} shows a portion of the tree $T$ representing
the same partial order as in Figure~\ref{poset}.

The partial order $(\Bintree,\leq_\Bintree)$ offers perhaps a better intuitive
understanding as to how the universal partial order is built from the very simple partial
order $(\{0,1\}^*, \leq_w)$ by using sets of elements instead of single
elements.  Understanding this makes it easy to find an embedding
of $(\Words,\leq_\Words)$ (or equivalently $(\Bintree,\leq_\Bintree)$) into a new structure by first looking for a way to represent the partial order $(\{0,1\}^*, \leq_w)$ within the new structure and then a way to represent subsets of $\{0,1\}^*$. This idea will be applied several times in the following sections.

Now we characterize gaps.

\begin{prop}
$S<S'$ is a gap in $(\Bintree,\leq_\Bintree)$ if and only if there exists an $s'\in S'$ such that
\begin{enumerate}
\item there is a vertex $s\in S$ such that both sons $s_0,s_1$ of $s$ in the tree $T$ are in $S'$,
\item $S\setminus\{s_0,s_1\}=S'\setminus\{s\}$.
\end{enumerate}
\end{prop}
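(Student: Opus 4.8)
The plan is to isolate a single \emph{elementary move} and to prove that the covering relations (gaps) are exactly its instances. For a finite antichain $S$ and a vertex $s\in S$ with sons $s_0,s_1$, let $\sigma(S,s)=(S\setminus\{s\})\cup\{s_0,s_1\}$; this is again a finite antichain, and I would first record the preliminary inequality $S<_\Bintree\sigma(S,s)$. Indeed $s_0$ and $s_1$ both have the vertex $s\in S$ on their path from $r$, so $S\leq_\Bintree\sigma(S,s)$, while the reverse fails because no vertex of $\sigma(S,s)$ lies on the path from $r$ to $s$. With this in hand the proposition asserts precisely that $S<_\Bintree S'$ is a gap if and only if $S'=\sigma(S,s)$ for some $s\in S$ (which is the content of conditions~1 and~2, the latter read as $S\setminus\{s\}=S'\setminus\{s_0,s_1\}$).

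The second ingredient is a decomposition that I would use in both directions. Whenever $S\leq_\Bintree S'$, each $v\in S'$ has by definition an ancestor in $S$, and this ancestor is \emph{unique} because $S$ is an antichain. Grouping the elements of $S'$ by this ancestor gives $S'=\bigsqcup_{s\in S}A_s$, where $A_s$ is the set of elements of $S'$ lying in the subtree rooted at $s$; the subtrees are pairwise disjoint since $S$ is an antichain. Call $s$ \emph{active} if $A_s\neq\{s\}$; an active $s$ satisfies $s\notin A_s$, so $A_s$ is an antichain of \emph{proper} descendants of $s$ (possibly empty). Strictness of $S<_\Bintree S'$ is equivalent to the existence of at least one active vertex, since $A_s=\{s\}$ for all $s$ forces $S'=S$.

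For the ``if'' direction I take $S'=\sigma(S,s^{*})$ and suppose an antichain $T$ satisfies $S\leq_\Bintree T\leq_\Bintree S'$; I must show $T\in\{S,S'\}$. Writing $R=S\setminus\{s^{*}\}$, a short chase of ancestors (each $r\in R$ has an ancestor in $T$, which in turn has an ancestor in $S$, and antisymmetry of the tree order pins it to $r$ itself) shows $R\subseteq T$; and any element of $T$ outside $R$ is forced into the subtree of $s^{*}$. Finally, since $s_0^{*}$ and $s_1^{*}$ must each have an ancestor in the antichain $T\cap\mathrm{subtree}(s^{*})$, this set is forced to be either $\{s^{*}\}$ or $\{s_0^{*},s_1^{*}\}$, giving $T=S$ or $T=S'$.

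For the ``only if'' direction I argue contrapositively, producing an intermediate antichain whenever $S'$ is not a single split. The key uniform construction is again a split: for any active $s^{*}$, the antichain $T=\sigma(S,s^{*})$ satisfies $S<_\Bintree T\leq_\Bintree S'$, because every $v\in A_{s^{*}}$ is a proper descendant of $s^{*}$ and hence lies below $s_0^{*}$ or $s_1^{*}$, so it retains an ancestor in $T$. This $T$ is a genuine strict intermediate unless $T=S'$, and comparing the two antichains inside the relevant subtrees shows that $T=S'$ forces $s^{*}$ to be the \emph{only} active vertex and $A_{s^{*}}=\{s_0^{*},s_1^{*}\}$; so a gap must have the split form. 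The point I expect to require the most care is exactly this last comparison, together with the observation underlying the whole argument: the order $\leq_\Bintree$ is \emph{not} the inclusion order of the sets of branches determined by the antichains (the distinct antichains $\{s\}$ and $\{s_0,s_1\}$ determine the same branches), so one cannot argue by a measure or a set-inclusion rank and must instead reason directly with the antichains and the uniqueness of ancestors.
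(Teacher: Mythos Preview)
Your proof is correct and follows essentially the same strategy as the paper: for the ``if'' direction you analyze directly what an intermediate antichain $T$ with $S\leq_\Bintree T\leq_\Bintree S'$ can look like, and for the ``only if'' direction you manufacture an intermediate to rule out anything but a single split. The one minor difference is the choice of intermediate in the ``only if'' step: the paper replaces an element of $S$ by one of its descendants taken from $S'$, whereas you uniformly apply the split $\sigma(S,s^{*})$ at an active vertex; your choice is cleaner since it reuses the same elementary move in both directions and makes the final comparison ($T=S'$ forces a unique active vertex with $A_{s^{*}}=\{s_0^{*},s_1^{*}\}$) transparent. Your reading of condition~2 as $S\setminus\{s\}=S'\setminus\{s_0,s_1\}$ is also the intended one.
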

This means that all gaps in $\Bintree$ result from replacing a member by its two sons.
\begin{proof}
Clearly any pair $S<S'$ satisfying $1.$, $2.$ is a gap (as any $S\leq_\Bintree S''\leq_\Bintree S'$ has to contain $S'\setminus\{s\}$, and either $s$ or the two vertices $s_0$, $s_1$).  

Let $S\leq_\Bintree S'$ be a gap.  If there are distinct vertices $s'_1$ and $s'_2$ in $S'$ and $s_1,s_2\in S$ are such that $s_i\leq s'_i$, i=1,2, then $S''$ defined as $min(S\setminus\{s_1\})\cup\{S'_1\}$ satisfies $S<_\Bintree S''<_\Bintree S'$.

Thus there is only one $S'\in S'\setminus S$ such that $s'>s$ for an $s\in S$. However then there is only one such $s'$ (so if $s_1,s_2$ are distinct then $S<S\setminus\{s_2\}<S'$). Moreover it is either $s=s'0$ or $s=s'1$. Otherwise $S<S'$ would not be a gap.
\end{proof}

The abundance of gaps indicates that $(\Bintree,\leq_\Bintree)$ (or
$(\Words,\leq_\Words)$) are redundant universal partial orders. This makes
them, in a way, far from being generic, since the generic partial order has no
gaps.  The next section has a variant of this partial order avoiding this
problem.  On the other hand gaps in partial orders are interesting and are
related to dualities, see \cite{Trotter, NZhu}.

\section{Intervals}
We show that the vertices of $(\Words,\leq_\Words)$ can be coded by geometric
objects ordered by inclusion.  Since we consider only countable structures we restrict ourselves to objects formed from rational numbers.

While the interval on rationals ordered by inclusion can represent infinite increasing chains,
decreasing chains or antichains, obviously this interval order has 
dimension 2 and thus fails to be universal. However
considering multiple intervals overcomes this limitation:
\begin{defn}
The vertices of $(\Intervals,\leq_\Intervals)$ are finite sets $S$ of closed
disjoint intervals  $[a,b]$ where $a$, $b$ are rational numbers and
$0\leq a<b\leq 1$.

We put $A\leq_\Intervals B$ when every interval in $A$ is covered by some interval of $B$. 
\end{defn}

In the other words elements of $(\Intervals,\leq_\Intervals)$ are finite sets of pairs of rational
numbers.  $A\leq_\Intervals B$ holds if for every $[a,b]\in A$, there is
an $[a',b']\in B$ such that $a'\leq a$ and $b\leq b'$.

\begin{defn} 
 A word $W=w_1w_2\ldots w_t$ on the alphabet $\{0,1\}$ can be considered as a number $0\leq n_W\leq 1$ 
with ternary expansion: $$n_W=\sum_{i=1}^{t} w_i{1\over 3^i}.$$  

For $A\in \Words$, the representation of $A$ in $\Intervals$ is then the following set of
intervals:

$$\Embed{\Words}{\Intervals}(A)=\{[n_W,n_W+{2\over 3^{|W|+1}}]: W\in A\}.$$
\end{defn}

The use of the ternary base might seem unnatural---indeed the binary base would
suffice. The main obstacle to using the later is that the embedding of $\{00,01\}$ would be two
intervals adjacent to each other overlapping in single point.  One would need
to take special care when taking the union of such intervals---we avoid this by
using ternary numbers.

\begin{lem}
$\Embed{\Words}{\Intervals}$ is a embedding of $(\Words,\leq_{\Words})$ into $(\Intervals,\leq_{\Intervals})$.

\end{lem}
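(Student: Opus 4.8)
The plan is to reduce the entire statement to a single geometric fact about the ternary encoding: that the prefix order on words is faithfully mirrored by inclusion of the assigned intervals. Write $I_W = [n_W, n_W + \frac{2}{3^{|W|+1}}]$ for the interval assigned to a word $W$, so that $\Embed{\Words}{\Intervals}(A) = \{I_W : W \in A\}$. The heart of the argument is the claim that for any two words $W, W'$ one has $I_W \subseteq I_{W'}$ if and only if $W'$ is a prefix of $W$ (equivalently $W \leq_w W'$), together with the companion statement that $I_W \cap I_{W'} = \emptyset$ whenever $W$ and $W'$ are incomparable in the prefix order. Once this is available, the three requirements for an embedding of partial orders --- producing a legal vertex of $\Intervals$, preserving $\leq_\Words$, and reflecting $\leq_\Intervals$ --- all follow mechanically.

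To prove the claim I would first record two one-step computations, both obtained by substituting $n_{Va} = n_V + a\cdot 3^{-(|V|+1)}$ (for $a\in\{0,1\}$) into the definition and comparing endpoints. First, each child interval $I_{Va}$ lies inside $I_V$. Second, the two sibling intervals $I_{V0}$ and $I_{V1}$ are disjoint; here the base $3$ is exactly what leaves a genuine middle-third gap between them rather than the single shared endpoint one would get in base $2$, so the intervals are truly disjoint. From child-containment, an easy induction on $|W| - |W'|$ yields the ``if'' direction: $W'$ a prefix of $W$ implies $I_W \subseteq I_{W'}$. For the converse and for disjointness I would split on the relation between $W$ and $W'$. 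If they are incomparable, factor through their longest common prefix $V$: one of $W, W'$ descends from $V0$ and the other from $V1$, so the inclusions $I_W \subseteq I_{V0}$ and $I_{W'} \subseteq I_{V1}$ together with sibling disjointness give $I_W \cap I_{W'} = \emptyset$. Since each $I_W$ has positive length, this simultaneously rules out $I_W \subseteq I_{W'}$ in the incomparable case; comparing interval lengths rules out the case where $W$ is a proper prefix of $W'$; and this leaves ``$W'$ a prefix of $W$'' as the only way $I_W \subseteq I_{W'}$ can hold.

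With the claim in hand I would finish as follows. That $\Embed{\Words}{\Intervals}(A)$ is a legitimate element of $\Intervals$ needs finiteness (immediate, as $A$ is finite), rational endpoints in $[0,1]$ (both endpoints are finite ternary rationals, and a short estimate places every $I_W$ inside $[0,\frac{2}{3}]$), and pairwise disjointness (since $A\in\Words$ is a prefix antichain, any two of its words are incomparable, so their intervals are disjoint by the claim). Preservation is direct: if $A \leq_\Words B$ then each $W\in A$ has a prefix $W'\in B$, whence $I_W \subseteq I_{W'}$, so every interval of $\Embed{\Words}{\Intervals}(A)$ is covered by one of $\Embed{\Words}{\Intervals}(B)$, i.e.\ $\Embed{\Words}{\Intervals}(A)\leq_\Intervals \Embed{\Words}{\Intervals}(B)$. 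Reflection is the converse reading: if $I_W\subseteq I_{W'}$ for some $W'\in B$, the claim forces $W'$ to be a prefix of $W$, i.e.\ $W\leq_w W'$, giving $A\leq_\Words B$. Injectivity is then automatic from antisymmetry of $\leq_\Words$.

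The main obstacle is the self-contained geometric claim, and specifically its harder half --- that interval inclusion can arise only from the prefix relation. Everything else is bookkeeping, but that step is the one place where sibling disjointness (hence the ternary base) and the strict length comparison both have to be invoked carefully, so I expect it to be where one genuinely argues rather than merely computes.
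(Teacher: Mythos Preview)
Your proposal is correct and follows essentially the same approach as the paper: both reduce the embedding to the single-word claim that $I_W \subseteq I_{W'}$ if and only if $W'$ is a prefix of $W$, with the paper invoking the ternary-expansion interpretation of the intervals in one sentence where you instead verify child-containment, sibling-disjointness, and the length comparison explicitly. Your version is more careful (you also check disjointness of the image intervals so that $\Embed{\Words}{\Intervals}(A)\in\Intervals$, which the paper leaves implicit), but there is no genuine difference in strategy.
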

\begin{proof}
It is sufficient to prove that for $W$, $W'$ there is an interval $[n_W,n_W+{1\over
3^{|W|}}]$ covered by an interval $[n_{W'},n_{W'}+{1\over 3^{|W'|}}]$ if and only if $W'$ is
initial segment of $W$. This follows easily from the fact that
intervals represent precisely all numbers whose ternary expansion starts with
$W$ with the exception of the upper bound itself.
\end{proof}

\begin{example}
The representation of $([4],\leq_P)$ as defined by Figure~\ref{poset} in $(\Intervals,\leq_\Intervals)$ is:
$$
\begin{array}{lllll}
\Embed{\Words}{\Intervals}(\Psi([1],\leq_{P_1}))&=&\Embed{\Words}{\Intervals}(\{0\})&=&\{(0,{2\over 3^2})\},\\
\Embed{\Words}{\Intervals}(\Psi([2],\leq_{P_2}))&=&\Embed{\Words}{\Intervals}(\{0,10\})&=&\{(0,{2\over 3^2}),({1\over 3}, {1\over 3}+{2\over 3^3})\},\\
\Embed{\Words}{\Intervals}(\Psi([3],\leq_{P_3}))&=&\Embed{\Words}{\Intervals}(\{000,100\})&=&\{(0,{2\over 3^4}),({1\over 3},{1\over 3}+{2\over 3^4})\},\\
\Embed{\Words}{\Intervals}(\Psi([4],\leq_{P_4}))&=&\Embed{\Words}{\Intervals}(\{0000\})&=&\{(0,{2\over 3^5})\}.
\end{array}
$$
\end{example}

\begin{corollary}
The partial order $(\Intervals,\leq_\Intervals)$ is universal.
\end{corollary}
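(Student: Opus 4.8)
The partial order $(\Intervals,\leq_\Intervals)$ is universal.

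The statement to prove is the final Corollary, asserting universality of the interval order $(\Intervals,\leq_\Intervals)$. The plan is to reduce this entirely to the already-established universality of the word order $(\Words,\leq_\Words)$, exploiting the embedding machinery built in the preceding Lemma.

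First I would recall what ``universal'' means here: every countable partial order embeds as an induced suborder into $(\Intervals,\leq_\Intervals)$. The key observation is that universality is transitive along embeddings of partial orders. We already know from the Corollary of Theorem \ref{Pembed} that $(\Words,\leq_\Words)$ is universal, so every countable partial order $(P,\leq_P)$ admits an embedding $\Phi:(P,\leq_P)\to(\Words,\leq_\Words)$. The immediately preceding Lemma supplies an embedding $\Embed{\Words}{\Intervals}:(\Words,\leq_\Words)\to(\Intervals,\leq_\Intervals)$.

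The main (and essentially only) step is then to compose these two embeddings. Given any countable partial order $(P,\leq_P)$, I would form the map $\Embed{\Words}{\Intervals}\circ\Phi:(P,\leq_P)\to(\Intervals,\leq_\Intervals)$. Since a composition of two order-embeddings is again an order-embedding — each one satisfies $x\leq y$ if and only if its image satisfies the target order relation, and composing these biconditionals preserves the equivalence — the composite is an embedding of $(P,\leq_P)$ into $(\Intervals,\leq_\Intervals)$. This establishes that $(\Intervals,\leq_\Intervals)$ is universal.

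There is no real obstacle here: both embeddings are already proved (the word-order universality via the elaborate inductive construction $\Psi$ of Theorem \ref{Pembed}, and the interval embedding via the ternary-expansion Lemma), so the Corollary is purely a transitivity-of-embeddings argument. The only point deserving a sentence of care is to confirm that $\Embed{\Words}{\Interval}$ is indeed an \emph{induced} embedding and not merely order-preserving — but the Lemma states precisely the biconditional ``covered if and only if initial segment,'' which guarantees inducedness, so the composition inherits this property and the proof is complete.
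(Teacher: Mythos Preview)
Your proposal is correct and matches the paper's approach exactly: the Corollary is stated immediately after the Lemma establishing that $\Embed{\Words}{\Intervals}$ is an embedding, and the paper treats the universality of $(\Intervals,\leq_\Intervals)$ as an immediate consequence of composing this embedding with the already-proved universality of $(\Words,\leq_\Words)$. There is nothing more to add (aside from the minor typo $\Embed{\Words}{\Interval}$ for $\Embed{\Words}{\Intervals}$ in your final paragraph).
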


The partial order $(\Intervals,\leq_\Intervals)$ differs significantly from
$(\Words,\leq_\Words)$ by the following:

\begin{prop}
The partial order $(\Intervals,\leq_\Intervals)$ has no gaps (is dense).
\end{prop}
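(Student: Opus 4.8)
The plan is to verify density directly: given $A <_\Intervals B$ (a strict inequality), I will construct a $C \in \Intervals$ with $A <_\Intervals C <_\Intervals B$. The construction simply inserts one small extra interval into $A$, placed so that it witnesses $A <_\Intervals C$ on the one hand, yet is too small to destroy the strictness of $C <_\Intervals B$ on the other.

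First I would extract the relevant data from the strict inequality. Since $A \leq_\Intervals B$ but $B \not\leq_\Intervals A$, there is a ``witness'' interval $I = [a', b'] \in B$ that is not covered by any single interval of $A$; that is, for every $[a,b] \in A$ either $a > a'$ or $b < b'$. Because the intervals of $B$ are pairwise disjoint and each positive-length interval of $A$ is covered by exactly one of them, the intervals of $A$ that happen to lie inside $I$ form a finite family $J_1, \dots, J_k$ of pairwise disjoint closed subintervals of $I$, each a proper subinterval (none can equal $I$, or $I$ would be covered). The key elementary observation is that finitely many pairwise disjoint proper closed subintervals cannot cover the closed interval $I$: if $k = 0$ the interior of $I$ is free, while if $k \geq 1$ there is always a nonempty open gap, either between two consecutive $J_i$ (disjoint closed intervals leave a strict gap) or adjacent to an endpoint of $I$. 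Using density of $\Q$, I pick rationals $p < q$ lying strictly inside such a gap, so that the closed interval $[p,q] \subseteq I$ is disjoint from every $J_i$, hence disjoint from every interval of $A$ (the remaining intervals of $A$ sit inside other, disjoint, intervals of $B$).

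I then set $C = A \cup \{[p,q]\}$, which is a legal vertex of $\Intervals$ since its intervals are pairwise disjoint and have rational endpoints. For $A <_\Intervals C$: every interval of $A$ covers itself, so $A \leq_\Intervals C$, while $[p,q] \in C$ is covered by no interval of $A$, so $C \not\leq_\Intervals A$. For $C <_\Intervals B$: each interval of $A$ is covered by some interval of $B$ by hypothesis, and $[p,q] \subseteq I \in B$, so $C \leq_\Intervals B$; on the other hand $I$ is covered by no interval of $C$, because it is covered by no interval of $A$ (our witness) and not by $[p,q]$ either, since $[p,q] \subsetneq I$. Hence $B \not\leq_\Intervals C$, giving $C <_\Intervals B$. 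Combining these, $A <_\Intervals C <_\Intervals B$, so no gap exists.

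I expect the only real subtlety to be the strictness of $C <_\Intervals B$: one must guarantee that the inserted interval is genuinely shorter than the witness interval $I$ and placed in its interior, so that it covers neither $I$ nor any other (disjoint) interval of $B$. Keeping $[p,q]$ inside a gap of $I$ and strictly contained in $I$ handles both requirements simultaneously, and the rational endpoints keep $C$ within $\Intervals$.
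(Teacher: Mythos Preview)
Your proof is correct. The paper's proof rests on the same key observation---that some interval $I \in B$ is not fully covered by the intervals of $A$---but then moves in the opposite direction: it constructs $C$ from $B$ by shortening or splitting $I$, whereas you construct $C$ from $A$ by inserting a small new interval into the uncovered part of $I$. The two arguments are dual and equally short; your version is perhaps slightly more explicit in verifying all four (non-)inequalities, while the paper leaves the verification that the shortened or split $C$ still satisfies $A <_\Intervals C$ to the reader.
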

\begin{proof}
Take $A,B\in \Intervals$, $A<_\Intervals B$. Because all the intervals in both $A$ and $B$ are closed and disjoint, there must be at least one interval $I$ in $B$ that is not fully covered by intervals of $A$ (otherwise we would have $B\leq_\Intervals A$). We may construct an element $C$ from $B$ by shortening the interval $I$  or splitting it into two disjoint intervals in a way such that $A<_\Intervals C<_\Intervals B$ holds.
\end{proof}
Consequently the presence (and abundance) of gaps in most of the universal
partial orders studied is not the main obstacle when looking for
representations of partial orders.  It is easy to see that
$(\Intervals,\leq_\Intervals)$ is not generic.

By considering a variant of $(\Intervals,\leq_\Intervals)$ with open 
(instead of closed) intervals we obtain a universal partial order
$(\Intervals',\leq_{\Intervals'})$ with gaps. The gaps are similar to the ones in
$(\Bintree,\leq_\Bintree)$ created by replacing interval $(a,b)$ by two
intervals $(a,c)$ and $(c,d)$.  Half open intervals give a
quasi-order containing a universal partial order.

\section{Geometric representations}
The representation as a set of intervals might be considered an
artificially constructed structure.  Partial orders represented by geometric
objects are studied in \cite{Alon}. It is shown that objects with $n$ ``degrees
of freedom'' cannot represent all partial orders of dimension $n+1$. It follows that
convex hulls used in the representation of the generic partial order cannot be defined by a constant
number of vertices.  We will show that even the simplest geometric objects with
unlimited ``degrees of freedom'' can represent a universal partial order.

\begin{defn}
Denote by $(\Convex,\leq_\Convex)$ the partial order whose vertices are all convex hulls
of finite sets of points in $\mathbb Q^2$, ordered by inclusion.
\end{defn}
This time we will embed $(\Intervals,\leq_\Intervals)$ into $(\Convex,\leq_\Convex)$.
\begin{defn}
For every $A\in \Intervals$ denote by $\Embed{\Intervals}{\Convex}(A)$ the convex hull generated by the points: $$(a,a^2),
({{a+b}\over 2}, ab), (b,b^2)\hbox{, for every } (a,b)\in A.$$
\end{defn}
  \begin{figure}[t!h]
    \begin{center}
1:
      \includegraphics[width=5cm]{t.1}
2:
      \includegraphics[width=5cm]{t.2}\\
3:
      \includegraphics[width=5cm]{t.3}
4:
      \includegraphics[width=5cm]{t.4}
    \end{center}
    \caption{Representation of the partial order $([4],\leq_P)$ in $(\Convex,\leq_\Convex)$.}
    \label{posetconvex}
  \end{figure}
See Figure~\ref{posetconvex} for the representation of the partial order in Figure~\ref{poset}.
\begin{thm}
$\Embed{\Intervals}{\Convex}$ is an embedding of $(\Intervals,\leq_\Intervals)$ to $(\Convex,\leq_\Convex)$.
\end{thm}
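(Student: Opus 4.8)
The plan is to reduce everything to the single fact that the three generating points of $\Embed{\Intervals}{\Convex}((a,b))$ are naturally attached to the parabola $y=x^2$: the two outer points $(a,a^2)$ and $(b,b^2)$ lie on it, while the middle point $\left(\frac{a+b}{2},ab\right)$ is exactly the intersection of the tangent lines to the parabola at $x=a$ and at $x=b$. Indeed, the tangent at $x=a$ is $y=2ax-a^2$, the tangent at $x=b$ is $y=2bx-b^2$, and these two lines meet at $x=\frac{a+b}{2}$, $y=ab$. Consequently the triangle $T(a,b):=\Embed{\Intervals}{\Convex}((a,b))$ is precisely the region lying below the chord $y=(a+b)x-ab$ and above both tangents, and the arc of the parabola over $[a,b]$ lies inside $T(a,b)$. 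First I would record these elementary facts about a single generating triangle.

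The heart of the argument is the following membership criterion, which I would prove next: for a finite disjoint family $B$ of intervals, a parabola point $(t,t^2)$ belongs to $\Embed{\Intervals}{\Convex}(B)$ if and only if $t$ lies in $\bigcup_{(a',b')\in B}[a',b']$. The forward implication is immediate from the fact above. For the converse, assuming $t$ is covered by no interval of $B$, I would separate $(t,t^2)$ from the whole hull by the tangent line $\ell_t(x)=2tx-t^2$, showing that every generating point of $\Embed{\Intervals}{\Convex}(B)$ lies strictly above $\ell_t$. For the outer points this is just strict convexity of the parabola; for a middle point $\left(\frac{a'+b'}{2},a'b'\right)$ the inequality $a'b'>\ell_t\!\left(\frac{a'+b'}{2}\right)$ reduces after simplification to $(t-a')(t-b')>0$, which holds exactly because $t\notin[a',b']$. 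Since $(t,t^2)$ itself lies on $\ell_t$, it is then outside the convex hull. This separation computation is the step I expect to be the main obstacle, and it is precisely where the choice of the middle point as the tangent intersection is essential.

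With the criterion in hand both directions follow cleanly. For the direction $A\leq_\Intervals B \Rightarrow \Embed{\Intervals}{\Convex}(A)\subseteq\Embed{\Intervals}{\Convex}(B)$: if every interval of $A$ is covered by an interval of $B$, then each generating triangle of $A$ is contained in one of $B$ (the single-interval case $[a,b]\subseteq[a',b']$ gives $T(a,b)\subseteq T(a',b')$ again via the chord/tangent description), so all generating points of $\Embed{\Intervals}{\Convex}(A)$ lie in the convex set $\Embed{\Intervals}{\Convex}(B)$, whence the hull inclusion. For the converse: given the hull inclusion, fix $(a,b)\in A$; the parabola arc over $[a,b]$ lies in $\Embed{\Intervals}{\Convex}(A)\subseteq\Embed{\Intervals}{\Convex}(B)$, so by the criterion $[a,b]\subseteq\bigcup_{(a',b')\in B}[a',b']$. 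Since $[a,b]$ is connected and the intervals of $B$ are pairwise disjoint closed sets, $[a,b]$ must be contained in a single one of them, i.e.\ $(a,b)$ is covered by an interval of $B$. As this holds for every element of $A$, we obtain $A\leq_\Intervals B$, which together with the easy direction shows that $\Embed{\Intervals}{\Convex}$ is an embedding of $(\Intervals,\leq_\Intervals)$ into $(\Convex,\leq_\Convex)$.
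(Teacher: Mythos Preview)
Your proof is correct and follows essentially the same approach as the paper: both hinge on the tangent-line interpretation of the middle point $\left(\frac{a+b}{2},ab\right)$ and the resulting membership criterion that $(t,t^2)$ lies in $\Embed{\Intervals}{\Convex}(B)$ if and only if $t$ is covered by some interval of $B$. You supply more detail than the paper does---in particular the explicit separating-line computation $(t-a')(t-b')>0$ for the criterion and the connectedness argument showing $[a,b]$ must fall into a single interval of $B$---but the skeleton is the same.
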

\begin{proof}
All points of the form $(x,x^2)$ lie on a convex parabola $y=x^2$. The points $
({{a+b}\over 2}, ab)$ are the intersection of two tangents of this parabola at the
points $(a,a^2)$ and $(b,b^2)$. Consequently all points in the construction of
$\Embed{\Intervals}{\Convex}(A)$ lie in a convex configuration.

We have $(x,x^2)$ in the convex hull $\Embed{\Intervals}{\Convex}(A)$ if and only if there is $[a,b]\in A$ such that $a\leq x\leq b$.  Thus for $A,B\in \Intervals$ we have $\Embed{\Intervals}{\Convex}(A)\leq_\Convex\Embed{\Intervals}{\Convex}(B)$ implies $A\leq_\Intervals B$.

To see the other implication, observe that the convex hull of $(a,a^2)$, $({{a+b}\over 2}, ab)$, $(b,b^2)$ is a subset of the convex hull of $(a',a'^2)$, $({{a'+b'}\over 2}, a'b'),$ $(b',b'^2)$ for every $[a,b]$ that is a  subinterval of $[a',b']$.
\end{proof}

We have:
\begin{corollary}
The partial order $(\Convex,\leq_\Convex)$ is universal.
\end{corollary}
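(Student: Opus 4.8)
The plan is to obtain universality of $(\Convex,\leq_\Convex)$ as an immediate consequence of the embedding theorem just established, combined with the already-proved universality of $(\Intervals,\leq_\Intervals)$, by exploiting the general principle that universality is inherited along order-embeddings. First I would record that $(\Convex,\leq_\Convex)$ is itself a countable partial order: each of its elements is the convex hull of a finite subset of $\mathbb{Q}^2$, and there are only countably many finite subsets of the countable set $\mathbb{Q}^2$, so the vertex set is countable; the remaining order axioms for inclusion are clear.

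Next I would assemble the two ingredients. By the preceding theorem, $\Embed{\Intervals}{\Convex}$ is an order-embedding of $(\Intervals,\leq_\Intervals)$ into $(\Convex,\leq_\Convex)$, i.e.\ $A\leq_\Intervals B$ if and only if $\Embed{\Intervals}{\Convex}(A)\leq_\Convex\Embed{\Intervals}{\Convex}(B)$. By the corollary already established for intervals, $(\Intervals,\leq_\Intervals)$ is universal, so every countable partial order $P$ admits an order-embedding $\varphi\colon P\to(\Intervals,\leq_\Intervals)$.

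Then I would compose. Given an arbitrary countable partial order $P$, the map $\Embed{\Intervals}{\Convex}\circ\varphi\colon P\to(\Convex,\leq_\Convex)$ is again an order-embedding, since order-embeddings compose: for $x,y\in P$ we have $x\leq_P y$ iff $\varphi(x)\leq_\Intervals\varphi(y)$ iff $\Embed{\Intervals}{\Convex}(\varphi(x))\leq_\Convex\Embed{\Intervals}{\Convex}(\varphi(y))$, and injectivity is preserved under composition. Hence every countable partial order embeds into $(\Convex,\leq_\Convex)$, which is precisely the assertion of universality.

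I do not expect any genuine obstacle here: the whole substantive content has been discharged by the embedding theorem, and the only points deserving a sentence of care are the countability of $\Convex$ and the routine fact that a composition of order-embeddings is an order-embedding. The separate observation in the text that $(\Convex,\leq_\Convex)$ fails to be generic is not required for the corollary.
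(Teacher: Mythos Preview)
Your proposal is correct and matches the paper's approach exactly: the paper states the corollary immediately after the embedding theorem without further argument, treating it as an immediate consequence of composing the embedding $\Embed{\Intervals}{\Convex}$ with the already-established universality of $(\Intervals,\leq_\Intervals)$. Your added remarks on countability and the composition of order-embeddings simply make explicit what the paper leaves implicit.
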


\begin{remark}
Our construction is related to Venn diagrams.  Consider the partial order $([n],\leq_P)$.  For the empty relation $\leq_P$ the representation constructed by $\Embed{\Intervals}{\Convex}(\Embed{\Words}{\Intervals}(\Psi([n],\emptyset)))$ is a Venn diagram, by Theorem \ref{Pembed} $(2.)$. Statement 2 of Theorem \ref{Pembed} can be seen as a Venn diagram condition under the constraints imposed by $\leq_P$.
\end{remark}

The same construction can be applied to functions, and stated in a perhaps more precise manner.
\begin{corollary}
Consider the class $\Functions$ of all convex piecewise linear functions  on the interval $(0,1)$ consisting of a finite set of segments, each with rational boundaries.  Put $f\leq_\Functions g$ if and only if $f(x)\leq g(x)$ for every $0\leq x\leq 1$. Then the partial order $(\Functions,\leq_\Functions$) is universal.
\end{corollary}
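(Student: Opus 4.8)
The plan is to reuse the embedding $\Embed{\Intervals}{\Convex}$ of the preceding theorem and to read off from each convex hull a single convex piecewise linear function, namely its lower boundary. For $A\in\Intervals$ write $\bigcup A$ for the union (a finite union of disjoint closed rational intervals) of its intervals, and let $\lambda_A$ denote the lower boundary of the polygon $\Embed{\Intervals}{\Convex}(A)$, regarded as a function of the first coordinate. Since $\Embed{\Intervals}{\Convex}(A)$ is the convex hull of finitely many points with rational coordinates, $\lambda_A$ is convex and piecewise linear with rational breakpoints, so after extension to all of $(0,1)$ (discussed below) it is a member of $\Functions$. The map $A\mapsto\lambda_A$ will be \emph{order-reversing}, which is harmless: since $(\Intervals,\leq_\Intervals)$ is universal and the class of countable partial orders is closed under passing to the opposite order, the opposite order $\Intervals^{op}$ is universal as well, and an order-preserving embedding of $\Intervals^{op}$ into $(\Functions,\leq_\Functions)$ already forces $\Functions$ to be universal.

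The heart of the argument is the parabola-detection property already established in the $\Convex$ proof, namely that $(x,x^2)\in\Embed{\Intervals}{\Convex}(A)$ if and only if $x\in\bigcup A$. Since the upper boundary of $\Embed{\Intervals}{\Convex}(A)$ is a chord lying above the parabola, this membership statement translates into a statement purely about the lower boundary:
$$\lambda_A(x)<x^2 \iff x\in \mathrm{int}\textstyle\bigcup A, \qquad \lambda_A(x)>x^2 \iff x\notin\overline{\textstyle\bigcup A}.$$
From this the order characterization follows in both directions. If $A\leq_\Intervals B$ then $\Embed{\Intervals}{\Convex}(A)\subseteq\Embed{\Intervals}{\Convex}(B)$, so the smaller polygon has the higher lower boundary, giving $\lambda_A\geq\lambda_B$ pointwise. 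Conversely, and this is the crucial (and \emph{a priori} surprising) point, if $\lambda_A\geq\lambda_B$ everywhere then for each $x\in\mathrm{int}\bigcup A$ we get $\lambda_B(x)\leq\lambda_A(x)<x^2$, whence $x\in\mathrm{int}\bigcup B$ by the dichotomy above; taking closures yields $\bigcup A\subseteq\bigcup B$, i.e. $A\leq_\Intervals B$. In particular $\lambda_A=\lambda_B$ forces $A=B$, so the map is injective. Thus $A\leq_\Intervals B \iff \lambda_A\geq_\Functions\lambda_B$, an order-reversing embedding.

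The step I expect to be the real obstacle is twofold and is exactly where the single-function encoding could go wrong: first, that a \emph{single} convex function (the lower boundary alone, discarding the upper boundary) still reflects inclusion of the hulls; and second, the bookkeeping needed to extend $\lambda_A$ to all of $(0,1)$ without spoiling the comparison. The first is resolved precisely by the detection dichotomy, as above. For the second I would normalize by prepending to every $A$ a fixed common pair of anchor intervals near the two ends of $(0,1)$; then all hulls share the same horizontal projection and the same extreme tangent segments, so all functions $\lambda_A$ have a common domain and can be padded \emph{identically} on the two fixed end-regions by continuing those extreme segments (which keeps them convex and rational). Adding the same intervals to every set preserves $\leq_\Intervals$, the padded regions contribute only equalities to the comparison $\lambda_A\geq\lambda_B$, and all covered points remain in the central window where the detection argument applies verbatim. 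Combining the resulting order-preserving embedding of $\Intervals^{op}$ into $(\Functions,\leq_\Functions)$ with the universality of $\Intervals$ then gives the claim. The same reasoning, read back through $\Embed{\Intervals}{\Convex}$ and the master order $(\Words,\leq_\Words)$, shows that the functional picture is merely the convex picture described coordinate-wise.
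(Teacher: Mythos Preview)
Your proposal is correct and is precisely the approach the paper has in mind: the corollary is stated immediately after the $(\Convex,\leq_\Convex)$ theorem with only the remark that ``the same construction can be applied to functions, and stated in a perhaps more precise manner,'' i.e.\ the convex hulls are to be read coordinate-wise as their (lower) boundary functions. You have simply filled in the details the paper omits --- the parabola-detection dichotomy for the lower boundary, the harmless order reversal (handled via $\Intervals^{\mathrm{op}}$), and the domain normalisation by common anchor intervals --- and all of these are handled soundly.
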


Similarly the following holds:
\begin{thm}
Denote by $\Polynoms$ the class of all finite polynomials with rational coefficients.  For $p,q\in \Polynoms$, put $p\leq_\Polynoms q$ if and only if $p(x)\leq q(x)$ for $x\in (0,1)$. The partial order
$(\Polynoms,\leq_\Polynoms)$ is universal.
\end{thm}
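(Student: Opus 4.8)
The plan is to exhibit an order embedding of the inclusion order $(\Intervals,\leq_\Intervals)$ into $(\Polynoms,\leq_\Polynoms)$; since $(\Intervals,\leq_\Intervals)$ is universal and $\leq_\Polynoms$ is easily seen to be a partial order (two polynomials agreeing on the infinite set $(0,1)$ are identical, so antisymmetry holds), this yields universality of $(\Polynoms,\leq_\Polynoms)$ by composing embeddings. Recall that $A\leq_\Intervals B$ holds precisely when $\bigcup A\subseteq\bigcup B$, so $(\Intervals,\leq_\Intervals)$ is the inclusion order of finite unions of disjoint closed rational subintervals $S\subseteq(0,1)$. The geometric heart of the construction is the same parabola--tangent idea used for $\Embed{\Intervals}{\Convex}$: to a set $S$ associate the convex function $h_S(x)=\max_{c\in S}(2cx-c^2)=x^2-\mathrm{dist}(x,S)^2$. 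Since $\mathrm{dist}(\cdot,S)\geq\mathrm{dist}(\cdot,S')$ everywhere if and only if $S\subseteq S'$ (both sets being closed), the map $S\mapsto h_S$ is an order embedding of $(\Intervals,\leq_\Intervals)$ into the pointwise order of continuous functions on $(0,1)$.

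First I would try to upgrade these representatives to honest polynomials by writing $P_S=x^2-Q_S$, where $Q_S$ is a strictly positive polynomial approximating $\mathrm{dist}(x,S)^2$ from above, so that $P_S\leq_\Polynoms P_{S'}$ iff $Q_S\geq Q_{S'}$ pointwise. For strict inclusion $S\subsetneq S'$ the true functions satisfy $h_{S'}-h_S=\mathrm{dist}(x,S)^2-\mathrm{dist}(x,S')^2$, which is strictly positive on the open region $S'\setminus S$; the plan is to choose the approximation errors small enough that this genuine gap survives, forcing $P_S<P_{S'}$ there, while the one-sided (from above) approximation keeps $Q_S\geq Q_{S'}$, hence $P_S\leq P_{S'}$, on the rest of $(0,1)$. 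For incomparable $S,S'$ the symmetric difference contributes gaps of both signs --- a genuine gap on $S\setminus S'$ and one on $S'\setminus S$ --- which should make the two polynomials cross, certifying incomparability.

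The hard part will be that $h_S$ coincides with the parabola $x^2$ on all of $S$, so comparable representatives touch along a whole subinterval; since two distinct polynomials can agree only on a finite set, no fixed polynomial representative can reproduce this touching, and a single global formula for $Q_S$ cannot be order-faithful against the infinitely many other sets $S'$ simultaneously (their endpoints accumulate throughout $(0,1)$). To get around this I would replace the closed-form attempt by an inductive (on-line) assignment: enumerate the countably many interval-unions $S_1,S_2,\dots$ and choose $P_{S_n}$ as a polynomial lying strictly between the pointwise maximum of the already-placed representatives below $S_n$ and the pointwise minimum of those above it, while forced to cross each already-placed incomparable representative. The feasibility of each step is the crux: transitivity of $\leq_\Intervals$ together with the maintained invariant guarantees that the admissible band between the lower and upper envelopes is nonempty, and the flexibility of polynomials on the finitely many symmetric-difference subintervals active at stage $n$ lets one dip below or rise above each incomparable representative on a small rational subinterval without disturbing the enforced relations. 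Verifying that such a $P_{S_n}$ always exists --- essentially an extension statement for the pointwise order on $\Polynoms$ --- is where the real work lies, and it is the step I expect to be the main obstacle; the direct on-line embedding of Theorem \ref{Pembed} into $(\Polynoms,\leq_\Polynoms)$ is the natural fallback if the closed-form route proves too rigid.
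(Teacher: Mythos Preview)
The paper does not prove this theorem: immediately after the statement it says ``The proof of this theorem needs tools of mathematical analysis and it will appear in \cite{JNS}'', so there is no argument in the paper to compare against beyond that remark.

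Your setup is the natural continuation of the paper's $\Embed{\Intervals}{\Convex}$ and $(\Functions,\leq_\Functions)$ constructions, and you correctly isolate the real obstruction: the representatives $h_S$ coincide with $x^2$ on all of $S$, whereas distinct polynomials cannot agree on an interval, so no closed-form assignment $S\mapsto P_S$ can be order-faithful against all other $S'$ simultaneously. That diagnosis is right, and it is presumably why the paper singles this case out as requiring analysis.

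Where the proposal stops short of a proof is exactly the on-line extension step you yourself flag. At stage $n$ you need a rational-coefficient polynomial lying between the envelopes $\max_{j\in L}P_{S_j}$ and $\min_{i\in U}P_{S_i}$ on all of $(0,1)$ while crossing each incomparable $P_{S_k}$. Two concrete difficulties are not handled: (i) these envelopes are only piecewise polynomial, and the band between them can pinch to width zero at finitely many points; any polynomial in the band must vanish there to the correct order, so the inductive invariant has to control these tangencies explicitly, not just assert that the band is ``nonempty''; (ii) forcing a crossing with an incomparable $P_{S_k}$ ``on a small subinterval'' may conflict with the envelope constraints when the only available room near the relevant region is a high-order power of $(x-x_0)$. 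Note also that the fallback you cite, Theorem~\ref{Pembed}, is an on-line embedding into $(\Words,\leq_\Words)$, not into $(\Polynoms,\leq_\Polynoms)$; what is actually needed is an extension property for the polynomial order itself, and that is the content of the missing argument.
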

The proof of this theorem needs tools of mathematical analysis
and it will appear in \cite{JNS} (jointly with Robert \v S\'amal).

\section{Grammars}

The rewriting rules used in a context-free grammar can be also used to define
a universal partially ordered set.  
\begin{defn}

The vertices of $(\Grammar,\leq_\Grammar)$ are all words over the alphabet $\{\downarrow{},\uparrow{},0,1\}$ created from the word $1$ by the following rules:
$$
\begin{array}{rcl}
1&\to& \downarrow{}11\uparrow{},\\
1&\to& 0.
\end{array}
$$

$W\leq_\Grammar W'$ if and only if $W$ can be constructed from $W'$ by:
$$
\begin{array}{rcl}
1&\to& \downarrow{}11\uparrow{},\\
1&\to& 0,\\
\downarrow{}00\uparrow{}&\to& 0.
\end{array}$$
\end{defn}

$(\Grammar,\leq_\Grammar)$ is a quasi-order: the transitivity of $\leq_\Grammar$ follows from the composition of lexical transformations.

\begin{defn}
Given $A\in \Words$ construct $\Embed{\Words}{\Grammar}$ as follows:

\begin{enumerate}
\item $\Embed{\Words}{\Grammar}(\emptyset)=0$.

\item $\Embed{\Words}{\Grammar}(\{\hbox{empty word}\})=1$.

\item $\Embed{\Words}{\Grammar}(A)$ is defined as the concatenation $\downarrow\Embed{\Words}{\Grammar}(A_0)\Embed{\Words}{\Grammar}(A_1)\uparrow$, where $A_0$ is created from all words of $A$ starting with $0$ with the first digit  removed and $A_1$ is created from all words of $A$ starting with $1$ with the first digit removed.
\end{enumerate}
\end{defn}

\begin{example}
The representation of $([4],\leq_P)$ as defined by Figure~\ref{poset} in $(\Grammar,\leq_\Grammar)$ is as follows (see also the correspondence with the $\Bintree$ representation in Figure~\ref{strom-plny}):
$$
\begin{array}{lllll}
\Embed{\Words}{\Grammar}(\Psi([1],\leq_{P_1}))&=&\Embed{\Words}{\Grammar}(\{0\})&=&\downarrow{}10\uparrow{},\\
\Embed{\Words}{\Grammar}(\Psi([2],\leq_{P_2}))&=&\Embed{\Words}{\Grammar}(\{0,10\})&=&\downarrow{}1\downarrow{}10\uparrow{}\uparrow{},\\
\Embed{\Words}{\Grammar}(\Psi([3],\leq_{P_3}))&=&\Embed{\Words}{\Grammar}(\{000,100\})&=&\downarrow{}\downarrow{}\downarrow{}10\uparrow{}0\uparrow{}\downarrow{}\downarrow{}10\uparrow{}0\uparrow{}\uparrow{},\\
\Embed{\Words}{\Grammar}(\Psi([4],\leq_{P_4}))&=&\Embed{\Words}{\Grammar}(\{0000\})&=&\downarrow{}\downarrow{}\downarrow{}\downarrow{}10\uparrow{}0\uparrow{}0\uparrow{}0\uparrow{}.\\
\end{array}
$$
\end{example}
We state the following without proof as it follows straightforwardly from the definitions.
\begin{prop}
For $A,B\in \Words$ the inequality $A\leq_\Words B$ holds if and only if $\Embed{\Words}{\Grammar}(A)\leq_\Grammar \Embed{\Words}{\Grammar} (B)$.
\end{prop}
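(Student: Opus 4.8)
The plan is to read each vertex of $\Grammar$ as a finite subtree of the infinite binary tree $T_u$ of Section~\ref{dominance}, extract the antichain of words it marks, and show that $\leq_\Grammar$ between two codes is governed entirely by that antichain. First I would record the structural facts about the encoding. Every vertex of $\Grammar$ is a balanced word, each $\downarrow$ matched by an $\uparrow$, so it codes a binary tree whose leaves carry a label $0$ or $1$; since the substrings $1$ and $\downarrow 00 \uparrow$ always sit at subtree boundaries (by balance), each of the three rewriting rules acts on a genuine subtree, and $\leq_\Grammar$ restricted to vertices is a tree-rewriting relation. For a vertex $X$ let $\mathrm{words}(X)$ be the set of positions of its $1$-leaves, read as words over $\{0,1\}$; being a set of leaves, it is an antichain for $\leq_w$, hence an element of $\Words$. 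A short induction on the recursion defining $\Embed{\Words}{\Grammar}$ then gives $\mathrm{words}(\Embed{\Words}{\Grammar}(C)) = C$ and shows that $\Embed{\Words}{\Grammar}(C)$ is \emph{reduced}, i.e. contains no subword $\downarrow 00 \uparrow$ (a node with two empty subtrees would itself be an empty subtree).

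For the backward implication I would argue by monotonicity. The key is a congruence property: if the subtree at position $v$ is replaced by one whose local word-set $T$ satisfies $T \leq_\Words S$ (with $S$ the old local word-set), then globally $(\text{new}) \leq_\Words (\text{old})$, since a new word $vt$ inherits the prefix $vs$ from a prefix $s \in S$ of $t$. Checking the rules locally, $1 \to \downarrow 11 \uparrow$ replaces $\{\varepsilon\}$ by $\{0,1\}$, $1 \to 0$ replaces $\{\varepsilon\}$ by $\emptyset$, and $\downarrow 00 \uparrow \to 0$ replaces $\emptyset$ by $\emptyset$; in each case the local word-set weakly decreases in $\leq_\Words$. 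Hence every single rewrite weakly decreases $\mathrm{words}$, and by transitivity a derivation $\Embed{\Words}{\Grammar}(B) \Rightarrow^* \Embed{\Words}{\Grammar}(A)$ yields $A = \mathrm{words}(\Embed{\Words}{\Grammar}(A)) \leq_\Words \mathrm{words}(\Embed{\Words}{\Grammar}(B)) = B$.

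For the forward implication, assume $A \leq_\Words B$. As $B$ is an antichain, each word of $A$ has a unique prefix in $B$, so $A$ partitions as $A = \bigsqcup_{W' \in B} A_{W'}$ with $A_{W'} = \{W \in A : W \leq_w W'\}$; write $A'_{W'}$ for $A_{W'}$ with the prefix $W'$ deleted. I would start from $\Embed{\Words}{\Grammar}(B)$ and, at each $1$-leaf $W'$ (these are exactly the words of $B$), rewrite that single $1$ into $\Embed{\Words}{\Grammar}(A'_{W'})$; this uses only the rules $1 \to \downarrow 11 \uparrow$ and $1 \to 0$, because by definition every vertex of $\Grammar$ is derivable from $1$ by precisely those two rules. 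Locality lets these expansions be carried out independently, producing a (possibly non-reduced) representation-tree $T$ with word-set $\bigsqcup_{W'} W' A'_{W'} = A$. Applying $\downarrow 00 \uparrow \to 0$ repeatedly then reduces $T$ without changing its word-set, terminating at $\Embed{\Words}{\Grammar}(A)$, so $\Embed{\Words}{\Grammar}(A) \leq_\Grammar \Embed{\Words}{\Grammar}(B)$.

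The main obstacle, and the one genuinely new lemma to isolate, is this normal-form statement: every representation-tree reduces under $\downarrow 00 \uparrow \to 0$ to $\Embed{\Words}{\Grammar}$ of its word-set. Termination is immediate since each application strictly shortens the word, so a reduced form always exists and still has the original word-set, reduction touching no $1$-leaf. It then remains to prove that a reduced code is determined by its word-set, which I would do by induction matching the clauses of $\Embed{\Words}{\Grammar}$: a reduced tree is $0$, or $1$, or $\downarrow X Y \uparrow$ with $X,Y$ reduced and not both $0$, and in the last case its word-set is the proper nonempty antichain $0\cdot\mathrm{words}(X) \cup 1\cdot\mathrm{words}(Y)$, exactly the split used to define $\Embed{\Words}{\Grammar}$. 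The example $B=\{00,01,1\}$, $A=\{1\}$ shows why the collapsing step is unavoidable: expanding the code of $B$ toward $A$ first yields $\downarrow\downarrow 00\uparrow 1\uparrow$, which becomes $\Embed{\Words}{\Grammar}(A)=\downarrow 01\uparrow$ only after one application of $\downarrow 00 \uparrow \to 0$.
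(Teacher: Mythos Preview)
Your plan is sound and, once written out, gives a complete proof. The paper itself does not supply one---it states the proposition ``without proof as it follows straightforwardly from the definitions''---so there is nothing to compare against beyond confirming that what you outline really does follow. It does: the tree reading of vertices of $\Grammar$, the invariant $\mathrm{words}(\cdot)$, the monotonicity check on the three rules, and the normal-form lemma are exactly the right ingredients, and each step goes through as you describe. The only point worth tightening when you write it up is to say explicitly that every word reachable from a vertex of $\Grammar$ by the three rules is again a well-formed tree encoding, so that $\mathrm{words}$ is defined along the entire derivation in the backward direction; you use this implicitly when chaining the monotonicity inequalities.
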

$(\Grammar,\leq_\Grammar)$ is a quasi-order. We have:
\begin{corollary}
The quasi-order $(\Grammar,\leq_\Grammar)$ contains a universal partial order.
\end{corollary}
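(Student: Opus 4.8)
The plan is to show that the partial order $(\Words,\leq_\Words)$, known to be universal by the Corollary of Section~\ref{wordsection}, sits inside $(\Grammar,\leq_\Grammar)$ as the image under the map $\Embed{\Words}{\Grammar}$. Since $(\Grammar,\leq_\Grammar)$ is only a quasi-order, the point requiring care is that $\leq_\Grammar$ restricts to a genuine (antisymmetric) partial order on this image, and that this partial order is isomorphic to $(\Words,\leq_\Words)$; everything else is inherited.

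First I would invoke the preceding Proposition, which states that for all $A,B\in\Words$ we have $A\leq_\Words B$ if and only if $\Embed{\Words}{\Grammar}(A)\leq_\Grammar\Embed{\Words}{\Grammar}(B)$; in other words $\Embed{\Words}{\Grammar}$ is an order-embedding of the partial order $(\Words,\leq_\Words)$ into the quasi-order $(\Grammar,\leq_\Grammar)$. From this equivalence together with the antisymmetry of $\leq_\Words$ I would deduce that $\Embed{\Words}{\Grammar}$ is injective: if $\Embed{\Words}{\Grammar}(A)=\Embed{\Words}{\Grammar}(B)$, then both $\Embed{\Words}{\Grammar}(A)\leq_\Grammar\Embed{\Words}{\Grammar}(B)$ and $\Embed{\Words}{\Grammar}(B)\leq_\Grammar\Embed{\Words}{\Grammar}(A)$ hold, so $A\leq_\Words B$ and $B\leq_\Words A$, whence $A=B$.

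Next I would let $\Grammar'$ denote the image $\Embed{\Words}{\Grammar}(\Words)$ and consider the restriction of $\leq_\Grammar$ to $\Grammar'$. Reflexivity and transitivity are inherited from the quasi-order $(\Grammar,\leq_\Grammar)$. For antisymmetry, suppose $X,Y\in\Grammar'$ satisfy $X\leq_\Grammar Y$ and $Y\leq_\Grammar X$; writing $X=\Embed{\Words}{\Grammar}(A)$ and $Y=\Embed{\Words}{\Grammar}(B)$, the Proposition gives $A\leq_\Words B$ and $B\leq_\Words A$, hence $A=B$ and therefore $X=Y$. Thus $(\Grammar',\leq_\Grammar)$ is a partial order, and by construction $\Embed{\Words}{\Grammar}$ is an order-isomorphism of $(\Words,\leq_\Words)$ onto it. Finally, since $(\Words,\leq_\Words)$ is universal, every countable partial order $P$ admits an embedding into $(\Words,\leq_\Words)$, which composed with $\Embed{\Words}{\Grammar}$ yields an embedding of $P$ into $(\Grammar',\leq_\Grammar)$; hence $(\Grammar',\leq_\Grammar)$ is a universal partial order contained in $(\Grammar,\leq_\Grammar)$, as claimed.

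The only genuine subtlety, and the step I would flag as the crux, is exactly the passage from the partial order $(\Words,\leq_\Words)$ into the mere quasi-order $(\Grammar,\leq_\Grammar)$: one must rule out that two distinct witnesses collapse into a single $\leq_\Grammar$-equivalence class, since the conclusion asks for an honest \emph{partial} order. The injectivity argument and the on-image antisymmetry argument above are precisely what secure this, and both rely on using the Proposition in \emph{both} directions of the stated equivalence.
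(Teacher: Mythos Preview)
Your proposal is correct and follows exactly the approach of the paper: the Corollary is stated immediately after the Proposition (itself left without proof), with the understanding that the two-way equivalence in the Proposition makes $\Embed{\Words}{\Grammar}$ an order-embedding of the universal partial order $(\Words,\leq_\Words)$ into $(\Grammar,\leq_\Grammar)$. Your explicit verification of injectivity and of antisymmetry on the image is more detailed than what the paper spells out, but it is precisely the content the paper leaves implicit.
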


\section{Multicuts and truncated vectors}
\label{TVsection}
A universal partially ordered structure similar to $(\Words,\leq_\Words)$, but
less suitable for further embeddings, was studied in
\cite{hedrlin,nesu,HN-trees}. While the structures defined in these papers are easily shown to be
equivalent, their definition and motivations were different.  \cite{hedrlin} contains the first finite presentation of universal partial order. \cite{nesu} first
used the notion of on-line embeddings to (1) prove the universality of the structure and
(2) as intermediate structure to prove the universality of the homomorphism
order of multigraphs. The motivation for this structure came from the  analogy with
Dedekind cuts and thus its members were called {\em multicuts}.  In
\cite{HN-trees} an essentially equivalent structure with the inequality reversed was
used as an intermediate structure for the stronger result showing the universality of
oriented paths. This time the structure arises in the context of orders of vectors
(as the simple extension of the orders of finite dimension represented by finite
vectors of rationals) resulting in name {\em truncated vectors}.

We follow the presentation in \cite{HN-trees}.

\begin{defn}
\label{TVdef}
Let $\vec{v}=(v_1,\ldots,v_t)$, $\vec{v}'=(v'_1,\ldots,v'_{t'})$ be $0$--$1$ vectors.
We put: $$\vec{v}\leq_{\vec{v}}\vec{v}'\hbox{ if and only if }t\geq t'\hbox{ and }v_i\geq v'_i\hbox{ for }i=1,\ldots,t'.$$  
\end{defn}
Thus we have e.g. $(1,0,1,1,1)<_{\vec{v}}(1,0,0,1)$ and $(1,0,0,1)>_{\vec{v}}(1,1,1,1)$.
An example of an infinite descending chain is e.g.
$$(1)>_{\vec{v}}(1,1)>_{\vec{v}}(1,1,1)>_{\vec{v}}\ldots.$$
Any finite partially ordered set is representable by vectors with this
ordering: for vectors of a fixed length we have just the reverse ordering of that used in
the (Dushnik-Miller) dimension of partially ordered sets, see e.g.
\cite{Trotter}.

\begin{defn}
We denote by $\TV$ the class of all finite vector-sets.
Let $\vec{V}$ and $\vec{V}'$ be two finite sets of $0$--$1$ vectors.  
We put $\vec{V}\leq_\TV\vec{V}'$ if and only if for every $\vec{v}\in \vec{V}$ there exists a $\vec{v}'\in\vec{V}'$ such that $\vec{v}\leq_{\vec{v}}\vec{v}'$.
\end{defn}

For a word $W$ on the alphabet $\{0,1\}$ we construct a vector $\vec{v}(W)$
of length $2|W|$ such that $2n$-th element of vector $\vec{v}(W)$ is $0$ if and only if the
$n$-th character of $W$ is 0, and the $(2n+1)$-th element of the vector $\vec{v}(W)$ is $1$
if and only if the $n$-th character of $W$ is 0.

It is easy to see that $W\leq_\Words W'$ if and only if
$\vec{v}(W)\leq_{\vec{v}} \vec{v}(W')$.
The embedding $\Embed{\Words}{\TV}:(\Words,\leq_\Words)\to (\TV,\leq_\TV)$ is
constructed as follows:
$$\Embed{\Words}{\TV}= \{\vec{v}(W), W\in A\}.$$

For our example $([4],\leq_P)$ in Figure~\ref{poset} we have embedding:

$$
\begin{array}{lllll}
\Embed{\Words}{\TV}(\Psi([1],\leq_{P_1}))&=&\Embed{\Words}{\TV}(\{0\})&=&\{(0,1)\},\\
\Embed{\Words}{\TV}(\Psi([2],\leq_{P_2}))&=&\Embed{\Words}{\TV}(\{0,10\})&=&\{(0,1),(1,0,0,1)\},\\
\Embed{\Words}{\TV}(\Psi([3],\leq_{P_3}))&=&\Embed{\Words}{\TV}(\{000,100\})&=&\{(0,1,0,1,0,1),(1,0,0,1,0,1)\},\\
\Embed{\Words}{\TV}(\Psi([4],\leq_{P_4}))&=&\Embed{\Words}{\TV}(\{0000\})&=&\{(0,1,0,1,0,1,0,1)\}.\\
\end{array}
$$

\begin{corollary}
The quasi-order $(\TV,\leq_\TV)$ contains a universal partial order.
\end{corollary}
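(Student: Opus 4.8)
The plan is to show that $\Embed{\Words}{\TV}$ is an order-embedding of $(\Words,\leq_\Words)$ into $(\TV,\leq_\TV)$ and then invoke the universality of $(\Words,\leq_\Words)$ established above. Since an order-embedding carries a universal partial order onto an isomorphic copy of itself, the image $\Embed{\Words}{\TV}(\Words)$ will be the desired universal partial order sitting inside the quasi-order $(\TV,\leq_\TV)$.

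First I would pin down the single-word encoding. By the definition of $\vec{v}(W)$, the $n$-th letter of $W$ contributes the length-two block $(0,1)$ when it is $0$ and the block $(1,0)$ when it is $1$. These two blocks are incomparable in the coordinatewise order, and this is exactly what makes the encoding rigid. I would verify the stated equivalence $W\leq_w W'$ if and only if $\vec{v}(W)\leq_{\vec{v}}\vec{v}(W')$ as follows. If $W'$ is an initial segment of $W$, then $|W|\geq|W'|$, so $\vec{v}(W)$ has length $2|W|\geq 2|W'|$ and agrees with $\vec{v}(W')$ on its first $2|W'|$ coordinates; hence $\vec{v}(W)\leq_{\vec{v}}\vec{v}(W')$. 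Conversely, if $\vec{v}(W)\leq_{\vec{v}}\vec{v}(W')$, then $2|W|\geq 2|W'|$ forces $|W|\geq|W'|$, and coordinatewise domination on each length-two block forces $\vec{v}(W)$ to carry the same block as $\vec{v}(W')$ (dominating $(0,1)$ requires the block $(0,1)$, and dominating $(1,0)$ requires $(1,0)$). Thus $W$ and $W'$ agree on the first $|W'|$ letters, i.e. $W'$ is an initial segment of $W$ and $W\leq_w W'$.

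Next I would lift this to sets. Unwinding the definition of $\leq_\TV$, the relation $\Embed{\Words}{\TV}(A)\leq_\TV\Embed{\Words}{\TV}(B)$ says precisely that for each $W\in A$ there is $W'\in B$ with $\vec{v}(W)\leq_{\vec{v}}\vec{v}(W')$, which by the previous paragraph is the same as: for each $W\in A$ there is $W'\in B$ with $W\leq_w W'$. This is exactly the defining condition of $A\leq_\Words B$. Hence $A\leq_\Words B$ if and only if $\Embed{\Words}{\TV}(A)\leq_\TV\Embed{\Words}{\TV}(B)$, so $\Embed{\Words}{\TV}$ both preserves and reflects the order. In particular it is injective, since $\Embed{\Words}{\TV}(A)=\Embed{\Words}{\TV}(B)$ would give $A\leq_\Words B\leq_\Words A$, forcing $A=B$ by antisymmetry of $\leq_\Words$.

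I do not expect a genuine obstacle; the only point that deserves care is the converse direction of the single-word equivalence, where the two-bit (rather than one-bit) encoding is essential — a naive one-bit encoding would collapse the incomparability of the letters $0$ and $1$ and fail to reflect $\leq_w$. Having established that $\Embed{\Words}{\TV}$ is an order-embedding, I would finish by recalling that $(\Words,\leq_\Words)$ is universal (Corollary above); its isomorphic image $\Embed{\Words}{\TV}(\Words)$ is then a universal partial order contained in the quasi-order $(\TV,\leq_\TV)$, which is the claim.
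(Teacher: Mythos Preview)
Your proposal is correct and follows exactly the paper's approach: encode single words via $\vec v(\cdot)$, check that $W\leq_w W'$ iff $\vec v(W)\leq_{\vec v}\vec v(W')$, lift this equivalence to sets to see that $\Embed{\Words}{\TV}$ is an order-embedding, and conclude from the universality of $(\Words,\leq_\Words)$. You have simply filled in the details the paper leaves as ``easy to see,'' including the crucial observation that the two-bit blocks $(0,1)$ and $(1,0)$ are incomparable, which is what forces agreement of letters in the converse direction.
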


The structure $(\TV,\leq_\TV)$ as compared to $(\Words,\leq_\Words)$ is more complicated to use for further
embeddings: the partial order of vectors is already a complex finite-universal
partial order. The reason why the structure $(\TV,\leq_\TV)$ was discovered first is is that it
allows a remarkably simple on-line embedding that we outline now.

Again we restrict ourselves to the partial orders whose vertex sets are the sets
$[n]$ (for some $n>1$) and we will always embed the vertices in the natural order.
The function $\Psi'$ mapping partial orders $([n],\leq_P)$ to elements of
$(\TV,\leq_\TV)$ is defined as follows:

\begin{defn}
Let $\vec{v}({[n],\leq_{P}})=(v_1$, $v_2$, \ldots, $v_{n})$ where $v_m=1$ if and only if $n\leq_\Poset m$, $m\leq n$, otherwise $v_m=0$. 

Let $$\Psi'([n],\leq_P)=\{\vec{v}([m],\leq_{P_m}): m\in P, m\leq n, m\leq_\Poset n\}.$$
\end{defn}

For our example in Figure~\ref{poset} we get a different (and more compact) embedding:
$$
\begin{array}{llllll}
\vec{v}(1)&=&(1), &\Psi'([1],\leq_{P_1})&=&\{(1)\},\\
\vec{v}(2)&=&(0,1), &\Psi'([2],\leq_{P_2})&=&\{(1),(0,1)\},\\
\vec{v}(3)&=&(1,0,1), &\Psi'([3],\leq_{P_3})&=&\{(1,0,1)\},\\
\vec{v}(4)&=&(1,1,1,1), &\Psi'([4],\leq_{P})&=&\{(1,1,1,1)\}.
\end{array}
$$

\begin{thm}
Fix the partial order $([n],\leq_P)$.
 For every $i,j\in[n]$, $$i\leq_P j \hbox{ if and only if } \Psi'([i],\leq_{P_i})\leq_\TV \Psi'([j],\leq_{P_j})$$ and $$\Psi'([i],\leq_{P_i}) = \Psi'([j],\leq_{P_j}) \hbox{ if and only if } i=j.$$ (Or in the other words, the mapping $\Phi'(i) = \Psi'([i],\leq_{P_i})$ is the embedding of $([n],\leq_P)$ into $(\TV,\leq_\TV)$).
\end{thm}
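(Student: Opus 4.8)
The plan is to reduce both assertions to the behaviour of the single ``generator'' vectors $\vec{v}([m],\leq_{P_m})$, since by construction each set $\Psi'([k],\leq_{P_k})$ is exactly the collection of such vectors for $m\leq k$ with $m\leq_P k$. Two elementary observations drive the argument. First, putting $m=k$ shows $\vec{v}([k],\leq_{P_k})\in\Psi'([k],\leq_{P_k})$, because $k\leq k$ and $k\leq_P k$; so each set contains its own ``diagonal'' vector. Second, unfolding Definition \ref{TVdef} and recalling that the $p$-th entry of $\vec{v}([m],\leq_{P_m})$ equals $1$ exactly when $m\leq_P p$, one gets that $\vec{v}([m],\leq_{P_m})\leq_{\vec{v}}\vec{v}([m'],\leq_{P_{m'}})$ holds iff $m\geq m'$ (lengths) and, for every $p\leq m'$, the implication $m'\leq_P p\Rightarrow m\leq_P p$ is valid (coordinatewise).

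For the forward implication I would assume $i\leq_P j$ and fix an arbitrary $\vec{v}([m],\leq_{P_m})\in\Psi'([i],\leq_{P_i})$, so $m\leq i$ and $m\leq_P i$, whence $m\leq_P j$ by transitivity. The witness in $\Psi'([j],\leq_{P_j})$ is then produced by a case split on the natural order of $m$ and $j$. If $m\leq j$, the very same vector $\vec{v}([m],\leq_{P_m})$ lies in $\Psi'([j],\leq_{P_j})$ and dominates itself. If $m>j$, I instead take the diagonal vector $\vec{v}([j],\leq_{P_j})\in\Psi'([j],\leq_{P_j})$: the length condition $m>j$ holds, and for $p\leq j$ the required implication $j\leq_P p\Rightarrow m\leq_P p$ follows from $m\leq_P j\leq_P p$. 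Either way a dominating vector exists, giving $\Psi'([i],\leq_{P_i})\leq_{\TV}\Psi'([j],\leq_{P_j})$.

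The converse is where the encoding pays off. Assuming $\Psi'([i],\leq_{P_i})\leq_{\TV}\Psi'([j],\leq_{P_j})$, I apply the definition of $\leq_{\TV}$ to the diagonal vector $\vec{v}([i],\leq_{P_i})$: it is dominated by some $\vec{v}([m'],\leq_{P_{m'}})\in\Psi'([j],\leq_{P_j})$, and membership forces $m'\leq_P j$. Evaluating the domination at the single coordinate $p=m'$---legitimate since $m'\leq i$ and $m'$ does not exceed the shorter length---and using that the $m'$-th entry of $\vec{v}([m'],\leq_{P_{m'}})$ is $1$ by reflexivity, I conclude that the $m'$-th entry of $\vec{v}([i],\leq_{P_i})$ is $1$, i.e.\ $i\leq_P m'$; transitivity with $m'\leq_P j$ then gives $i\leq_P j$. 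For injectivity, note that $A\leq_{\TV}A$ holds for every vector set $A$, so $\Psi'([i],\leq_{P_i})=\Psi'([j],\leq_{P_j})$ yields domination both ways; the equivalence just established gives $i\leq_P j$ and $j\leq_P i$, and antisymmetry of $\leq_P$ forces $i=j$.

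I expect the case analysis in the forward direction to be the main obstacle to write cleanly: the definition of $\Psi'$ deliberately entangles the poset order $\leq_P$ with the ambient natural order on $[n]$ (through the length-$m$ truncation of each generator and the side condition $m\leq k$), so one must verify that a suitable witness survives both when $m\leq j$ and when $m>j$. Once this bookkeeping is in place, everything else is a direct unfolding of Definition \ref{TVdef} and the definition of $\Psi'$, combined only with transitivity and antisymmetry of $\leq_P$.
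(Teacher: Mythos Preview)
Your proof is correct. The paper does not spell out the argument here; it only says the proof ``can be done via induction analogously as in the second part of the proof of Theorem~\ref{Pembed}'' and refers to \cite{HN-trees}. Your approach is different in that it is entirely direct, with no induction: you exploit the presence of the ``diagonal'' generator $\vec v([k],\leq_{P_k})$ in $\Psi'([k],\leq_{P_k})$ and the fact that its $k$-th coordinate is $1$, which immediately pins down $i\leq_P m'$ in the converse direction. The inductive approach hinted at in the paper would track how $\Psi'$ changes when a new vertex $n$ is added, mirroring the structure of the proof of Theorem~\ref{Pembed}; your argument sidesteps this entirely, at the price of the small case split $m\leq j$ versus $m>j$ in the forward direction. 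Both work, but your version is shorter and makes more transparent why the encoding in Definition~\ref{TVdef} succeeds: the last coordinate of each generator is always $1$, so domination by a shorter vector forces a $\leq_P$-relation.
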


The proof can  be done via induction analogously as in the second part of the proof of
Theorem \ref{Pembed}. See our paper \cite{HN-trees}.  The main advantage of this embedding is
that the size of the answer is $O(n^2)$ instead of $O(2^n)$. 

\section{Periodic sets}
\label{preiodickesection}
As the last finite presentation we mention the following what we believe to
be very elegant description.
Consider the partial order defined by inclusion on sets of integers. This
partial order is uncountable and contains every countable partial order. We
can however show the perhaps surprising fact that the subset of all periodic subsets (which has a very simple and finite description) is countably universal.
\begin{defn}
$S\subseteq \Z$ is {\em $p$-periodic} if for every $x\in S$ we have also $x+p\in S$ and $x-p\in S$.

For a periodic set $S$ with period $p$ denote by the {\em signature $s(p,S)$} a word over the alphabet $\{0,1\}$ of length $p$ such that $n$-th letter is $1$ if and only if $n\in S$.

By $\Periodic$ we denote the class of all sets $S\subseteq \Z$ such that $S$ is
$2^n$-periodic for some $n$.
\end{defn}

Clearly every periodic set is determined by its signature and thus
$(\Periodic,\subseteq)$ is a finite presentation. We consider the ordering
of periodic sets by inclusion and prove:

\begin{thm}
The partial order $(\Periodic,\subseteq)$ is universal.
\end{thm}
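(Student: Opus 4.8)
The plan is to reduce to the ``master'' order $(\Words,\leq_\Words)$, whose universality was established in Section \ref{wordsection}: it suffices to build an order-embedding $\Embed{\Words}{\Periodic}\colon (\Words,\leq_\Words)\to(\Periodic,\subseteq)$. The naive attempt is to send a word $W=w_1\cdots w_t$ to the residue class modulo $2^t$ whose lowest binary digits read $w_1,\ldots,w_t$, and a set $A\in\Words$ to the union of these classes. A prefix $W'$ of $W$ then does give $S_W\subseteq S_{W'}$, but this map is \emph{not} an embedding: a residue class is the disjoint union of its two halves, so $S_W=S_{W0}\cup S_{W1}$, which would force the strictly comparable pair $\{W0,W1\}<_\Words\{W\}$ to become equal under $\subseteq$. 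Defeating this collapse is the main obstacle.

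I would fix it by embedding the infinite binary tree $T_u$ into itself with gaps, using marker bits. For $W=w_1\cdots w_t$ let $\iota(W)$ be the length-$2t$ word $1w_11w_2\cdots 1w_t$, and let $S_W\subseteq\Z$ be the $2^{2t}$-periodic set of integers whose lowest $2t$ binary digits read $\iota(W)$. Define $\Embed{\Words}{\Periodic}(A)=\bigcup_{W\in A}S_W$; since $A$ is finite, this is $2^{2N}$-periodic with $N=\max_{W\in A}|W|$, hence lies in $\Periodic$. Two routine observations drive the argument. First, $\iota$ is injective and prefix-preserving, so $W\leq_w W'$ (that is, $W'$ is a prefix of $W$) is equivalent to $S_W\subseteq S_{W'}$; and if $W,W'$ are $\leq_w$-incomparable then $\iota(W),\iota(W')$ disagree at some common position, so $S_W\cap S_{W'}=\emptyset$. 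Second, and crucially, every \emph{proper} descendant $U$ of $W$ has a $1$ in position $2|W|$ of $\iota(U)$, whereas the ``private'' subclass $P_W=\{x\in S_W:\text{bit }2|W|\text{ of }x\text{ is }0\}$ is a nonempty subset of $S_W$ that is disjoint from $S_U$ for every proper descendant $U$ (and from $S_{W'}$ for every incomparable $W'$).

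With these tools I would establish $A\leq_\Words B\iff \Embed{\Words}{\Periodic}(A)\subseteq\Embed{\Words}{\Periodic}(B)$. The forward direction is immediate: each $W\in A$ has a prefix $W'\in B$, so $S_W\subseteq S_{W'}\subseteq\Embed{\Words}{\Periodic}(B)$. For the converse I would argue through the private points: assuming $\Embed{\Words}{\Periodic}(A)\subseteq\Embed{\Words}{\Periodic}(B)$ and fixing $W\in A$, choose $x\in P_W\subseteq\Embed{\Words}{\Periodic}(B)$; then $x\in S_{W'}$ for some $W'\in B$, whence $S_W\cap S_{W'}\neq\emptyset$ and $W,W'$ are $\leq_w$-comparable. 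The possibility that $W'$ is a proper descendant of $W$ is ruled out, since then $x$ would carry a $1$ in position $2|W|$, contradicting $x\in P_W$; therefore $W'$ is a prefix of $W$, giving $W\leq_w W'$ and hence $A\leq_\Words B$. Thus $\Embed{\Words}{\Periodic}$ is an order-embedding of a universal partial order into $(\Periodic,\subseteq)$, which proves the theorem. (Equivalently, the whole construction can be phrased in the tree language of $(\Bintree,\leq_\Bintree)$, the marker bits realizing a gap-leaving self-embedding of $T_u$.)
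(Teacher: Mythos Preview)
Your proposal is correct and follows the same overall strategy as the paper: embed $(\Words,\leq_\Words)$ into $(\Periodic,\subseteq)$ by sending each antichain $A$ to a union of residue classes modulo a power of $2$. The paper's proof, however, uses exactly the naive map you describe and reject --- each word $W$ is sent directly to the residue class whose low $|W|$ bits spell (the reversal of) $W$ --- and then simply asserts that $\Embed{\Words}{\Periodic}(A)\subseteq\Embed{\Words}{\Periodic}(A')$ if and only if $A\leq_\Words A'$. Your counterexample is genuine: $\{W0,W1\}$ and $\{W\}$ are strictly comparable in $(\Words,\leq_\Words)$ but map to the same periodic set, so the reverse implication fails and the paper's argument, as written, has a gap. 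Your marker-bit encoding $\iota(W)=1w_11w_2\cdots 1w_t$ together with the private-point argument via $P_W$ repairs precisely this defect: the forced $1$ in bit position $2|W|$ of every proper extension of $W$ separates $S_W$ from the union of all $S_U$ over such extensions, which is exactly what the reverse implication needs. So your proof is not merely a variant of the paper's --- it is a correct completion of it.
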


\begin{proof}
We embed $(\Words,\leq_\Words)$ into $(\Periodic,\subseteq)$ as follows:
For $A\in \Words$ denote by $\Embed\Words{\Periodic}(A)$ the set of integers such that $n\in \Embed\Words{\Periodic}(A)$ if and only if there is $W\in A$ and the $|A|$ least significant digits of the binary expansion of $n$ forms a reversed word $W$ (when the binary expansion has fewer than $|W|$ digits, add 0 as needed).

It is easy to see that $\Embed\Words{\Periodic}(A)$ is $2^n$-periodic, where $n$ is the length of
longest word in $W$, and $\Embed\Words{\Periodic}(A)\subseteq \Embed\Words{\Periodic}(A')$ if and only if $A\leq_\Words A'$.
\end{proof}

$(\Periodic,\subseteq)$ is dense, but it fails to have the $3$-extension
property: there is no set strictly smaller than the set with signature $01$ and
greater than both sets with signatures $0100$ and $0010$.

\chapter{Universality of graph homomorphisms}
\label{cestickychapter}

Perhaps the most natural order between finite models is induced by homomorphisms.
The universality of the homomorphism order for the class of all finite
graphs was first shown by \cite{Pultr}.  

Numerous other classes followed (see e. g. \cite{Pultr}) but planar graphs
(and other topologically restricted classes) presented a problem.

The homomorphism order on the class of finite paths was studied in \cite{NZhu}.
It has been proved it is a dense partial order (with the exception of a few
gaps which were characterized; these gaps are formed by all core-path of height
$\leq 4$).  \cite{NZhu} also rises (seemingly too ambitious) question whether
it is a universal partial order. 
  This has
been resolved in \cite{HN-paths,HN-trees} by showing that finite oriented paths
with homomorphism order are universal. In this section we give a new proof of
this result (see also \cite{HN-Posets}).  The proof is simpler and yields a stronger result (see Theorem
\ref{klacky}).

Recall that
an {\em oriented path $P$} of length $n$ is any oriented graph $(V,E)$ where
$V=\{v_0,v_1,\ldots,v_n\}$ and for every $i=1,2,\ldots,n$ either
$(v_{i-1},v_i)\in E$ or $(v_i,v_{i-1})\in E$ (but not both), and there are no
other edges.  Thus an oriented path is any orientation of an undirected path.

Denote by $(\Paths,\leq_\Paths)$ the class of all finite paths ordered by homomorphism
order.

To show the universality of oriented paths, we will construct an embedding of
$(\Periodic,\subseteq)$ to $(\Paths,\leq_\Paths)$.  Recall that the class $\Periodic$ denotes the class of all periodic subsets of $\Z$ (see Section \ref{preiodickesection}).
This is a new feature, which gives
a new, more streamlined and shorter proof of the \cite{HN-paths}.
  The main difference of the proof in \cite{HN-paths,HN-trees} and the one presented here is the use of $(\Periodic,\subseteq)$ as the
base of the representation instead of $(\TV,\leq_\TV)$.  The linear nature of graph
homomorphisms among oriented paths make it very difficult to adapt many-to-one
mapping involved in $\leq_\TV$. The cyclic mappings of $(\Periodic,\subseteq)$ are easier
to use.

Let us introduce terms and notations that are useful when speaking of homomorphisms
between paths. (We follow standard notations as e.g. in \cite{Hell, NZhu}.)

While oriented paths do not make a difference between initial and terminal
vertices, we will always consider paths in a specific order of vertices from
the initial to the terminal vertex.  We denote the initial vertex $v_0$ and the
terminal vertex $v_n$ of $P$ by $in(P)$ and $term(P)$ respectively.  For a path $P$ we will
denote by $\overleftarrow{P}$ the flipped path $P$ with order of vertices
$v_n,v_{n-1},\ldots,v_0$.  For paths $P$ and $P'$ we denote by $PP'$ the path
created by the concatenation of $P$ and $P'$ (i.e. the disjoint union of $P$ and $P'$
with $term(P)$ identified with $in(P')$).

The {\em length} of a path $P$ is the number of edges in $P$.  The {\em
algebraic length} of a path $P$ is the number of forwarding minus the
number of backwarding edges in $P$.  Thus the algebraic length of a
path could be negative.  The {\em level $l_P(v_i)$} of $v_i$ is the algebraic
length of the subpath $(p_0,p_1,\ldots,p_i)$ of $P$.  The {\em distance} between
vertices $p_i$ and $p_j$, $d_P(p_i,p_j)$, is given by $|j-i|$. The {\em
algebraic distance, $a_P(p_i,p_j)$}, is $l_P(v_j)-l_P(v_i)$.

Denote by $\varphi:P\to P'$ a homomorphism from path $P$ to $P'$.  Observe that
we always have $d_P(p_i,p_j)\leq d_{P'}(\varphi(p_i),\varphi(p_j))$ and
$a_P(p_i,p_j)=a_{P'}(\varphi(p_i),\varphi(p_j))$.  We will construct paths in
such a way that  every homomorphism $\varphi$ between path $P$ and $P'$ must
map the initial vertex of $P$ to the initial vertex of $P'$ and thus preserve levels of
vertices (see Lemma \ref{zacatel} below).

\section{Main construction}
  \begin{figure}[t]
    \begin{center}
      \includegraphics[width=12cm]{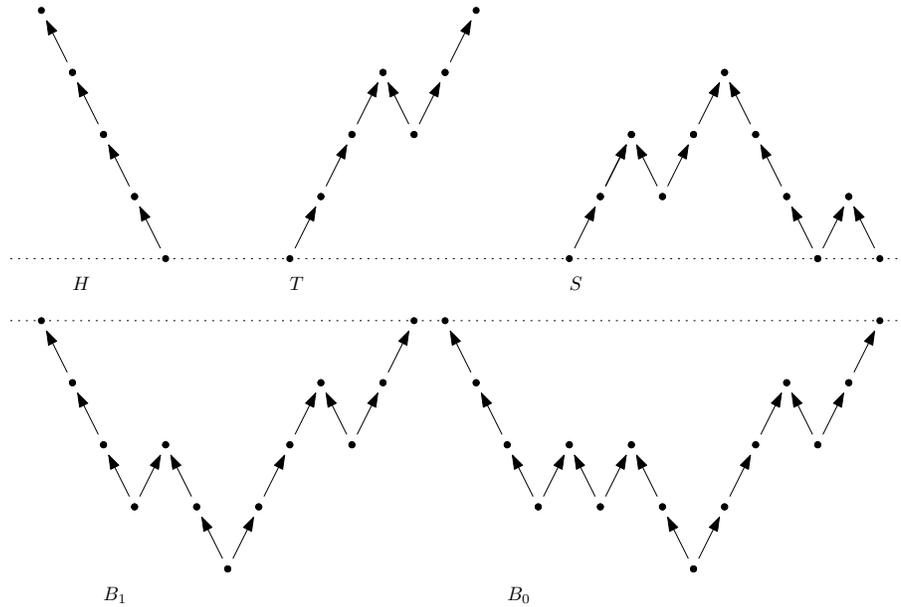}
    \end{center}
    \caption{Building blocks of $p(W)$.}
    \label{cesty}
  \end{figure}
The basic building blocks if our construction are the paths shown in Figure
\ref{cesty} ($H$ stands for {\em head}, $T$ for {\em tail}, $B$ for {\em body} and $S$ for {\em \v
sipka}---arrow in Czech language). Their initial vertices appear on the left,
terminal vertices on the right.  Except for $H$ and $T$ the paths are balanced
(i.e. their algebraic length is $0$). We will construct paths by concatenating
copies of these blocks. $H$ will always be the first path, $T$ always the last.  (The
dotted line in Figure \ref{cesty} and Figure \ref{p01110} determines vertices with level $-4$.)

\begin{defn}
Given a word $W$ on the alphabet $\{0,1\}$ of length $2^n$, we assign path $p(W)$ recursively as follows:

\begin{enumerate}
 \item $p(0)=B_0$.
 \item $p(1)=B_1$.
 \item $p(W)=p(W_1)S\overleftarrow {p(W_2)}$ where $W_1$ and $W_2$ are words of length $2^{n-1}$ such that $W=W_1W_2$. 
\end{enumerate}
Put $\overline{p}(W)=Hp(W)T$.
\end{defn}

\begin{example}
\label{exp1}
For a periodic set $S$, $s(4,S)=0110$, we construct $\overline{p}(s(4,S))$ in the following way:

$$p(0)=B_0,$$
$$p(1)=B_1,$$
$$p(01)=B_0S\overleftarrow{B_1},$$
$$p(10)=B_1S\overleftarrow{B_0},$$
$$p(0110)=B_0S\overleftarrow {B_1}SB_0\overleftarrow S\overleftarrow {B_1},$$
$$\overline{p}(0110)=HB_0S\overleftarrow {B_1}SB_0\overleftarrow S\overleftarrow {B_1}T.$$

  \begin{figure}[t]
    \begin{center}
      \includegraphics[width=13cm]{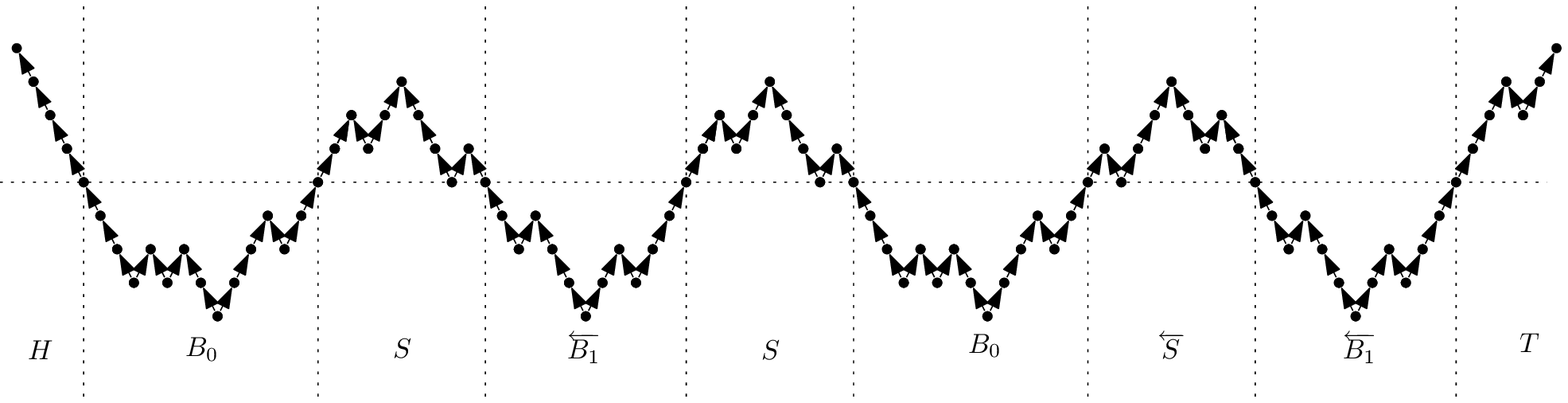}
    \end{center}
    \caption{$\overline{p}(0110)$.}
    \label{p01110}
  \end{figure}
See Figure \ref{p01110}.

\end{example}

The key result of our construction is given by the following:
\begin{prop}
\label{pscarkou}
Fix a periodic set $S$ of period $2^k$ and a periodic set $S'$ of period $2^{k'}$.
There is a homomorphism $$\varphi:\overline{p}(s(2^k,S))\to \overline{p}(s(2^{k'},S'))$$ if and only if $S\subseteq
S'$ and $k'\leq k$.

If a homomorphism $\varphi$ exists, then $\varphi$ maps the initial vertex of $\overline{p}(s(2^k,S))$ to the initial vertex of $\overline{p}(s(2^{k'},S'))$.
If $k'=k$ then $\varphi$ maps the terminal vertex of $\overline{p}(s(2^k,S))$ to the terminal vertex of $\overline{p}(s(2^{k'},S'))$. If $k'<k$
 then $\varphi$ maps the terminal vertex of $\overline{p}(s(2^k,S))$ to the initial vertex of $\overline{p}(s(2^{k'},S'))$.
\end{prop}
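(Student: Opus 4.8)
The plan is to exploit the rigidity of homomorphisms between these paths, which comes from the head and tail markers, and then reduce the whole statement to a bookkeeping of bits and periods. Throughout I would use the fact (Lemma \ref{zacatel}) that any homomorphism $\varphi$ between two of our paths sends the initial vertex to the initial vertex and therefore preserves levels, i.e. $l_{P'}(\varphi(v)) = l_P(v)$ for every vertex $v$; this holds because every $p(W)$ is balanced, so the offset carried by $H$ at the front is the only way to reach the extremal level configuration in which an initial vertex sits. Together with $a_P(p_i,p_j) = a_{P'}(\varphi(p_i),\varphi(p_j))$ and $d_P \le d_{P'}$, level preservation means $\varphi$ may only move a vertex horizontally along the target at its own level, which already pins down most of its behaviour. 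I would first record the two purely local facts about the building blocks that the figure is designed to guarantee: (a) there is a level-preserving, endpoint-fixing homomorphism $B_0 \to B_1$ but none $B_1 \to B_0$ (the block $B_1$ dips to level $-4$ where $B_0$ does not); and (b) the arrow block together with the levels forces every body block of the source to be mapped onto a body block of the target and every arrow onto an arrow, with no slipping in between. Fact (b) is the technical heart, and I flag it now as the expected main obstacle.

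For the sufficiency direction ($S \subseteq S'$ and $k' \le k$ imply a homomorphism exists) I would induct on $k$. In the base case $k = k'$ the words $s(2^k,S)$ and $s(2^k,S')$ have equal length and $S \subseteq S'$ says $s(2^k,S)$ is bitwise $\le s(2^k,S')$; I then build $\varphi$ positionally, using the identity on $H$, on $T$ and on every arrow, the identities $B_0\to B_0$ and $B_1 \to B_1$ where the bits agree, and the homomorphism $B_0 \to B_1$ of fact (a) where the source bit is $0$ and the target bit is $1$. Because that block homomorphism fixes endpoints, the pieces splice into a genuine homomorphism fixing both the initial and the terminal vertex. For the step $k' < k$ I would split $s(2^k,S) = W_1 W_2$ into halves, check using $2^{k'} \mid 2^{k-1}$ and the $2^{k'}$-periodicity of $S'$ that the two periodic sets $S_1, S_2$ with signatures $W_1, W_2$ still satisfy $S_i \subseteq S'$, and apply the induction hypothesis to obtain $\varphi_1, \varphi_2 : \overline{p}(s(2^{k-1},S_i)) \to \overline{p}(s(2^{k'},S'))$. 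The recursive shape $p(W) = p(W_1)\, S\, \overleftarrow{p(W_2)}$ is exactly what lets me glue $\varphi_1$ and a flipped copy of $\varphi_2$ across the arrow $S$. The number of times the image sweeps across the target is $2^{k-k'}$, which equals $1$ when $k'=k$ and is even when $k'<k$; an odd number of sweeps returns the terminal vertex to the target's terminal vertex while an even number returns it to the target's initial vertex, giving precisely the claimed endpoint behaviour.

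For the necessity direction I would start from an arbitrary homomorphism $\varphi$ and combine level preservation with fact (b): the $2^k$ body blocks of the source are sent, in order, onto body blocks of the target, and since the tail $T$ realises a local configuration occurring in the target only at its initial and terminal vertices, the body portion of the source must cover an integer number of full traversals of the target. As each traversal consumes $2^{k'}$ body blocks of the target, this forces $2^{k'} \mid 2^k$, i.e. $k' \le k$ (if $k' > k$ the source is too short to complete even one traversal and $T$ has nowhere legal to go). Reading off, at each body block, the implication ``source carries $B_1 \Rightarrow$ target carries $B_1$'' — forced by the non-existence of $B_1 \to B_0$ in fact (a) — and using $2^{k'}$-periodicity to identify block $i$ with block $i \bmod 2^{k'}$, yields exactly $S \subseteq S'$; the image of the terminal vertex is then fixed by the parity of the number of traversals as above.

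The step I expect to be genuinely delicate is fact (b), the rigidity forbidding $\varphi$ from slipping a body block partway onto an arrow or straddling two blocks of the target. I would prove it by a level-by-level analysis of the building blocks: since levels are preserved and the arrow $S$, the head $H$ and the tail $T$ each create level patterns that cannot be matched in the interior of a body block, any vertex of the source at an ``arrow level'' must map to an arrow and any vertex at a ``body level'' to a body, and the monotonicity coming from $d_P \le d_{P'}$ then propagates this alignment along the whole path. Once this synchronisation is established, both directions reduce to the bit-and-period bookkeeping described above.
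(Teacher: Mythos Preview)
Your proposal is correct and follows essentially the same route as the paper. The paper factors the sufficiency direction into a same-length blockwise map (Lemma~\ref{pscarkou2}) composed with an iterated ``folding lemma'' $\overline{p}(WW)\to\overline{p}(W)$ (Lemma~\ref{folding}), whereas you package the same folding into an induction on $k$ that glues the two halves across the central arrow; these are the same argument organised top-down versus bottom-up. Your fact~(b) is precisely what the paper establishes in Lemmas~\ref{phomo} and~\ref{pscarkou2} via the observation that $S$ and the $B_i$ occupy disjoint level ranges, and for the necessity direction you actually supply more detail than the paper, which simply asserts that any homomorphism ``must follow the same scheme of folding'' and omits the rest.
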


Prior to the proof of Proposition $\ref{pscarkou}$ we start with observations about homomorphisms between our special paths.
\begin{lem}
\label{zacatel}
Any homomorphism $\varphi:\overline{p}(W)\to \overline{p}(W')$ must map the initial vertex of $\overline{p}(W)$ to the initial vertex of $\overline{p}(W')$.
\end{lem}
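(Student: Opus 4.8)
The engine of the proof will be the fact, recorded just before the lemma, that a homomorphism of oriented paths preserves algebraic length: for $\varphi:\overline{p}(W)\to\overline{p}(W')$ and any vertices $x,y$ one has $a_{\overline{p}(W)}(x,y)=a_{\overline{p}(W')}(\varphi(x),\varphi(y))$. Applying this with $x=in(\overline{p}(W))$ shows that $\varphi$ shifts all levels by a single constant $c:=l_{\overline{p}(W')}(\varphi(in(\overline{p}(W))))$; that is, $l_{\overline{p}(W')}(\varphi(v))=l_{\overline{p}(W)}(v)+c$ for every vertex $v$. The whole argument then reduces to showing $c=0$ together with the fact that the image of the (level $0$) initial vertex is pinned to $in(\overline{p}(W'))$.

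Next I would read off two extremal features of the building blocks from Figure~\ref{cesty}. Since $H,T,B_0,B_1,S$ are fixed gadgets and only $H$ and $T$ are unbalanced, every path $\overline{p}(W)$ has all its levels in a fixed band $[-N,0]$: the maximum level $0$ is attained at the endpoints, while the minimum level $-N$ is attained only inside the head $H$ (the body blocks and arrows never descend that far). Moreover $H$ contains a monotone descending segment running from $in(\overline{p}(W))$ down to the unique deepest vertex at level $-N$, and this is the only monotone descent of length $N$ issuing from a level-$0$ vertex; in particular the terminal vertex, whose incident tail $T$ has a different shape, begins no such descent. Because $H$ and $T$ are literally the same gadgets in source and target, the value $N$ is common to $\overline{p}(W)$ and $\overline{p}(W')$.

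With these facts the determination of $c$ is immediate. On one hand every $l_{\overline{p}(W')}(\varphi(v))=l_{\overline{p}(W)}(v)+c$ is a genuine level of the target, hence $\le 0$; taking $v$ with $l_{\overline{p}(W)}(v)=0$ gives $c\le 0$. On the other hand, choosing $v$ at the source minimum level $-N$, its image lies at level $-N+c$, which cannot be below the target minimum $-N$, so $c\ge 0$. Thus $c=0$, the initial vertex maps to a level-$0$ vertex, and therefore $\varphi(in(\overline{p}(W)))\in\{in(\overline{p}(W')),term(\overline{p}(W'))\}$.

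It remains to exclude the terminal vertex, and this is the step I expect to be the genuine obstacle, since both endpoints sit at level $0$ and carry the same incident-edge orientation, so no one-step local check separates them. Here I would invoke the monotone-descent feature: the length-$N$ descent starting at $in(\overline{p}(W))$ has image levels $0,-1,\dots,-N$ decreasing by exactly $1$ at each step, so its image is again a monotone descent of length $N$ reaching level $-N$. In the target the only monotone descent of length $N$ from a level-$0$ vertex is the one inside the head starting at $in(\overline{p}(W'))$ (the tail admits no comparable descent, and level $-N$ occurs only in the head), whence $\varphi(in(\overline{p}(W)))=in(\overline{p}(W'))$. The only remaining work is the routine but careful inspection of Figure~\ref{cesty} needed to certify the band $[-N,0]$, the location of the minimum inside $H$, and the head/tail asymmetry underlying the uniqueness of the long descent.
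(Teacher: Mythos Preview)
Your approach is essentially the paper's, but organised more systematically: you first pin down the level shift $c=0$ by a clean band argument, then use uniqueness of the long monotone segment in $H$ to distinguish the two endpoints. The paper goes directly to the monotone-path step (``$\overline{p}(W)$ starts with the monotone path of $7$ edges; the only such subpath of $\overline{p}(W')$ is formed by the first $8$ vertices'') and then separately argues that the flip is impossible via a level obstruction; your preliminary determination of $c=0$ makes this last step cleaner.

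One caution on the gadget-reading: you assert that the monotone descent in $H$ runs all the way from level $0$ down to the global minimum $-N$, and that $-N$ is attained only inside $H$. The paper's own numbers suggest otherwise (the initial monotone run is of length $7$, while vertices at level $-8$ exist), so the minimum may well be reached in a body block rather than at the bottom of the initial descent. This does not damage your argument, but it means Step~4 should be rephrased: after $c=0$, the length-$7$ monotone segment starting at $in(\overline{p}(W))$ must map onto a monotone segment at the same levels $0,-1,\dots,-7$, and it is the uniqueness of \emph{that} segment in the target (not a descent to $-N$) which forces the image to start at $in(\overline{p}(W'))$. Your band argument for $c=0$ only needs that the source and target share the same minimum level, not where that minimum is located, so that part stands as written.
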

\begin{proof}
$\overline{p}(W)$ starts with the monotone path of 7 edges. The homomorphism $\varphi$ must map this
path to a monotone path in $\overline{p}(W')$. The only such
subpath of $\overline{p}(W')$ is formed by first 8 vertices of $\overline{p}(W')$.

It is easy to see that $\varphi$ cannot flip the path:
If $\varphi$ maps the initial vertex of $\overline{p}(W)$ to the 8th vertex of $\overline{p}(W')$ then $\overline{p}(W)$
has vertices at level $-8$ and because homomorphisms must preserve algebraic
distances, they must map to the vertex of level $1$ in $\overline{p}(W')$ and there
is no such vertex in $\overline{p}(W')$.
\end{proof}
\begin{lem}
\label{phomo}
Fix words $W,W'$ of the same length $2^k$.  Let $\varphi$ be a homomorphism $\varphi:p(W)\to p(W')$.
 Then $\varphi$ maps the initial vertex of $p(W)$ to the initial vertex of $p(W')$
 if and only if $\varphi$ maps the terminal vertex of $p(W)$ to the terminal vertex of $p(W')$.
\end{lem}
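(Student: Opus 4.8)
The plan is to prove Lemma~\ref{phomo} from two ingredients: the fact (recorded in the excerpt) that a homomorphism of oriented paths preserves algebraic distances, and hence preserves the level function up to a single global additive shift; and a rigidity property of the finite blocks of Figure~\ref{cesty}. First I would record the structural picture of $p(W)$ for $|W|=2^k$. Since each building block $B_0,B_1,S$ is balanced and dips below its endpoints (down toward the level $-4$ line of Figure~\ref{cesty}), the path $p(W)$ is itself balanced and all of its vertices lie weakly below level $0$. Consequently level $0$ is the maximal level of $p(W)$, it is attained exactly at the block-junction vertices, and reading these level-$0$ vertices from left to right splits $p(W)$ into a sequence of equal-length ``valleys'', one per block. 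Because $|W|=|W'|=2^k$, the paths $p(W)$ and $p(W')$ contain the same number $2^{k+1}-1$ of blocks, hence the same number of level-$0$ vertices $c_0,\dots,c_m$ and $c_0',\dots,c_m'$; the initial and terminal vertices are precisely the two extreme vertices $c_0,c_m$ and $c_0',c_m'$.

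Next I would prove the forward implication. Given $\varphi\colon p(W)\to p(W')$ with $\varphi(c_0)=c_0'$, preservation of algebraic distance means $\varphi$ carries each forward edge to a forward edge and each backward edge to a backward edge, so it reproduces step by step the signed sequence of level changes; thus the image of each valley of $p(W)$ is a walk in $p(W')$ realizing that valley's signed step sequence. I would then run a ``rightward march'' along the spine. The initial vertex $c_0'$ of $p(W')$ is an endpoint, hence has a single neighbour, which forces the first step of the image walk to enter the first valley of $p(W')$; marching across it lands $\varphi(c_1)$ on $c_1'$, and inductively $\varphi(c_i)=c_i'$ for every $i$. In particular $\varphi(c_m)=c_m'$, i.e. the terminal vertex maps to the terminal vertex. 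The converse is the mirror statement: equivalently, one applies the same argument to the flipped homomorphism $\varphi\colon\overleftarrow{p(W)}\to\overleftarrow{p(W')}$, under which ``$\mathrm{term}\to\mathrm{term}$'' becomes ``$\mathrm{init}\to\mathrm{init}$'', so I would write out only one direction and invoke this symmetry.

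The hard part will be the inductive step of the rightward march: at an interior level-$0$ vertex $c_i'$ both incident blocks descend, so the preserved signed step sequence alone does not decide whether the image walk continues into the next valley or folds back into the previous one. Ruling out this folding is exactly what pins the terminal vertex to its correct image, and it is where the explicit asymmetric shapes of the blocks in Figure~\ref{cesty}---in particular the arrow $S$ (\v sipka) and the distinct bodies $B_0,B_1$---are used: each valley's signed step sequence is non-palindromic, so a block cannot be traversed against its orientation, and a walk may leave a level-$0$ vertex in only one admissible direction. The finite verification of this rigidity, carried out on the homomorphisms $B_i\to B_j$ (the case $k=0$) together with $S$, is the base-case ingredient, mirroring the use of the forced monotone head already exploited for $\overline{p}(W)$ in Lemma~\ref{zacatel}.
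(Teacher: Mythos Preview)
Your block-by-block ``rightward march'' along the level-$0$ spine is a genuinely different organisation from the paper's, which proceeds by induction on $k$ via the recursive decomposition $p(W)=p(W_0)\,S\,\overleftarrow{p(W_1)}$: one applies the induction hypothesis to the first half, then argues that the central copy of $S$ must map to the central copy of $S$ in $p(W')$ (because $S$ reaches levels that the bodies $B_0,B_1$ never visit, and because $S$ cannot be flipped), and finally applies the hypothesis to the second half. Your structural picture of $p(W)$ as a concatenation of $2^{k+1}-1$ balanced valleys meeting exactly at the level-$0$ vertices is correct and useful, and the symmetry reduction for the converse direction is fine.

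The gap is the rigidity step. Under your own hypothesis that every block dips strictly below its endpoints, each block must begin with a backward edge and end with a forward edge; hence at every interior junction $c_i'$ \emph{both} incident arcs point toward $c_i'$. The first step of the image walk leaving $c_i'$ is therefore a level-decreasing step, and such a step is admissible in \emph{both} directions --- so your claim that ``a walk may leave a level-$0$ vertex in only one admissible direction'' is false. More generally, ``non-palindromic'' only rules out $X\to\overleftarrow X$; what you must exclude is the $X$-patterned walk wandering into the \emph{previous} valley of $p(W')$, which is in general a block of a different type. What actually prevents this, and what the paper uses, is that $S$ occupies levels disjoint from those of $B_0,B_1$: this forces each $S$-valley to map into the matching $S$-valley of $p(W')$ (the $S$/$\overleftarrow S$ skeletons of $p(W)$ and $p(W')$ coincide since $|W|=|W'|$), and together with the un-flippability of $S$ and the rigidity of $B_0$ this pins down the junctions. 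You could repair the march by invoking exactly this level-separation fact at each step, but then you are redoing the paper's argument block by block; the recursion is the cleaner packaging.
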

\begin{proof}

We proceed by induction on length of $W$:

For $W=i$ and $W'=j$, $i,j\in\{0,1\}$ we have $p(W)=B_i$ and $p(W')=B_j$.
There is no homomorphism $B_1\to B_0$. The unique
homomorphism $B_0\to B_{1}$ has the desired properties. The only homomorphism $B_0\to B_0$ is the isomorphism $B_0\to B_0$.

In the induction step put $W=W_0W_1$ and $W'=W'_0W'_1$ where $W_0$, $W_1$, $W'_0$, $W'_1$ are words of length $2^{k-1}$. We have $p(W)=p(W_0)S\overleftarrow {p(W_1)}$ and $p(W')=p(W'_0)S\overleftarrow {p(W'_1)}$. 

First assume that $\varphi$ maps $in(p(W))$ to $in(p(W'))$.
Then $\varphi$ clearly maps $p(W_0)$ to $p(W'_0)$ and thus by the induction
hypothesis $\varphi$ maps $term(p(W_0))$ to $term(p(W'_0))$. Because the vertices of
$S$ are at different levels than the vertices of the final blocks $B_0$ or $B_1$ of
$p(W_0')$, a copy of $S$ that follows in $p(W)$ after $p(W_0)$ must map to a copy 
of $S$ that follows in $p(W')$ after $p(W'_0)$.
Further $\varphi$ cannot flip $S$ and thus $\varphi$ maps $term(S)$ to $term(S)$.
By same argument $\varphi$ maps $p(W_1)$ to $p(W'_1)$. The initial vertex of $p(W_1)$ is the terminal vertex of $p(W)$ and it must map
to the initial vertex of $p(W'_1)$ and thus also the terminal vertex of $p(W')$.

The second possibility is that $\varphi$ maps  $term(p(W))$ to $term(p(W'))$. This can be handled similarly (starting from the terminal vertex of paths
in the reverse order).
\end{proof}
\begin{lem}
\label{pscarkou2}
Fix periodic sets $S,S'$ of the same period $2^k$.
There is a homomorphism $$\varphi:p(s(2^k,S))\to p(s(2^k,S'))$$ mapping $in(p(s(2^k,S)))$ to $in(p(s(2^k,S')))$ if and only if $S\subseteq S'$. 
\end{lem}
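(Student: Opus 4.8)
The plan is to prove both implications by induction on $k$, reducing everything to a componentwise comparison of signatures and leaning on the block analysis already carried out in the proof of Lemma~\ref{phomo}.

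First I would record the combinatorial reformulation. Writing $W=s(2^k,S)$ and $W'=s(2^k,S')$, the relation $S\subseteq S'$ is exactly the statement that the $i$-th letter of $W$ is at most the $i$-th letter of $W'$ for every position $i$ (reading $1$ as ``present''). When $k\geq 1$ and we split $W=W_0W_1$ and $W'=W'_0W'_1$ into halves of length $2^{k-1}$, this componentwise domination holds if and only if it holds separately for the two halves; letting $S_0,S_1,S'_0,S'_1$ be the period-$2^{k-1}$ sets with signatures $W_0,W_1,W'_0,W'_1$, this reads: $S\subseteq S'$ iff $S_0\subseteq S'_0$ and $S_1\subseteq S'_1$. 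The base case $k=0$ is immediate: here $p(W)$ and $p(W')$ are single blocks $B_0$ or $B_1$, and the homomorphisms among these were determined inside the proof of Lemma~\ref{phomo} (there is no homomorphism $B_1\to B_0$, while homomorphisms exist in the three remaining cases), which matches the inclusion relation on period-$1$ sets precisely.

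For the inductive step in the direction $(\Leftarrow)$, assume $S\subseteq S'$, so $S_0\subseteq S'_0$ and $S_1\subseteq S'_1$. By the induction hypothesis there are homomorphisms $\varphi_0\colon p(W_0)\to p(W'_0)$ and $\varphi_1\colon p(W_1)\to p(W'_1)$, each sending the initial vertex to the initial vertex; by Lemma~\ref{phomo} each also sends the terminal vertex to the terminal vertex. I would then assemble the required homomorphism $\varphi\colon p(W)=p(W_0)S\overleftarrow{p(W_1)}\to p(W')=p(W'_0)S\overleftarrow{p(W'_1)}$ by using $\varphi_0$ on $p(W_0)$, the identity on the common block $S$, and $\varphi_1$ on $\overleftarrow{p(W_1)}$. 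The only thing to check is consistency at the two junction vertices, and this follows precisely because $\varphi_0$ matches $term(p(W_0))$ with $in(S)$ while $\varphi_1$ matches $term(S)$ with $term(p(W_1))$, the gluing vertex of the flipped block; the resulting $\varphi$ is a homomorphism fixing the initial vertex.

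For the direction $(\Rightarrow)$, suppose $\varphi\colon p(W)\to p(W')$ maps the initial vertex to the initial vertex. Here I would invoke the structural analysis from the proof of Lemma~\ref{phomo}: the levels of the vertices of $S$ differ from those of the surrounding body blocks, so $\varphi$ can neither shift the decomposition nor flip the arrow $S$, and therefore $\varphi$ restricts to homomorphisms $p(W_0)\to p(W'_0)$ and $p(W_1)\to p(W'_1)$, each sending the initial vertex to the initial vertex. By the induction hypothesis this yields $S_0\subseteq S'_0$ and $S_1\subseteq S'_1$, hence $S\subseteq S'$. The one genuine obstacle is this decomposition step --- justifying that a homomorphism must respect the block boundaries rather than smearing across them --- but this is exactly the content already extracted in the proof of Lemma~\ref{phomo}, so it can be reused rather than reproved.
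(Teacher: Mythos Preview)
Your argument is correct and essentially matches the paper's: both directions reduce to the $B_0\to B_1$ versus $B_1\not\to B_0$ dichotomy, using the level separation of $S$ from the body blocks (via Lemma~\ref{phomo}) to force the block decomposition to be respected. The only cosmetic difference is that you organize the argument by induction on $k$, halving the word at each step, whereas the paper argues in one pass at the level of the individual $B$-blocks; the content is the same.
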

\begin{proof}
If $S\subseteq S'$ then the Lemma follows from the construction of $p(s(2^k,S))$. Every digit 1 of $s(2^k,S)$ has a corresponding copy of $B_1$ in $p(s(2^k,S))$ and every
digit 0 has a corresponding copy of $B_0$ in $p(s(2^k,S))$.  It is easy to build a homomorphism $\varphi$ by concatenating a homomorphism $B_0\to B_1$
and identical maps of $S$, $B_0$ and $B_1$.

In the opposite direction, assume that there is a homomorphism $\varphi$ from $p(s(2^k,S))$ to $p(s(2^k,S'))$. By the assumption and Lemma \ref{phomo}, $\varphi$ must be map
$term(p(s(2^k,S)))$ to $term(p(s(2^k,S')))$. Because $S$ use vertices at different
levels than $B_0$ and $B_1$, all copies of $S$ must be mapped to copies of $S$. Similarly copies of $B_0$ and $B_1$ must be mapped to copies of $B_0$ or $B_1$. If $S\not \subseteq S'$ then there is position $i$ such that $i$-th letter of $s(2^k,S)$ is 1 and $i$-th letter of $s(2^k,S')$ is 0. It follows that the copy of $B_1$ corresponding to this letter would have to map to a copy of $B_0$. This contradicts with the fact that there is no homomorphism $B_1\to B_0$.
\end{proof}

\begin{lem}[folding]
\label{folding}
For a word $W$ of length $2^k$, there is a homomorphism $$\varphi:\overline{p}(WW)\to \overline{p}(W)$$ mapping $in(\overline{p}(WW))$ to $in(\overline{p}(W))$
and $term(\overline{p}(WW))$ to $in(\overline{p}(W))$.
\end{lem}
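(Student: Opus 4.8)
The plan is to exhibit $\varphi$ explicitly by folding $\overline{p}(WW)$ in half onto $\overline{p}(W)$. Expanding the recursive definition with $W_1=W_2=W$ gives $p(WW)=p(W)\,S\,\overleftarrow{p(W)}$, so that
$$\overline{p}(WW)=H\,p(W)\,S\,\overleftarrow{p(W)}\,T,\qquad \overline{p}(W)=H\,p(W)\,T.$$
Since $B_0,B_1,S$ are balanced, an easy induction shows $p(W)$ is balanced; hence both $in(p(W))=term(H)$ and $term(p(W))$ lie at the bottom level $-7$ reached by the head, while $in(\overline{p}(W))$ and $term(\overline{p}(W))$ lie at level $0$. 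First I would define $\varphi$ on the prefix $H\,p(W)$ of $\overline{p}(WW)$ to be the identity embedding onto the prefix $H\,p(W)$ of $\overline{p}(W)$. This already carries $in(\overline{p}(WW))=in(H)$ to $in(\overline{p}(W))$, as required, and sends the terminal vertex of the forward copy of $p(W)$ to $term(p(W))$ at level $-7$.

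Next I would fold the remaining suffix $S\,\overleftarrow{p(W)}\,T$ back over the prefix. On $\overleftarrow{p(W)}$ I retrace: since $\overleftarrow{p(W)}$ is the same oriented graph as $p(W)$ with its endpoints swapped, mapping it by the identity on vertices sends it isomorphically onto the (already placed) forward copy of $p(W)$, so that $in(\overleftarrow{p(W)})=term(p(W))$ stays at $term(p(W))$ and $term(\overleftarrow{p(W)})=in(p(W))$ lands on $in(p(W))=term(H)$. Finally, the tail $T$ has algebraic length $+7$ and its levels lie in $[-7,0]$, so it maps homomorphically onto $\overleftarrow{H}$, the backward traversal of the head, which climbs from level $-7$ back to level $0$; this carries $term(\overline{p}(WW))=term(T)$ to $in(H)=in(\overline{p}(W))$. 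Thus both endpoints of $\overline{p}(WW)$ land on $in(\overline{p}(W))$, exactly as the lemma demands, and the retracing piece together with $T\to\overleftarrow{H}$ are honest homomorphisms by the preceding level bookkeeping.

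The one junction that genuinely uses the geometry of the blocks, and which I expect to be the main obstacle, is the central arrow $S$. In $\overline{p}(WW)$ the block $S$ joins the end of the forward copy of $p(W)$ to the start of the reversed copy, both sitting at $term(p(W))$ (level $-7$), so I need $\varphi(S)$ to be a closed walk based at the image vertex $term(p(W))$ whose edge orientations match those of $S$ and which uses only edges of $\overline{p}(W)$ present near $term(p(W))$ (namely the last edge of the final body block and the first edge of $T$). This is precisely what the symmetric, balanced shape of $S$ in Figure~\ref{cesty} is designed to allow: $S$ folds about its own midpoint onto its first half, yielding the required out-and-back walk anchored at $term(p(W))$. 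Checking this fold of $S$, and with it the consistency of the edge orientations at the two junctions $p(W)\,S$ and $S\,\overleftarrow{p(W)}$, is the crux; once it is verified, concatenating the three pieces gives the desired homomorphism $\varphi\colon\overline{p}(WW)\to\overline{p}(W)$ with $in(\overline{p}(WW))\mapsto in(\overline{p}(W))$ and $term(\overline{p}(WW))\mapsto in(\overline{p}(W))$.
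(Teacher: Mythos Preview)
Your global plan---identify $H\,p(W)$ with the prefix, retrace $\overleftarrow{p(W)}$ over the forward copy, and send the final $T$ back along $\overleftarrow{H}$---is exactly the paper's fold. The only divergence is at the central $S$, which you correctly flag as the crux but leave unresolved: you propose folding $S$ about its midpoint and squeezing the result into ``the last edge of the final body block and the first edge of $T$,'' which is both vague and unnecessarily constrained. The paper's move here is simpler: map $S$ into $T$ (not just its first edge), so that $in(S)\mapsto in(T)=term(p(W))$ and $term(S)\mapsto in(T)$ as well, i.e.\ $S$ walks into $T$ and back. The block $T$ is designed precisely to absorb $S$ in this way; once you use all of $T$ rather than one edge of it, no symmetry hypothesis on $S$ is needed and the junction check becomes immediate. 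With that single change your argument is complete and coincides with the paper's.
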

\begin{proof}
By definition $$\overline{p}(WW)=Hp(W)S\overleftarrow{p(W)}T$$ and $$\overline{p}(W)=Hp(W)T.$$ 
The homomorphism $\varphi$
maps the first copy of $p(W)$ in $\overline{p}(WW)$ to a copy of $p(W)$ in $\overline{p}(W)$, a copy of $S$
is mapped to $T$ such that the terminal vertex of $S$ maps to the initial vertex of $T$
and thus it is possible to map a copy of $\overleftarrow{p(W)}$ in $\overline{p}(WW)$ to the
same copy of $p(W)$ in $\overline{p}(W)$.
\end{proof}

We will use the folding Lemma iteratively. By composition of homomorphisms there is
also homomorphism $p(WWWW)\to p(WW)\to p(W)$. (From the path constructed from $2^k$ copies of $W$ to
$p(W)$.)
\begin{proof}[Proof (of Proposition \ref{pscarkou})]
Assume the existence of a homomorphism $\varphi$ as in Proposition \ref{pscarkou}.
First observe that $k'\leq k$ (if $k<k'$ then there is a copy of $T$ in $\overline{p}(s(2^k,S))$ would have to map
into the middle of $\overline{p}(s(2^{k'},S'))$, but there are no vertices at the level 0 in
$\overline{p}(s(2^{k'},S'))$ except for the initial and terminal vertex).

For $k=k'$ the statement follows directly from Lemma \ref{pscarkou2}.

For $k'<k$ denote by $W''$ the word that consist of $2^{k-k'}$ concatenations of $W'$. Consider a homomorphism $\varphi'$ from $p(W)$ to $p(W'')$ 
mapping $in(p(W)$ to $in(p(W''))$. $W$ and $W''$
have the same length and such a homomorphism exists by Lemma \ref{pscarkou2} if
and only if $S\subseteq S'$. Applying Lemma \ref{folding} there is a homomorphism $\varphi'': p(W'')\to p(W')$.
A homomorphism $\varphi$ can be obtained by composing $\varphi'$ and $\varphi''$.
It is easy to see that any homomorphism $\overline{p}(W)\to \overline{p}(W')$ must follow the same scheme of
``folding'' the longer path $\overline{p}(W)$ into $\overline{p}(W')$ and thus there is a homomorphism
$\varphi$ if and only if $S\subseteq S'$. We omit the details.
\end{proof}

For a periodic set $S$ denote by $S^{(i)}$ the inclusion maximal
periodic subset of $S$ with period $i$. (For example for $s(4,S)=0111$ we have $s(2,S^{(2)})=01$.)
\begin{defn}
For $S\in \Periodic$ let $i$ be the minimal integer such that $S$ has period $2^i$.
Let $\Embed{\Periodic}{\Paths}(S)$  be the concatenation of the paths $$H,$$
$$\overline{p}(s(1,S^{(1)}))\overleftarrow{\overline{p}(s(1,S^{(1)}))},$$
$$\overline{p}(s(2,S^{(2)}))\overleftarrow{\overline{p}(s(2,S^{(2)}))},$$
$$\overline{p}(s(4,S^{(4)}))\overleftarrow{\overline{p}(s(4,S^{(4)}))},$$
\centerline{\ldots,}
$$\overline{p}(s(2^{i-1},S^{(2^{i-1})}))\overleftarrow{\overline{p}(s(2^{i-1},S^{(2^{i-1})}))},$$
$$\overline{p}(s(2^i,S))\overleftarrow {\overline{p}(s(2^i,S))}.$$
\end{defn}

\begin{thm}
$\Embed{\Periodic}{\Paths}(v)$ is an embedding of $(\Periodic,\subseteq)$ to $(\Paths,\leq_\Paths)$.
\end{thm}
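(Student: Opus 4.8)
The plan is to show that $\Embed{\Periodic}{\Paths}$ is an order-embedding, i.e.\ that for all $S,S'\in\Periodic$ one has $S\subseteq S'$ if and only if $\Embed{\Periodic}{\Paths}(S)\leq_\Paths\Embed{\Periodic}{\Paths}(S')$; injectivity and hence the embedding property then follow from antisymmetry (if the two images are homomorphism-equivalent then $S\subseteq S'$ and $S'\subseteq S$, so $S=S'$). The whole argument rests on Proposition~\ref{pscarkou}, which already decides the existence of a homomorphism between two doubled-head paths $\overline{p}(s(2^k,S))$ in terms of an inclusion together with a comparison of periods, and on the folding Lemma~\ref{folding}. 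The first thing I would record is the elementary observation that $S\subseteq S'$ implies $S^{(2^j)}\subseteq {S'}^{(2^j)}$ for every $j$: any $2^j$-periodic subset of $S$ is a $2^j$-periodic subset of $S'$, hence is contained in the maximal one. This is exactly what lets the corresponding pieces of the two concatenations be compared piece by piece.

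For the forward direction I would build a homomorphism $\varphi\colon\Embed{\Periodic}{\Paths}(S)\to\Embed{\Periodic}{\Paths}(S')$ block by block, a block being one doubled piece $\overline{p}(s(2^j,S^{(2^j)}))\overleftarrow{\overline{p}(s(2^j,S^{(2^j)}))}$. Let $i,i'$ be the minimal period exponents of $S,S'$. For each $j\le i'$ I would send the $j$-th source block onto the $j$-th target block using the equal-period homomorphism supplied by Proposition~\ref{pscarkou} (with $k=k'=j$), which is available precisely because $S^{(2^j)}\subseteq {S'}^{(2^j)}$. For the remaining source blocks with $i'<j\le i$ (these occur only when $i>i'$) the target has no block of matching period, so I would fold them into the last target block $\overline{p}(s(2^{i'},S'))$: since $S'$ is $2^j$-periodic we have ${S'}^{(2^j)}=S'$, whence $S^{(2^j)}\subseteq S'$, and Proposition~\ref{pscarkou} in the folding case $k=j>k'=i'$ (via Lemma~\ref{folding}) yields the required map. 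Because every doubled block begins and ends at a level-$0$ vertex and all of these partial homomorphisms send those endpoints to level-$0$ initial vertices of the target, the partial maps agree on the shared concatenation vertices and glue into a single homomorphism; checking this compatibility is routine once the equal-period and folding cases are written out.

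The backward direction is where the real work lies, and I expect it to be the main obstacle. Given a homomorphism $\varphi\colon\Embed{\Periodic}{\Paths}(S)\to\Embed{\Periodic}{\Paths}(S')$ I would first invoke Lemma~\ref{zacatel} to force $\varphi$ to send the initial vertex to the initial vertex, so that $\varphi$ preserves levels and algebraic distances throughout. The crucial step is then to show that the restriction of $\varphi$ to the forward copy $\overline{p}(s(2^i,S))$ sitting inside the last source block must land inside a single copy $\overline{p}(s(2^j,{S'}^{(2^j)}))$ occurring in the target: the leading $H$, the arrow blocks $S$ and the tail $T$ occupy distinguished levels, and a level-preserving homomorphism cannot spread such a path across the low-level valleys that separate consecutive target pieces, nor exploit the internal flips of the doubled blocks (this is the same level- and algebraic-length bookkeeping used in Lemmas~\ref{zacatel}, \ref{phomo} and \ref{pscarkou2}). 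Once the image is confined to one target copy, Proposition~\ref{pscarkou} forces $j\le i$ and $S\subseteq {S'}^{(2^j)}\subseteq S'$, which is exactly the inclusion sought. Combining the two implications gives the order-embedding, and composing it with the universality of $(\Periodic,\subseteq)$ established in Section~\ref{preiodickesection} yields the universality of $(\Paths,\leq_\Paths)$.
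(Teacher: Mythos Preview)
Your proposal is correct and follows essentially the same route as the paper: a block-by-block construction for the forward direction (matching periods up to $i'$ and folding the remaining blocks into the last target piece via Proposition~\ref{pscarkou} and Lemma~\ref{folding}), and a level-preservation argument for the backward direction that confines the image of $\overline{p}(s(2^i,S))$ to a single target piece $\overline{p}(s(2^{k'},S'^{(2^{k'})}))$ so that Proposition~\ref{pscarkou} yields $S\subseteq S'^{(2^{k'})}\subseteq S'$. One small remark: in the backward direction you cannot literally invoke Lemma~\ref{zacatel}, since that lemma is stated for paths of the form $\overline{p}(W)$; the paper uses the \emph{same argument} applied to the longer initial monotone segment $HH$ of $\Embed{\Periodic}{\Paths}(S)$, which is exactly what you go on to describe.
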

\begin{proof}
Fix $S$ and $S'$ in $\Periodic$ of periods $2^i$ and $2^{i'}$ respectively.

Assume that $S\subseteq S', i>i'$.  Then the homomorphism $\varphi:\Embed{\Periodic}{\Paths}(S)\to\Embed{\Periodic}{\Paths}(S')$ can be constructed
via the concatenation of homomorphisms:
$$H\to H,$$
$$\overline{p}(s(1,S^{(1)}))\overleftarrow{\overline{p}(s(1,S^{(1)}))}\to \overline{p}(s(1,S'^{(1)}))\overleftarrow{\overline{p}(s(1,S'^{(1)}))},$$
$$\overline{p}(s(1,S^{(2)}))\overleftarrow{\overline{p}(s(1,S^{(2)}))}\to \overline{p}(s(2,S'^{(2)}))\overleftarrow{\overline{p}(s(2,S'^{(2)}))},$$
\centerline{\ldots,}
$$\overline{p}(s(2^{i'-1},S^{(2^{i'-1})}))\overleftarrow{\overline{p}(s(2^{i'-1},S^{(2^{i'-1})}))}\to \overline{p}(s(2^{i'-1},S'^{(2^{i'-1})}))\overleftarrow{\overline{p}(s(2^{i'-1},S'^{(2^{i'-1})}))},$$
$$\overline{p}(s(2^{i'},S^{(2^{i'})}))\overleftarrow{\overline{p}(s(2^{i'},S^{(2^{i'})}))}\to \overline{p}(s(2^{i'},S')),$$
$$\overline{p}(s(2^{i'+1},S^{(2^{i'+1})}))\overleftarrow{\overline{p}(s(2^{i'+1},S^{(2^{i'+1})}))}\to \overline{p}(s(2^{i'},S')),$$
\centerline{\ldots,}
$$\overline{p}(s(2^{i},S))\overleftarrow{\overline{p}(s(2^{i'},S))}\to \overline{p}(s(2^{i'},S')).$$
Individual homomorphisms exists by Proposition \ref{pscarkou}.  For $i\leq i'$ the
construction is even easier.

In the opposite direction assume that there is a homomorphism
$\varphi:\Embed{\Periodic}{\Paths}(S)\to\Embed{\Periodic}{\Paths}(S')$.
$\Embed{\Periodic}{\Paths}(S)$ starts by two concatenations of $H$ and thus 
a long monotone path and using a same argument as in Lemma \ref{zacatel}, $\varphi$
must map the initial vertex of $\Embed{\Periodic}{\Paths}(S)$ to the initial vertex of
$\Embed{\Periodic}{\Paths}(S')$.  It follows that $\varphi$ preserves levels of vertices.
It follows that for every $k=1,2,4,\ldots, 2^i$, $\varphi$ must map $\overline{p}(s(k,S^{(k)})$ to $\overline{p}(s(k',S'^{(k')}))$  for some $k'\leq k, k'=1,2,4,\ldots, 2^{i'}$.  By application of
Proposition \ref{pscarkou} it follows that $S^{(k)}\subseteq S'^{(k')}$. In particular $S\subseteq S'^{(k')}$. This holds only if $S\subseteq S'$.
\end{proof}
\begin{thm}[\cite{HN-paths}]
\label{pathuniv}
The quasi-order $(\Paths,\leq_\Paths)$ contains universal partial order.
\end{thm}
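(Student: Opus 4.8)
The plan is to obtain Theorem \ref{pathuniv} as an immediate consequence of the two preceding results: the universality of $(\Periodic,\subseteq)$ and the fact that $\Embed{\Periodic}{\Paths}$ is an order-embedding of $(\Periodic,\subseteq)$ into $(\Paths,\leq_\Paths)$. Since embeddings of (quasi-)orders compose, it suffices to chain these two facts together and then check that the resulting image is an honest partial order.

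First I would recall that $(\Periodic,\subseteq)$ is a universal partial order, so every countable partial order $(P,\leq_P)$ admits an embedding $\iota\colon (P,\leq_P)\to(\Periodic,\subseteq)$. Composing with $\Embed{\Periodic}{\Paths}$ yields a map $\Embed{\Periodic}{\Paths}\circ\iota$ from $(P,\leq_P)$ into $(\Paths,\leq_\Paths)$. Because each factor satisfies $x\leq y$ if and only if the corresponding images are comparable in the target order, the composite satisfies $p\leq_P q$ if and only if $\Embed{\Periodic}{\Paths}(\iota(p))\leq_\Paths\Embed{\Periodic}{\Paths}(\iota(q))$, so it is again an order-embedding. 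This is the routine half of the argument.

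The one point needing care --- and the only genuine obstacle --- is that $\leq_\Paths$ is a quasi-order rather than a partial order, since homomorphically equivalent paths are distinct yet mutually comparable. I would argue that the image of $\Embed{\Periodic}{\Paths}$ nonetheless carries a genuine partial order isomorphic to $(\Periodic,\subseteq)$: if $S\neq S'$ then, since $\subseteq$ is antisymmetric on $\Periodic$, at least one of $S\subseteq S'$ or $S'\subseteq S$ fails, and by the embedding property the corresponding comparability of $\Embed{\Periodic}{\Paths}(S)$ and $\Embed{\Periodic}{\Paths}(S')$ fails as well; in particular no two distinct members of the image are homomorphically equivalent. Hence the image, equipped with $\leq_\Paths$, is a partial order, and by the previous paragraph it embeds every countable partial order. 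This establishes that $(\Paths,\leq_\Paths)$ contains a universal partial order.
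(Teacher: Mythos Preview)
Your proposal is correct and matches the paper's approach: the paper presents Theorem~\ref{pathuniv} as an immediate consequence of the preceding theorem (that $\Embed{\Periodic}{\Paths}$ is an embedding of $(\Periodic,\subseteq)$ into $(\Paths,\leq_\Paths)$) together with the already-established universality of $(\Periodic,\subseteq)$, without even writing out a separate proof. Your additional care in verifying that the image really is a partial order inside the quasi-order is a useful clarification that the paper leaves implicit.
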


In fact our new proof of Corollary \ref{pathuniv} gives the following
strengthening for rooted homomorphisms of paths. A {\em plank} $(P,r)$ is an
oriented path rooted at the initial vertex $r=in(P)$.
Given planks $(P,r)$ and $(P',r')$, a homomorphism $\varphi:(P,r)\to (P',r')$ is a homomorphism
$P\to P'$ such that $\varphi(r)=r'$.

\begin{thm}
\label{klacky}
The quasi-order formed by all planks ordered by the existence of homomorphisms contains a universal partial order.
\end{thm}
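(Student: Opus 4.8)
The plan is to reuse the very construction $\Embed{\Periodic}{\Paths}$ introduced above, now regarding each path $\Embed{\Periodic}{\Paths}(S)$ as a plank rooted at its initial vertex $in(\Embed{\Periodic}{\Paths}(S))$. The point is that this construction already forces homomorphisms to preserve the root, so the rooted statement should follow with essentially no additional work beyond that observation.

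First I would isolate the structural fact already exploited in the unrooted case: every path $\Embed{\Periodic}{\Paths}(S)$ begins with two consecutive copies of the head $H$, hence with a long monotone initial segment. By the argument of Lemma \ref{zacatel}, any homomorphism $\varphi:\Embed{\Periodic}{\Paths}(S)\to\Embed{\Periodic}{\Paths}(S')$ must carry this monotone prefix onto the unique monotone prefix of the target and cannot flip it (the level constraint rules that out), so $\varphi$ necessarily sends $in(\Embed{\Periodic}{\Paths}(S))$ to $in(\Embed{\Periodic}{\Paths}(S'))$. In other words, on this particular family every ordinary path homomorphism is automatically root-preserving, i.e. is a plank homomorphism.

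Granting this, the theorem follows at once. If there is a plank homomorphism $\Embed{\Periodic}{\Paths}(S)\to\Embed{\Periodic}{\Paths}(S')$ then in particular there is a path homomorphism, so the embedding of $(\Periodic,\subseteq)$ into $(\Paths,\leq_\Paths)$ established above yields $S\subseteq S'$; conversely, if $S\subseteq S'$ then the homomorphism built in that proof is assembled by concatenating maps beginning with $H\to H$, hence already sends $in$ to $in$ and is a valid plank homomorphism. Thus $\Embed{\Periodic}{\Paths}$ also embeds $(\Periodic,\subseteq)$ into the plank quasi-order, and since $(\Periodic,\subseteq)$ is universal (Section \ref{preiodickesection}), this quasi-order contains a universal partial order as a suborder. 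I expect no genuine obstacle: the whole difficulty was already dispatched in showing that the construction forces root-preservation, and the rooted refinement is merely the observation that this makes the existing embedding survive the passage to rooted homomorphisms. The only care needed is to confirm by inspection that both directions of the original argument produce, respectively admit only, root-preserving maps — which they do.
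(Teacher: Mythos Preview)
Your proposal is correct and matches the paper's approach exactly: the paper does not give a separate proof of Theorem~\ref{klacky} but simply notes that the construction $\Embed{\Periodic}{\Paths}$ already forces root-preservation (via the Lemma~\ref{zacatel} argument), so the embedding of $(\Periodic,\subseteq)$ into $(\Paths,\leq_\Paths)$ is automatically an embedding into the plank quasi-order.
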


\section{Other classes}
In this section we outline techniques of proving the universality of
homomorphism order on a given class $\K$ of graphs (or relational structures
in general) by embedding the class of oriented paths.  We prove following
two results that presented an open problem for several years.
\begin{thm}
\label{degree}
Denote by $(\Delta_k,\leq_{\Delta_k})$ the class of all finite graphs  with the
maximal degree $\leq k$ ordered by the existence of a homomorphism.  The
quasi-order $(\Delta_k,\leq_{\Delta_k})$ contains a universal partial order if
and only if $k\geq 3$.
\end{thm}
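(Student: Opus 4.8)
The plan is to prove the two implications separately, treating the negative direction $k\le 2$ first since it is the easier one.

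For $k\le 2$ every graph in $\Delta_k$ is a disjoint union of paths and cycles, and I would show that the homomorphism quasi-order on $\Delta_2$ is in fact \emph{total}. Indeed, a max-degree-$2$ graph maps to $K_2$ exactly when it is bipartite, and maps to the odd cycle $C_{2n+1}$ exactly when its odd girth is at least $2n+1$; combined with the fact that $C_{2m+1}\to C_{2n+1}$ iff $m\ge n$, this shows that every graph of $\Delta_2$ is homomorphism-equivalent to precisely one of $K_1$ (edgeless), $K_2$ (bipartite with an edge), or some $C_{2m+1}$, the class being determined only by the presence of an edge and by the odd girth. These classes form a single chain $K_1 <_h K_2 <_h \cdots <_h C_7 <_h C_5 <_h C_3$, so any two members of $\Delta_2$ are comparable and every subset of $\Delta_2$ carries a linear order. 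A universal partial order must embed a two-element antichain, which no chain admits; hence $\Delta_k$ contains no universal partial order for $k\le 2$.

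For $k\ge 3$ it suffices to treat $k=3$, since $\Delta_3\subseteq\Delta_k$. By Theorem \ref{pathuniv} the order $(\Paths,\leq_\Paths)$ on oriented paths already contains a universal partial order, so it is enough to produce an order-preserving embedding of $(\Paths,\leq_\Paths)$ into $(\Delta_3,\leq_{\Delta_3})$. I would define a gadget (arc-replacement) map $P\mapsto\Theta(P)$ sending each oriented path to an undirected graph of maximum degree $3$: fix one small \emph{asymmetric} undirected gadget with a distinguished source end and target end, marked by a distinguishing flag (for instance a short path carrying a single pendant vertex placed nearer the source), and replace every arc $(u,v)$ of $P$ by a copy of this gadget with its source identified to $u$ and its target to $v$. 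Since $P$ is a path, each vertex meets at most two arcs, so identified vertices get degree at most $2$, the flag vertex has degree $3$, and all other gadget vertices degree at most $2$; thus $\Theta(P)\in\Delta_3$. Using rooted paths (planks, Theorem \ref{klacky}) instead, together with a globally unique root-marking gadget, is a convenient variant, since it pins the initial vertex and lets orientation propagate.

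The heart of the argument, and the step I expect to be the main obstacle, is the \emph{faithfulness} of $\Theta$: that $P\to P'$ as oriented graphs iff $\Theta(P)\to\Theta(P')$ as undirected graphs. One direction is routine, as an orientation-preserving homomorphism of paths extends gadget by gadget. The converse needs a rigidity lemma asserting that every homomorphism $\Theta(P)\to\Theta(P')$ must carry gadgets to gadgets respecting source and target, so that it descends to a homomorphism $P\to P'$; this is the undirected, bounded-degree analogue of the level-preservation lemmas already established for oriented paths (Lemma \ref{zacatel} and Lemma \ref{phomo}). The delicate point is that a degree-$1$ flag is flexible under homomorphism, so I would, if necessary, reinforce the flag by a rigid pendant configuration still of degree at most $3$, and then argue from the forced images of the flags and the bounded-degree local structure that no folding or orientation reversal can occur. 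Granting this, $\Theta$ is an order-embedding of $(\Paths,\leq_\Paths)$ into $(\Delta_3,\leq_{\Delta_3})$, and composing with the embedding of a universal partial order into $(\Paths,\leq_\Paths)$ from Theorem \ref{pathuniv} places a universal partial order inside $\Delta_3\subseteq\Delta_k$, completing the case $k\ge 3$.
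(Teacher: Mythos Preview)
Your overall plan coincides with the paper's: the negative direction for $k\le 2$ via the chain structure of $\Delta_2$ is correct (the paper in fact leaves this direction implicit), and for $k\ge 3$ both you and the paper transfer the universality of $(\Paths,\leq_\Paths)$ into $\Delta_3$ by an arc-replacement (indicator) construction $P\mapsto P*(I,a,b)$ and then argue faithfulness.

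The gap is in your choice of gadget. A ``short path carrying a single pendant vertex'' is nowhere near rigid: paths fold freely under homomorphism and a degree-$1$ pendant can be sent to any neighbour of its attachment point, so nothing prevents a homomorphism $\Theta(P)\to\Theta(P')$ from collapsing or reversing gadgets, and arc orientation cannot be recovered. Your own hedge (``reinforce the flag by a rigid pendant configuration'') is precisely where the content lies, and you have not supplied it. The paper resolves this by choosing the indicator $(I,a,b)$ so that $I'=I\setminus\{a,b\}$ is a \emph{rigid} graph (a core with trivial automorphism group), with girth large enough that the only short cycles of $P*(I,a,b)$ sit inside individual copies of $I'$. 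The faithfulness argument is then structural rather than local: every nontrivial block of $P*(I,a,b)$ is a copy of $I'$; any homomorphism $g\colon P*(I,a,b)\to P'*(I,a,b)$ must carry blocks to blocks; rigidity of $I'$ forces each block to map isomorphically onto a block of the target, fixing the roles of $a$ and $b$; and this yields the desired $f\colon P\to P'$. So the missing idea is to use a $2$-connected rigid core as the gadget body and to argue through the block decomposition, not through flags or level-preservation analogues.
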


\begin{thm}
  \label{thero}
  Denote by $(\K,\leq_\K)$ the class of all cubic planar graphs ordered by the existence of homomorphism. 
  The quasi-order $(\K,\leq_\K)$ contains a universal partial order.
\end{thm}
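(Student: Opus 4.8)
The plan is to derive the result from the already-established universality of the homomorphism order on oriented paths (Theorem \ref{pathuniv}, and its rooted refinement Theorem \ref{klacky}) by means of a gadget-replacement (indicator) construction $\Embed{\Paths}{\K}$ carrying each oriented path $P$ to a cubic planar graph $\Embed{\Paths}{\K}(P)$. The aim is to design this construction so that it is a full embedding of the quasi-order, i.e. so that for all oriented paths $P,P'$ in the universal family one has $P\leq_\Paths P'$ if and only if $\Embed{\Paths}{\K}(P)\leq_\K \Embed{\Paths}{\K}(P')$. Composing $\Embed{\Paths}{\K}$ with the universal embedding $\Embed{\Periodic}{\Paths}$ then yields a universal partial order inside $(\K,\leq_\K)$. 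It suffices to restrict attention to the concrete paths $\Embed{\Periodic}{\Paths}(S)$, which begin with the long monotone head $H$ and carry a well-behaved level function; this rigidity will be exploited below.

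The construction replaces each arc of $P$ by one of two small rigid planar gadgets, $G^{+}$ for a forward edge and $G^{-}$ for a backward edge, and each vertex by a connector; the gadgets are designed with prescribed port-degrees so that every vertex of $\Embed{\Paths}{\K}(P)$ has degree exactly $3$. Since an oriented path admits an obvious planar drawing and each gadget is attached locally along this drawing, the resulting graph is planar, so $\Embed{\Paths}{\K}(P)\in\K$. A key point is that, unlike the oriented paths themselves (which are bipartite and would collapse under undirected homomorphism), the gadgets must contain short odd cycles: these both make $\Embed{\Paths}{\K}(P)$ non-bipartite, so that the homomorphism order does not degenerate, and supply the asymmetry that encodes the orientation of each arc, with $G^{+}$ and $G^{-}$ admitting no orientation-reversing homomorphism between them. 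The head $H$ is likewise replaced by a long monotone chain of gadgets whose role is to force, as in Lemma \ref{zacatel}, that any homomorphism between two images respects the global level structure.

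The main obstacle is the homomorphism correspondence, and in particular the reflection direction: one must show that a homomorphism $\Embed{\Paths}{\K}(P)\to\Embed{\Paths}{\K}(P')$ can exist only when $P\leq_\Paths P'$. The plan is to prove a rigidity lemma for the gadgets stating that every such homomorphism maps gadgets to gadgets, preserves the forward/backward type of each arc, and fixes the image of the initial connector---the analogue of Lemmas \ref{zacatel} and \ref{phomo} together with the folding behaviour of Proposition \ref{pscarkou}. Granting this, a homomorphism of images induces a level-preserving, orientation-preserving map of the underlying oriented paths, i.e. a homomorphism $P\to P'$; conversely, any homomorphism $P\to P'$ extends gadget-by-gadget to the images. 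Verifying that the chosen gadgets are simultaneously cubic, planar, non-bipartite, and rigid in this strong sense is the delicate part, but once such gadgets are exhibited the remainder follows the pattern already used for oriented paths.
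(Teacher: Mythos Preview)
Your approach is essentially the same as the paper's: reduce to the universality of oriented paths via an indicator (gadget-replacement) construction, producing planar graphs of small degree whose homomorphisms reflect and preserve path homomorphisms. The paper, however, executes this more simply in two respects. First, it uses a \emph{single} rigid indicator $(I,a,b)$ with asymmetric ports rather than your pair $G^{+},G^{-}$; the orientation of an arc $(x,y)$ is encoded by which endpoint receives $a$ and which receives $b$, so no separate forward/backward gadgets are needed. Second, and more importantly, the reflection direction is obtained purely from the rigidity of the block $I'=I\setminus\{a,b\}$: since the only short cycles in $P*(I,a,b)$ lie inside the copies of $I'$, any homomorphism $g:\phi(P)\to\phi(P')$ must send blocks to blocks and hence descends to a map $f:P\to P'$. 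This works uniformly for \emph{all} oriented paths, so there is no need to restrict to the special family $\Embed{\Periodic}{\Paths}(S)$, nor to lift the level/head arguments of Lemmas~\ref{zacatel} and~\ref{phomo} to the undirected setting as you propose. Your plan would work, but the extra machinery (two gadgets, special head treatment, level-preservation at the image level) is unnecessary once you recognise that block rigidity alone carries the argument. Finally, the paper first obtains degree $\leq 3$ from the construction and only then extends to cubic by a routine step; getting degree exactly $3$ directly, as you suggest, is awkward at the path endpoints.
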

We use the indicator technique (``arrow construction'') which allows us to
replace arcs of a graph by copies of a gadget (``indicator'') in such a way
that the (global) homomorphism properties are preserved, see \cite{Pultr, NT}.
More precisely this can be done as follows:

Any graph $I$ with two distinguished vertices $a$,$b$ is called an {\em indicator}.
(We use the indicator defined by Figure \ref{strnulak1}.)
  \begin{figure}[strnulak1]
    \begin{center}
      {\def\IPEfile{strnulak1.ipe}\begingroup
  \catcode`\%=9\catcode`\!=0\catcode`\-=11\input{\IPEfile}}
    \end{center}
    \caption{$(I,a,b)$.}
    \label{strnulak1}
  \end{figure}
Given a graph $G=(V,E)$ we denote by $G*(I,a,b)$ the following graph $(W,F)$:

$$W=(E\times V(I))/\sim.$$

Thus the vertices of $(V,E)$ are equivalence classes of the equivalence $\sim$.
For a pair $(e,x)\in E\times V(I)$ its equivalence class will be denoted by
$[e,x]$.

The equivalence $\sim$ is generated by the following pairs:

$$((x,y),a)\sim((x,y'),a),$$
$$((x,y),b)\sim((x',y),b),$$
$$((x,y),b)\sim((y,z),a).$$

We put $\{[e,x],[e',x']\}\in F$ if and only if $e=e'$ and $\{x,x'\}\in E(I)$.

Indicator construction is schematically shown in Figure \ref{indikatory}.
See also Section \ref{bounded} for the indicator construction on relational
structures.
  \begin{figure}[tp]
    \begin{center}
      {\def\IPEfile{indikatory.ipe}\begingroup
  \catcode`\%=9\catcode`\!=0\catcode`\-=11\input{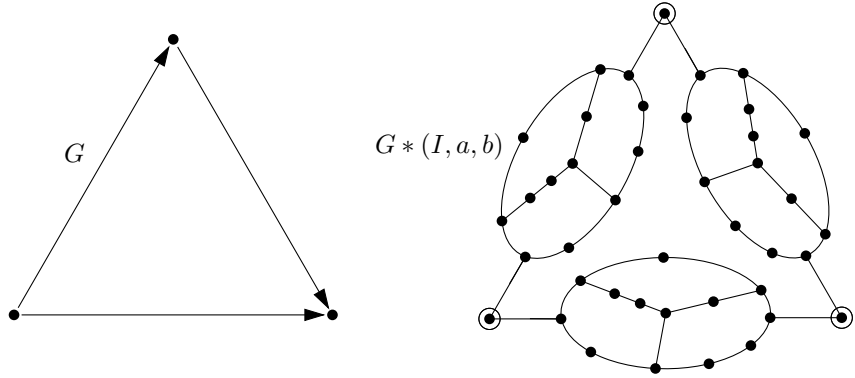}}
    \end{center}
    \caption{Construction of $G*(I,a,b)$.}
    \label{indikatory}
  \end{figure}

We have the following properties:

\begin{claim}~
  \begin{enumerate}
    \item $P*(I,a,b)$ is a planar graph with all its degrees $\leq 3$ for every path $P$.
    \item If $f:P\to P'$ is a path homomorphism then the mapping $\phi(f)$ defined by $$\phi(f)[(u,v),x]=[(f(u),f(v)),x]$$ is a homomorphism $\phi(P)\to\phi(P')$.
    \item If $g:\phi(P)\to\phi(P')$ then there exists $f:P\to P'$.
  \end{enumerate}
\end{claim}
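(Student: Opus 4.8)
The plan is to handle the three parts in turn, the first two by direct inspection and the third as the real step. I first record that the three generators of $\sim$ collapse, at each original vertex $v$, all $a$-vertices of arcs leaving $v$ and all $b$-vertices of arcs entering $v$ into a single vertex $\overline v$; thus $P*(I,a,b)$ (which I abbreviate $\phi(P)$) arises from one copy of $I$ per arc of $P$ by amalgamating these copies at the vertices $\overline v$. For part 1, I would read off from Figure \ref{strnulak1} that $I$ is planar with maximum degree at most $3$ and that $a,b$ have small degree. Since $P$ is a path, each $\overline{v_i}$ belongs to at most two copies of $I$, so the copies are glued only at the cut vertices $\overline{v_i}$; amalgamating planar graphs at a single vertex preserves planarity, the degree of $\overline{v_i}$ is the sum of the degrees in $I$ of the two roles it plays, which (given the degrees of $a$ and $b$ read off from Figure \ref{strnulak1}) stays at most $3$, and every internal vertex of a copy keeps its degree from $I$. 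Hence $\phi(P)$ is planar with all degrees at most $3$.

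For part 2, I would verify that $\phi(f)[(u,v),x]=[(f(u),f(v)),x]$ is well defined and a homomorphism. As $f$ preserves arcs, $(u,v)\in E(P)$ yields $(f(u),f(v))\in E(P')$, so the value lies in $\phi(P')$; compatibility with $\sim$ is checked generator by generator, e.g.\ $((x,y),a)\sim((x,y'),a)$ is sent to two classes sharing the first coordinate $f(x)$, and likewise for the remaining two generators, using only that $f$ is a homomorphism. An edge $\{[e,x],[e,x']\}$ of $\phi(P)$ satisfies $\{x,x'\}\in E(I)$, and its image has the same $I$-coordinates, hence is an edge of $\phi(P')$. So $\phi(f)$ is a homomorphism.

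Part 3 is the substance. The aim is to show that an arbitrary homomorphism $g:\phi(P)\to\phi(P')$ respects the block decomposition, namely that it carries each amalgamation vertex $\overline v$ to an amalgamation vertex of $\phi(P')$ and maps each copy of $I$ onto a single copy with $a\mapsto a$ and $b\mapsto b$. Granting this, I would set $f(v)$ to be the vertex of $P'$ with $g(\overline v)=\overline{f(v)}$. Then the copy of $I$ lying over an arc $(u,v)$ of $P$ joins $\overline u$ in its $a$-role to $\overline v$ in its $b$-role, so its image is a copy over an arc of $P'$ joining $\overline{f(u)}$ ($a$-role) to $\overline{f(v)}$ ($b$-role); since such a copy exists exactly when $(f(u),f(v))\in E(P')$, the map $f$ preserves arcs and is the required homomorphism $P\to P'$.

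The rigidity statement is where I expect the main difficulty, and it is the only place the explicit $(I,a,b)$ of Figure \ref{strnulak1} is used. I would prove it by the level technique already developed for paths in Lemmas \ref{zacatel} and \ref{phomo}: $I$ is oriented so that, after amalgamation, the vertices $\overline v$ are distinguished by their levels and as the unique meeting points of the monotone segments contributed by the copies of $I$. Because a homomorphism preserves algebraic distance, $g$ preserves levels; this pins each $\overline v$ to a vertex of the same level in $\phi(P')$, rules out mapping across the interior of a copy of $I$, and forces every copy to map onto a single copy with the roles of $a$ and $b$ preserved. Establishing this local rigidity carefully---especially that no homomorphism can fold or shortcut a copy of $I$ in an unintended way---is the step into which all the work goes.
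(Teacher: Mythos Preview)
Your parts 1 and 2 are fine and match the paper's treatment (which simply says ``only the last claim needs explanation''). Part 3, however, rests on a mistaken premise. The indicator $(I,a,b)$ of Figure~\ref{strnulak1} is an \emph{undirected} graph, and the output $\phi(P)=P*(I,a,b)$ is by construction an undirected graph as well (the edges are unordered pairs $\{[e,x],[e,x']\}$ with $\{x,x'\}\in E(I)$); this is precisely why the construction is being used to prove Theorems~\ref{degree} and~\ref{thero} about undirected cubic planar graphs. Consequently there is no notion of level or algebraic distance available in $\phi(P)$ and $\phi(P')$, and the level technique of Lemmas~\ref{zacatel} and~\ref{phomo} simply does not apply. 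Your proposed rigidity argument therefore has no foundation.

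The paper's argument is quite different and uses purely undirected-graph structure. One sets $I'=I\setminus\{a,b\}$ and observes two facts: (i) the only short cycles (length $\leq 7$) in $\phi(P)$ lie inside individual copies of $I'$, so that the non-trivial blocks of $\phi(P)$ are exactly these copies; and (ii) $I'$ is a well-known \emph{rigid} graph (a core with trivial automorphism group). Since a homomorphism must send blocks to blocks, each copy of $I'$ in $\phi(P)$ is carried onto a copy of $I'$ in $\phi(P')$, and rigidity forces this to be the identity on the $I'$-coordinate. This yields a map $f:V(P)\to V(P')$ with $g([e,z])=[(f(x),f(y)),z]$ for every arc $e=(x,y)$ and every $z\in V(I')$, and $f$ is the desired path homomorphism. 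The role of orientation here is only to label the two ends of each arc so that the $a$- and $b$-roles are distinguished; it plays no part in the homomorphism analysis of $\phi(P)$.
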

\begin{proof}
Only the last claim needs explanation.
Put $I'=I-\{a,b\}$ (thus $I'$ is the main block of $I$).  Observe that the
only cycles in the graph $P*(I,a,b)$ of length $\leq 7$ belong to the set
$\{[a,z]:z\in V(I')\}$ from an edge $e\in P$.  In fact all non-trivial blocks
of $P*(I,a,b)$ are isomorphic to $I'$.  It is well known that $I'$ is rigid
(see e.g. \cite{NT}).  This in turn means that for any homomorphism
$G:P*(I,a,b)\to P'*(I,a,b)$ there exists a mapping $f:V(P)\to V(P')$ such that
for every edge $e=(x,y)\in E$ and $z\in V(I')$ holds $g([e,z])=[(f(x),
f(y)),z]$.  This $f$ is a desired homomorphism $P\to P'$.
(Note that this correspondence of $g$ and $f$ is not functorial; the graph $I$ fails to be rigid.)
\end{proof}

Put $\phi(P)=P*(I,a,b)$.  We proved $P\to P'$ if and only if $\phi(P)\to \phi(P')$. Note
that $\phi(P)$ is planar and that all degrees $\leq 3$.  It is a graph theory
routine to extend $\phi(P)$ to planar cubic graphs. This implies Theorem \ref{thero}.


\subsection{Series-parallel graphs}

We can use the indicator construction to obtain the following
\begin{thm}
  \label{series}
  Denote by $(\mathcal S_l,\leq_{\mathcal S_l})$  the class of all series-parallel graphs of girth $>l$.
  For every $l>0$ the quasi-order $(\mathcal S_l,\leq_{\mathcal S_l})$ contains a universal partial order.
\end{thm}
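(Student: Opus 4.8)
The plan is to reuse the indicator (arrow) construction of the previous section, exactly as in the proofs of Theorem \ref{thero} and Theorem \ref{degree}, but with an indicator tailored so that the resulting graphs are series-parallel and of girth greater than $l$. First I would fix an indicator $(I,a,b)$ with four properties: its body $I'=I-\{a,b\}$ is rigid, its two terminals $a$ and $b$ are not interchangeable by any endomorphism (so that the orientation of each arc is encoded and recoverable), it is a two-terminal series-parallel graph, and it has girth greater than $l$. Note that the arrow construction here plays a double role: besides controlling girth and treewidth, it converts the \emph{oriented} paths of the previous section into \emph{undirected} graphs, with the asymmetry between $a$ and $b$ recording the direction of each original arc.

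Given such an indicator I would set $\phi(P)=P*(I,a,b)$ and apply $\phi$ to the oriented paths. Two structural facts then need checking. For series-parallelity: an oriented path is a series composition of its arcs, and $\phi$ replaces each arc by a copy of the two-terminal series-parallel gadget $(I,a,b)$ glued to its neighbours at a single cut vertex (each of the identifications $((x,y),b)\sim((y,z),a)$, $((x,y),a)\sim((x,y'),a)$ and $((x,y),b)\sim((x',y),b)$ shares exactly one vertex), so $\phi(P)$ is itself a series composition of copies of $I$ and hence series-parallel. For girth: since consecutive gadgets meet only in cut vertices, every cycle of $\phi(P)$ lies inside a single copy of $I$, whence the girth of $\phi(P)$ equals that of $I$, which exceeds $l$. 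Thus $\phi(P)\in\mathcal S_l$.

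Next I would verify the homomorphism correspondence by repeating the three parts of the Claim. The map $f\mapsto\phi(f)$ sends a path homomorphism $P\to P'$ to a homomorphism $\phi(P)\to\phi(P')$; conversely, rigidity of $I'$ forces any homomorphism $\phi(P)\to\phi(P')$ to send each copy of the body $I'$ onto another such copy while respecting its distinguished vertices, and since $a$ and $b$ cannot be exchanged, the induced map on the original vertices is a path homomorphism $P\to P'$. Hence $P\to P'$ if and only if $\phi(P)\to\phi(P')$, so $\phi$ embeds the quasi-order $(\Paths,\leq_\Paths)$ into $(\mathcal S_l,\leq_{\mathcal S_l})$. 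Combined with Theorem \ref{pathuniv}, this exhibits a universal partial order inside $(\mathcal S_l,\leq_{\mathcal S_l})$, proving Theorem \ref{series}.

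The main obstacle is the very first step: producing, for each $l$, a two-terminal series-parallel indicator of girth greater than $l$ whose body is rigid and whose terminals are distinguishable. High girth forces long cycles, rigidity forces enough asymmetry to kill all nontrivial endomorphisms, and the series-parallel (treewidth two) constraint keeps the gadget thin, so reconciling these demands is delicate. I would attempt it by starting from a long cycle or a generalized theta graph of girth exceeding $l$ (both series-parallel) and attaching, at a few prescribed vertices, pairwise non-isomorphic high-girth series-parallel decorations of carefully chosen lengths, so as to break every automorphism and prevent any folding, while never creating a cycle shorter than $l+1$ and never destroying the series-parallel structure; distinguishing $a$ from $b$ would be arranged by decorating the two terminals differently. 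Establishing rigidity of the resulting body, presumably via a level- and length-counting argument analogous to Lemma \ref{zacatel} and Lemma \ref{phomo}, is where the real work lies.
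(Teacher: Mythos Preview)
Your approach is essentially the paper's: apply the indicator construction $P\mapsto P*(I,a,b)$ to oriented paths and invoke Theorem~\ref{pathuniv}. The paper carries this out with an explicit indicator $I_l$, resolving what you call the ``main obstacle'': take three vertices $c_1,c_2,c_3$, join each pair $c_i,c_j$ by two internally disjoint paths of lengths $l$ and $l+1$, and attach $a,b$ as pendant vertices to two carefully chosen interior vertices on two of these paths. This is precisely a decorated generalized theta graph, matching your own instinct.

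One point where the paper differs from your plan: it does \emph{not} insist that the body $I'=I_l-\{a,b\}$ be rigid. In fact $I'$ is not rigid; the paper only uses that $I'$ is a core and has no automorphism exchanging the neighbours of $a$ and $b$. That weaker hypothesis is already enough to recover a path homomorphism $P\to P'$ from any $g:\phi(P)\to\phi(P')$, by the same block-by-block argument as in the proof of Theorem~\ref{thero}. Insisting on full rigidity, as you do, is stronger than necessary and makes the indicator harder to exhibit; relaxing to ``core plus no terminal-swapping automorphism'' is what lets the simple theta-graph construction go through.
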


Fix $l\geq 2$. Theorem \ref{series} is proved similarly as \ref{thero} by means of the
indicator $I_l$ defined in Figure \ref{strnulak2}.
  \begin{figure}[strnulak2]
    \begin{center}
      {\def\IPEfile{strnulak2.ipe}\begingroup
  \catcode`\%=9\catcode`\!=0\catcode`\-=11\input{\IPEfile}}
    \end{center}
    \caption{$(I_1,a,b)$.}
    \label{strnulak2}
  \end{figure}
The vertices of $I_l$ are $a,b,c_1,c_2,c_3$ together with
$$v_{12}(1),\ldots,v_{12}(l-1), v'_{12}(1),\ldots,v'_{12}(l),$$
$$v_{13}(1),\ldots,v_{13}(l-1), v'_{13}(1),\ldots,v'_{13}(l),$$
$$v_{23}(1),\ldots,v_{23}(l-1), v'_{23}(1),\ldots,v'_{23}(l).$$

The edges of $I_l$ form pairs $\{a,v'_{12}(2)\}, \{v'_{23}(l-2),b\}$ and edges of paths joining vertices $c_1$, $c_2$, $c_3$: 
$$\{c_1, v_{12}\}, \{c_1, v'_{12}\},\{c_1, v_{13}\},$$
$$\{v_{ij}(k),v_{ij}(k+1)\}\hbox{ for }1\leq i<j\leq 3, k=1,\ldots,l-2,$$
$$\{v'_{ij}(k),v'_{ij}(k+1)\}\hbox{ for }1\leq i<j\leq 3,  k=1,\ldots,l-2,$$
$$\{c_1, v_{1i}(1)\},\{c_1,v'_{1i}(1)\}\hbox{ for } i=2,3,$$
$$\{c_2, v_{12}(l-1)\},\{c_2,v'_{12}(l)\},$$
$$\{c_2, v_{23}(1)\},\{c_2,v'_{23}(1)\},$$
$$\{c_1, v_{i3}(l-1)\},\{c_3,v'_{i3}(l)\}\hbox{ for } i=2,3.$$

The graph $I_l$ has girth $>2l+1$.  Put $I'=I-\{a,b\}$. $I'$ is not rigid but
it is a core graph and it has no automorphism witch maps $v'_{12}(2)$ to
$v'_{23}(l-2)$.  It follows that we may argue similarly as in the proof of Theorem
\ref{thero}.  We omit the details.

\section{Related results}

By similar techniques as presented in this chapter Lehtonen \cite{lehtonen1} shows universally of labeled partial orders ordered by homomorphisms.

Lehtonen  and Ne\v set\v ril \cite{lehtonen2} consider also the partial order defined on boolean
functions in the following way.  Each clone $\mathcal C$ on a fixed base set $A$
determines a quasiorder on the set of all operations on $A$ by the following
rule: $f$ is a $\mathcal C$-minor of $g$ if $f$ can be obtained by substituting
operations from $\mathcal C$ for the variables of $g$.  Using embedding homomorphism order
on hypergraphs, it can be shown that a clone $\mathcal C$ on $\{0,1\}$ has the property
that the corresponding $\mathcal C$ minor partial order is universal if and only if $\mathcal C$
is one of the countably many clones of clique functions or the clone of
self-dual monotone functions (using the classification of Post classes).

It seems that in most cases the homomorphism order of classes of relational
structures is either universal or fails to be universal for very simple reasons
(such as the absence of infinite chains or anti-chains).
Ne\v set\v ril and Nigussie \cite{NJared} look for minimal minor closed classes of
graphs that are dense and universal.  They show that $(\Paths,\leq_\Paths)$ is
a unique minimal class of oriented graphs which is both universal and dense.
Moreover, they show a dichotomy result for any minor closed class $\K$ of
directed trees. $\K$ is either universal or it is well-quasi-ordered.  Situation
seems more difficult for the case of undirected graphs, where such minimal
classes are not known and only partial result on series-parallel graphs was
obtained.

\chapter{Universal structures for $\Forb(\F)$}
\label{Forbchapter}

The main purpose of this chapter is to give a new proof of the existence of an (embedding-) universal
structure for the class $\Forb(\F)$, where $\F$ consists of connected finite structures of finite type
(Corollary~\ref{1thm}). Unlike \cite{CherlinShelahShi} we give a combinatorial
proof based on the amalgamation method. 

Explicit construction allows us to state the result in a stronger form (Theorem
\ref{mainthm}) for countable families $\F$.  Explicit construction also makes
it possible to describe the universal structures via forbidden embeddings
(Theorem \ref{reprezent}) and establish a number of their properties.

The techniques used in the finite presentation of the rational Urysohn space (Chapter~\ref{homprostorchapter}) can be extended to the
finite presentation of universal structures constructed here.  In
the general case the resulting construction is however too complicated to serve its
purpose as a simple and informative description of the universal structure. We show (in Section \ref{urysohnsection}) the relation to
homomorphism dualities and Urysohn spaces for special families $\F$ in order to outline how finite presentation can be constructed.

%
First let us recall the concept of lifts and shadows in a more detailed form.
The class $\Rel(\Delta), \Delta=(\delta_i: i\in I)$, $I$ finite, is fixed throughout this chapter. Unless otherwise stated all structures $\relsys{A}, \relsys{B},\ldots$ belong to $\Rel(\Delta)$.
Now let $\Delta'=(\delta'_i:i\in I')$ be a type containing type $\Delta$. (By this we mean $I\subseteq I'$ and $\delta'_i=\delta_i$ for $i\in I$.)
Then every structure $\relsys{X}\in \Rel(\Delta')$ may be viewed as structure $\relsys{A}=(A,(\rel{A}{i}: i\in I))\in \Rel(\Delta)$ together with some additional relations $\rel{X}{i}$ for $i\in I'\setminus I$. To make this more explicit these additional relations will be denoted by $\ext{X}{i}, i\in I'\setminus I$. Thus a structure $\relsys{X}\in \Rel(\Delta')$ will be written as
$$\relsys{X}=(A,(\rel{A}{i}:i\in I),(\ext{X}{i}:i\in I'\setminus I))$$

and, by abuse of notation, more briefly as
$$\relsys{X}=(\relsys{A},\ext{X}{1},\ext{X}{2},\ldots, \ext{X}{N}).$$

 We call $\relsys{X}$ a {\em lift} of $\relsys{A}$ and $\relsys{A}$ is called the {\em shadow} (or {\em projection}) of $\relsys{X}$. In this sense the class $\Rel(\Delta')$ is the class of all lifts of $\Rel(\Delta)$.
Conversely, $\Rel(\Delta)$ is the class of all shadows of $\Rel(\Delta')$. In this chapter we shall always consider types of shadows to be finite, although we allow countable types for lifts (so $I$ is finite and $I'$ countable).
 Note that a lift is also in the model-theoretic setting called an {\em expansion} and a shadow a {\em reduct}.
(Our terminology is motivated by a computer science context, see \cite{KunN}.)
We shall use letters $\relsys{A}, \relsys{B}, \relsys{C}, \ldots$ for shadows (in $\Rel(\Delta)$) and letters $\relsys{X}, \relsys{Y}, \relsys{Z}$ for lifts (in $\Rel(\Delta'))$.

For a lift $\relsys X=(\relsys{A}, \ext{X}{1},\ldots,\ext{X}{N})$, we denote by $\psi(\relsys{X})$ the relational
structure $\relsys{A}$, i.e., the shadow of $\relsys{X}$. ($\psi$
is called a {\em forgetful functor}.) Similarly, for a class $\K'$
of lifted structures we denote by $\psi(\K')$ the class of all shadows
of structures in $\K'$.


For a structure $\relsys{A}=(A,(\rel{A}{i}:i\in I))$ the {\em Gaifman graph}
(in combinatorics often called {\em 2-section}) is the graph $G$ with vertices
$A$ and all those edges which are a subset of a tuple of a relation of
$\relsys{A}$: $G=(V,E)$, where $\{x,y\}\in E$ if and only if $x\neq y$ and there exists a
tuple $\vec{v}\in \rel{A}{i},i\in I$, such that $x,y\in \vec{v}$.

A {\em cut} in $\relsys{A}$ is a subset $C$ of $A$ such that the Gaifman graph $G_\relsys{A}$ is disconnected by removing the set $C$ (i.e. if $C$ is graph-theoretic cut of $G_{\relsys A}$). By a {\em minimal cut} we always mean an inclusion-minimal cut.

If $C$ is a set of vertices then $\overrightarrow{C}$ will denote a tuple (of length $|C|$) from all elements of $R$. Alternatively, $\overrightarrow{R}$ is an arbitrary linear order of $R$.


\section{Classes omitting countable families of structures}
\label{hlavnisekce}

Let $\F$ be a fixed countable set of finite relational structures of finite type
$\Delta$. For the construction of a universal structure of $\Forb(\F)$ we use special
lifts, called $\F$-lifts.  The definition of $\F$-lift is easy and
resembles decomposition techniques standard in graph theory and thus we adopt a
similar terminology. The following is the basic notion:

\begin{defn}
For a relational structure $\relsys{A}$ and minimal cut $R$ in $\relsys{A}$,
a {\em piece} of a relational structure $\relsys A$ is a pair
$\Piece=(\relsys{P},\overrightarrow{R})$. Here $\relsys{P}$ is the structure
induced on $\relsys{A}$ by the union of $R$ and vertices of some connected
component of $\relsys{A}\setminus R$. The tuple $\overrightarrow{R}$ consists of the vertices of the cut $R$ in a (fixed) linear order.
\end{defn}

Note that from inclusion-minimality of the cut $R$ it follows that the pieces of a connected structure are always connected structures.

All pieces are thought of as rooted structures: a piece $\Piece$ is a structure $\relsys{P}$ rooted at $\overrightarrow{R}$. Accordingly, we say
 that pieces $\Piece_1=(\relsys{P}_1,\overrightarrow{R}_1)$ and
$\Piece_2=(\relsys{P}_2,\overrightarrow{R}_2)$ are {\em isomorphic} if there is a function $\varphi:P_1\to P_2$ that is a isomorphism of structures $\relsys{P}_1$ and $\relsys{P}_2$ and $\varphi$ restricted to $\overrightarrow{R}_1$ is a monotone bijection between $\overrightarrow{R}_1$ and $\overrightarrow{R}_2$ (we denote this
$\varphi(\overrightarrow{R}_1)=\overrightarrow{R}_2$).

\begin{figure}
\label{Petersoni}
\centerline{\includegraphics{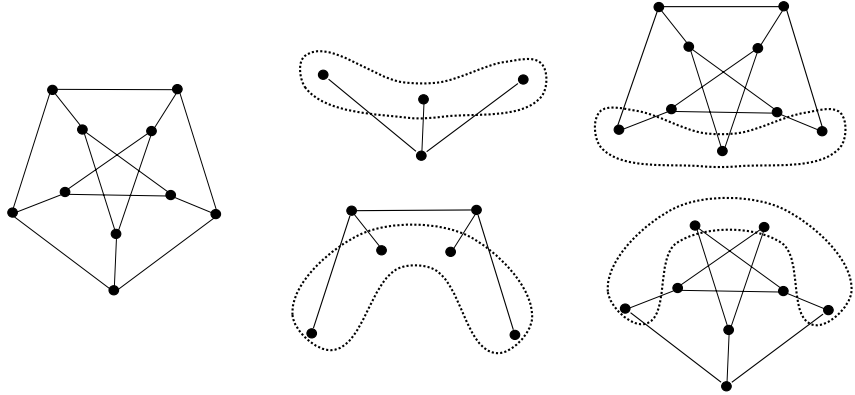}}
\caption{Pieces of the Petersen graph up to isomorphism (and permutations of roots). }
\end{figure}
Observe that for relational trees, pieces are equivalent to rooted  branches. Pieces of the Petersen graph are shown in Figure \ref{Petersoni}.

First let us prove a simple observation about pieces. We show that in most cases a ``subpiece'' of a piece is a piece.
\begin{lem}
Let $\Piece_1=(\relsys{P}_1,\overrightarrow{R}_1)$ be a piece of structure $\relsys A$ and $\Piece_2=(\relsys{P}_2,\overrightarrow{R}_2)$ a piece of $\relsys{P}_1$.  If $R_1\cap P_2\subseteq R_2$, then $\Piece_2$ is also a  piece of $\relsys A$.
\label{kousky}
\end{lem}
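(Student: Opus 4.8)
The plan is to keep the same cut and the same component when passing from $\relsys{P}_1$ to $\relsys{A}$: I will show that $R_2$ is again a minimal cut of $\relsys{A}$ and that the component of $G_{\relsys{A}}\setminus R_2$ used to form $\relsys{P}_2$ is exactly $K_2:=P_2\setminus R_2$; write also $K_1:=P_1\setminus R_1$. Two preliminary observations are essentially free of content. First, since $\relsys{P}_1$ is an induced substructure of $\relsys{A}$ and $\relsys{P}_2$ an induced substructure of $\relsys{P}_1$, the Gaifman graphs satisfy $G_{\relsys{P}_1}=G_{\relsys{A}}[P_1]$ and $G_{\relsys{P}_2}=G_{\relsys{A}}[P_2]$, and moreover $\relsys{P}_2=\relsys{A}[P_2]$; thus the ``structure induced'' clause in the definition of a piece takes care of itself and only the combinatorics of $G_{\relsys{A}}$ must be checked. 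Second, the hypothesis $R_1\cap P_2\subseteq R_2$ is equivalent to $R_1\cap K_2=\emptyset$, because $K_2=P_2\setminus R_2$; consequently $K_2\subseteq P_1\setminus R_1=K_1$.

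The heart of the argument is a locality statement: every vertex $v\in K_2$ has all of its $G_{\relsys{A}}$-neighbours inside $P_2$. Indeed $v\in K_2\subseteq K_1$, and $K_1$ is a connected component of $G_{\relsys{A}}\setminus R_1$, so the neighbours of $v$ in $G_{\relsys{A}}$ lie in $K_1\cup R_1\subseteq P_1$; being neighbours inside $P_1$, they are neighbours in $G_{\relsys{P}_1}=G_{\relsys{A}}[P_1]$, and since $v$ lies in the component $K_2$ of $G_{\relsys{P}_1}\setminus R_2$ they must lie in $K_2\cup R_2=P_2$. From this I obtain at once that $K_2$ is a connected component of $G_{\relsys{A}}\setminus R_2$: it spans no edge to $(A\setminus R_2)\setminus K_2$, it is connected because $G_{\relsys{A}}[K_2]=G_{\relsys{P}_1}[K_2]$, and $A\setminus P_2$ is nonempty since $R_2$ is a cut of $G_{\relsys{P}_1}$ and hence $G_{\relsys{P}_1}\setminus R_2$ has at least one further component, giving $P_1\setminus P_2\neq\emptyset$. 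So $R_2$ separates $K_2$ from the rest of $\relsys{A}$ exactly as it did inside $\relsys{P}_1$, and this separation is where the hypothesis is used decisively.

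It remains to certify minimality, and this is the step I expect to be delicate. I will use that $\Piece_2$ is a piece of $\relsys{P}_1$: inclusion-minimality of $R_2$ in $G_{\relsys{P}_1}$ forces every $r\in R_2$ to have a neighbour in $K_2$ (otherwise $R_2\setminus\{r\}$ would still separate $K_2$ from the remaining components of $G_{\relsys{P}_1}\setminus R_2$), so that $N_{G_{\relsys{P}_1}}(K_2)=R_2$; by the same token every component of $G_{\relsys{P}_1}\setminus R_2$ has neighbourhood equal to $R_2$. All these edges persist in $G_{\relsys{A}}$, whence $N_{G_{\relsys{A}}}(K_2)=R_2$ and no vertex of $R_2$ is superfluous for cutting off $K_2$. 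The subtle point to control is that passing from $\relsys{P}_1$ to the larger $\relsys{A}$ attaches new material only across $R_1$ (every component of $G_{\relsys{A}}\setminus R_1$ other than $K_1$ has its neighbourhood contained in $R_1$), and the assumption $R_1\cap K_2=\emptyset$ guarantees that this new material sits on the far side of $R_2$ and never bypasses it to reach $K_2$; together with the locality statement of the previous paragraph this produces a second component of $G_{\relsys{A}}\setminus R_2$ whose neighbourhood is again all of $R_2$. I then conclude that $R_2$ is a minimal cut of $\relsys{A}$ witnessed by $K_2$, so that $(\relsys{P}_2,\overrightarrow{R}_2)$, being the structure induced by $\relsys{A}$ on $R_2\cup K_2$ and rooted at $\overrightarrow{R}_2$, is a piece of $\relsys{A}$, as claimed.
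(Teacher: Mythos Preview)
Your locality argument showing that $K_2=P_2\setminus R_2$ is a connected component of $G_{\relsys{A}}\setminus R_2$ is correct and is exactly what the paper proves; the paper's proof stops at ``$\relsys{C}_2$ is also a connected component of $\relsys{A}$, created after removing vertices of $R_2$'' and never verifies that $R_2$ is a \emph{minimal} cut of $\relsys{A}$.

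The minimality step, which you rightly call delicate, has a genuine gap---and in fact cannot be closed, because the lemma as stated is false. Producing two components of $G_{\relsys{A}}\setminus R_2$ each with neighbourhood exactly $R_2$ does not force $R_2$ to be inclusion-minimal: a further component with strictly smaller neighbourhood already lets a proper subset of $R_2$ be a cut. This occurs precisely when $R_1\subsetneq R_2$, since then every component $M$ of $G_{\relsys{A}}\setminus R_1$ other than $K_1$ is also a component of $G_{\relsys{A}}\setminus R_2$ (it is disjoint from $R_2\subseteq P_1$ and its neighbours lie in $R_1\subseteq R_2$), and $N_{G_{\relsys{A}}}(M)=R_1\subsetneq R_2$. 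Concretely, take the graph on $\{1,\dots,6\}$ with edges $\{1,3\},\{2,3\},\{3,4\},\{4,5\},\{5,6\},\{3,6\}$; let $R_1=\{3\}$ with $P_1$ the $4$-cycle on $\{3,4,5,6\}$; let $R_2=\{3,5\}$ (a minimal cut of $\relsys{P}_1$), $K_2=\{6\}$, $P_2=\{3,5,6\}$. The hypothesis $R_1\cap P_2=\{3\}\subseteq R_2$ holds, yet $\{3\}\subsetneq R_2$ is already a cut of $\relsys{A}$, so $R_2$ is not minimal in $\relsys{A}$ and $\Piece_2$ is not a piece of $\relsys{A}$. Your argument (and the statement) goes through under the extra hypothesis $R_1\not\subseteq R_2$: then some vertex of $R_1\setminus R_2$ lies in one of the other components $K'_j$ of $G_{\relsys{P}_1}\setminus R_2$, every outside component $M$ attaches to it there, and one checks that \emph{all} components of $G_{\relsys{A}}\setminus R_2$ have neighbourhood $R_2$, which does give minimality.
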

\begin{proof}
Denote by $\relsys{C}_1$ the connected component of $\relsys{A}\setminus R_1$ that produces $\Piece_1$.  Denote by $\relsys{C}_2$ the component of $\relsys{P}_1\setminus R_2$ that produces $\Piece_2$. 
As $R_1\cap P_2\subseteq R_2$ one can check that then $\relsys{C}_2$ is contained in $\relsys{C}_1$ and every vertex of $\relsys{A}$ connected by a tuple to any vertex of $\relsys{C}_2$ is
contained in $\relsys{P}_1$. Thus $\relsys{C}_2$ is also a connected component of
$\relsys{A}$, created after removing vertices of $R_2$.
\end{proof}

Fix an index set $I'$ and let $\Piece_i, i\in I'$, be all pieces of all relational
structures $\relsys{F}\in \F$. Notice that there are only countably many pieces.

The relational structure $\relsys X=(\relsys{A}, (\ext{X}{i}:i\in I'))$ is
called the {\em $\F$-lift} of the relational structure $\relsys A$ when the arities of
relations $\ext{X}{i},i\in I'$, correspond to
$|\overrightarrow{R}_i|$.

For a relational structure $\relsys A$ we define the {\em canonical lift $\relsys{X}=L({\relsys A})$}
by putting $(v_1,v_2,\ldots,v_l)\in \ext{X}{i}$ if and only if there is homomorphism
$\varphi$ from $\relsys{P}_i$ to $A$ such that $\varphi(\overrightarrow
{R}_i)=(v_1,v_2,\ldots,v_l)$.

\begin{thm}
\label{mainthm}
Let $\F$ be a countable set of finite connected relational structures.  Denote
by $\Lifts$ the class of all induced substructures (sublifts) of lifts $L(\relsys{A})$,
$\relsys{A}\in\Forb(\F)$.   Denote by $\Lifts_f$ the class of all finite structures
in $\Lifts$.  $\Lifts_f$ is an amalgamation class (Definition \ref{amalgamationclassdef}).
There is a generic structure $\relsys{U}$ in $\Lifts$ and its shadow $\psi(\relsys{U})$ is a universal structure for the class $\Forb(\F)$.
\label{mainth}
\end{thm}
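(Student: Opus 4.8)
To prove Theorem \ref{mainthm} I would follow the template already used for the Rado graph and the generic partial order: first establish that $\Lifts_f$ is an amalgamation class, invoke Theorem \ref{fraissethm} to get the generic lift $\relsys{U}$, and then show that its shadow $\psi(\relsys{U})$ is universal for $\Forb(\F)$. The key technical device is the \emph{canonical lift} $L(\relsys{A})$, whose added relations $\ext{X}{i}$ record exactly which tuples of $\relsys{A}$ receive a homomorphic image of the piece $\Piece_i$ (with matching roots). The central intuition is that these extra relations act as ``certificates'' that forbidden substructures are being assembled: if $\relsys{A}\in\Forb(\F)$ then the canonical lift encodes, at each minimal cut, which partial copies of forbidden $\relsys{F}\in\F$ are present, and amalgamation can be controlled by never completing a full copy.

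\textbf{First steps.} I would begin by verifying the easy closure conditions for $\Lifts_f$: it is hereditary by definition (sublifts of $L(\relsys{A})$), isomorphism-closed, and countable since $\Delta$ is finite and there are only countably many finite structures. The joint embedding property follows from the amalgamation property over the empty structure, exactly as noted after Theorem \ref{fraissethm}, once I know $\F$ consists of connected structures. The substantive work is the amalgamation property. Given $\relsys{X},\relsys{Y}\in\Lifts_f$ with a common sublift $\relsys{Z}$, I would form the free amalgam $\relsys{D}$ of the shadows $\psi(\relsys{X})$ and $\psi(\relsys{Y})$ over $\psi(\relsys{Z})$, and then equip $\relsys{D}$ with lift relations by taking the union of the lift relations of $\relsys{X}$ and $\relsys{Y}$ (suitably identified on $\relsys{Z}$). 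I must then check two things: that this amalgam is again a sublift of some $L(\relsys{A})$ with $\relsys{A}\in\Forb(\F)$, and in particular that $\psi(\relsys{D})$ contains no copy of any $\relsys{F}\in\F$.

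\textbf{The main obstacle.} The crux is showing that the free amalgam of two $\F$-avoiding shadows, taken over $\psi(\relsys{Z})$, still avoids every $\relsys{F}\in\F$ — and that this is precisely guaranteed by the lift relations agreeing on $\relsys{Z}$. Here is where Lemma \ref{kousky} (``subpiece of a piece is a piece'') becomes essential. Suppose for contradiction that a forbidden $\relsys{F}$ appears in $\psi(\relsys{D})$ but in neither $\psi(\relsys{X})$ nor $\psi(\relsys{Y})$; then $\relsys{F}$ must straddle the cut $Z$, so $Z$ induces a cut of (the image of) $\relsys{F}$. The components of $\relsys{F}\setminus Z$ are pieces of $\relsys{F}$, hence carry indices $i\in I'$; since each such piece embeds on one side with its roots in $\relsys{Z}$, the corresponding relation $\ext{X}{i}$ or $\ext{Y}{i}$ must already hold on the root tuple in $\relsys{Z}$. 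The point is that the presence of these piece-relations on both sides, agreeing on $\relsys{Z}$, would force the full copy of $\relsys{F}$ to already be realizable as a homomorphic image inside one of the original structures, contradicting $\psi(\relsys{X}),\psi(\relsys{Y})\in\Forb(\F)$. Making this straddling argument precise — correctly handling minimal cuts, the root orderings, and the bookkeeping between pieces and their subpieces via Lemma \ref{kousky} — is the delicate part and the real content of the proof.

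\textbf{Concluding universality.} Once amalgamation is established, Theorem \ref{fraissethm} yields the generic structure $\relsys{U}\in\Lifts$. For universality of the shadow, I would take any $\relsys{A}\in\Forb(\F)$, form its canonical lift $L(\relsys{A})\in\Lifts$, and note that $\Age(L(\relsys{A}))\subseteq\Age(\relsys{U})=\Lifts_f$; by Lemma \ref{extuniv}(3), $\relsys{U}$ embeds every countable structure younger than itself, so $L(\relsys{A})$ embeds into $\relsys{U}$, and applying the forgetful functor $\psi$ gives an embedding $\relsys{A}=\psi(L(\relsys{A}))\hookrightarrow\psi(\relsys{U})$. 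It remains to confirm $\psi(\relsys{U})\in\Forb(\F)$, which follows because every finite substructure of $\psi(\relsys{U})$ is the shadow of a member of $\Lifts_f$, and by construction no such shadow contains a copy of any $\relsys{F}\in\F$. This establishes that $\psi(\relsys{U})$ is universal for $\Forb(\F)$, completing the proof.
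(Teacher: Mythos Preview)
Your overall plan (verify the easy closure properties, prove amalgamation, invoke Fra\"{\i}ss\'e, then read off universality of the shadow via Lemma~\ref{extuniv}) matches the paper's. But there is a genuine gap in the amalgamation step, and it stems from amalgamating the wrong objects.

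You form the free amalgam $\relsys{D}$ of the \emph{shadows} $\psi(\relsys{X})$ and $\psi(\relsys{Y})$ over $\psi(\relsys{Z})$ and then decorate it with the union of the lift relations from $\relsys{X}$ and $\relsys{Y}$. For this decorated structure to lie in $\Lifts$ you must exhibit some $\relsys{A}'\in\Forb(\F)$ such that your lift is \emph{exactly} the substructure of $L(\relsys{A}')$ induced on $X\cup Y$. You never say what $\relsys{A}'$ is, and $\psi(\relsys{D})$ itself will not do: a tuple $\vec v\in\ext{X}{i}$ records a homomorphism $\relsys{P}_i\to W(\relsys{X})$ that may use vertices of the witness lying \emph{outside} $X$, so there is no reason $\relsys{P}_i$ maps into $\psi(\relsys{D})$ with roots $\vec v$. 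The paper's remedy is to amalgamate the \emph{witnesses}: set $\relsys{A}=W(\relsys{X})$, $\relsys{B}=W(\relsys{Y})$, take $\relsys{D}$ to be the free amalgam of $\relsys{A}$ and $\relsys{B}$ over $\psi(\relsys{Z})$, and put $\relsys{V}=L(\relsys{D})$. One must then prove that $\relsys{V}$ restricted to $A$ is precisely $L(\relsys{A})$ (hence restricted to $X$ is $\relsys{X}$), i.e.\ that no \emph{new} lift tuples are created by the amalgamation. This is done by a minimal-counterexample argument on the size of the offending piece $\Piece_k$: if a new tuple appears, the homomorphism $\relsys{P}_k\to\relsys{D}$ must cross $C$, the preimage of $C$ cuts $\relsys{P}_k$ into strictly smaller pieces (here Lemma~\ref{kousky} is used), and minimality lets one reroute each of those into $\relsys{A}$, producing a homomorphism $\relsys{P}_k\to\relsys{A}$ after all. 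You identify the $\Forb(\F)$ check as ``the crux'', but in the paper that is the shorter half; the substantial work is this no-new-tuples argument, which your outline omits.

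A smaller imprecision: in your straddling argument you conclude that a full copy of $\relsys{F}$ is realizable ``inside one of the original structures, contradicting $\psi(\relsys{X}),\psi(\relsys{Y})\in\Forb(\F)$''. The actual contradiction is with $\relsys{Z}\in\Lifts$: once all pieces of $\relsys{F}$ at some minimal cut have their root tuples recorded in $\relsys{Z}$, the witness $W(\relsys{Z})$ (not $\psi(\relsys{X})$ or $\psi(\relsys{Y})$) receives a homomorphic copy of $\relsys{F}$.
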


\begin{figure}
\label{constructionh}
\centerline{\includegraphics{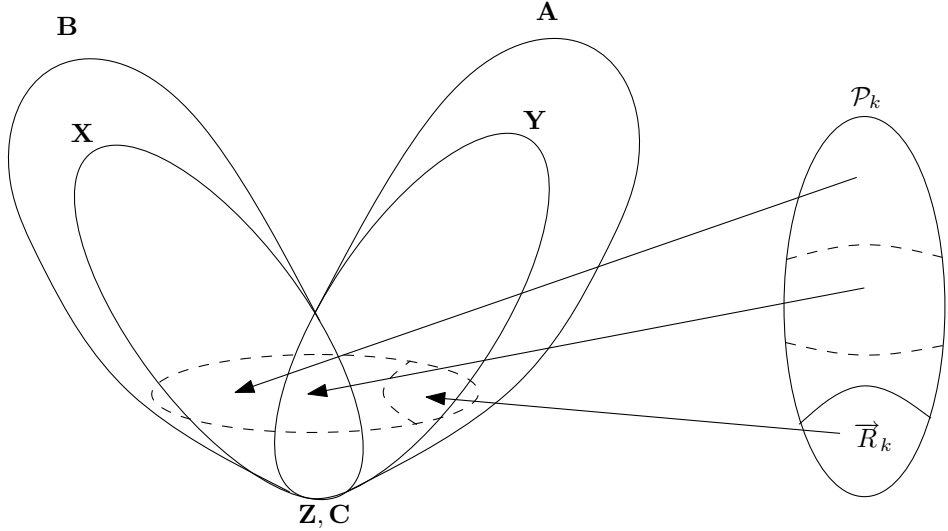}}
\caption{Construction of an amalgamation.}
\end{figure}

For $\relsys{X}\in \Lifts$ we denote by $W(\relsys{X})$ one of the structures $\relsys{A}\in\Forb(\F)$ such that the structure $\relsys{X}$ is induced on $X$ by $L(\relsys{A})$. $W(\relsys{X})$ is called a {\em witness} of the fact that $\relsys{X}$ belongs to $\Lifts$.

\begin{proof}
By definition the class $\Lifts$ (and thus also $\Lifts_f$) is hereditary, isomorphism-closed, and has the joint
embedding property. $\Lifts_f$ contains only countably many mutually non-isomorphic structures, because there are only countably
many mutually non-isomorphic structures in $\Age(\Forb(\F))$ (type $\Delta$ is finite) and thus also
countably many mutually non-isomorphic lifts. To show that $\Lifts_f$ is an amalgamation  class it remains to
verify that $\Lifts_f$ has the amalgamation property. The rest of theorem follows from Theorem \ref{fraissethm}
and the fact that $\Lifts$ is the class of all lifts younger than the ultrahomogeneous structure (lift) $\relsys{U}=\lim \Lifts_f$ (\Fraisse{} limit of $\Lifts_f$).

Consider $\relsys{X},\relsys{Y},\relsys{Z}\in \Lifts_f$. Assume that structure
$\relsys{Z}$ is a substructure induced by both $\relsys{X}$ and
$\relsys{Y}$ on $Z$ and without loss of generality assume that $X\cup Y=Z$.

Put
$$\relsys{A}=W(\relsys{X}),$$
$$\relsys{B}=W(\relsys{Y}),$$
$$\relsys{C}=\psi(\relsys{Z}).$$

Because $\Lifts$ is closed under isomorphism, we can still assume that
$\relsys{A}$ and $\relsys{B}$ are vertex-disjoint with the exception of vertices of
$\relsys{C}$.

Let $\relsys{D}$ be the free amalgamation of $\relsys{A}$ and $\relsys{B}$ over vertices of $\relsys{C}$: the vertices of $\relsys{D}$ are $A\cup B$ and there is $\vec{v}\in \rel{D}{i}$ if and only if $\vec{v}\in \rel{A}{i}$ or $\vec{v}\in \rel{B}{i}$.

We claim that the structure $$\relsys{V}=L(\relsys{D})$$ is a (not necessarily free) amalgamation of $L(\relsys{A})$ and $L(\relsys{B})$ over $\relsys{Z}$
and thus also an amalgamation of $\relsys{X}, \relsys{Y}$ over $\relsys{Z}$. The situation is depicted in Figure \ref{constructionh}.

First we show that the substructure induced by $\relsys{V}$ on $A$ is $L(\relsys{A})$ and that the substructure induced by $\relsys{V}$ on $B$ is $L(\relsys{B})$. In the other words, no new tuples to $L(\relsys{A})$ or $L(\relsys{B})$ (and thus none to $\relsys{X}$ or $\relsys{Y}$ either) have been introduced.

Assume to the contrary that there is a new tuple $(v_1,\ldots,v_t)\in \ext{V}{k}$ and among all tuples and possible choices of $k$ choose one with the minimal number of vertices in the corresponding piece $\Piece_k$. By symmetry we can assume that $v_i\in A, i=1,\ldots,t$. Explicitly, we assume
that there is a homomorphism $\varphi$ from $\relsys{P}_k$ to $\relsys{D}$ such that
$\varphi(\overrightarrow{R_k})=(v_1,v_2,\ldots, v_t)\notin \extl{L(\relsys{A})}{k}$.

The set of vertices of $\relsys{P}_k$ mapped to $L(\relsys{A})$, $\varphi^{-1}(A)$, is nonempty, because it contains all vertices of $\overrightarrow{R_k}$.  $\varphi^{-1}(B)$ is nonempty because there is no homomorphism $\varphi'$ from $\relsys{P}_k$ to $\relsys{A}$ such that $\varphi'(\overrightarrow{R_k})=(v_1,v_2,\ldots, v_t)$ (otherwise we would have $(v_1,v_2,\ldots, v_t)\in \extl{L(\relsys{A})}{k})$.

Because there are no tuples spanning both vertices $A\setminus C$ and vertices $B\setminus C$ in $\relsys D$ and because pieces are connected we also have
$\varphi^{-1}(C)$ nonempty. Additionally, the vertices of $\varphi^{-1}(C)$ form a cut of $\relsys{P}_k$.

Denote by $\relsys K_1, \relsys K_2,\ldots, \relsys K_l$ all connected components of the substructure induced on $P_k\setminus \varphi^{-1}(A)$ by  $\relsys P_k$.  For each component $\relsys K_i$, $1\leq i\leq l$, there is a vertex cut $K'_i$ of $\relsys P_k$ formed by all vertices of $\varphi^{-1}(A)$ connected to $K_i$.
This cut is always contained in $\varphi^{-1}(C)$.

Because $\Piece_k$ is piece of some $\relsys F\in \F$ and because $(\relsys K_i, \overrightarrow {K'_i})$ are pieces of $\relsys P_k$, by Lemma \ref{kousky} they are also pieces of $\relsys F$. We denote by $\Piece_{k_1}, \Piece_{k_2},\ldots, \Piece_{k_l}$ the pieces isomorphic to the pieces $(\relsys K_1, \overrightarrow
{K'_1}), (\relsys K_2, \overrightarrow {K'_2}), \ldots, (\relsys K_l,
\overrightarrow {K'_l})$ via isomorphisms $\varphi_1, \varphi_2,\ldots,
\varphi_l$ respectively.

Now we use minimality of the piece $\Piece_k$. All the pieces $\Piece_{k_i},
i=1,\ldots, l$, have smaller size than $\Piece_k$ (as $\varphi^{-1}(C)$ is a cut
of $\Piece_k)$. Thus we have that tuple $\varphi(K_i)$ of $L(\relsys{D})$ is also
 a tuple of $L(\relsys{A})$.  Thus there exists a homomorphism $\varphi'_i$
from $\relsys K_i$ to $\relsys D$ such that
$\varphi'_i(\overrightarrow {K'_i}) = \varphi (\overrightarrow {K'_i})$ for every
$i=1,2,\ldots, l$.

In this situation we define $\varphi'(x):P_k\to A$ as follows:
\begin{enumerate}
 \item $\varphi'(x)=\varphi'_i(x)$ when $x\in K_i$ for some $i=1,2,\ldots, l$.
 \item $\varphi'(x)=\varphi(x)$ otherwise.
\end{enumerate}

It is easy to see that $\varphi'(x)$ is a homomorphism from $\relsys P_k$ to $L(\relsys{A})$. This is a contradiction.

It remains to verify that $\relsys{D}\in \Forb(\F)$. We proceed analogously. Assume that $\varphi$ is a homomorphism of some $\relsys{F}\in \F$ to $\relsys{D}$. Because $\relsys{A},\relsys{B}\in \Forb(\F)$, $\varphi$ must use vertices of $\relsys{C}$ and $\varphi^{-1}(C)$ forms a cut of $\relsys{F}$.   Denote by $E$ a minimal cut contained in $\varphi^{-1}(C)$. $\varphi(E)$ must contain tuples corresponding to all pieces of $\relsys{F}$ having $E$ as roots in $\relsys{Z}$.  This is a contradiction with $\relsys{Z}\in \Lifts$.

\end{proof}


\section {Forbidden lifts ($\Forbi(\F')$ classes)}

In Theorem \ref{mainthm} we found an amalgamation class $\Lifts_f\in
\Rel(\Delta')$ of lifted objects such that the shadow of the \Fraisse{} limit
of $\Lifts_f$ is a universal object of $\Forb(\F)$. In this section we focus on
finite families $\F$ and further refine this result by giving an explicit
description of the amalgamation class $\Lifts$ in terms of forbidden
substructures. We prove that $\Lifts$ is equivalent to a class $\Forbi(\F')$
for an explicitly defined class of lifts $\F'$ (derived from the class $\F$).
This however holds only for lifts with finite types.  For infinite families
$\F$ (and thus lifts with infinite type) this is not possible: there
are even uncountably many relational systems with a single vertex $v$: every
relation may or may not contain the tuple $(v,v,\ldots, v)$. Only countably many of
them can be forbidden in $\Forbi(\F')$ for $\F'$ countable and thus the class $\Age(\Forbi(\F'))$
must contain uncountably many mutually non-isomorphic structures.

First we show a more explicit construction of a witness.

\begin{defn} 
\label{enumerace}
For a piece $\Piece_i$ such that $\overrightarrow{R_i}$ is an $n$-tuple and for an $n$-tuple $\vec{x}$ of vertices of $\relsys{A}$ we denote by $A+_{\vec{x}}\Piece_i$ the relational structure created as a disjoint union of $\relsys{A}$ and $\relsys{P}_i$ identifying vertices of $\overrightarrow{R}_i$ along $\vec{x}$ (i.e. $A+\Piece_i$ is a free amalgamation over $\overrightarrow{x}$).

We put $\relsys{X}+_{\ext{X}{i}}\Piece_i = \relsys{\psi(X)}+_{\vec x_1}\Piece_i+_{\vec x_2}\Piece_i+_{\vec x_3}\ldots+_{\vec x_k}\Piece_i$, where $\{\vec x_1,\vec x_2,\ldots, \vec x_k\}=\ext{X}{i}$.
Finally we put $UW(\relsys X)=\psi(\relsys{X})+_{\ext{X}{1}}\Piece_1+_{\ext{X}{2}}\Piece_2+_{\ext{X}{3}}\ldots+_{\ext{X}{N}}\Piece_N$.
We shall call $UW(\relsys{X})$ the {\em universal witness} of $\relsys X$.
\end{defn}
Now we develop an alternative and more explicit description of the class $\Lifts$ (introduced in Section \ref{hlavnisekce}). We preserve all the notation introduced there.
\begin{lem}
\label{UWlem}
Lift $\relsys X$ belongs to $\Lifts$ if and only if $UW(\relsys{X})\in \Forb(\F)$ and $\relsys X$ is induced on $X$ by $L(UW(\relsys {X}))$ (in the other words, $UW(\relsys{X})\in \Forb(\F)$ is a witness of $\relsys{X}$).
\end{lem}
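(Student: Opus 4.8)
The statement is an equivalence, and the plan is to treat the two implications separately, the backward one being essentially definitional and the forward one carrying all the content. Throughout I write $\relsys{A}[X]$ for the substructure of $\relsys{A}$ induced on $X$. For the backward implication I would simply observe that if $UW(\relsys{X})\in\Forb(\F)$ and $\relsys{X}$ is induced on $X$ by $L(UW(\relsys{X}))$, then $UW(\relsys{X})$ is itself a structure of $\Forb(\F)$ witnessing membership of $\relsys{X}$ in $\Lifts$; that is, one may take $W(\relsys{X})=UW(\relsys{X})$, and nothing further is required.

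For the forward implication I would fix a witness $\relsys{A}=W(\relsys{X})\in\Forb(\F)$, so that $\relsys{X}$ is induced on $X$ by $L(\relsys{A})$ and, since the canonical lift preserves the shadow, $\psi(\relsys{X})=\relsys{A}[X]$. The key idea is to construct a single homomorphism $g\colon UW(\relsys{X})\to\relsys{A}$ and read off both conclusions from it. I would define $g$ piecewise: on $\psi(\relsys{X})=\relsys{A}[X]$ as the inclusion $X\hookrightarrow A$, and on each copy of $\relsys{P}_i$ glued at a tuple $\vec{x}\in\ext{X}{i}$ as a homomorphism $\varphi_{\vec{x},i}\colon\relsys{P}_i\to\relsys{A}$ with $\varphi_{\vec{x},i}(\overrightarrow{R_i})=\vec{x}$, which exists exactly because $\vec{x}\in\extl{L(\relsys{A})}{i}$. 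Since each gluing is a free amalgamation identifying $\overrightarrow{R_i}$ with $\vec{x}$, the two partial maps agree on the identified root vertices; and because the pieces are connected and the glued copies are pairwise disjoint away from $X$, every tuple of $UW(\relsys{X})$ lies wholly inside $\psi(\relsys{X})$ or wholly inside a single glued copy. This should make $g$ well defined and a homomorphism.

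From $g$ I would then extract the two required facts. First, $UW(\relsys{X})\in\Forb(\F)$: a homomorphism $\relsys{F}\to UW(\relsys{X})$ with $\relsys{F}\in\F$ would compose with $g$ to give $\relsys{F}\to\relsys{A}$, contradicting $\relsys{A}\in\Forb(\F)$. Second, $\relsys{X}$ is induced on $X$ by $L(UW(\relsys{X}))$, which I would verify on shadow and lift relations in turn. For the shadow, gluing a piece adds no tuple inside $X$, because any tuple of $\relsys{P}_i$ spanning only its roots is sent by $\varphi_{\vec{x},i}$ into $\vec{x}\subseteq X$ and hence already lies in $\relsys{A}[X]=\psi(\relsys{X})$. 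For the lift relations and a tuple $\vec{v}$ of vertices of $X$, if $\vec{v}\in\ext{X}{i}$ then the copy of $\Piece_i$ glued at $\vec{v}$ supplies a homomorphism witnessing $\vec{v}\in\extl{L(UW(\relsys{X}))}{i}$; conversely any homomorphism $\relsys{P}_i\to UW(\relsys{X})$ carrying $\overrightarrow{R_i}$ to $\vec{v}$ composes with $g$ (which fixes $X$) into a homomorphism $\relsys{P}_i\to\relsys{A}$ carrying $\overrightarrow{R_i}$ to $\vec{v}$, so $\vec{v}\in\extl{L(\relsys{A})}{i}=\ext{X}{i}$.

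The step I expect to be the main obstacle is the well-definedness and shadow-faithfulness of $g$: one must argue carefully that free amalgamation together with the root identifications leaves the shadow on $X$ unchanged and genuinely combines the partial maps into a homomorphism. This is exactly where connectedness of the pieces and the property that no tuple spans two distinct glued copies are used; the remaining verifications are routine bookkeeping against the definitions of the canonical lift $L$, of $UW(\relsys{X})$, and of $\Forb(\F)$.
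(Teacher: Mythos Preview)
Your proof is correct and follows essentially the same approach as the paper's: fix a witness $\relsys{A}=W(\relsys{X})$, build a homomorphism $g\colon UW(\relsys{X})\to\relsys{A}$ that is the identity on $X$, and derive both conclusions by composition. The paper states the existence of this homomorphism in one line (``it follows from the construction that there exists a homomorphism $\varphi:\relsys{B}\to\relsys{A}$ which is the identity on $\relsys{X}$'') and then runs the same two composition arguments you give; your version simply makes the piecewise construction of $g$ and the shadow-faithfulness check explicit, which is a welcome level of detail. One small remark: the fact that every tuple of $UW(\relsys{X})$ lies wholly inside $\psi(\relsys{X})$ or a single glued copy is a direct consequence of the free amalgamation in Definition~\ref{enumerace} and does not actually require connectedness of the pieces.
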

\begin{proof}
Assume that $\relsys{X}\in \Lifts$ and also put
$$\relsys{A}=W(\relsys{X}),$$
$$\relsys{B}=UW(\relsys{X}).$$

It follows from the construction that there exists a homomorphism $\varphi:\relsys{B}\to \relsys{A}$ which is the identity on $\relsys{X}$.

If there was a homomorphism $\varphi'$ from some $\relsys{F}$ 
 to
$UW(\relsys{X})$ then, by composing with $\varphi$, there also exists a homomorphism from
$\relsys{F}$ to $W(\relsys{X})$.  This is not possible, since $\relsys{A}$ is
a witness.

Let us assume now that $\relsys{X}$ is not induced by $L(\relsys{B})$ on $X$.  From the construction  of $L(\relsys{B})$ we have trivially
that for each $\vec{v}\in \ext{X}{i}$, there is also $\vec{v}\in \extl{L(\relsys{B})}{i}$.  Assume that there is some $\vec{v}\in \extl{L(\relsys{B})}{i}$ consisting only
of vertices from $\relsys{X}$ such that $\vec{v}\notin \ext{X}{i}$.  Let $\varphi'$ be the homomorphism $\relsys{P}_i\to L(\relsys{B})$ such that $\varphi'(\overrightarrow{R_i})=\vec{v}$. Again by composing with $\varphi$
we obtain a homomorphism $\relsys{P}_i\to L(\relsys{A})$, a contradiction with $\vec{v}\notin \ext{X}{i}$. Thus $\relsys{X}$ is induced by $x$ on $L(\relsys{B})$.

In the reverse direction,
if $UW(\relsys{X})$ is a witness then $\relsys{X}\in \Lifts$.  The conditions listed in the lemma are
precisely the conditions for $UW(\relsys{X})$ to be a witness.
\end{proof}

\begin{defn}
\label{roothomo}
For a rooted structure $(\relsys{X},\overrightarrow{R})$ we define an {\rm $i$-rooted homomorphism} $(\relsys{X}, \overrightarrow{R})$ $\to \relsys{Y}$ as a homomorphism
$f:\relsys{X}\to\relsys{Y}$ such that $f(\overrightarrow{R})\in \ext{Y}{i}$ if and only if $\overrightarrow{R}\in \ext{X}{i}$.

For relational structure $\relsys{A}$ and $\relsys{X}$ a sublift of $L(\relsys{A})$, we say that $\relsys X$ is {\em $\relsys{A}$-covering} if and only if there is a homomorphism $f:\relsys{A}\to UW(\relsys{X})$. 

Similarly for piece $\Piece_i$ and $\relsys{X}$ a sublift of $L(\relsys{P}_i)$ such that $\overrightarrow{R}_i\notin \ext{X}{i}$, we say that $\relsys X$ is {\em $\Piece_i$-covering} if and only if $X$ contains all roots of $\overrightarrow{R}_i$ and there is a homomorphism $\varphi:\relsys{P}_i\to UW({\relsys{X}})$ such that $f$ is the identity on $\overrightarrow{R}_i$.
\end{defn}

Our first characterization of classes $\Lifts$ is in terms of rooted homomorphisms and coverings.

\begin{thm}
\label{reprezent}
For a fixed finite $\F$, the class $\Lifts$ (defined above before Theorem \ref{mainthm}) satisfies:

$\relsys{X}\in \Lifts$ if and only if
\begin{enumerate}
\item[(a)] there is no homomorphism $\relsys{Y}\to \relsys{X}$, where $\relsys{Y}$ is $\relsys{F}$-covering for some $\relsys{F}\in \F$,
\item[(b)] for every $i=1,\ldots, N$ and every $\Piece_i$-covering $\relsys{Z}$ there is no $i$-rooted homomorphism $f:(\relsys{Z},\overrightarrow{R}_i)\to \relsys{X}$.
\end{enumerate}
\end{thm}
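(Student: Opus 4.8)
The plan is to deduce the characterisation directly from Lemma \ref{UWlem}, which already reduces membership $\relsys{X}\in\Lifts$ to two conditions on the universal witness: $(\mathrm{I})$ $UW(\relsys{X})\in\Forb(\F)$, and $(\mathrm{II})$ $\relsys{X}$ is the lift induced on $X$ by $L(UW(\relsys{X}))$ (equivalently, no tuple $\vec v\subseteq X$ with $\vec v\notin\ext{X}{i}$ lies in $\extl{L(UW(\relsys{X}))}{i}$). I would show that $(b)$ is equivalent to $(\mathrm{II})$, and that $(a)$ is equivalent to $(\mathrm{I})$ in the presence of $(\mathrm{II})$; combining these with Lemma \ref{UWlem} yields the theorem. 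Everything rests on one structural fact: the canonical lift $L$ is covariantly functorial on homomorphisms (if $\vec r\in\extl{L(\relsys{A})}{i}$ is witnessed by $\relsys{P}_i\to\relsys{A}$ and $\phi:\relsys{A}\to\relsys{B}$, then $\phi(\vec r)\in\extl{L(\relsys{B})}{i}$), together with the fact that $UW(\relsys{X})$ is a free amalgam of the core $\psi(\relsys{X})$ with copies of pieces glued along their (cut) roots, so that deleting $X$ isolates the piece interiors and any connected substructure's image decomposes along these cuts.

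First I would treat the two ``easy'' directions, which merely build homomorphisms into universal witnesses. If $h:\relsys{Y}\to\relsys{X}$ is a homomorphism of lifts then $h$ preserves each relation $\ext{}{i}$, so for every glued piece of $UW(\relsys{Y})$ sitting over a tuple $\vec w\in\ext{Y}{i}$ the image $h(\vec w)\in\ext{X}{i}$ carries a matching glued copy in $UW(\relsys{X})$; mapping interiors identically extends $h$ to $\bar h:UW(\relsys{Y})\to UW(\relsys{X})$. Composing $\bar h$ with the covering homomorphism $\relsys{F}\to UW(\relsys{Y})$ (respectively $\relsys{P}_i\to UW(\relsys{Z})$) produces $\relsys{F}\to UW(\relsys{X})$ (respectively $\relsys{P}_i\to UW(\relsys{X})$ with roots landing on $f(\overrightarrow{R}_i)$). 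This gives $(\mathrm{I})\Rightarrow(a)$, and, since an $i$-rooted homomorphism preserves the predicate ``$\overrightarrow{R}_i\notin\ext{}{i}$'', also $\neg(b)\Rightarrow\neg(\mathrm{II})$.

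The substance lies in the reverse constructions, where a covering lift is extracted from a homomorphism into $UW(\relsys{X})$. For $(\mathrm{II})$: given a new tuple witnessed by $\gamma:\relsys{P}_i\to UW(\relsys{X})$ with $\gamma(\overrightarrow{R}_i)=\vec v\subseteq X$ and $\vec v\notin\ext{X}{i}$, I put $Q_0=\gamma^{-1}(X)$ and let $\relsys{Z}$ be the sublift of $L(\relsys{P}_i)$ on $Q_0$ retaining exactly those $\vec r\in\extl{L(\relsys{P}_i)}{j}$ with $\gamma(\vec r)\in\ext{X}{j}$. This filter forces $\overrightarrow{R}_i\notin\ext{Z}{i}$ (because $\gamma(\overrightarrow{R}_i)=\vec v\notin\ext{X}{i}$), makes $\gamma|_{Q_0}$ an $i$-rooted homomorphism $(\relsys{Z},\overrightarrow{R}_i)\to\relsys{X}$, and leaves $\relsys{Z}$ being $\Piece_i$-covering, contradicting $(b)$; hence $\neg(\mathrm{II})\Rightarrow\neg(b)$. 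For $(\mathrm{I})$ I assume $(\mathrm{II})$ and a homomorphism $g:\relsys{F}\to UW(\relsys{X})$; then $\relsys{Y}$, the sublift of $L(\relsys{F})$ induced on $F_0=g^{-1}(X)$, is $\relsys{F}$-covering, and $(\mathrm{II})$ guarantees that every warranted extension tuple of $F_0$ has its $g$-image already in $\ext{X}{\cdot}$, so $g|_{F_0}:\relsys{Y}\to\relsys{X}$ is a homomorphism of lifts, contradicting $(a)$.

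I expect the main obstacle to be verifying that these restricted lifts genuinely are coverings, i.e.\ producing the folding homomorphisms $\relsys{F}\to UW(\relsys{Y})$ and $\relsys{P}_i\to UW(\relsys{Z})$ that return the parts of $\relsys{F}$ (resp.\ $\relsys{P}_i$) sitting inside glued pieces to pieces re-attached over $F_0$ (resp.\ $Q_0$). This is exactly the cut-analysis already performed in the proof of Theorem \ref{mainthm}: each connected component of $\relsys{F}\setminus F_0$ maps into a single piece interior, its boundary maps into the roots of that piece, and by Lemma \ref{kousky} a minimal cut separating it exhibits an honest piece of $\relsys{F}$, along which the component is re-routed into the corresponding glued copy. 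A secondary technicality I would record is that tuples lying entirely within piece-roots create no spurious core relations, so that $g|_{F_0}$ and $\gamma|_{Q_0}$ are homomorphisms on shadows as well as on the added relations.
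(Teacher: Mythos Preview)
Your approach is essentially the paper's: both reduce to Lemma \ref{UWlem} and build the candidate covering lift as the maximal sublift of $L(\relsys{P}_i)$ (resp.\ $L(\relsys{F})$) on the preimage of $X$ for which the restriction is a lift homomorphism into $\relsys{X}$. The organisation differs only cosmetically---the paper first proves the forward direction as a lemma, then does a case split for the converse---and your separation into ``$(\mathrm{II})\Leftrightarrow(b)$'' and ``$(\mathrm{I})\Leftrightarrow(a)$ given $(\mathrm{II})$'' is a clean way to say the same thing.

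There is, however, one genuine gap in your handling of $\neg(\mathrm{II})\Rightarrow\neg(b)$. To show $\relsys{Z}$ is $\Piece_i$-covering you must re-route each component $C_j$ of $P_i\setminus Q_0$ into the glued copy of $\Piece_{k_j}$ at its boundary $\overrightarrow{R}_{k_j}$ inside $UW(\relsys{Z})$. For that copy to be present you need $\overrightarrow{R}_{k_j}\in\ext{Z}{k_j}$, which by your filter means $\gamma(\overrightarrow{R}_{k_j})\in\ext{X}{k_j}$. But you are precisely in the situation where $(\mathrm{II})$ fails, so the mere existence of $\relsys{P}_{k_j}\to UW(\relsys{X})$ with roots in $X$ does \emph{not} force the image tuple into $\ext{X}{k_j}$. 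The paper closes this by first choosing $i$ with $|P_i|$ minimal among all pieces witnessing a violation of $(\mathrm{II})$; then every $\Piece_{k_j}$ is strictly smaller, so its root image cannot itself be a violation and therefore lies in $\ext{X}{k_j}$. Your appeal to ``the cut-analysis already performed in the proof of Theorem \ref{mainthm}'' is on target---that proof uses exactly this minimality device---but the minimality choice has to be stated here; without it your $\relsys{Z}$ need not be covering. For $\neg(\mathrm{I})\Rightarrow\neg(a)$ the issue evaporates, since you correctly assume $(\mathrm{II})$ there and then the full induced sublift on $F_0$ already works.
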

\begin{figure}
\centerline{\includegraphics{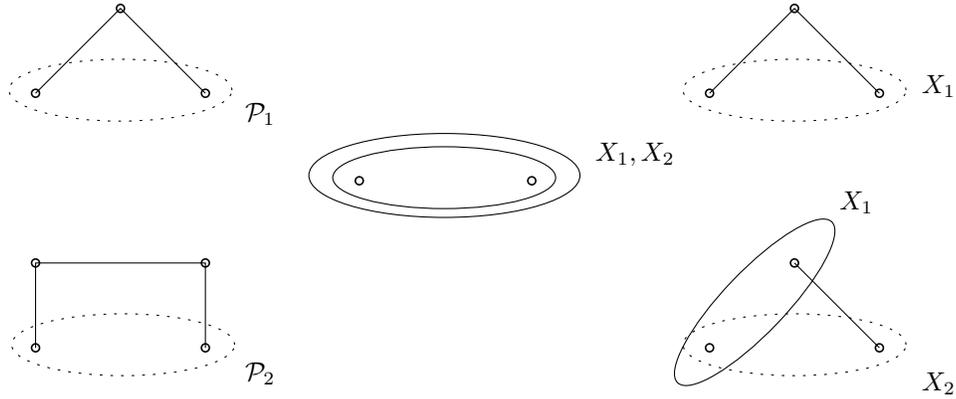}}
\caption{Pieces of the 5-cycle (up to isomorphisms and permutations of roots), inclusion-minimal $C_5$-covering sublifts, and inclusion-minimal $\Piece_1$-covering and $\Piece_2$-covering sublifts. Roots are denoted by dotted lines.}
\end{figure}

\begin{lem}
 Conditions $(a)$ and $(b)$ hold for every $\relsys{X}\in \Lifts$.
\end{lem}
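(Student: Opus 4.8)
The plan is to fix, for a given $\relsys{X}\in\Lifts$, a witness $\relsys{A}=W(\relsys{X})\in\Forb(\F)$, so that $\relsys{X}$ is exactly the sublift induced by $L(\relsys{A})$ on $X$; the inclusion $\iota\colon\relsys{X}\hookrightarrow L(\relsys{A})$ is then an embedding of lifts. The single tool I would isolate first is the following \emph{extension claim}: whenever $h\colon\relsys{Y}\to L(\relsys{A})$ is a homomorphism of lifts (for an arbitrary lift $\relsys{Y}$), the shadow $\psi(h)\colon\psi(\relsys{Y})\to\relsys{A}$ extends to a homomorphism $\bar h\colon UW(\relsys{Y})\to\relsys{A}$. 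To prove it, recall from Definition \ref{enumerace} that $UW(\relsys{Y})$ is obtained from $\psi(\relsys{Y})$ by freely gluing, for every $i$ and every tuple $\vec{x}\in\ext{Y}{i}$, a copy of the piece $\Piece_i$ with its roots identified along $\vec{x}$. Since $h$ preserves the lifted relations, $h(\vec{x})\in\extl{L(\relsys{A})}{i}$, and by the definition of the canonical lift there is a homomorphism $\eta\colon\relsys{P}_i\to\relsys{A}$ with $\eta(\overrightarrow{R}_i)=\psi(h)(\vec{x})$. Mapping each glued copy of $\Piece_i$ by its corresponding $\eta$ agrees with $\psi(h)$ on the shared root vertices, so these maps assemble into the desired $\bar h$.

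For condition $(a)$ I would argue by contradiction. Suppose $g\colon\relsys{Y}\to\relsys{X}$ is a homomorphism with $\relsys{Y}$ an $\relsys{F}$-covering sublift of $L(\relsys{F})$ for some $\relsys{F}\in\F$. Composing with $\iota$ gives a lift homomorphism $\iota\circ g\colon\relsys{Y}\to L(\relsys{A})$, so the extension claim yields $\bar h\colon UW(\relsys{Y})\to\relsys{A}$. By the definition of $\relsys{F}$-covering (Definition \ref{roothomo}) there is a homomorphism $\relsys{F}\to UW(\relsys{Y})$; composing it with $\bar h$ produces a homomorphism $\relsys{F}\to\relsys{A}$, contradicting $\relsys{A}\in\Forb(\F)$.

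Condition $(b)$ is handled by the same mechanism. Suppose, for some $i$ and some $\Piece_i$-covering sublift $\relsys{Z}$, there is an $i$-rooted homomorphism $f\colon(\relsys{Z},\overrightarrow{R}_i)\to\relsys{X}$. Being $\Piece_i$-covering forces $\overrightarrow{R}_i\notin\ext{Z}{i}$, so the $i$-rooted condition gives $f(\overrightarrow{R}_i)\notin\ext{X}{i}$, and hence $f(\overrightarrow{R}_i)\notin\extl{L(\relsys{A})}{i}$ since $\relsys{X}$ is induced by $L(\relsys{A})$ on $X$. On the other hand, $\iota\circ f$ is a lift homomorphism $\relsys{Z}\to L(\relsys{A})$, so the extension claim provides $\bar h\colon UW(\relsys{Z})\to\relsys{A}$ extending $\psi(f)$; composing the covering homomorphism $\varphi\colon\relsys{P}_i\to UW(\relsys{Z})$ (the identity on $\overrightarrow{R}_i$) with $\bar h$ gives a homomorphism $\relsys{P}_i\to\relsys{A}$ sending $\overrightarrow{R}_i$ to $f(\overrightarrow{R}_i)$. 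By the defining property of $L(\relsys{A})$ this means $f(\overrightarrow{R}_i)\in\extl{L(\relsys{A})}{i}$, a contradiction.

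The only delicate point, and hence the step I would write out most carefully, is the extension claim: one must check that the maps $\eta$ on the freely amalgamated pieces glue into a well-defined homomorphism, i.e. that they agree with $\psi(h)$ on the root tuples and that distinct pieces do not interfere, which holds precisely because $UW(\relsys{Y})$ is a free amalgamation over those root tuples. Everything else reduces to routine composition of homomorphisms together with the definition of the canonical lift, so I expect the bookkeeping between lifts, shadows, and the functor $\psi$ to be the main source of care rather than any genuine difficulty.
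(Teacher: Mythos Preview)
Your proof is correct and follows essentially the same approach as the paper. The only cosmetic difference is that the paper invokes Lemma~\ref{UWlem} and works with the universal witness $UW(\relsys{X})$ as the target (writing the chain $\relsys{F}\to UW(\relsys{Y})\to UW(\relsys{X})$), whereas you fix an arbitrary witness $\relsys{A}=W(\relsys{X})$ and spell out the extension claim explicitly; the underlying mechanism---that a lift homomorphism into a canonical lift extends, on shadows, through the $UW$ construction---is identical in both.
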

\begin{proof}
Fix $\relsys{X}\in \Lifts$. Assume that $(a)$ does not hold for some $\relsys{Y}$ that is $\relsys{F}$-covering for some $\relsys{F}\in\F$. Since there is a homomorphism $\relsys{F}\to UW(\relsys{Y})$ and a homomorphism $\relsys{Y}\to\relsys{X}$ we also have a homomorphism $\relsys{F}\to UW(\relsys{Y}) \to UW(\relsys{X})$, a contradiction with Lemma \ref{UWlem}.

To show $(b)$ use a rooted analogy of the same proof.
\end{proof}

\begin{proof}[Proof of Theorem \ref{reprezent}]

Take lift $\relsys{X}$ such that $\relsys{X}\notin \Lifts$.  By Lemma \ref{UWlem}
we have one of the following cases:
\begin{itemize}
\item[I.] $\relsys{X}$ is not induced by $L(UW(\relsys{X}))$ on $\relsys{X}$.

In this case we have some homomorphism $f:\relsys{P}_i\to
UW(\relsys{X})$ such that $f(\overrightarrow{R}_i)\notin \ext{X}{i}$.  Assume that $i$
is chosen so that the number of vertices of $\relsys{P}_i$ is minimal.

Denote by $\relsys{Y}$ a maximal (non-induced) sublift of $L(\relsys{P}_i)$
such that $f$ is also a homomorphism from $\relsys{Y}$ to $\relsys{X}$.
We need to show that $\relsys{Y}$ is $\Piece_i$-covering to get a contradiction
with $(b)$.

Denote by $\relsys{C}_1,\ldots,\relsys{C}_t$ the components of $\relsys{P}_i\setminus Y$.
Now denote by $\Piece_{k_1},\ldots, \Piece_{k_t}$ the pieces corresponding to these components
and by $f_1,\ldots, f_t$ the homomorphisms $\relsys{P}_{k_j}\to \relsys{P}_i$, $j=1,\ldots,t$ mapping non-roots
of $\relsys{P}_{k_j}$ to vertices of $\relsys{C}_j$ and roots to vertices of $\relsys{Y}$.

Because $f(f_j(x))$ is a homomorphism $\relsys{P}_{k_j}\to UW(\relsys{X})$ we
have from the minimality of the counterexample $f(f_j(\overrightarrow{R}_{k_j}))\in
\ext{X}{k_j}$ and thus also $f_j(\overrightarrow{R}_{k_j})\in \ext{Y}{k_j}$.
This holds for every $j=1,\ldots,t$, and thus we also have a homomorphism $\relsys{P_n}\to UW(\relsys{Y})$ 
that is the identity on $Y$.  This prove that $\relsys{Y}$ is $\Piece_i$-covering.

\item[II.] There is a homomorphism $f$ from some $\relsys{F}\in\F$ to $UW(\relsys{X})$.

Assume that $f(F)\cap X$ is empty.  In this case there is $i$ such that $f(F)$ is contained among the vertices of a copy of $\Piece_i$ in $UW(X)$.  In this case the lift $\relsys{X}$ is covering because it contains a tuple in $\ext{X}{i}$. A contradiction.

Denote by $\relsys{Y}$ a maximal (non-induced) sublift of $L(\relsys{F})$ so $f$
is also a homomorphism $\relsys{Y}\to\relsys{X}$. Because there is a nonempty
intersection of $f(F)$ and $X$, $Y$ is nonempty.  We can show that $\relsys{Y}$
is covering by the same argument as in I, getting a contradiction with $(a)$ too.
\end{itemize}
\end{proof}

Observe that properties $(a)$ and $(b)$ directly translate to a family $\F'$ that has the property that the shadow of $\Forbi(\F')$ is $\Forb(\F)$. This leads to the desired explicit characterization of the class $\Lifts$.
\begin{thm}
\label{reprezent2}
Let $\F'$ be a class of $\F$-lifts satisfying the following:
\begin{enumerate}
\item $\relsys{X}\in \F'$ for every lift $\relsys{X}$ such that there is an $\relsys{F}$-covering lift $\relsys{Y}$ for some $\relsys{F}\in \F$ together with a
surjective homomorphism $\relsys{Y}\to\relsys{X}$,
\item $\relsys{X}\in \F'$ for every structure $\relsys{X}$ such that there is $1\leq i\leq N$ and a $\Piece_i$-covering rooted structure $(\relsys{Y}, \overrightarrow{R})$ together with a surjective $i$-rooted homomorphism $\relsys{Y}\to\relsys{X}$,
\item  $\F'$ contains no other structures.
\end{enumerate}

Then we have:
\begin{enumerate}
  \item $\F'$ is a finite family.
  \item $\Forbi(\F')=\Lifts$ and thus $\Age(\Forbi(\F'))$ is an amalgamation class. The shadow $\relsys{U}$ of the generic $\relsys{U}'=\lim \Age(\Forbi(\F'))$ is a universal structure for $\Forb(\F)$.
\end{enumerate}
\end{thm}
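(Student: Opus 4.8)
The plan is to deduce everything from the characterization already established in Theorem~\ref{reprezent}, which states that a lift $\relsys{X}$ lies in $\Lifts$ exactly when conditions $(a)$ and $(b)$ hold. Thus it suffices to prove that the failure of $(a)$ or of $(b)$ is equivalent to the existence of an embedding of some member of $\F'$ into $\relsys{X}$; combined with Theorem~\ref{reprezent} this gives $\Forbi(\F')=\Lifts$, and the amalgamation and universality assertions then follow at once from Theorem~\ref{mainthm}, since $\Age(\Forbi(\F'))=\Lifts_f$ and $\relsys{U}'=\lim\Age(\Forbi(\F'))=\relsys{U}$, whose shadow $\psi(\relsys{U})$ is universal for $\Forb(\F)$.

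The key device is the standard epi--mono factorization: every homomorphism $g\colon\relsys{Y}\to\relsys{X}$ factors as a surjective homomorphism $\relsys{Y}\to\relsys{X}'$ followed by the inclusion embedding $\relsys{X}'\hookrightarrow\relsys{X}$, where $\relsys{X}'$ is the substructure induced on $g(Y)$ by $\relsys{X}$. First I would treat $(a)$: if $(a)$ fails there is an $\relsys{F}$-covering $\relsys{Y}$ and a homomorphism $\relsys{Y}\to\relsys{X}$; factoring it, the image $\relsys{X}'$ receives a surjective homomorphism from $\relsys{Y}$, so $\relsys{X}'\in\F'$ by clause~$1$ of the definition of $\F'$, and $\relsys{X}'$ embeds into $\relsys{X}$. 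Conversely, an embedding of a clause-$1$ member of $\F'$ composes with the defining surjection to yield a homomorphism $\relsys{Y}\to\relsys{X}$ violating $(a)$. The same argument, carried out for $i$-rooted homomorphisms and $\Piece_i$-covering structures $\relsys{Z}$, matches the failure of $(b)$ with the embedding of a clause-$2$ member of $\F'$; here one must check that the factorization preserves the rooted condition, which it does because $\overrightarrow{R}_i\notin\ext{Z}{i}$ is transported through both the surjection and the relation-preserving embedding. By clause~$3$, clauses $1$ and $2$ exhaust $\F'$, so $\relsys{X}\in\Forbi(\F')$ if and only if both $(a)$ and $(b)$ hold. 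Since a homomorphism out of a finite structure has finite image, this equivalence carries over to countable $\relsys{X}$, giving $\Forbi(\F')=\Lifts$ as classes of countable structures.

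For the finiteness of $\F'$, I would use that $\F$ is finite, so there are only finitely many pieces $\Piece_i$ and the lift type $\Delta'$ is finite. Each $\relsys{F}$-covering $\relsys{Y}$ is a sublift of the finite structure $L(\relsys{F})$, and each $\Piece_i$-covering structure is a sublift of the finite $L(\relsys{P}_i)$; on a fixed finite vertex set over a finite type there are only finitely many sublifts (even allowing non-induced ones, since there are finitely many admissible tuples). Every target of a surjective homomorphism from such a finite structure has at most $|Y|$ vertices and bounded arities, hence ranges over a finite set up to isomorphism, so $\F'$ is finite.

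The main obstacle I anticipate is the rooted bookkeeping in the step linking $(b)$ with clause~$2$: one must verify that the image of a $\Piece_i$-covering structure under a surjective $i$-rooted homomorphism is again of the form demanded by clause~$2$, and that the roots, the covering homomorphism $\relsys{P}_i\to UW(\relsys{Z})$, and the non-membership of $\overrightarrow{R}_i$ in the $i$-th lifted relation all survive the factorization. Everything else reduces to the elementary factorization lemma and the finiteness count.
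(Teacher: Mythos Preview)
Your proposal is correct and follows exactly the approach of the paper: the paper's proof is two sentences, observing that the vertex count of members of $\F'$ is bounded by the sizes of the $\relsys{F}\in\F$ (giving finiteness), and that ``from the construction above'' $\Forbi(\F')$ is precisely the class satisfying $(a)$ and $(b)$, hence equals $\Lifts$ by Theorem~\ref{reprezent}. You have spelled out the epi--mono factorization that the paper leaves implicit, and the rooted bookkeeping you flag as a potential obstacle does indeed go through for the reason you give.
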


\begin{proof}
$\F'$ is necessarily finite, because the number of vertices of lifts $\relsys{X}\in
\F'$ is bounded by the number of vertices of structures $\relsys{A}\in \F$.
From the construction above it follows that $\Forbi(\F')$ is precisely the class of
structures satisfying conditions $(a)$ and $(b)$.
\end{proof}

We used the notion of rooted homomorphisms (and thus classes $\Forbi(\F')$) to define our lifted classes.  It is
easy to see that the classes $\Forb(\F')$  are not powerful enough to 
extend the expressive power of lifts.
\begin{lem}
\label{stupne}
Assume that there is a class $\F$ and a lifted class $\F'$ such that $\Forb(\F')$ contains a generic structure (lift) whose shadow is a universal structure of the class $\Forb(\F)$.  Then the class $\Forb(\F)$ itself contains a generic structure.
\end{lem}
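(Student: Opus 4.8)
The plan is to deduce the statement from \Fraisse{}'s theorem (Theorem~\ref{fraissethm}): I will show that $\Age(\Forb(\F))$ is an amalgamation class, and the generic structure for $\Forb(\F)$ is then its \Fraisse{} limit. That limit indeed lies in $\Forb(\F)$, since every $\relsys{F}\in\F$ is finite, so any homomorphism $\relsys{F}\to\lim$ would factor through a finite induced substructure, which is a member of $\Age(\Forb(\F))$ and hence admits no such homomorphism. Heredity, isomorphism-closedness and countability of $\Age(\Forb(\F))$ are routine (the type $\Delta$ is finite), so the entire weight of the argument is the amalgamation property, which I intend to transfer from $\Age(\Forb(\F'))$ through the forgetful functor $\psi$. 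I write $\relsys{U}'$ for the given generic lift in $\Forb(\F')$ and $\relsys{U}=\psi(\relsys{U}')$ for its shadow; by Theorem~\ref{fraissethm} the existence of $\relsys{U}'$ already makes $\Age(\Forb(\F'))$ an amalgamation class.

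Before the transfer I would record two facts. First, because $\Forb(\F')$ is defined by forbidden \emph{homomorphisms}, it is closed under homomorphic preimages: if $\relsys{Y}\to\relsys{Z}$ and $\relsys{Z}\in\Forb(\F')$, then $\relsys{Y}\in\Forb(\F')$. Applying this to the identity map yields the decisive \emph{downward closure}: if a lift lies in $\Forb(\F')$, then any lift with the same shadow obtained by deleting some tuples from its added relations (those indexed by $\Delta'\setminus\Delta$) again lies in $\Forb(\F')$. This is precisely where $\Forb$ beats $\Forbi$: deleting tuples can create new induced copies, so it need not preserve membership in an embedding-defined class. Second, the shadow of every finite member of $\Forb(\F')$ lies in $\Forb(\F)$, since such a structure embeds into $\relsys{U}'$, whence its shadow embeds into $\psi(\relsys{U}')=\relsys{U}\in\Forb(\F)$ and $\Forb(\F)$ is hereditary; conversely, universality of $\relsys{U}$ lets me lift, because any finite $\relsys{A}\in\Forb(\F)$ embeds into $\relsys{U}$, and the lift induced by $\relsys{U}'$ on its image is a lift $\relsys{A}'\in\Forb(\F')$ with $\psi(\relsys{A}')=\relsys{A}$.

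The amalgamation step then runs as follows. Given $\relsys{A},\relsys{B},\relsys{C}\in\Forb(\F)$ with $\relsys{C}$ embedded in both (assume $A\cap B=C$), I lift $\relsys{A}$ and $\relsys{B}$ to $\relsys{A}',\relsys{B}'\in\Forb(\F')$ as above. Their restrictions to $C$ are two lifts $\relsys{C}'_A,\relsys{C}'_B$ of $\relsys{C}$, which in general differ, and this mismatch is the main obstacle, since a shadow carries no canonical lift. I resolve it via downward closure: let $\relsys{C}'$ be the lift of $\relsys{C}$ whose $i$-th added relation is the intersection of those of $\relsys{C}'_A$ and $\relsys{C}'_B$, and delete from $\relsys{A}'$ (respectively $\relsys{B}'$) exactly those $C$-internal added tuples not surviving in $\relsys{C}'$, obtaining $\relsys{A}^\ast,\relsys{B}^\ast\in\Forb(\F')$. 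By construction $\psi(\relsys{A}^\ast)=\relsys{A}$, $\psi(\relsys{B}^\ast)=\relsys{B}$, and $\relsys{A}^\ast,\relsys{B}^\ast$ now induce the \emph{same} lift $\relsys{C}'$ on $C$. Amalgamating them over $\relsys{C}'$ inside the amalgamation class $\Age(\Forb(\F'))$ produces $\relsys{D}'\in\Forb(\F')$, whose shadow $\relsys{D}=\psi(\relsys{D}')$ lies in $\Forb(\F)$ by the second fact. Since $\psi$ sends lift-embeddings to shadow-embeddings, $\relsys{D}$ contains $\relsys{A}$ and $\relsys{B}$ as induced substructures agreeing on $\relsys{C}$, so it is the desired amalgam.

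With amalgamation established (joint embedding is the special case $\relsys{C}=\emptyset$), $\Age(\Forb(\F))$ is an amalgamation class and Theorem~\ref{fraissethm} delivers a generic structure for $\Forb(\F)$, completing the proof. The genuinely delicate point is the one flagged above: the two lifts of $\relsys{C}$ coming from $\relsys{A}$ and $\relsys{B}$ need not coincide, and it is exactly the homomorphism-defined nature of $\Forb(\F')$, through downward closure, that allows me to trim them to a common lift without leaving the class. I expect to verify carefully only that deleting added tuples genuinely preserves membership and that the pruned restrictions agree on $\relsys{C}$; everything else is bookkeeping about $\psi$.
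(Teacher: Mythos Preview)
Your argument is correct and rests on the same key observation as the paper's proof: since $\Forb(\F')$ is defined by forbidden \emph{homomorphisms}, it is monotone (downward closed under deleting tuples). Both proofs transfer amalgamation from $\Age(\Forb(\F'))$ to $\Age(\Forb(\F))$ via $\psi$ and then invoke Theorem~\ref{fraissethm}.

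The paper, however, executes the transfer more directly. Rather than lifting $\relsys{A},\relsys{B},\relsys{C}$ through embeddings into $\relsys{U}'$ and then repairing the resulting mismatch on $C$ by intersecting the two induced lifts, the paper simply takes the \emph{trivial} lift of each (all added relations empty). Monotonicity---the very fact you isolate as ``downward closure''---pushed all the way down guarantees that these trivial lifts already lie in $\Forb(\F')$, and since the trivial lift is canonical there is no mismatch to resolve: the trivial lifts $\relsys{X},\relsys{Y},\relsys{Z}$ automatically satisfy $\relsys{Z}=\relsys{X}\restriction C=\relsys{Y}\restriction C$. One then amalgamates $\relsys{X},\relsys{Y}$ over $\relsys{Z}$ in $\Age(\Forb(\F'))$ and takes the shadow. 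Your lift--prune--intersect manoeuvre is thus unnecessary; it arrives at a common lift of $\relsys{C}$ by work that the trivial lift gives for free. Nothing is wrong with your route, but the paper's shortcut is worth knowing: whenever monotonicity is available, the trivial lift is the canonical section of $\psi$ and eliminates the coherence problem entirely.
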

\begin{proof}
Observe that all classes $\Forb(\F')$ are monotone. That is, for any $\relsys{X}\in \Forb(\F')$, a relational structure $\relsys{Y}$ created from $\relsys{X}$ by removing some of its tuples also belongs to $\Forb(\F')$.

In particular $\Forb(\F')$ is closed under constructing shadows and thus $\Forb(\F)$ may be thought of as a subclass of $\Forb(\F')$ (modulo the signature of relational structures).

Now take any $\relsys{A},\relsys{B},\relsys{C}\in \Forb(\F)$ and their lifts $\relsys{X},\relsys{Y},\relsys{Z}$ such that they contain no new tuples.  These lifts are in the class $\Forb(\F')$.  Now consider $\relsys{W}$ an amalgamation of $\relsys{X}$ and $\relsys{Y}$ over $\relsys{Z}$ and its shadow $\relsys{D}$. Then $\relsys{D}$ is an amalgamation of $\relsys{A}$ and $\relsys{B}$ over $\relsys{C}$.
\end{proof}

\section{Bounding arities}
\label{bounded}

The expressive power of lifts can be limited in several ways. For example, it is natural to 
restrict arities of the newly added relations.  It follows from the above proof that the arities of new relations
in our lifted class $\Lifts$ depend on the size of a maximal inclusion-minimal cut of the Gaifman graph of a forbidden structure.

In this section we completely characterize the minimal arity of generic lifts of classes $\Forb(\F)$.
This involves a non-trivial Ramsey-type statement stated below as Lemma \ref{sets}. As a warm-up, we first show that the generic universal graph for the class $\Forb(C_5)$ cannot be
constructed by finite monadic lifts.

Consider, for contradiction, a monadic lift $\relsys{U}'$ which is both a ultrahomogeneous relational structure and whose
shadow $\relsys{U}$ is universal for the class $\Forb(C_5)$. Since all extended
relations are monadic, we can view them as a finite coloring of vertices.
For $v\in \relsys{U}$ we shall denote by $c(v)$ the color of $v$ or, equivalently, the set of
all extended relations $\ext{U}{i}$ such that $(v)\in \ext{U}{i}$.

Since graphs in $\Forb(C_5)$ have unbounded chromatic number, we know that the
chromatic number of $\relsys{U}$ is infinite. Consider the decomposition of
$\relsys{U}$ implied by $c$. Since the range of $c$ is finite, one of the
graphs in this decomposition has infinite chromatic number. Denote this subgraph
by $\relsys{S}$.

In fact it suffices that $\relsys{S}$ is not bipartite. Thus $\relsys{S}$ contains an odd cycle. The shortest odd cycle has length $\geq 7$ and thus $\relsys{S}$ contains
 an induced
path of length 3 formed by vertices $p_1,p_2,p_3,p_4$.  Additionally there is a vertex $v$ of degree at least 2.  Because the graph is triangle free, the vertices $v_1$ and $v_2$ connected to $v$ are not connected by an edge.

From the ultrahomogeneity of $\relsys{U}'$ we know that the partial isomorphism mapping
$v_1\to p_1$ and $v_2\to p_4$ can be extended to an automorphism $\varphi$ of $\relsys{U}'$.
The vertex $\varphi(v)$ is connected to $p_1$ and $p_4$ and thus together with
$p_1,p_2,p_4$ contains either a triangle or a $5$-cycle.  It follows that the generic lift $\relsys{U}'$ cannot be monadic.

In this section we prove that there is nothing special here about arity 2 nor about the pentagon. One can determine the minimal arity of generic lifts for general classes $\Forb(\F)$. Towards this end we shall need a Ramsey-type statement, which we formulate after introducing the following:

Let $S$ be a finite set with a partition $S_1\cup S_2\cup\ldots\cup S_n$. For $v\in S$ we denote by $i(v)$ the index $i$ such that $v\in S_i$. Similarly, for a tuple $\vec{x}=(x_1,x_2,\ldots,x_t)$ of elements of $S$ we denote by $i(\vec{x})$ the tuple $(i(x_1), i(x_2),\ldots,i(x_t))$. We make use of the following:
\begin{lem}
\label{sets}
For every $n\geq 2$, $r<n$ and $K$ integers, there is a relational structure $\relsys{S}=(S,\relS)$, with vertices $S=S_1\cup S_2\cup\ldots \cup S_n$ (the sets $S_i$ are mutually disjoint) and a single relation $\relS$ of arity $2n$ with the following properties:
\begin{enumerate}
  \item Every $(v_1,u_1,v_2,u_2,\ldots, v_n,u_n)\in \relS$ satisfies $v_i\neq u_i\in S_i, i=1,\ldots, n$.
  \item For every $\vec{v},\vec{u}\in \relsys{S}$, $\vec{v}\neq\vec{u}$, $\vec{v}$ and $\vec{u}$ have at most $r$ common vertices.
  \item 
For every coloring of tuples of $S$ of size $r$ ($r$-tuples) using $K$ colors there is a $2n$-tuple $\vec{v}\in \relS$ such that every two $r$-tuples $\vec{x},\vec{x}'$ consisting of vertices of $\vec{v}$ such that $i(\vec{x})=i(\vec{x}')$ have the same color.
\end{enumerate}
\end{lem}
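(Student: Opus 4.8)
The plan is to separate the three requirements. Condition (1) is built into the definition of $\relS$; condition (2) is a bounded-overlap (``packing'') condition on the tuples; and (3) is the genuine Ramsey content. I would first reformulate (3) as a structural Ramsey statement. Since each part $S_i$ contributes exactly the two vertices $v_i,u_i$ to a tuple $\vec{v}\in\relS$, the map $\vec{x}\mapsto i(\vec{x})$ records only \emph{which} parts the entries of $\vec{x}$ come from, and in particular does not distinguish $v_i$ from $u_i$. Hence ``all $r$-tuples of a fixed type receive one color'' is exactly the demand that, inside the $2n$-vertex block carried by $\vec{v}$, the coloring of $r$-subtuples be constant on the (finitely many) orbits generated by reordering across parts together with the swaps $v_i\leftrightarrow u_i$ within each part. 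Thus (3) asks for a single block on which the coloring of $r$-subtuples factors through the type map. Note that since $r<n$, every such $r$-subtuple omits at least one part; I will use this to keep overlaps within budget.

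For (1) and (2) I would use an explicit coded gadget. Fix a large field $A$ and, for each part $i$ and each $a\in A$, reserve a disjoint two-element gadget $\{x_{i,a},y_{i,a}\}\subseteq S_i$, all gadgets pairwise disjoint. Index the tuples of $\relS$ by the codewords of a Reed--Solomon (MDS) code $C\subseteq A^{n}$ of dimension $k=\lfloor r/2\rfloor+1$: a codeword $w=(w_1,\dots,w_n)$ yields the $2n$-tuple with $(v_i,u_i)=(x_{i,w_i},y_{i,w_i})$. Then (1) holds by construction, and since two distinct codewords agree in at most $k-1=\lfloor r/2\rfloor$ coordinates while gadgets for distinct symbols are disjoint, two distinct tuples share at most $2\lfloor r/2\rfloor\le r$ vertices, giving (2). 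The number of tuples is $|A|^{k}$, which I can make exceed any prescribed Ramsey bound by enlarging $A$; and $r<n$ guarantees $k\le n$, so the code exists.

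To establish (3) I would exploit that the codeword space is the message grid $A^{k}$ and that each vertex occurring in a block is a linear image of the message, so the family of blocks forms a highly symmetric partite system. There are two routes I would try in parallel: either prove directly that this coded grid is Ramsey by a product / Hales--Jewett argument on $A^{k}$, or invoke the Nešetřil--Rödl structural Ramsey theorem \cite{NR1} (equivalently, iterate the classical Ramsey Theorem~\ref{ramseythm} once per type) in the category of these partite blocks, taking as the small object one $r$-subtuple-structure of a fixed type, as the target object a whole block, and treating the finitely many types one after another. Each application yields a block in which all copies of the chosen type-structure share a color; because the category is arranged so that the within-part swaps $v_i\leftrightarrow u_i$ and the cross-part reorderings are symmetries of the small object, ``copies of the type-structure'' are precisely the $r$-subtuples of that type, so exhausting all types produces a block satisfying (3).

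The hard part will be reconciling (2) with (3), which pull in opposite directions: (2) forces the blocks to be nearly disjoint, whereas (3) fails outright once they are genuinely disjoint, since an adversary can split every gadget $\{x_{i,a},y_{i,a}\}$ into two colors and leave no type-canonical block. The whole task is therefore to retain enough overlap among blocks for the Ramsey argument to bite while never exceeding the budget of $r$ shared vertices — and this is exactly where $r<n$ is indispensable, as it leaves the room $2\lfloor r/2\rfloor\le r<2n$ needed for a coded grid of dimension $k$. Concretely, if I take the structure furnished abstractly by the structural Ramsey theorem I must still verify (or thin/augment it to force) the overlap bound without destroying the Ramsey property, while if I build the coded grid directly I must prove its Ramseyness from scratch; settling which of these is cleaner is the crux. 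A secondary technical point is the symmetrization over $v_i\leftrightarrow u_i$: because the type map ignores the order within a part, the Ramsey step must be performed in a category in which each part carries an unordered pair rather than two labelled vertices, which is the very reason the relation has arity $2n$ with two vertices per part.
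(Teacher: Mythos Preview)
The paper does not actually give a proof of this lemma: it simply says that the statement is a partite version of the Ne\v set\v ril--R\"odl result \cite{NRo}, can alternatively be obtained by the amalgamation method \cite{N4,NR2}, and then omits the argument as auxiliary. So there is no detailed proof to compare your proposal against---only a pointer to the partite construction.

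That said, your plan has a genuine gap. Your Reed--Solomon construction cannot satisfy (3), and you essentially spell out the reason yourself without drawing the conclusion. In your coded grid, every tuple has $v_i=x_{i,w_i}$ and $u_i=y_{i,w_i}$; the $x/y$ labelling is globally consistent across all blocks. Hence the $2^r$-coloring ``record for each coordinate of the $r$-tuple whether it is an $x$-vertex or a $y$-vertex'' is a legal coloring, and for a type such as $(1,1,\ldots)$ the subtuples $(v_1,u_1,\ldots)$ and $(u_1,v_1,\ldots)$ receive distinct colors in \emph{every} block. So no enlargement of $A$ and no Hales--Jewett argument on the message space will rescue (3); the construction is dead for (3) before any Ramsey step is attempted. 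This is strictly worse than the ``genuinely disjoint'' scenario you flag in your last paragraph.

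Your Route~2 is closer in spirit to what the paper cites, but the framing ``first obtain (3) from the structural Ramsey theorem, then thin or augment to force the overlap bound (2)'' is backwards. Thinning a Ramsey system generally destroys the Ramsey property, and augmenting can break (2). The point of the partite construction (which is precisely the content of \cite{NRo} that the paper invokes) is that it produces the Ramsey object and the small-overlap condition \emph{simultaneously}: one builds the system by iterated partite amalgamations, and the sparsity of overlaps is a structural invariant maintained throughout the induction, not something imposed afterward. Your observation that the within-part swap $v_i\leftrightarrow u_i$ must be a symmetry of the small object is correct and is exactly what the partite setup encodes; but the way to use it is to run the partite amalgamation in that category from the start, not to bolt it onto a separately-constructed code.
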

\begin{proof}
This statement follows from results obtained by Ne\v set\v ril and R\"odl
\cite{NRo}. Although not stated explicitly, this is a ``partite version''
of the main result of \cite{NRo}. It can also be  obtained directly by
means of the amalgamation method, see \cite{N4,NR2}. In this work this result plays
an auxiliary role only and we omit the proof.
\end{proof}
Given a relational structure $\relsys{S}=(S,\relS)$ with a relation $\relS$ of arity $2n$ and a rooted relational structure $(\relsys{A}, \overrightarrow{R})$ of type $\Delta$ with $\overrightarrow{R}=(r_1,r'_1,r_2,r'_2,\ldots, r_n,r'_n)$, we denote by $\relsys{S}*(\relsys{A},\overrightarrow{R})$ the following relational structure $\relsys{B}$ of type $\Delta$:

The vertices of $\relsys{B}$ are equivalence classes of a equivalence relation $\sim$ on $\relS \times A$
generated by the following pairs:

$$(\vec{v}, r_i)\sim (\vec{u}, r_i) \hbox{ if and only if } \vec{v}_{2i}=\vec{u}_{2i},$$
$$(\vec{v}, r'_i)\sim (\vec{u}, r'_i) \hbox{ if and only if } \vec{v}_{2i+1}=\vec{u}_{2i+1},$$
$$(\vec{v}, r_i)\sim (\vec{u}, r'_i) \hbox{ if and only if } \vec{v}_{2i}=\vec{u}_{2i+1}.$$

Denote by $[\vec{v}, r_i]$ the equivalence class of $\sim$ containing $(\vec{v}, r_i)$.

We put $\vec{v}\in \rel{B}{j}$ if and only if $\vec{v}= ([\vec{u},v_1],[\vec{u},v_2],\ldots, [\vec{u},v_t])$ for some $\vec{u}\in \relS$ and $(v_1,v_2,\ldots, v_t)\in \rel{A}{j}$.

This is a variant of the indicator construction introduced in Section \ref{degree}.  It essentially means replacing every tuple of $\relS$ by a disjoint copy of $\relsys{A}$ with roots $\overrightarrow{R}$ identified with vertices of the tuple.

For a given vertex $v$ of $\relsys{S}*(\relsys{A},\overrightarrow{R})$ such that $v=[\vec{u}, r_i]$ (or $v=[\vec{u}, r'_i]$) we shall call the vertex $v'=\vec{u}_{2i}$ (or $v'=\vec{u}_{2i+1}$, respectively) {\em the vertex corresponding to $v$ in $\relsys{S}$}.  Note that this gives the correspondence between vertices of $\relsys{S}$ and $\relsys{S}*(\relsys{A},\overrightarrow{R})$ restricted to vertices $[\vec{v}, r_i]$ and $[\vec{v}, r'_i]$.
\begin{figure}
\label{Hconstruct}
\centerline{\includegraphics{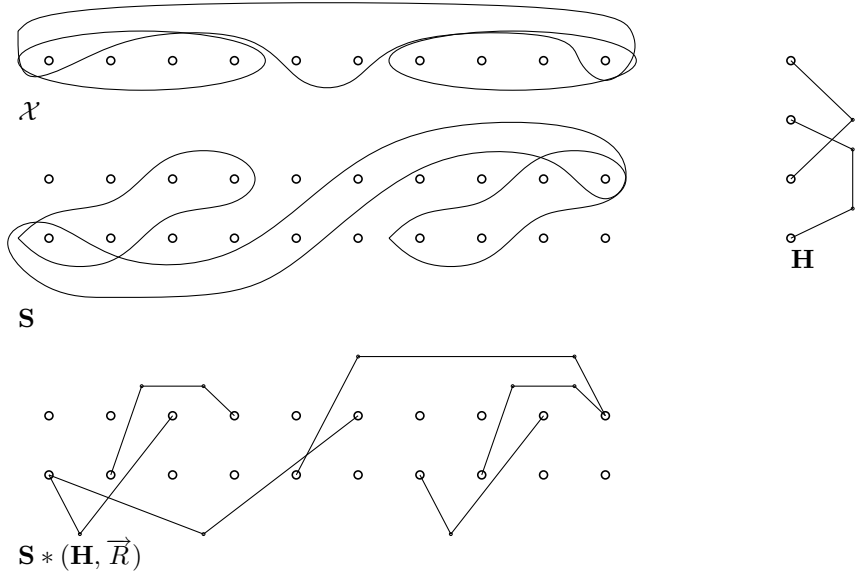}}
\caption{Sketch of the construction.}
\end{figure}

A finite family of finite relational structures is called {\em minimal}  if and only if
all structures in $\F$ are cores and there is no homomorphism between two
structures
in $\F$.

The following is the main result of this section.

\begin{thm}
\label{aritythm}
Denote by $\F$ a minimal family of finite connected relational structures. There is a lift $\K$ of the class $\Forb(\F)$ that contains new relations of arity at most $r$ with a generic structure (lift) $\relsys{U}$ if and only if all minimal cuts of $\relsys{F}\in \F$ consist of at most $r$ vertices.
\end{thm}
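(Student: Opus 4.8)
The plan is to prove Theorem~\ref{aritythm} by establishing two implications. The easy direction is that if all minimal cuts of every $\relsys{F}\in\F$ have at most $r$ vertices, then the lifted class $\Lifts$ constructed in Theorem~\ref{mainthm} already uses only relations of arity at most $r$: by construction the arity of the relation $\ext{X}{i}$ equals $|\overrightarrow{R}_i|$, which is the size of the minimal cut defining the piece $\Piece_i$. Hence the generic lift $\relsys{U}$ has new relations of arity at most $r$, and its shadow is universal for $\Forb(\F)$ by Theorem~\ref{mainthm}. So the content is entirely in the converse: if some $\relsys{F}\in\F$ has an inclusion-minimal cut $R$ with $|R|=n>r$, then \emph{no} lift using relations of arity at most $r$ can have a generic structure whose shadow is universal for $\Forb(\F)$.

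**The obstruction via the indicator construction.** For the converse I would argue by contradiction. Suppose $\K$ is a lift of $\Forb(\F)$ with new relations of arity at most $r$ and a generic lift $\relsys{U}$ whose shadow is universal for $\Forb(\F)$. Fix $\relsys{F}\in\F$ with a minimal cut $R=\{r_1,\dots,r_n\}$, $n>r$, and let $\overrightarrow{R}=(r_1,r_1',\dots)$ be its associated $2n$-tuple (pairing each cut vertex to prepare for the indicator gadget). The idea is to feed the structure $\relsys{S}*(\relsys{F},\overrightarrow{R})$ from the indicator construction into $\relsys{U}$: since $\relsys{S}*(\relsys{F},\overrightarrow{R})\in\Forb(\F)$ (the indicator replaces each $2n$-tuple of $\relsys{S}$ by a disjoint copy of $\relsys{F}$, and no homomorphic image of $\relsys{F}$ survives because the cut structure is preserved only within single copies), universality gives an embedding of its shadow into the shadow of $\relsys{U}$, hence a lift of it embeds into $\relsys{U}$. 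Here I would take $\relsys{S}=(S,\relS)$ from Lemma~\ref{sets} with the given $n,r$ and with $K$ equal to the number of possible ``lift-types'' of an $r$-tuple in $\relsys{U}$ (finite, since the type is finite and arities are $\le r$).

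**Applying the Ramsey statement.** The embedding of $\relsys{S}*(\relsys{F},\overrightarrow{R})$ into $\relsys{U}$ induces a coloring of the $r$-tuples of $S$ by their induced lift-type in $\relsys{U}$, so by property~(3) of Lemma~\ref{sets} there is a $2n$-tuple $\vec{v}\in\relS$ on which any two $r$-subtuples with the same index-pattern $i(\vec{x})$ receive the same lift-color. This $\vec{v}$ carries a copy of $\relsys{F}$ whose cut $R$ sits on the $n$ vertices of $\vec{v}$. Because $n>r$, the roots $\overrightarrow{R}$ of the piece attached at this cut can never be fully ``seen'' by any single relation of arity $\le r$; the Ramsey color-homogeneity then lets me build, using ultrahomogeneity of $\relsys{U}$, an automorphism that moves a piece of $\relsys{F}$ across the cut while fixing the $r$-type data, producing a homomorphic copy of $\relsys{F}$ inside the shadow of $\relsys{U}$. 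This contradicts the shadow lying in $\Forb(\F)$, exactly as in the warm-up $C_5$ argument but with arity $r$ in place of arity $2$. The main obstacle I anticipate is the bookkeeping in this last step: precisely formalizing how color-homogeneity on $r$-tuples combined with $n>r$ forces two distinct pieces attached to the same cut to be ``indistinguishable'' to all arity-$\le r$ relations, so that an automorphism merging them reconstructs a forbidden $\relsys{F}$. I would isolate this as the crux, after dispatching the routine verifications that $\relsys{S}*(\relsys{F},\overrightarrow{R})\in\Forb(\F)$ and that the induced coloring uses only finitely many colors.
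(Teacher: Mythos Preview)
Your overall architecture---Ramsey lemma plus indicator construction plus ultrahomogeneity to force a forbidden copy---matches the paper, and the easy direction is exactly right. But there is a genuine gap in the hard direction: the gadget you feed into the indicator is wrong, and this is not a bookkeeping issue but the heart of the construction.

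You propose to use $\relsys{S}*(\relsys{F},\overrightarrow{R})$. But in this construction each tuple of $\relsys{S}$ is replaced by a copy of $\relsys{F}$, so the result trivially contains $\relsys{F}$ as a substructure and is \emph{not} in $\Forb(\F)$. Your parenthetical justification (``no homomorphic image of $\relsys{F}$ survives because the cut structure is preserved only within single copies'') is false on its face: a single copy already \emph{is} a homomorphic image of $\relsys{F}$. Relatedly, your $2n$-tuple $\overrightarrow{R}=(r_1,r_1',\dots)$ is not well-defined: the cut $C$ has only $n$ vertices, and there is nothing in $\relsys{F}$ to pair them with.

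The paper's fix is to take $\relsys{H}$ to be the \emph{disjoint union} of the two pieces $\relsys{P}_1$ and $\relsys{P}_2$ determined by the cut $C$, rooted at the $2n$-tuple $((r_1,1),(r_1,2),\dots,(r_n,1),(r_n,2))$ consisting of both copies of the cut. Then $\relsys{D}=\relsys{S}*(\relsys{H},\overrightarrow{R})$ attaches, for each $2n$-tuple $(u_1,v_1,\dots,u_n,v_n)\in\relS$, a copy of $\relsys{P}_1$ rooted at $(u_1,\dots,u_n)$ and a separate copy of $\relsys{P}_2$ rooted at $(v_1,\dots,v_n)$. Since $u_i\neq v_i$ (property~(1) of Lemma~\ref{sets}), these two pieces do \emph{not} reassemble into $\relsys{F}$ within a single tuple. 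Showing that no $\relsys{F}'\in\F$ maps into $\relsys{D}$ is then not routine: the paper uses that $\relsys{F}$ is a core, that $\F$ is minimal, and crucially property~(2) of Lemma~\ref{sets} (any two tuples of $\relS$ share at most $r<n$ vertices) to rule out that pieces from different tuples could combine to give $\relsys{F}$.

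Finally, the endgame you left vague becomes concrete with $\relsys{H}$: the Ramsey-homogeneous tuple $(u_1,v_1,\dots,u_n,v_n)$ gives $\{\Phi(u_1),\dots,\Phi(u_n)\}$ and $\{\Phi(v_1),\dots,\Phi(v_n)\}$ carrying the same lift (since all arity-$\le r$ relations agree on $r$-tuples with matching index patterns), so $\Phi(u_i)\mapsto\Phi(v_i)$ is a partial isomorphism of $\relsys{U}$. Its extension to an automorphism $\Psi$ moves the roots of the embedded $\Piece_1$ onto the roots of the embedded $\Piece_2$, and $\Psi(\Piece_1)\cup\Piece_2$ is the desired copy of $\relsys{F}$ in the shadow.
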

\begin{proof}

The construction of the lifted class $\Lifts$ in the proof of Theorem $\ref{mainthm}$ adds
relations of arities corresponding to the sizes of minimal cuts of
$\relsys{F}\in \F$, so one direction of Theorem \ref{aritythm} follows directly from the proof
of Theorem $\ref{mainthm}$.

In the opposite direction fix a class $\F$, $r\geq 1$, and a relational structure
$\relsys{F}\in \F$ containing a minimal cut $C=\{r_1,r_2,\ldots, r_n\}$ of size 
$n>r$.  Assume, for a contradiction, that there exists a lift $\K$ of the class
$\Forb(\F)$ with a generic lift $\relsys{U}$ and contains new relations
of arities at most $r$. Denote by $K$ the number of different relational structures on $r$ vertices
appearing in $\K$.

For brevity, assume that $\relsys{F}\setminus C$ has only two connected components. Denote by $\Piece_1=(\relsys{P}_1,\overrightarrow{R}_1)$ and $\Piece_2=(\relsys{P}_2,\overrightarrow{R}_2)$ the pieces generated by $C$ such that $\overrightarrow{R}_1=\overrightarrow{R}_2=(r_1,r_2,\ldots, r_n)$. (For three or and more pieces we can proceed analogously.)

Now we construct a relational structure $\relsys{H}$ as follows:
$$H= (P_1\times \{1\})\cup (P_2\times \{2\}),$$
and put
$$((v_1,1),\ldots ,(v_t,1))\in \rel{H}{i}\hbox{ if and only if }(v_1,\ldots ,v_t)\in \rel{P_1}{i},$$ 
$$((v_1,2),\ldots ,(v_t,2))\in \rel{H}{i}\hbox{ if and only if }(v_1,\ldots ,v_t)\in \rel{P_2}{i},$$ with no other tuples.  
In other words, $\relsys{H}$ is the disjoint union of $\relsys{P}_1$ and $\relsys{P}_2$.  We shall consider $\relsys{H}$ rooted by the tuple $$\overrightarrow{R}=
((r_1,1),(r_1,2),(r_2,1),(r_2,2),\ldots,(r_n,1),(r_n,2)).$$

Take the relational structure $\relsys{S}$ given by Lemma \ref{sets} and put $\relsys{D}=\relsys{S}* (\relsys{H},\overrightarrow{R})$. This construction for the two pieces of 5-cycle is shown in Figure \ref{Hconstruct}.
For a vertex $v\in S$ denote by $m(v)$ the vertex of $\relsys{D}$ corresponding to $v$ (if it exists) or an arbitrary vertex of $\relsys{D}$ otherwise.

Denote by $f$ the mapping defined by $[(a,1),\vec{x})]\mapsto a$ for $a\in P_1$ and $[(a,2),\vec{x}]\mapsto a$ for $a\in P_2$. It is easy to check that $f$ is a homomorphism $\relsys{D}\to \relsys{F}$.
Additionally, for $v\in \relsys{D}$ put $$M(v)=\{\vec{x}: ((a,t),\vec{x})\hbox{ is in equivalence class } v\}.$$
Observe that for a vertex $v\in \relsys{D}$ such that $f(v)\in C$, $M(v)$ may contain multiple tuples, while for all other vertices
$M(v)$ contains precisely one tuple.

Assume, to the contrary, that there is a homomorphism $\varphi:\relsys{F}\to\relsys{D}$.  By composition we have that $\varphi \circ f$ is a homomorphism $\relsys{F}\to\relsys{F}$.  Because $\relsys{F}$ is a core, we also know that $\varphi \circ f$ is an automorphism of
$\relsys{F}$. It follows that $\varphi$ is an injective homomorphism $\relsys{F}\to\relsys{D}$.
For $v\in \relsys{F}$, denote by $M(v)$ the set $M(\varphi(v''))$ where $v''$ is uniquely defined by $\varphi\circ f(v'')=v$.
It follows that for $v\in F\setminus C$, $M(v)$ consists of single tuple. For tuple $\vec{x}\in \rel{F}{i}$, there is a tuple $\varphi(\vec{x})\in \rel{D}{i}$ if and only if the sets $M(v), v\in \varphi(\vec{x})$, have a nonempty intersection (i.e. all belong to the single copy of some piece $\relsys{P}_i$) and thus also the sets $M(v), v\in \varphi\circ f(\vec{x})$, have a nonempty intersection.

As the relational systems $\relsys{P}_i\setminus C$ are connected, it follows
that all $M(v), v\in \relsys{P}_i\setminus C$, are equivalent singleton sets.
Denote by $\vec{x}_1$ the tuple such that $M(v)=\{\vec{x}_1\}$ for $x\in P_1\setminus C$ and
by $\vec{x}_2$ the tuple such that $M(v)=\{\vec{x}_2\}$ for $v\in P_2\setminus C$.
Because copies of pieces in $\relsys{D}$ corresponding to a single tuple $\vec{x}\in \cal
X$ are not connected, we have $\vec{x}_1\neq \vec{x}_2$.
 Finally, because every vertex in $C$ is connected to $P_1\setminus C$ by some tuple (by minimality of the cut $C$), we have $\vec{x}_1\in M(v)$ for every $v\in C$ and
analogously $\vec{x}_2\in M(v)$ for every $v\in C$.  It follows that the sets $\vec{x}_1$ and $\vec{x}_2$ overlap on the whole of $C$. Thus $\vec{x}_1$ and $\vec{x}$ overlap on $r$ or more vertices. This is a contradiction with the construction of the relational system $\relsys{S}$. It follows that there is no homomorphism $\relsys{F}\to \relsys{D}$.

There is also no homomorphism $\relsys{F'}\to \relsys{D}$ for any $\relsys{F'}\in \F, \relsys{F'}\neq \relsys{F}$, because composing such a homomorphism with $f$ would
lead to a homomorphism $\relsys{F'}\to \relsys{F}$ that does not exist.  It follows that $\relsys{D}\in \Forb(\F)$.

Take the generic lift $\relsys{U}\in \K$.  Every embedding $\Phi:\relsys{D}\to \psi(\relsys{U})$ ($\psi(\relsys{U})$ is the shadow of $\relsys{U}$) implies a $K$-coloring of $r$-tuples with elements of $D$ (colors are defined by the additional relations of $\relsys{U}$) and thus also 
a $K$ coloring of $r$-tuples of $\relsys{S}$.  Consequently, using Lemma \ref{sets}, there is a tuple $\vec{v}\in \relsys{S}$, such that $\vec{v}=(u_1,v_1,u_2,v_2,\ldots, u_n,v_n)$ and the relations added by the lift $\K$ are equivalent on $\Phi(u_i)$ and $\Phi(v_i), i=1,\ldots, n$. Thus $\relsys{U}$ induce on both sets $\{\Phi(u_1),\Phi(u_2),\ldots, \Phi(u_n)\}$ and $\{\Phi(v_1),\Phi(v_2),\ldots, \Phi(v_n)\}$ the same lift $\relsys{X}$. ($\relsys{X}$ is the lift of the relational structure induced by $\relsys{F}$ on $C$.)  Consequently, there is a partial isomorphism of $\relsys{U}$ mapping $\Phi(u_i)\to \Phi(v_i)$. By genericity of the relational structure $\relsys{U}$ this partial isomorphism extends to an automorphism $\Psi$ of $\relsys{U}$. From the construction of the relational system $\relsys{D}$ this mapping $\Psi$ sends a root of the image of piece $\Piece_1$ to the corresponding roots of the image of piece $\Piece_2$. Thus the shadow of $\relsys{U}$ contains copy of $\relsys{F}\in \F$, and this is a desired contradiction.
\end{proof}

\section {Special cases of small arities}
\label{monadic}

By Theorem \ref{aritythm} it follows that the only minimal classes of
finite relational structures $\F$ such that the class $\Forb(\F)$ has a monadic
lift that forms an amalgamation class are precisely the classes $\F$ such that
all minimal vertex cuts of the Gaifman graph of each $\relsys{F}\in \F$ have size 1.
Examples forming an amalgamation class include graphs whose blocks are all complete graphs.

Consider even more restricted classes $\F$ of structures consisting from
(relational) trees only (see Definition \ref{reltree} for relational trees). In this case we can claim a much stronger result:
there exists a finite  universal object $\relsys{D}$ which is a retract of a
universal structure $\relsys{U}$. 

\subsection{Finite dualities and constraint satisfaction problems}

\label{Duality}
A constraint satisfaction problem (CSP) is the following decision problem:

\smallskip

\noindent
Instance: A finite structure $\relsys{A}$.

\noindent
Question: Does there exist a homomorphism $\relsys{A}\to\relsys{H}$?
\smallskip

We denote by $\CSP(\relsys{H})$ the class of all finite structures $\relsys{A}$ with $\relsys{A}\to\relsys{H}$.
It is easy to see that the class $\CSP(\relsys{H})$ coincides with a particular instance of lifts and shadow. 

Recall that a {\em finite duality} (for structures of given type) is any equation
$$\Forb(\F)=CSP(\relsys{D})$$
where $\F$ is a finite set \cite{NesetrilPultr,NTardif,Hell}. $\relsys{D}$ is called the {\em dual of $\F$}. We also write $\relsys{D}_\F$ for the dual of $\F$ (it is easy to see that $\relsys{D}_\F$ is up to homomorphism-equivalence uniquely determined). The pair $(\F,\relsys{D})$ is called a {\em dual pair}.  In a sense duality is a simple constraint satisfaction problem: the existence of a homomorphism to $\relsys{D}$ (i.e.~a $\relsys{D}$-coloring) is equivalently characterized by a finite set of forbidden substructures. Dualities play a role not only in complexity problems but also in logic, model theory, the theory of partial orders and category theory. In particular, it follows from \cite{Atserias} and \cite{Rossman} that dualities coincide with those first-order definable classes which are homomorphism-closed. 

Finite dualities for monadic lifts include all classes $CSP(\relsys{H})$. We formulate this as follows:

\begin{prop}
\label{prop21}
For a class $\K$ of structures the following two statements are equivalent:
\begin{enumerate}
\item $\K=\CSP(\relsys{H})$ for finite $\relsys{H}$.
\item There exists a class $\K'$ of monadic lifts such that:
\begin{enumerate}
\item The shadow of $\K'$ is $\K$.
\item $\K'=\Forb(\F')\cap \Forbi(\relsys{K}_1)$, where $\F'$ is a finite set of monadic covering lifts of edges (i.e. every $\relsys{F}\in\F'$ contains at most one non-unary tuple.) while every vertex belongs to a unary lifted tuple.
\end{enumerate}
\end{enumerate}
\end{prop}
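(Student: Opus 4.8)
The plan is to prove the equivalence by building an explicit dictionary between a finite target structure $\relsys{H}$ and a monadic lifted class, exploiting the decisive feature of (2): each forbidden lift is supported on at most one tuple, so membership in $\K'$ is a purely \emph{local} condition (one vertex at a time, one edge at a time). Throughout I will read a monadic lift as a colouring of vertices by finitely many colours $U_1,\dots,U_m$ (the monadic type, which I may assume finite since only the colours occurring in $\F'$ affect membership), and I will use that any class $\Forb(\F')\cap\Forbi(\relsys{K}_1)$ is hereditary.

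For the direction $(1)\Rightarrow(2)$ I would take $\relsys{H}$ with vertex set $\{1,\dots,m\}$, introduce one unary symbol $U_j$ per vertex $j$, and let $\K'$ consist of the monadic lifts in which every vertex carries exactly one colour and the induced colour map $c$ is a homomorphism to $\relsys{H}$. Then $\relsys{X}\in\K'$ holds exactly when $c$ witnesses $\psi(\relsys{X})\to\relsys{H}$, so $\psi(\K')=\CSP(\relsys{H})=\K$, which is (2)(a). To obtain (2)(b) I would let $\relsys{K}_1$ be the single uncoloured vertex, so $\Forbi(\relsys{K}_1)$ says precisely that every vertex belongs to a unary lifted tuple, and set $\F'$ to contain: for each pair $j\neq k$ the one-vertex lift carrying both $U_j$ and $U_k$ (zero non-unary tuples, forcing at most one colour), and for each $i$ and each colour sequence $(j_1,\dots,j_{\delta_i})\notin\rel{H}{i}$ the single coloured tuple in relation $i$ (one non-unary tuple, forcing the edge constraint). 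Each member of $\F'$ has at most one non-unary tuple, $\F'$ is finite, and a direct check gives $\Forb(\F')\cap\Forbi(\relsys{K}_1)=\K'$.

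For $(2)\Rightarrow(1)$ I would reconstruct $\relsys{H}$ from local types. Its vertices are the nonempty colour sets $S$ with $\mathbf{v}_S\in\K'$ (the single vertex coloured exactly by $S$), and I put $(S_1,\dots,S_{\delta_i})\in\rel{H}{i}$ iff the single coloured tuple $\relsys{e}_{i;S_1,\dots,S_{\delta_i}}$ (one tuple in relation $i$, vertex $\ell$ coloured exactly by $S_\ell$, no other tuples) lies in $\K'$; finiteness of the type makes $\relsys{H}$ finite. The proof of $\psi(\K')=\CSP(\relsys{H})$ would rest on two structural facts about $\K'=\Forb(\F')\cap\Forbi(\relsys{K}_1)$: it is hereditary, and it is monotone under deleting shadow tuples while retaining colours (deleting a tuple only destroys homomorphisms into a structure and changes no colour, hence preserves both conjuncts). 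Given $\relsys{A}\in\psi(\K')$ with witness $\relsys{X}$, the map $g(v)=S_v$ lands among vertices of $\relsys{H}$ by heredity, and each tuple of $\relsys{A}$, after trimming its induced sublift to a single edge by monotonicity, realises some $\relsys{e}_{i;\dots}\in\K'$, so $g$ is a homomorphism $\relsys{A}\to\relsys{H}$. Conversely, from $g\colon\relsys{A}\to\relsys{H}$ I colour $v$ by $g(v)$ and verify the resulting lift avoids every forbidden structure: since each $\relsys{F}\in\F'$ sits on a single vertex or a single edge, any homomorphism $\relsys{F}\to\relsys{X}$ factors through the corresponding one-vertex sublift $\mathbf{v}_{S_v}$ or one-edge sublift $\relsys{e}_{i;\dots}$, which by construction of $\relsys{H}$ lies in $\Forb(\F')$, a contradiction.

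The crux, where I expect the real work, is precisely this \emph{locality} step in $(2)\Rightarrow(1)$: showing that avoiding all of $\F'$ is equivalent to the separate vertex-wise and edge-wise constraints encoded by $\relsys{H}$. This is exactly the point at which the hypothesis ``every $\relsys{F}\in\F'$ has at most one non-unary tuple'' is essential, since it forces a homomorphism out of a forbidden lift to be determined by its action on one tuple, letting the global condition decompose into finitely many local ones that a single finite $\relsys{H}$ can carry. The secondary technical nuisance is that (2) guarantees only at least one colour per vertex, not exactly one, which is why $\relsys{H}$ must be built on colour \emph{types} $S$ rather than on individual colours.
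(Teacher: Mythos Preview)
Your proposal is correct and takes essentially the same approach as the paper: for $(2)\Rightarrow(1)$ you build $\relsys{H}$ on the set of admissible colour types and declare a tuple to be a relation of $\relsys{H}$ exactly when the corresponding single-edge lift lies in $\K'$, which is precisely the paper's construction of $\relsys{H}_0$ (``a vertex for every consistent combination of new relations'') followed by adding tuples whenever the one-edge extension stays in $\Forb(\F')$. The paper dismisses $(1)\Rightarrow(2)$ as obvious and gives only a two-line sketch for the converse; your version spells out the locality argument (heredity plus monotonicity under deleting shadow tuples) that the paper leaves implicit, and correctly identifies the ``at most one non-unary tuple'' hypothesis as the reason the global condition decomposes into vertex- and edge-local ones.
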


\begin{proof}[Proof (sketch)]
1.~obviously implies 2.

In the opposite direction construct $\relsys{H}$ as follows:  Let
$\relsys{H}_0$ be a lift with a vertex for every consistent combination of new
relations  $\ext{H_0}{i}$, and with relations $\rel{H_0}{i}$ empty.  Now construct a lift
$\relsys{H}$ on the same vertex set as $\relsys{H}_0$ with $\ext{H}{i}=\ext{H_0}{i}$.
Put tuple $\vec{x}\in \rel{H}{i}$ if and only if the structure induced by $\vec{x}$ on
$\relsys{H}_0$ with $\vec{x}$ added to $\rel{H}{i}$ is in $\Forb(\F')$. Consequently
if $\Age(\Forb(\F'))$ is an amalgamation class then $\Age(\Forb(\F))$ is amalgamation class too.
\end{proof}

In the language of dualities this amounts to saying that the classes $\CSP(\relsys{H})$ are just those classes described by shadow dualities of the simplest kind: the forbidden lifts are just vertex-colored edges.


As discussed in Section \ref{homounivsection}, finite dualities have been characterized:
\begin{thm}[\cite{NTardif}]
For every type $\Delta$ and for every finite set $\F$ of finite relational trees there exists a dual structure $\relsys{D}_\F$. Up to homomorphism-equivalence there are no other dual pairs.
\end{thm}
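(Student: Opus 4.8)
The statement has two halves, which I treat separately: the \emph{existence} of a dual $\relsys{D}_\F$ for a finite family $\F$ of relational trees, and the \emph{uniqueness} assertion that every dual pair is homomorphism-equivalent to one obtained from trees. For existence I would run the lift construction of Theorem~\ref{mainthm} (equivalently Theorem~\ref{reprezent2}) on $\F$ and then collapse the resulting generic lift to a finite structure. The key point, exactly where the tree hypothesis enters, is that for relational trees the pieces are rooted branches rooted at single elements, so all minimal cuts used by the construction have size $1$; by Theorem~\ref{aritythm} the canonical lift $\Lifts$ is then \emph{monadic}, i.e. the new relations $\ext{X}{i}$ are all unary. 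Thus the generic lift $\relsys{U}=\lim\Lifts_f$ carries only a finite set of vertex colors (at most $2^N$, where $N$ is the number of pieces), and its shadow $\psi(\relsys{U})$ is universal for $\Forb(\F)$.

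\textbf{Building and verifying $\relsys{D}_\F$.} I would define $\relsys{D}_\F$ as the color quotient of $\psi(\relsys{U})$: its vertices are the realized colors $c(v)$, and a tuple of colors lies in the relation $\rel{D}{j}$ of $\relsys{D}_\F$ exactly when some tuple of $\rel{U}{j}$ realizes those colors. The quotient map $r=c\colon\psi(\relsys{U})\to\relsys{D}_\F$ is then a homomorphism, so composing an embedding $\relsys{A}\hookrightarrow\psi(\relsys{U})$ (universality) with $r$ yields $\relsys{A}\to\relsys{D}_\F$ for every $\relsys{A}\in\Forb(\F)$; hence $\relsys{D}_\F$ is homomorphism-universal. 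It remains to show $\relsys{D}_\F\in\Forb(\F)$, i.e. $\relsys{F}\not\to\relsys{D}_\F$ for each tree $\relsys{F}\in\F$. Here I would \emph{lift} a hypothetical $\varphi\colon\relsys{F}\to\relsys{D}_\F$ to a homomorphism $\Phi\colon\relsys{F}\to\psi(\relsys{U})$ with $c\circ\Phi=\varphi$, contradicting $\psi(\relsys{U})\in\Forb(\F)$. Because $ig(\relsys{F})$ is a tree, its tuples can be ordered so that each is attached to the union of the previous ones through a single shared element; processing them in this order, each new tuple of colors $(\varphi(w_1),\ldots)\in\rel{D}{j}$ is realized by a genuine tuple of $\rel{U}{j}$, and ultrahomogeneity of $\relsys{U}$ (where single-vertex sublifts are determined by their color) supplies an automorphism aligning the already-placed shared vertex. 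This produces $\Phi$ and the desired contradiction, establishing $\Forb(\F)=\CSP(\relsys{D}_\F)$.

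\textbf{Uniqueness.} For the second half I would first note that a dual $\relsys{D}$ of a given $\F$ is the maximum of $\Forb(\F)$ in the homomorphism order, hence unique up to homomorphism-equivalence; so it suffices to show that any dual pair $(\F,\relsys{D})$ can be replaced by one whose forbidden structures are trees. Reducing $\F$ to cores and $\relsys{D}$ to its core, I would invoke the sparse incomparability lemma of Ne\v set\v ril and R\"odl (the same circle of ideas underlying Lemma~\ref{sets}) to produce, for every $g$, a structure $\relsys{A}$ with $\relsys{A}\not\to\relsys{D}$ and incidence-girth exceeding $g$. Since $\relsys{A}\notin\CSP(\relsys{D})=\Forb(\F)$, some $\relsys{F}\in\F$ maps into $\relsys{A}$; taking $g$ larger than every $|\relsys{F}|$, the image is incidence-acyclic, so each obstruction maps onto a tree. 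One then checks that the finitely many minimal tree obstructions $\F^{*}=\{\relsys{T}\ \text{tree}:\relsys{T}\not\to\relsys{D},\ \relsys{T}\ \text{minimal}\}$ already satisfy $\Forb(\F^{*})=\CSP(\relsys{D})$, which exhibits $(\F,\relsys{D})$ as homomorphism-equivalent to the tree dual pair $(\F^{*},\relsys{D}_{\F^{*}})$.

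\textbf{Main obstacle.} The existence half is essentially bookkeeping on top of Theorem~\ref{mainthm}, with the tree-lifting step as its only genuinely delicate point: it is precisely the acyclicity of $ig(\relsys{F})$ that lets the one-vertex-at-a-time amalgamation go through, free of the cycle-induced inconsistencies that would arise for non-trees. The harder, classical part is uniqueness; turning an ``$\relsys{A}\not\to\relsys{D}$'' obstruction into a tree requires the high-girth (sparse incomparability) input together with a boundedness argument keeping $\F^{*}$ finite, and this is where I expect the real work to lie.
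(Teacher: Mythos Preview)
The paper does not prove this theorem: it is quoted from \cite{NTardif} as a known result, both in the introduction and again in Section~\ref{dualpairs}. What the paper \emph{does} provide is an alternative proof of the existence half, stated and proved as Corollary~\ref{nasdual}; there is no proof of the uniqueness half in the paper at all, so your sparse-incomparability sketch has nothing in the text to be compared against (it is, however, the standard line of argument from \cite{NTardif}).

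For the existence half your plan coincides with Corollary~\ref{nasdual}: monadic lift via Theorem~\ref{mainthm}/\ref{aritythm} because tree cuts have size~$1$, then collapse $\relsys{U}'$ by color. One difference worth noting: to check $\relsys{D}_\F\in\Forb(\F)$ you propose lifting a hypothetical $\varphi:\relsys{F}\to\relsys{D}_\F$ back to $\psi(\relsys{U})$ by a tuple-by-tuple tree induction using ultrahomogeneity. The paper avoids this entirely. Its observation is that, for trees, every inclusion-minimal covering lift is supported on a single relational tuple, so membership in $\Lifts=\Forbi(\F')$ is decided tuple-by-tuple; hence the color quotient (built so that each tuple is individually consistent) is itself a lift in $\Lifts$ and a genuine \emph{retract} of $\relsys{U}'$. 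Being a substructure of $\relsys{U}'$, its shadow lies in $\Forb(\F)$ automatically. Your lifting argument is correct but is an unnecessary detour compared to this one-line retract observation.
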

\label{dualpairs}
Various constructions of structure duals of given $\F$ are known \cite{NTardif2}.
It follows from this section that we have a yet another approach to this problem:
\begin{corollary}
\label{nasdual}
Let $\F$ be a set of finite relational trees of finite type, then there exists a finite set of lifted structures $\F'$ with the following properties:
\begin{itemize}
  \item[$(i)$] $\Age(\Forbi(\F'))$ is an amalgamation class (and thus there is universal $\relsys{U}'\in \Forbi(\F')$),
  \item[$(ii)$] all lifts in $\Forbi(\F')$ are monadic,
  \item[$(iii)$] $\psi(\relsys{U'})=\relsys{U}$ is universal for $\K$,
  \item[$(iv)$] $\relsys{U}'$ has a finite retract $\extsys{D}_\F$ and consequently $\psi(\extsys{D}_\F)=\relsys{D}_\F$ is a dual of $\F$.
\end{itemize}
\end{corollary}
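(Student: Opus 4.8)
The plan is to extract $(i)$--$(iii)$ directly from the explicit description already established and then to realise the dual by collapsing the monadic colours of the generic lift. First I would apply Theorem \ref{reprezent2} to the finite family $\F$. It produces a finite family $\F'$ of $\F$-lifts with $\Forbi(\F')=\Lifts$, so that $\Age(\Forbi(\F'))$ is an amalgamation class, its \Fraisse{} limit $\relsys{U}'=\lim\Age(\Forbi(\F'))$ is generic in $\Forbi(\F')$, and the shadow $\relsys{U}=\psi(\relsys{U}')$ is universal for $\Forb(\F)=\K$; this gives $(i)$ and $(iii)$. For $(ii)$ I would use that every $\relsys{F}\in\F$ is a relational tree, i.e.\ $ig(\relsys{F})$ is a graph tree. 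Then every inclusion-minimal cut of the Gaifman graph of $\relsys{F}$ is a single vertex, so each piece $\Piece_i$ is rooted at one vertex and the corresponding relation $\ext{X}{i}$ is unary. Hence the enriched type $\Delta'$ adds only unary symbols and every member of $\Forbi(\F')$ is monadic.

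For $(iv)$ I would first record two structural facts. Since $\F'$ and $\Delta'$ are finite, $\relsys{U}'$ is ultrahomogeneous of finite type, hence $\omega$-categorical, so it realises only finitely many $1$-types; and because the lift is monadic the $1$-type of a vertex is exactly its \emph{colour} $c(v)=\{i:(v)\in\ext{U'}{i}\}$. Moreover, by genericity every canonical tuple of the shadow is present, so $\relsys{U}'=L(\relsys{U})$. I would then define $\extsys{D}_\F$ as the quotient lift on the finite colour set $C$, putting a tuple $(\gamma_1,\dots,\gamma_k)$ into the $j$-th relation exactly when some tuple of the $j$-th relation of $\relsys{U}'$ carries colours $\gamma_1,\dots,\gamma_k$, and $\gamma$ into the unary relation $i$ iff $i\in\gamma$. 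By construction the colour map $c\colon\relsys{U}'\to\extsys{D}_\F$ is a surjective homomorphism of lifts.

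The decisive step is to turn $c$ into a retraction, and this is where I expect the real work. The tool I would use is the functoriality of the canonical-lift operator $L$: if $h\colon\relsys{A}\to\relsys{B}$ is a homomorphism of shadows then, composing $h$ with the homomorphisms $\relsys{P}_i\to\relsys{A}$ that define the canonical relations, $h$ is automatically a homomorphism $L(\relsys{A})\to L(\relsys{B})$ of lifts. Granting that $\extsys{D}_\F\in\Lifts$, universality of $\relsys{U}'$ gives an embedding $\iota\colon\extsys{D}_\F\hookrightarrow\relsys{U}'$, and since $\iota(\gamma)$ has colour $\gamma$ we obtain $c\circ\iota=\mathrm{id}$; then $\iota\circ c$ is an idempotent lift-endomorphism of $\relsys{U}'$ with finite image $\iota(\extsys{D}_\F)$, exhibiting $\extsys{D}_\F$ as a finite retract. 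The hard part is precisely the claim $\extsys{D}_\F\in\Lifts$, i.e.\ that identifying equally coloured vertices keeps us inside $\Forb(\F)$ at the shadow level: this is \emph{false} without the tree hypothesis (the warm-up for $\Forb(C_5)$ in Section \ref{bounded} shows a monadic collapse creating a forbidden cycle), and it is exactly the singleton-cut property of trees that makes it go through. I would prove it through the rooted-homomorphism characterisation of Theorem \ref{reprezent}: any $\relsys{F}$-covering or $\Piece_i$-covering lift mapping homomorphically into $\extsys{D}_\F$ could, because covering lifts are connected and $c$ is a surjective homomorphism of canonical lifts, be pulled back along $c$ to a homomorphism into $\relsys{U}'$, contradicting $\relsys{U}'\in\Lifts$ via Lemma \ref{UWlem}.

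Finally I would deduce the duality. Setting $\relsys{D}_\F=\psi(\extsys{D}_\F)$, it is the shadow of a retract of $\relsys{U}'$, hence an induced substructure of $\relsys{U}\in\Forb(\F)$, so $\relsys{D}_\F\in\Forb(\F)$. For any $\relsys{A}\in\Forb(\F)$ universality of $\relsys{U}$ gives $\relsys{A}\hookrightarrow\relsys{U}\to\relsys{D}_\F$, whence $\relsys{A}\to\relsys{D}_\F$; conversely $\relsys{A}\to\relsys{D}_\F$ forces $\relsys{A}\in\Forb(\F)$, since each connected $\relsys{F}\in\F$ admitting $\relsys{F}\to\relsys{A}\to\relsys{D}_\F$ would contradict $\relsys{D}_\F\in\Forb(\F)$. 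Thus $\Forb(\F)=\CSP(\relsys{D}_\F)$, so $\psi(\extsys{D}_\F)=\relsys{D}_\F$ is a dual of $\F$, necessarily homomorphism-equivalent to the Ne\v set\v ril--Tardif dual. The step I expect to cost the most is the pull-back of covering homomorphisms through the colour map certifying $\extsys{D}_\F\in\Lifts$, as this is where the restriction to relational trees is genuinely used.
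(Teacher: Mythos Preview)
Your derivation of $(i)$--$(iii)$ from Theorem~\ref{reprezent2} and the observation that minimal cuts in a relational tree are single vertices is exactly what the paper does. The quotient construction of $\extsys{D}_\F$ by colour classes is also the right object, and your duality argument at the end is fine.

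The gap is in the step you yourself flag as the hard one: showing $\extsys{D}_\F\in\Lifts$ by ``pulling back'' a hypothetical homomorphism $\relsys{Y}\to\extsys{D}_\F$ (with $\relsys{Y}$ covering) along $c$ to a homomorphism $\relsys{Y}\to\relsys{U}'$. The justification you give --- that covering lifts are connected and $c$ is surjective --- does not do this. Surjective homomorphisms have no general lifting property, and connectedness of the source does not help; this is precisely the phenomenon behind the $C_5$ warm-up you cite. What actually makes the pull-back go through for trees is the observation the paper isolates: every inclusion-minimal covering sublift of a piece of a tree (and likewise every inclusion-minimal $\relsys{F}$-covering sublift) is carried by a \emph{single tuple} of the shadow. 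Consequently the class $\Lifts=\Forbi(\F')$ is axiomatised by forbidden colour patterns on individual tuples. Since each tuple of $\extsys{D}_\F$ is, by construction, the colour pattern of an actual tuple of $\relsys{U}'\in\Lifts$, none of these single-tuple constraints can be violated in $\extsys{D}_\F$; equivalently, any minimal covering $\relsys{Y}_0$ mapping into $\extsys{D}_\F$ lifts to $\relsys{U}'$ because it has only one tuple to place. Your pull-back argument becomes correct once you first restrict an arbitrary covering $\relsys{Y}$ to a minimal covering sublift $\relsys{Y}_0\subseteq\relsys{Y}$ and then invoke this single-tuple property.

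Two minor remarks. The functoriality of $L$ and the equality $\relsys{U}'=L(\relsys{U})$ are not needed: $c$ is a lift homomorphism by the very definition of the relations on $\extsys{D}_\F$, and once $\extsys{D}_\F\in\Lifts$ the embedding $\iota$ given by universality of $\relsys{U}'$ automatically preserves the monadic relations, so $c\circ\iota=\mathrm{id}$ is immediate. You can drop that detour.
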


\begin{proof}[Proof]
Observe that the inclusion-minimal cuts of a relational tree
all have size 1. Thus for a fixed family $\F$ of finite relational trees
 Theorem \ref{mainthm} establishes the existence of a monadic lift
that gives a generic structure $\relsys{U}'$ whose shadow is (homomorphism-) universal for $\Forb(\F)$.

This structure $\relsys{U}'$ is countable.  To get a dual, we find finite
$\relsys{X}\in \Lifts$ which is a retract of $\relsys{U}'$ and for which there is still a homomorphism $\relsys{Y}\to \relsys{U}'$ if and only if there is a homomorphism $\relsys{Y} \to \relsys{X}$.

The set $\F'$ is given by Theorem  $\ref{reprezent}$.
 Observe that every inclusion-minimal covering set of every piece of a tree is induced by a single
tuple and thus the class $\Lifts$ is defined by forbidden (rooted) homomorphisms of structures covered by  single tuple.
This means that the generic structure $\relsys{U}'$ has a finite retract defined by
all consistent combinations of new relations of its vertices.
\end{proof}

Note that it is also possible to construct $\relsys{D}_\F$ in a finite way
without using the \Fraisse{} limit: for every possible combination of new
relations on a single vertex, create a single vertex of $\relsys{D}_\F$ and then
keep adding tuples as long as possible so that $\relsys{D}_\F$ is still in $\Lifts$ (similarly
as in the proof of Proposition \ref{prop21}).

Finally, let us remark that one can prove that $\extsys{U}$ (and thus also $\relsys{U}$) has a finite presentation.

\label{aplikace}

%
%

\subsection {Forbidden cycles and Urysohn spaces (binary lifts)}
\label{urysohnsection}

We briefly turn our attention to binary lifts.  This relates some of the earliest results on universal graphs with recently intensively studied
Urysohn spaces.

We shall consider a finite family $\F$ consisting of graphs of odd cycles of lengths
$3,5,7,\ldots, l$.  As shown by \cite{Mekler} (see also \cite{CherlinShelahShi}) these families have universal
graphs in $\Forbi(\F)$ and, as shown by \cite{CherlinShi}, these are the only
classes defined by forbidding a finite set of  cycles.
These classes also form especially easy families of pieces.  In fact each piece
is an undirected path of length at most $l$, where $l$ is the length of the longest cycle
in $\F$ with both ends of the path being roots.  This allows a particularly easy
description of the lifted structure.

We use the following definition which is motivated by metric spaces.
When specialized to graphs, this definition is analogous to (the corrected form) of an
$s$-structure \cite{Mekler}. However this approach also gives a new easy description (i.e. finite presentation) of the lifted structure by the same construction that was used for Urysohn space in Section \ref{homprostorchapter}.  

\begin{defn}
\label{evenoddp}
A pair $(a,b)$ is considered to be an {\em even-odd pair} if $a$ is an even non-negative integer or $\omega$, and $b$ is odd non-negative integer or $\omega$.

For even-odd pairs $(a,b)$ and $(c,d)$ we say that $(a,b)\leq (c,d)$ if and only if $a\leq c$ and $b\leq d$.  Consider $a+\omega=\omega$ and $\omega+b=\omega$.  Put $$(a,b)+(c,d) = (\min(a+c,b+d),\min (a+d,b+c)).$$

For a set $S$, a function $d$ from $S$ to even-odd pairs is called an {\em even-odd distance function on $S$} if the following conditions are satisfied:
\begin{enumerate}
\item $d(x, y) = (0,b)$, $b$ is any odd number or $\omega$, if and only if   $x = y$,
\item $d(x, y) = d(y, x)$,
\item $d(x, z) \leq d(x, y) + d(y, z)$.

\end{enumerate}
Finally a pair $(S,d)$ where $d$ is an even-odd distance function for $S$ is called an {\em even-odd metric space}.
\end{defn}

Note that the even-odd metric spaces differ from the usual notion of the metric space
primarily by the fact that the ordering of values of the distance function is not
linear, but forms a 2-dimensional partial order.  
Some basic results about metric spaces are valid even in this setting.

An even-odd metric space can form a stronger version of the distance metric on the
graph.  For a graph $G$ we can put $d(x,y)=(a,b)$ where $a$ is length of the
shortest walk of even length connecting $x$ and $y$, while $b$
is the length of the shortest walk of odd length. 

The even-odd distance metric specifies the length of all possible walks: for a graph
$G$ and an even-odd distance metric $d$ we now have a walk connecting $x$ and
$y$ of length $a$ if and only if $d(x,y)=(b,c)$ such that $b\leq a$ for $a$ even or $c\leq
a$ for $a$ odd.

It is well known that the generic metric space exists for several classes of
metric spaces \cite{d9,Nguyen}. (See also Chapter \ref{homprostorchapter}.)  Analogously we have:
\begin{lem}
\label{Ulema}
There exists a generic even-odd metric space $\Urysohn_{eo}$.
\end{lem}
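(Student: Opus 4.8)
The statement to prove is Lemma \ref{Ulema}, the existence of a generic even-odd metric space $\Urysohn_{eo}$. The natural approach is to apply Fra\"\i{}ss\'e's theorem (Theorem \ref{fraissethm}): I would let $\K$ denote the class of all finite even-odd metric spaces with rational (here: integer-or-$\omega$) even-odd distances, and verify that $\K$ is an amalgamation class. Since the existence and uniqueness of the generic structure is then immediate from Theorem \ref{fraissethm}, the entire content reduces to checking the four conditions of an amalgamation class, of which only the amalgamation property is non-trivial. First I would confirm that $\K$ is hereditary (any subset of an even-odd metric space, with the restricted distance function, still satisfies the three axioms of Definition \ref{evenoddp}), isomorphism-closed, and contains only countably many structures up to isomorphism (each finite space is determined by finitely many even-odd pairs, and there are only countably many such pairs).

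\textbf{The amalgamation step.} The heart of the proof is the amalgamation property, and I would prove it by the standard shortest-path (geodesic) construction, exactly mirroring the argument for ordinary metric spaces and the triangle-inequality verification in Proposition \ref{metrik}. Given even-odd metric spaces $\mathbf A$, $\mathbf B$ amalgamated over a common subspace $\mathbf C$, I would form the disjoint union $D = A \cup B$ with $A \cap B = C$, and define the amalgamated distance for $x \in A \setminus C$, $y \in B \setminus C$ by the ``path through $\mathbf C$'' formula
$$d_D(x,y) = \min_{z \in C} \bigl(d_\mathbf A(x,z) + d_\mathbf B(z,y)\bigr),$$
where the minimum is taken in the product partial order on even-odd pairs and $+$ is the operation defined in Definition \ref{evenoddp}. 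For pairs both inside $A$ or both inside $B$ the distance is inherited. The key verification is that $d_D$ is a genuine even-odd distance function, i.e.\ that it satisfies the three axioms; I expect this to be the main obstacle, because the ordering on even-odd pairs is only a two-dimensional partial order rather than a linear order, so the usual one-line triangle-inequality argument does not transfer verbatim.

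\textbf{Handling the partial-order subtlety.} To overcome this I would unwind the $+$ operation componentwise: an even-odd pair records the shortest even walk length and the shortest odd walk length separately, and the operation $(a,b)+(c,d)=(\min(a+c,b+d),\min(a+d,b+c))$ is precisely the ``parity-tracking'' convolution of walk lengths (even $=$ even$+$even or odd$+$odd, odd $=$ even$+$odd or odd$+$even). With this reading, the triangle inequality $d_D(x,z)\le d_D(x,y)+d_D(y,z)$ becomes a statement about concatenating walks and tracking parity, which I would verify by checking, for each coordinate and each parity split, that a witnessing walk of the appropriate length exists; a minimality argument (taking a counterexample minimizing the total size as in Proposition \ref{metrik}) lets me assume the relevant distances are realized by paths through $\mathbf C$ and reduces every case to the inherited inequalities in $\mathbf A$ and $\mathbf B$ plus the associativity and monotonicity of $+$. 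I would also check the well-definedness axiom: $d_D(x,y)=(0,b)$ forces $x=y$, which holds because the even-coordinate distance between distinct vertices on the two sides, being a sum through $C$, is strictly positive. Finally I would note that the resulting $\mathbf D$ lies in $\K$ and that the two inclusions are embeddings agreeing on $\mathbf C$, completing the amalgamation property and hence, via Theorem \ref{fraissethm}, yielding the unique generic even-odd metric space $\Urysohn_{eo}$.
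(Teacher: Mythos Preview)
Your proposal is correct and follows essentially the same route as the paper. The paper's proof also reduces to Fra\"\i{}ss\'e's theorem and verifies the amalgamation property by the shortest-walk construction; it phrases the amalgam distance directly as the pair $(\ell,\ell')$ of minimal even and odd walk lengths in the free amalgam (i.e.\ the componentwise minimum you describe), which is exactly what your $\min_{z\in C}$ formula together with the parity-tracking interpretation of $+$ computes.
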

\begin{proof}
 We prove that the class $\M$ of all finite even-odd metric spaces is an amalgamation class.

To show that $\M$ has the amalgamation property, 
 take a free amalgamation $\relsys{D}$ of even-odd metric spaces $\relsys{A}$,
$\relsys{B}$ over $\relsys{C}$. This amalgamation is not an even-odd metric space, since some distances are not defined.

We can however define a walk from $v_1$ to $v_t$ of length $l$ in $\relsys{D}$ as a sequence of vertices $v_1,v_2,v_3,\ldots, v_t$
and distances $d_1,d_2,d_3,\ldots, d_{t-1}$ such that $\sum_{i=1}^{t-1} d_i=l$ and $d_i$ is present in
the even-odd pair $d(v_i,v_{i+1})$ for $i=1,\ldots,t-1$.

We produce the even-odd metric space $\Urysohn_{eo}$ on the same vertex set as
$\relsys{D}$, where the distance between vertices $a,b\in E$ is the even-odd pair
$(l,l')$ such that $l$ is the smallest even value such that there exists a walk
joining $a$ and $b$ of length $l$ in $\relsys{D}$, and $l'$ is the smallest odd value such
that there exists a walk from $a$ to $b$ of length $l'$.

It is easy to see that $\Urysohn_{eo}$ is an even-odd metric space (every triangle
inequality is supported by the existence of a walk) and other properties of
the amalgamation class follow from the definition.
\end{proof}

The graphs containing no odd cycle up to length $l$ can be axiomatized by a simple
condition on their even-odd distance metric.  Denote by $\K_l$ the class of all countable even-odd metric spaces such that there are no vertices $x,y$ such that $d(x,y)=(a,b)$ with $a+b\leq l$.  The existence of the generic even-odd metric space $\Urysohn_l=(U_l,d_l)$ for class $\K_l$ is a simple consequence of Lemma \ref{Ulema}. In fact $\Urysohn_l$ is a subspace of $\Urysohn_{eo}$ induced by all those vertices $v$ of $\Urysohn_{eo}$ satisfying $d(v,v)=(0,b)$ and $b>l$.

\begin{thm}
For a metric space $\Urysohn_l=(U_l,d_l)$ denote by $G_l=(U_l,E_l)$ the graph on the vertex set $U_l$ where $\{x,y\}\in E_l$ if and only if $d(x,y)=(a,1)$.

For every choice of odd integer $l\geq 3$, $G_l$ is a universal graph for the class $\Forb(C_l)$.
\end{thm}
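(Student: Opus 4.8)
The plan is to transfer the universality question from graphs to even-odd metric spaces, where the genericity of $\Urysohn_l$ does all the work. The bridge is that, for a graph of odd girth $>l$, adjacency is encoded by the even-odd walk metric, and membership in $\K_l$ is exactly the odd-girth condition. For any graph $H$ define an even-odd distance $d_H$ on $V(H)$ by $d_H(x,y)=(a,b)$, where $a$ is the length of a shortest even walk and $b$ of a shortest odd walk between $x$ and $y$ (with value $\omega$ when none exists). Concatenation of walks gives the triangle inequality of Definition \ref{evenoddp}, symmetry is clear, and $d_H(x,y)=(0,b)$ holds exactly when $x=y$ (the empty walk has length $0$, and any nontrivial even walk has length $\ge 2$); hence $(V(H),d_H)$ is an even-odd metric space. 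The decisive identity is that an edge of $H$ is precisely an odd walk of length one, so $\{x,y\}\in E(H)$ if and only if $d_H(x,y)=(a,1)$ for some even $a$ — which is exactly the rule defining $E_l$.

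First I would show $G_l\in\Forb(C_l)$. Suppose $G_l$ contained an odd cycle $v_0v_1\cdots v_{m-1}v_0$ with $m\le l$. Each edge $\{v_i,v_{i+1}\}$ gives $d_l(v_i,v_{i+1})=(a_i,1)$, and iterating the triangle inequality, i.e.\ concatenating the $m$ length-one odd walks into a closed walk of odd length $m$, bounds the odd coordinate of $d_l(v_0,v_0)$ by $m\le l$. But every $v\in U_l$ satisfies $d_l(v,v)=(0,b)$ with $b>l$, a contradiction. Since $\Forb(C_l)$ is exactly the class of graphs of odd girth $>l$, this places $G_l$ in the class.

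For universality, take a countable $H\in\Forb(C_l)$ and form $(V(H),d_H)$. I claim it lies in $\K_l$: if some pair $x,y$ had $d_H(x,y)=(a,b)$ with $a+b\le l$, then $a,b$ are both finite, and concatenating a shortest even walk $x\to y$ with a shortest odd walk $y\to x$ yields a closed walk through $x$ of odd length $a+b\le l$, hence an odd cycle of length $\le l$ in $H$ — impossible (the case $x=y$ giving $a=0$ is the odd-girth constraint itself). As $\Urysohn_l$ is generic, hence universal, for $\K_l$ (Lemma \ref{Ulema}), there is an isometric embedding $\iota:(V(H),d_H)\to\Urysohn_l$. Because $\iota$ preserves $d$ exactly, the edge identity yields $\{x,y\}\in E(H)$ iff $d_l(\iota(x),\iota(y))=(a,1)$ iff $\{\iota(x),\iota(y)\}\in E_l$, so $\iota$ is an \emph{induced} embedding $H\to G_l$, completing the proof of universality.

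The main obstacle is the dictionary in the middle: verifying that the $\K_l$ defining inequality $a+b\le l$ corresponds precisely to a short odd closed walk, and symmetrically that an odd cycle in $G_l$ forces a small odd self-distance through the nonlinear $+$ on even-odd pairs. Both directions hinge on keeping careful track of walk parities under concatenation, and on the fact that isometry preserves the odd coordinate. Once this equivalence is established, the embeddings themselves are supplied for free by the genericity of $\Urysohn_l$, so no further combinatorial construction is needed.
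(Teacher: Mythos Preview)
Your proof is correct and follows essentially the same strategy as the paper: pass from a graph to its even-odd walk metric, verify membership in $\K_l$, and invoke the universality of $\Urysohn_l$ to obtain an isometric (hence induced) embedding. The only minor difference is that for showing $G_l\in\Forb(C_l)$ the paper bounds $a+b\le k$ for a pair of vertices on an odd $k$-cycle (using the two complementary arcs), whereas you bound the odd self-distance at a base vertex; both arguments are equally valid and yours is arguably cleaner.
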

\begin{proof}
The graph $G_l$ does not contain any odd cycle up to length $l$ due to the fact that any two
vertices $x,y$ on an odd cycle of length $k$ have the distance $d(x,y)=(a,b)$ where
$a+b$ is at most $k$.

Now consider any countable graph $G=(V,E)$ omitting odd cycles of length at most
$l$. Construct the corresponding even-odd distance metric space $(V,d_G)$. By the universality argument,  $(V,d_G)$ is subspace of $\Urysohn_l$ and thus $G$ is a subgraph of $G_l$.
\end{proof}

The explicit construction of the rational Urysohn space, as described in Chapter \ref{homprostorchapter}, can be carried over to
even-odd metric spaces. This is captured by the following definition.

\begin{defn}
\label{defUf}
The vertices of $\U$ are functions $f$ such that:
\begin{enumerate}
\item[(1)] The domain $D_f$ of $f$ is a finite (possibly empty) set of functions and $\emptyset$.
\item[(2)] The range of $f$ consist of even-odd pairs.
\item[(3)] For every $g\in D_f$ and $h\in D_{g}$, we have $h\in D_f$.
\item[(4)] $D_f$ using metric $d_{\U}$ defined below forms an even-odd metric
space.
\item[(5)] $f$ defines an extension of the even-odd metric space on vertices $D_f$ by adding a new vertex. This means that $f(\emptyset)=(0,x)$ and for every
$g, h\in D_f$ we have $f(g)+f(h)\leq d_{\U}(g,h)$ and $f(g)\geq f(h)+d_\U(g,h)$.

\end{enumerate}
The metric $d_{\U}(f,g)$ is defined by:
\begin{enumerate}
  \item if $f=g$ then
$d_{\U}(f,g)=f(\emptyset),$
  \item if $f\in D_g$ then
$d_{\U}(f,g)=g(f),$
  \item if $g\in D_f$ then 
$d_{\U}(f,g)=f(g),$
  \item if none of above hold then
$d_{\U}(f,g)=min_{h\in D_f\cap D_g} f(h)+g(h).$

The minimum is taken elementwise on pairs.
\end{enumerate}
 \end{defn}

\begin{thm}\label{genericT}
$(\U,d_\U)$ is the generic even-odd metric space.
\end{thm}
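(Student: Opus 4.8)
The plan is to establish Theorem~\ref{genericT} by the same strategy already used to prove that $(\U,d_\U)$ is the generic metric space in Theorem~\ref{generic}, adapting the argument to the even-odd setting. By Lemma~\ref{Ulema} the class $\M$ of all finite even-odd metric spaces is an amalgamation class, so there is a unique generic even-odd metric space $\Urysohn_{eo}$; it therefore suffices to show that $(\U,d_\U)$ of Definition~\ref{defUf} is countable, is an even-odd metric space, and has the extension property. Countability is immediate since every $f\in\U$ has finite domain and hence is a finite object built recursively from finitely many even-odd pairs.

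First I would verify that $d_\U$ is a well-defined even-odd distance function, i.e. that it satisfies the three conditions of Definition~\ref{evenoddp}. Symmetry is built into the case analysis, and $d_\U(f,f)=f(\emptyset)=(0,x)$ gives condition~1. The work is the triangle inequality $d_\U(f,h)\leq d_\U(f,g)+d_\U(g,h)$, where $\leq$ and $+$ are the coordinatewise operations on even-odd pairs defined in Definition~\ref{evenoddp}. Here I would mimic Proposition~\ref{metrik}: take a counterexample triangle $f,g,h$ minimizing the total size of the three functions (measured by the depth of the nested domains), and split into cases according to which of the three distances is realized by a \emph{witness} $x\in D_f\cap D_g$ (case~4 of the metric definition) versus being realized directly because one function lies in the domain of another (cases~2 and~3). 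In each case the minimality of the triangle lets one replace a distance by a strictly smaller sub-triangle and derive the inequality by induction, exactly as in Cases 1--4 of the proof of Proposition~\ref{metrik}. The only genuinely new bookkeeping is that inequalities are now two-dimensional, so at each step I must check both coordinates; because the even-odd sum is defined by two separate minima $(\min(a+c,b+d),\min(a+d,b+c))$, the parity interaction (even walks compose with even to give even, even with odd to give odd) must be tracked, but this is precisely what the definition of $+$ encodes.

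Next I would prove the extension property. Given a finite subset $\X\subseteq\U$ together with a prescribed even-odd distance function $D$ from $\X$ to the new point that is compatible with $d_\U$ restricted to $\X$ (the analogue of a Katětov function, here valued in even-odd pairs and satisfying the two inequalities in condition~(5) of Definition~\ref{defUf}), I construct the witness vertex as the function $m$ whose domain is $\bigcup_{g\in\X}(\{g\}\cup D_g)$ and whose value on each such $g$ is $\min_{\,g'\in\X}\bigl(D(g')+d_\U(g',g)\bigr)$, mirroring rule $iii.$ in the proof of Theorem~\ref{generic}. I then check that $m$ lies in $\U$ (conditions (1)--(5)) and that $d_\U(m,g)=D(g)$ for every $g\in\X$. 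Verifying condition~(5), that $m$ encodes a legitimate one-point extension, amounts to re-running the triangle-inequality verification of Theorem~\ref{generic} with even-odd pairs in place of rationals, again splitting on whether the relevant distances have witnesses.

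The main obstacle will be this last verification in the even-odd setting: because the value set is a $2$-dimensional partial order rather than a linear order, the ``shortest path always exists'' argument and the minimality-of-witness computations of Theorem~\ref{generic} must be re-examined coordinate by coordinate, and one must confirm that taking the coordinatewise minimum in the definition of $m(g)$ genuinely produces an even-odd distance function (in particular that the even/odd parities are respected and no degenerate pair $(0,b)$ is created for distinct points). Once this is checked, Theorem~\ref{fraissethm} and the uniqueness of $\Urysohn_{eo}$ from Lemma~\ref{Ulema} identify $(\U,d_\U)$ with the generic even-odd metric space, completing the proof.
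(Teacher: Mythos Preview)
Your proposal is correct and matches the paper's intended approach: the paper states Theorem~\ref{genericT} without proof, relying on the evident analogy with Theorem~\ref{generic} (and its function-based reformulation in Section~\ref{alternative}), and your plan is precisely to spell out that analogy in the even-odd setting. One minor point to watch when you carry it out: in Definition~\ref{defUf} the domain of every $f\in\U$ must contain the empty function $\emptyset$, and $m(\emptyset)=(0,x)$ records the odd self-distance of the new point rather than a distance to another vertex, so your formula $m(g)=\min_{g'\in\X}\bigl(D(g')+d_\U(g',g)\bigr)$ needs a separate clause for $g=\emptyset$.
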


\section{Indivisibility results}
A pair $(A,B)$ is called a {\em partition of a structure} $\relsys{R}$ if $A$ and $B$ are disjoint sets of vertices of $\relsys{R}$ and $A\cup B=R$.  We denote by $\relsys{A}$ the structure induced on $A$ by $\relsys{R}$ and by $\relsys{B}$ the structure induced on $B$ by $\relsys{R}$.

A structure $\relsys{R}$ is {\em weakly indivisible} if for every partition
$(A,B)$ of $R$ for which some finite induced substructure of $\relsys{R}$ does
not have copy in $\relsys{A}$, there exists a copy of $\relsys{R}$ in
$\relsys{B}$.

For a minimal finite family $\F$ of finite structures, we call structure
$\relsys{A}$ a {\em minimal homomorphic image} of $\relsys{F}\in \F$ if and only if
$\relsys{A}$ is a homomorphic image of $\relsys{F}$ and every proper substructure
of $\relsys{A}$ is in $\Forb(\F)$.

The weak indivisibility of ultrahomogeneous structures has been studied in \cite{Sauer}.
In this section we briefly discuss basic (in)divisibility results on universal
structures for classes $\Forb(\F)$.  

We say that a class $\K$ has the {\em free vertex amalgamation} property if, for any $\relsys{A},\relsys{B}\in \K$, and relational structure $\relsys{C}$ consisting of a single vertex and embeddings $\alpha:\relsys{C}\to\relsys{A}$ and $\beta:\relsys{C}\to\relsys{B}$, there is $(\relsys{D},\gamma,\delta)$, $\relsys{D}\in \K$, that is a free amalgamation of $(\relsys{A},\relsys{B},\relsys{C},\alpha,\beta)$.

\begin{thm}[\cite{Sauer}]
\label{Sauerthm}
Let $\relsys{H}$ be a ultrahomogeneous structure such that $\Age(\relsys{H})$ has free vertex amalgamation property and contains unique (up to isomorphism) structure on single vertex.  Then $\relsys{H}$ is weakly indivisible. 
\end{thm}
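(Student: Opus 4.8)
The plan is to prove the statement constructively: assuming the partition $(A,B)$ is such that the induced structure $\relsys{A}$ omits some finite $\relsys{F}\in\Age(\relsys{H})$, I would directly build an embedding of $\relsys{H}$ into the induced structure $\relsys{B}$. Since $\relsys{H}$ is ultrahomogeneous, by Lemma \ref{extuniv} it is universal for and determined by its age, and constructing any embedding into a countable target reduces (via a one-directional zig-zag) to repeatedly solving a single-vertex extension problem. Concretely, I would enumerate $H=\{h_0,h_1,\dots\}$ and maintain an embedding $g_n$ of the substructure induced on $\{h_0,\dots,h_{n-1}\}$ whose image lies entirely in $B$; the whole theorem then rests on being able to extend $g_n$ by one vertex while staying inside $B$.

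The core step is the following local claim: for every finite $\relsys{S}\subseteq B$ (so $\relsys{S}\in\Age(\relsys{H})$), every one-point extension $\relsys{S}^{+}=\relsys{S}\cup\{p\}$ lying in $\Age(\relsys{H})$ admits a realization over $\relsys{S}$ by some vertex of $B$. Here both hypotheses enter. Using the free vertex amalgamation property together with the uniqueness (up to isomorphism) of the one-vertex structure, which guarantees that the two copies of the base agree and hence that the amalgam is again in $\Age(\relsys{H})$, I would form the free amalgam $\relsys{T}$ of $\relsys{S}^{+}$ and a copy of the omitted structure $\relsys{F}$, glued along the single vertex $p$. By universality and ultrahomogeneity of $\relsys{H}$, the partial isomorphism $g_n$ extends to an embedding of $\relsys{T}$ into $\relsys{H}$; this produces a realizer $w$ of $\relsys{S}^{+}$ carrying a freely attached copy $\relsys{F}^{\ast}$ of $\relsys{F}$. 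Because $\relsys{A}$ omits $\relsys{F}$, the copy $\relsys{F}^{\ast}$ cannot lie entirely in $A$, which is the leverage that prevents all realizers from being absorbed into $A$.

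The main obstacle, and the step needing the most care, is turning ``$\relsys{F}^{\ast}\not\subseteq A$'' into ``the realizing vertex itself lies in $B$'': a priori the intruding $B$-vertex could be an interior vertex of the gadget rather than $w$ itself. I would resolve this by an induction on $|\relsys{F}|$ combined with iterated free attachment, exploiting that freeness lets me detach pieces of $\relsys{F}^{\ast}$ at single cut vertices, which is legitimate precisely because the base is a single vertex so the free vertex amalgamation property applies at each stage, and re-root the extension problem at a strictly smaller omitted structure; the minimal case, where $\relsys{F}$ is a single vertex, is immediate from the uniqueness of the one-vertex structure. Once the local claim is established, the greedy construction never gets stuck, the one-directional zig-zag exhausts all of $\relsys{H}$, and the resulting $g=\bigcup_n g_n$ is the desired embedding of $\relsys{H}$ into $\relsys{B}$, establishing weak indivisibility.
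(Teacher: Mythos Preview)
The paper does not give its own proof of this theorem; it is quoted from Sauer \cite{Sauer} and used as a black box in the subsequent indivisibility discussion. So there is no in-paper argument to compare your proposal against.

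On the merits of your sketch: the overall architecture is the standard and correct one. Reducing weak indivisibility to a one-sided back-and-forth into $\relsys{B}$, and reducing that in turn to a one-point extension property for $\relsys{B}$, is exactly right, and the gadget you propose (freely amalgamating the desired one-point extension $\relsys{S}^{+}$ with a copy of the omitted $\relsys{F}$ over the single new vertex $p$) is the natural move that the free vertex amalgamation hypothesis is designed to support. The uniqueness of the one-vertex structure is used precisely where you say, to ensure the base of the amalgam matches.

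The gap is in the inductive step. You write that when the realizer $w$ lands in $A$ you will ``re-root the extension problem at a strictly smaller omitted structure'', but you have not said which smaller structure $A$ is supposed to omit, and in general $A$ need not omit any proper induced substructure of $\relsys{F}$. Knowing that the freely attached copy $\relsys{F}^{\ast}$ has some vertex in $B$ does not by itself hand you a smaller forbidden configuration for $A$, nor does it move the specific vertex $w$ realizing $\relsys{S}^{+}$ into $B$; the $B$-vertex you find is a realizer of a \emph{different} one-point extension (one freely attached to $S$, carrying only the $\relsys{F}$-relations to $w$). What is actually needed is an iterated tree of free attachments of $\relsys{F}$ (or an equivalent finitary pigeonhole along a chain of such attachments) so that \emph{some} attachment vertex along the way is forced into $B$ while simultaneously realizing the original type $\relsys{S}^{+}$ over $S$; organizing this is the genuine content of Sauer's argument, and your sketch does not yet supply it.
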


The construction of universal structures as shadows of ultrahomogeneous structures makes
this result particularly easy to apply to obtain indivisibility results for
universal structures for classes $\Forb(\F)$. This leads to the following partial classification of classes $\F$ that do admit a weakly indivisible structure universal for $\Forb(\F)$.
\begin{thm}
Fix a finite minimal family of connected finite structures $\F$.  
\begin{enumerate}
\item The class $\Forb(\F)$ contains a universal structure that is weakly indivisible if every vertex-minimal cut $C$ of every homomorphic image $\relsys{A}$ of $\relsys{F}\in \F$ is of size at least 2
and additionally the structure induced by $\relsys{A}$ on $C$ is connected and has no cuts of size 1.
\item All universal structures $\relsys{U}$ in $\Forb(\F)$ are
divisible if there is a structure $\relsys{A}$ which is a minimal homomorphic image of $\relsys{F}\in \F$ such
that $\relsys{A}$ contains a cut $C$ of size 1.
\end{enumerate}
\end{thm}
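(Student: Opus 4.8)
The plan is to handle both parts through a single mechanism: the cut hypotheses control whether the lifted amalgamation class $\Lifts_f$ of Theorem~\ref{mainthm} admits \emph{free amalgamation over a single vertex}, and free vertex amalgamation is exactly the hypothesis feeding Sauer's Theorem~\ref{Sauerthm}. Throughout, fix the generic (ultrahomogeneous) lift $\relsys{U}'$ given by Theorem~\ref{mainthm}, with $\Age(\relsys{U}')=\Lifts_f$ and shadow $\relsys{U}=\psi(\relsys{U}')$ universal for $\Forb(\F)$. The first observation is that weak indivisibility passes from $\relsys{U}'$ to its shadow. A partition $(A,B)$ of $\relsys{U}$ is literally a partition of $\relsys{U}'$. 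If a finite induced substructure $\relsys{S}$ of $\relsys{U}$ has no copy in $\relsys{U}[A]=\psi(\relsys{U}'[A])$, let $\relsys{S}'$ be the induced sublift of $\relsys{U}'$ on the vertices of $\relsys{S}$; since an embedding of lifts projects to an embedding of shadows (shadow relations are among the lift relations, and embeddings both preserve and reflect relations), a copy of $\relsys{S}'$ in $\relsys{U}'[A]$ would yield a copy of $\relsys{S}=\psi(\relsys{S}')$ in $\relsys{U}[A]$; so $\relsys{S}'$ has no copy in $\relsys{U}'[A]$. Weak indivisibility of $\relsys{U}'$ then produces an embedding $\relsys{U}'\hookrightarrow\relsys{U}'[B]$, which projects to a copy of $\relsys{U}$ in $\relsys{U}[B]$. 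Hence for Part~1 it suffices to prove that $\relsys{U}'$ is weakly indivisible.

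To apply Theorem~\ref{Sauerthm} I must verify that $\Lifts_f$ has the free vertex amalgamation property and a unique one-vertex member. Uniqueness is routine under the hypotheses: every minimal cut has size $\geq 2$, so every piece is connected with at least two vertices, hence admits no homomorphism into a single loopless vertex; thus the one-vertex shadow carries the empty lift and is the unique one-vertex member of $\Lifts_f$. For free vertex amalgamation I revisit the amalgamation from the proof of Theorem~\ref{mainthm}: given $\relsys{X},\relsys{Y}\in\Lifts_f$ meeting in a single vertex $c$, I form $\relsys{D}$ as the free amalgam of the shadow witnesses over $c$ and set $\relsys{V}=L(\relsys{D})$; the one thing to add is that no new lift tuple of $\relsys{V}$ straddles both sides. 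Such a tuple would be $\varphi(\overrightarrow{R}_k)$ for a homomorphism $\varphi$ from a piece $\Piece_k$ whose roots $R_k$ --- a minimal cut of some $\relsys{F}\in\F$ --- are sent to vertices lying in both $A\setminus\{c\}$ and $B\setminus\{c\}$. Because $\relsys{D}$ carries no tuple meeting both sides, the single vertex $c$ would have to separate the image of the structure induced on $R_k$. But by the hypothesis, applied to the homomorphic image of $\relsys{F}$ carrying this configuration, the structure induced on the cut is connected and has no cut of size $1$, so its image cannot be disconnected by the single vertex $c$ --- a contradiction. Thus the amalgamation is free over $c$, and Sauer's theorem gives the weak indivisibility of $\relsys{U}'$, completing Part~1.

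For Part~2 suppose $\relsys{A}$ is a minimal homomorphic image of some $\relsys{F}\in\F$ with a cut vertex $c$, and write $\relsys{A}=\relsys{L}\cup_c\relsys{R}$ for the two sides. By minimality both $\relsys{L},\relsys{R}\in\Forb(\F)$ while $\relsys{A}\notin\Forb(\F)$, so $\relsys{A}$ embeds into no member of $\Forb(\F)$; in particular $\relsys{A}\not\hookrightarrow\relsys{U}$ for every universal $\relsys{U}$. Fixing such a $\relsys{U}$, I will exhibit a $2$-colouring with no monochromatic copy of $\relsys{U}$, which is precisely divisibility. The colouring is built from a rank function measuring how a vertex sits inside iterated gluings of copies of $\relsys{L}$ and $\relsys{R}$ along their cut vertices; colouring by the parity of this rank, one argues that a monochromatic copy of $\relsys{U}$ would have to contain a copy of $\relsys{L}$ and a copy of $\relsys{R}$ meeting in a single vertex, hence a copy of the forbidden $\relsys{A}$, contradicting $\relsys{A}\not\hookrightarrow\relsys{U}$. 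This generalizes the (easy) divisibility of the accidental universal graphs for $\Forbi(S_3)$ and the near-paths discussed in Section~\ref{univsectioncl}.

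The main obstacle in Part~1 is the bookkeeping in the displayed contradiction: the hypothesis is phrased for homomorphic images of the whole $\relsys{F}$, whereas the amalgamation produces only a homomorphism of a single piece, so care is needed to recognize the image of the cut $R_k$ as (part of) a cut of a genuine homomorphic image of $\relsys{F}$ before invoking connectivity and the absence of a size-$1$ cut. In Part~2 the delicate step is the construction of the rank/parity colouring and the proof that a monochromatic copy of $\relsys{U}$ is forced to realize the single-vertex gluing that reconstructs $\relsys{A}$; this is where the size-$1$ cut is used in an essential way, mirroring the failure of free vertex amalgamation that drives Part~1.
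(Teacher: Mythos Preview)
Your treatment of Part~1 is essentially the paper's own argument: reduce to weak indivisibility of the generic lift $\relsys{U}'$, then feed Sauer's Theorem~\ref{Sauerthm} the free vertex amalgamation property, which you obtain by re-examining the amalgam $L(\relsys{D})$ from Theorem~\ref{mainthm} and using the cut hypothesis to rule out any extended tuple straddling the single common vertex. That is exactly what the paper does (and with about the same level of hand-waving at the point where one passes from a piece of $\relsys{F}$ to a genuine homomorphic image of $\relsys{F}$).

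Part~2, however, has a real gap. Your ``rank function measuring how a vertex sits inside iterated gluings of copies of $\relsys{L}$ and $\relsys{R}$'' is never defined, and the key implication---that a monochromatic copy of $\relsys{U}$ would be forced to contain copies of $\relsys{L}$ and $\relsys{R}$ meeting in a single vertex---is asserted without argument. As stated it cannot be right: any copy of $\relsys{U}$ lies in $\Forb(\F)$ and therefore \emph{never} contains such a glued configuration (that is precisely $\relsys{A}$, a homomorphic image of some $\relsys{F}\in\F$). So whatever your parity colouring is, the contradiction you announce cannot arise in the form you describe; something more is needed to explain why the colouring obstructs universality, and you have not supplied it.

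The paper's route here is far more direct and avoids all of this. List the pieces $\Piece_1,\ldots,\Piece_n$ of $\relsys{A}$ at the size-$1$ cut $\{c\}$, and set $U_i=\{v\in U:\text{there is a homomorphism }\relsys{P}_i\to\relsys{U}\text{ sending the root to }v\}$. No vertex lies in every $U_i$ (else the $n$ rooted homomorphisms glue to a homomorphism $\relsys{A}\to\relsys{U}$, hence $\relsys{F}\to\relsys{U}$). Consequently each $U_i$ and the complement $U\setminus\bigcup_i U_i$ omit a copy of some $\relsys{P}_j\in\Forb(\F)$, so none of these finitely many pieces of $\relsys{U}$ is universal. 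This is the divisibility statement. I recommend replacing your parity scheme by this colouring-by-root-type: it uses the cut-of-size-$1$ hypothesis transparently and needs no auxiliary rank.
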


\begin{proof}

To prove $2.$, fix $\relsys{A}$, a minimal homomorphic image of $\relsys{F}\in \F$
that has a vertex cut $C$ of size 1.  Denote by $\Piece_1,\Piece_2,\ldots,\Piece_n$
all the pieces of $\relsys{A}$ generated by the cut $C$.  Fix $\relsys{U}$, the universal
structure for $\Forb(\F)$. Denote by $U_i$, $i=1, \ldots, n$ the set of all
vertices $v$ of $\relsys{U}$ such that there is a rooted homomorphism from
$\Piece_i$ to $\relsys{U}$ mapping a root of $\Piece_i$ to $u$.

The structure induced on $U_i$ by $\relsys{U}$ is not universal as it does not contain a homomorphic image of $\relsys{P}_i$.
 Similarly, the structure induced on
$U\setminus \bigcup_{i=1}^n U_i$ is not universal since there is no
homomorphic image of $\relsys{P}_1$.  Consequently, $\relsys{U}$ is divided
into finitely many substructures such that none is universal, resulting in 
the divisibility of $\relsys{U}$.

$1.$ follows from the weak indivisibility of the class $\Lifts$.  To apply Theorem
\ref{Sauerthm} we only need to show that the class $\Lifts$ admits a free vertex
amalgamation.  This follows directly from the construction of the amalgamation in the proof
of Theorem $\ref{mainthm}$. The amalgamation constructed is not free in general:
every new tuple $\vec{v}$ added to an extended relation $i$ has the property that there is a homomorphism from the
structure induced by $R_i$ on $\relsys{P}_i$ into the vertices
of tuple $\vec{v}$.   But since we have a free amalgamation of the shadow and since all cuts of all homomorphic
images do not have cuts of size 1, we have the free vertex amalgamation property.
\end{proof}

\section{Lifted classes with free amalgamation}
The explicit construction of lifts provided by Theorem $\ref{mainthm}$ allows
more insight into their structure. In this section we give an answer to a problem of
Atserias \cite{Atseriaspriv} which asks whether there always exists a lift
of a class $\Forb(\F)$ with the free amalgamation property.  The answer is negative in general.
We can however precisely characterize families $\F$ with this property.

Recall that structure is {\em irreducible}  if it does not have a cut (alternatively, any two distinct vertices are contained in a tuple of $\relsys{A})$.
 
\begin{thm}
Let $\F$ be a minimal family of finite connected relational structures.  Then the following statements are equivalent:
\begin{enumerate}
\item There exists class $\K'$ such that:
\begin{enumerate}
  \item[(a)] $\Age(\K')$ is an amalgamation class,
  \item[(b)] $\K'$ is closed under free amalgamation,
  \item[(c)] the shadow of $\K'$ is $\Forb(\F)$.
  \item[(d)] $\K$ contains a generic structure.
\end{enumerate}
\item Every minimal cut in $\relsys{F}\in \F$ induces an irreducible substructure.
\end{enumerate}

\end{thm}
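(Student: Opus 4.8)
The plan is to establish the two implications separately, reusing the explicit lift $\Lifts$ from Theorem~\ref{mainthm}; the point is that condition~(2) is exactly what forces the canonical amalgamation built there to be \emph{free}.

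For $(2)\Rightarrow(1)$ I would take $\K'=\Lifts$. Theorem~\ref{mainthm} already gives that $\Age(\Lifts)$ is an amalgamation class with a generic lift whose shadow is universal for $\Forb(\F)$, so (a), (c), (d) hold and only closure under free amalgamation (b) remains. I would revisit the amalgam $\relsys{V}=L(\relsys{D})$ constructed in the proof of Theorem~\ref{mainthm}, where $\relsys{D}$ is the free amalgamation of the shadows $\relsys{A}$ and $\relsys{B}$ over $\relsys{C}$. Since that proof shows $\relsys{V}$ induces $L(\relsys{A})$ on $A$ and $L(\relsys{B})$ on $B$, the only way freeness could fail is through an \emph{extended} tuple of $\relsys{V}$ meeting both $A\setminus C$ and $B\setminus C$. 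Such a tuple is $\varphi(\overrightarrow{R_i})$ for some homomorphism $\varphi\colon\relsys{P}_i\to\relsys{D}$, where $\overrightarrow{R_i}$ is a minimal cut of some $\relsys{F}\in\F$. By hypothesis~(2) the structure induced on $R_i$ is irreducible, i.e. its Gaifman graph is complete, so the image $\varphi(R_i)$ is a clique in $G_{\relsys{D}}$. But $\relsys{D}$ is a free amalgamation, so $G_{\relsys{D}}$ has no edge between $A\setminus C$ and $B\setminus C$; hence any clique, in particular $\varphi(R_i)$, lies entirely inside $A$ or entirely inside $B$. Therefore no extended tuple spans the amalgam, $\relsys{V}$ is the free amalgamation of the two lifts, and $\Lifts$ is closed under free amalgamation.

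For $(1)\Rightarrow(2)$ I would argue by contraposition: assume some $\relsys{F}\in\F$ has a minimal cut $R$ for which $\relsys{F}[R]$ is reducible, and suppose a class $\K'$ satisfying (a)--(d) existed, with generic lift $\relsys{U}'$. Let $\Piece_1,\dots,\Piece_m$ be the pieces of $\relsys{F}$ determined by $R$; by minimality of $\F$ each $\relsys{P}_j$ lies in $\Forb(\F)$, and $\relsys{F}$ is the free amalgamation of the $\relsys{P}_j$ over $\relsys{F}[R]$. Reducibility of $\relsys{F}[R]$ means $R$ contains two vertices lying in no common tuple of $\relsys{F}[R]$, so the cut can be \emph{split}: I would produce lifts of the pieces agreeing on a common lift $\relsys{Z}$ of $\relsys{F}[R]$ whose free amalgamation has shadow exactly $\relsys{F}$. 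Since $\relsys{F}\notin\Forb(\F)$, this contradicts closure under free amalgamation together with (c). To force this configuration through an arbitrary $\K'$ (which need not be $\Lifts$ and need not be ultrahomogeneous on shadows) I would use the indicator construction $\relsys{S}*(\relsys{H},\overrightarrow{R})$ together with the partite Ramsey statement of Lemma~\ref{sets}, exactly as in the proof of Theorem~\ref{aritythm}: building a witness $\relsys{D}\in\Forb(\F)$ on which the ultrahomogeneity of $\relsys{U}'$ yields an automorphism identifying the two halves of a split copy of $R$, an identification that glues two pieces along their roots and plants a homomorphic image of $\relsys{F}$ in the shadow of $\relsys{U}'$.

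The main obstacle is the direction $(1)\Rightarrow(2)$, and specifically the control of the \emph{extended} relations on the cut $R$. Reconstructing $\relsys{F}$ as a free amalgamation of its pieces is automatic at the level of shadows, so the whole content lies in showing that reducibility of $\relsys{F}[R]$ lets the two sides of the split carry compatible lifts on $R$ that free amalgamation will not re-correlate, whereas irreducibility forces the roots into a single clique that any free amalgamation must keep on one side (this is precisely the dichotomy exploited in the first direction). Making this lift bookkeeping uniform across an arbitrary admissible $\K'$ is what requires Lemma~\ref{sets}; and verifying that the resulting $\relsys{D}$ really lies in $\Forb(\F)$, i.e. admits no homomorphism from any $\relsys{F}'\in\F$, is the technical heart, handled as in Theorem~\ref{aritythm} through the retraction $f\colon\relsys{D}\to\relsys{F}$ and the fact that $\relsys{F}$ is a core.
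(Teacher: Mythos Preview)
Your $(2)\Rightarrow(1)$ is correct and is exactly the paper's argument: irreducibility of each $R_i$ makes $\varphi(R_i)$ a clique in the Gaifman graph of the free amalgam $\relsys{D}$, hence forces it into one side, so $L(\relsys{D})$ acquires no extended tuple spanning both parts.

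For $(1)\Rightarrow(2)$ you have located the right endgame (the indicator construction and Lemma~\ref{sets} from the proof of Theorem~\ref{aritythm}), but there is a genuine gap. That Ramsey argument works in Theorem~\ref{aritythm} because the lift there has arity bounded by some $r<|C|$: Lemma~\ref{sets} colours $r$-tuples, and when every extended relation has arity $\le r$, matching all $r$-subtuples of two copies of $C$ forces the induced lifts on those copies to be isomorphic, so ultrahomogeneity supplies the identifying automorphism. In the present theorem the arity of $\K'$ is unrestricted; an extended relation of arity $\ge|C|$ can sit on a full image of $C$ and distinguish the two halves even when every proper subtuple matches. Your sketch offers no mechanism to exclude this, so the appeal to Lemma~\ref{sets} does not go through as stated.

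The paper closes this gap by a preliminary reduction, and it is this reduction --- not the final amalgamation of pieces --- that is the essential use of hypothesis~(b). Pass from $\K'$ to the subclass $\overline{\K}'\subseteq\K'$ of lifts in which every extended tuple lies on an \emph{irreducible} substructure of the shadow. Closure under free amalgamation is exactly what shows $\overline{\K}'$ still has shadow $\Forb(\F)$: given $\relsys{A}\in\K'$ carrying an extended tuple $\vec v$ whose shadow support is reducible with a split $v_1\cup v_2$, the free amalgamation of $\relsys{A}$ restricted to $(A\setminus\vec v)\cup v_1$ and to $(A\setminus\vec v)\cup v_2$ over their intersection lies in $\K'$ by (b), has the same shadow, and has one fewer offending tuple; iterate. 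After this reduction every extended tuple of $\overline{\K}'$ contained in an image of the reducible cut $C$ lies on a proper irreducible subset, hence has arity $<|C|$, and now the proof of Theorem~\ref{aritythm} applies verbatim with $r=|C|-1$ to yield the contradiction. Your plan becomes correct once this step is inserted.
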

\begin{proof}
To show that $2.$ implies $1.$ it suffices to verify that for such classes $\F$ the amalgamation $\relsys{V}$ constructed in the proof of Theorem $\ref{mainthm}$ is the free amalgamation of $\relsys{X}$ and $\relsys{Y}$ over $\relsys{Z}$.  The amalgamation is constructed as $L(\relsys{D})$, where $\relsys{D}$ is the free amalgamation of shadows of $\relsys{X}, \relsys{Y}, \relsys{Z}$. Now for every tuple $\vec{v}\in \ext{V}{i}$ we have a homomorphism $\varphi:P_i\to \relsys{D}$. Because $\relsys{P_i}$ induces on vertices $\overrightarrow{R}_i$ an irreducible relational structure, the map must correspond to the shadow of $\relsys{A}$ or $\relsys{B}$ and thus there are no new edges in $\relsys{V}$.

In the opposite direction, assume that  $\F$ and
a class $\K'$ satisfying $(a)$, $(b)$, $(c)$ and $(d)$ are given.

Define a class $\overline{\K}'$ as the class of all $\relsys{A}\in \K'$
such that for each tuple $\vec{v}\in \ext{A}{i}$ the relational
structure induced by $\psi(\relsys{A})$ on $\vec{v}$ is irreducible.

We claim that $\overline{\K}'$ also satisfies $(a)$, $(b)$, $(c)$ and $(d)$.
 Assume the contrary.  Then, for some $i$, there is $\relsys{A}\in \K'$ and $\vec{v}\in \ext{A}{i}$ such that structure induced by
$\psi(\relsys{A})$ on $\vec{v}$ is reducible and there is no $\relsys{B}\in \overline{\K}'$ such that the shadow of $\relsys{A}$ is the same as the shadow of $\relsys{B}$. Without loss of generality we can assume that $\relsys{A}$
is a counterexample with the minimal number of tuples.  Denote by $v_1, v_2$ subsets
of vertices of $\vec{v}$ such that the free amalgamation of structures induced on
$v_1$ and $v_2$ by structure $\psi(\relsys{A})$ over vertices $v_1\cup v_2$ is
equivalent to the structure induced on $\vec{v}$ by structure $\psi(\relsys{A})$.

Now construct $\relsys{B}$ as the free amalgamation of the structure induced on $(A\setminus v)\cup v_1$ and on $(A\setminus v)\cup v_2$ by $\relsys{A}$ over vertices
$v_1\cap v_2$. Because $\K'$ is an amalgamation class, we have $\relsys{B}\in \K'$.  The shadow of $\relsys{B}$ is
equivalent to the shadow of $\relsys{A}$ and either $\relsys{B}\in \overline{\K}'$ or
$\relsys{B}$ is a smaller counterexample, a contradiction with minimality of
$\relsys{A}$.

Now take $\relsys{F}\in \F$ such that there is a vertex-minimal cut $C$ and
the structure $\relsys{C}$ induced on $C$ by $\relsys{F}$ is not irreducible.  By
Theorem \ref{aritythm} we know that the arity of the lift $\overline{\K}'$ must be at least $|C|$.
While the lift $\overline{\K}'$ can have unbounded arity, from the fact that the images of
$C$ are reducible, the arity of the lift $\overline{\K}'$ on images of $\relsys{C}$
is strictly smaller than $C$.  The proof of Theorem \ref{aritythm} only deals with
extended tuples on images of cuts $C$ and thus we have a contradiction.
\end{proof}

\chapter{Conclusion (summary and open problems)}

\section{Finite presentations of ultrahomogeneous structures}
In Part I we exhibited finite presentations of several ultrahomogeneous structures
as provided by the classification programme (Section \ref{Genericsection}).  We
gave finite presentations of all ultrahomogeneous undirected graphs
(Chapter~\ref{graphschapter}), all partial orders
(Chapter~\ref{homposetchapter}), all ultrahomogeneous tournaments
(Chapter~\ref{generictournaments}) and the rational Urysohn metric space (Chapter \ref{homprostorchapter}).

%
There are a number of ways to continue research in this direction. Naturally
one might seek further positive examples.
\begin{prob}
\label{grpr}
Which ultrahomogeneous structures are finitely presented? In particular which
ultrahomogeneous directed graphs are finitely presented?
\end{prob}
To determine precisely which ultrahomogeneous directed graphs are finitely presented,
one would clearly need to provide a condition on a set $\F$ of finite tournaments
that would imply the existence of a finite presentation of the universal directed
graph for the class $\Forbi(\F)$. To decide this problem one needs to know how
complex the finite presentation can be.  This leads to:
\begin{prob}
Give a more precise formulation of the notion of a finite presentation.
\end{prob}
We hope to attack this problem in \cite{novyclanek}.

A related question is the following:
\begin{prob}[Cameron \cite {Debbie}]
Is there a simpler finite presentation of the generic partial order? (Simpler than $(\HF,\leq_\HF)$ given in Definition \ref{hfdef}.)
\end{prob}
Even if $(\HF,\leq_\HF)$ fits very well with our notion
of a finite presentation, in several ways it can be considered inferior to
the finite presentations of the Rado graph (Section \ref{explicitmodels}).  It is significantly less
streamlined and in a
way it ``just partially encodes the amalgamation process.''  This can be considered necessary
(the definition of ordinal numbers can be considered similarly faulty)
but still there is hope that some well-established mathematical structure
will be shown to give the generic partial order in a similarly easy way
to the variants of representations of the Rado graph.

A similar question can be raised about the finite presentation of the rational
Urysohn space.  In addition, we have only given a presentation of the rational Urysohn
space, not finite presentations for all ultrahomogeneous metric spaces.
Clearly our finite presentation can be easily modified, for example, for the generic metric
space $\Urysohn_\N$ where the distances are integers. More generally we can ask:

\begin{prob}
\label{prpr}
Give a finite presentation of the generic metric space $\Urysohn_S$ for a given $S\subset[0,+\infty]$ satisfying the 4-values condition (see Definition \ref{4values}).
\end{prob}
  Similarly as in Problem \ref{grpr}, we may require a simple representation of
$S$.  Problem \ref{prpr} is harder because the construction of the rational
metric space uses several properties of $\Q$, such as the fact that $\Q$ is
closed under addition and subtraction.

Generalizing even further, the construction of the Urysohn metric space can be
modified for relational structures with axiomatization similar to that for metric spaces. In Chapter \ref{homprostorchapter} we gave an analogous finite presentation of the generic partial order.  In Chapter
\ref{Forbchapter} we gave a finite presentation of universal graphs for classes
$\Forb(C_l)$ where $C_l$ is a cycle of length $l>3$.
\begin{prob}
Give a finite presentation of universal structures for (some of) the classes
$\Forb(\F)$ constructed in Chapter \ref{Forbchapter}.
\end{prob}

\section{Finitely presented universal structures}
In Part II we looked for well-known finitely presented structures and tried to
prove their universality. In Chapter \ref{posetschapter} we gave a
catalogue of structures that are known to induce a universal partial order.  The
catalogue of finite presentations of partial orders can always be extended.
\begin{prob}
Find more examples of mathematical structures that form universal partial
orders.
\end{prob}

A particularly interesting special case is the following:
\begin{prob}
Denote by $\mathcal R$ the class of all recursive languages $NP$.  For
recursive languages $A, B\in \mathcal R$ put $A\leq_\mathcal R B$ if and only if $A$ is
polynomial-time reducible to $B$.  Does the quasi-order $(\mathcal R,\leq_\mathcal
R)$ contain a universal partial order?
\end{prob}
Assuming that $P$ is not equal to $NP$, the density of $(\mathcal R,\leq_\mathcal R)$
was shown in \cite{Diaz}. The way to embed any finite partial order is shown
in \cite{Pangrac}. It is still not known whether every countable partial order can be embedded
in $(\mathcal R,\leq_\mathcal R)$.

In Chapter \ref{cestickychapter} we focused on the universality of the
homomorphism order of restricted classes of relational structures. The
universality of oriented paths implies the universality of many other classes.
\begin{prob}
Find more classes of relational structures where the homomorphism order is universal (or give a reason why it is not).
\end{prob}

The study of the universality of the homomorphism order was originally motivated by
the study of embeddings of categories.  It is natural to ask when a representation
of a partial order can be strengthened to the representation of a category.

\begin{prob}
For which classes of relational structures $\K$ does there exist an embedding $\varphi$
of the generic partial order $(P,\leq_P)$ into $(\K,\leq_h)$ such that for any
$x,y\in P$ with $x\leq_P y$ there is only one homomorphism $\varphi(x)\to
\varphi(y)$? (Such a $\varphi$ is then an embedding of $(P,\leq_P)$ as a thin category.)
\end{prob}
It was shown in \cite{Pultr} that there is an embedding of any category
representable by sets and functions into the class of undirected graphs (with
homomorphisms as morphisms). Also it is known that there is no such embedding
into topologically restricted classes (such as planar graphs) as these classes
fail to represent all groups (see \cite{B}) and monoids (in case of bounded
degrees, see \cite{BP}).  Our embedding of the universal partial order into the
class of oriented paths ordered by the existence of a homomorphism shows that
embedding of partial orders is noticeably easier than embedding of categories
in general.

In Chapter \ref{Forbchapter} we gave a combinatorial proof of the existence of
universal structures for the classes $\Forb(\F)$, $\F$ family of finite connected structures. Based on the
explicit description of such universal structures we showed the relation to
homomorphism dualities and Urysohn spaces, as well as described some additional
properties.  A natural development of the main result of this chapter would be
to give the following:

\begin{prob}
Extend the techniques of construction of universal graphs in Chapter
\ref{Forbchapter} to all classes with local failure of amalgamation
(reproving \ref{Convigtonthm} in a combinatorial way).
\end{prob}
Local failure of amalgamation is a more combinatorial condition for the
existence of a universal structure for a given class than  finiteness of algebraic closure.
It may be interesting to extend this condition to a necessary and sufficient condition
for the existence of an $\omega$-categorical universal graph.

We also gave only a partial classification of divisibility results on the
$\omega$-categorical graphs for classes $\Forbi(\F)$.

\section{Classification programmes}
Perhaps the most challenging problem is to complete the classification
programmes.  We gave an overview of those programmes in Chapter 1,
so here we give just a short summary:

\subsection{The classification of ultrahomogeneous structures}  
The classification of ultrahomogeneous structures of type $\Delta=(2)$ has been
completed.  Here one has to refer to the fundamental works of Schmerl
\cite{Schmerl}, Lachlan \cite{La1,La2}, Lachlan and Woodrow \cite{lachlan} and
Cherlin \cite{Cherlin}. See Section \ref{classection}. 

Among other types of structures where the classification programme is completed
are, for example, homogeneous permutations \cite{CameronP}, colored partial
orders \cite{Sousa}, and 2-graphs \cite{Truss}.  Initial work has also been
done to classify structures of type $\Delta=(3)$ in \cite{res6}.

Recall that a relational structure $\relsys{S}$ is ultrahomogeneous if any
isomorphism between finite induced substructures of $\relsys{S}$ can be
extended to an automorphism of $\relsys{S}$. Various weaker notions of
ultrahomogeneity are discussed.  For instance it is possible to bound the size
of the substructures by a given constant $n$. This results in the notion of an {\em
$n$-homomorphism}.  Alternatively, only special classes of substructures can be
considered, resulting in the notion of {\em connected-homogeneity} or {\em
distance-transitivity}.  A structure $\relsys{S}$ is {\em set-homogeneous}, if
for every two substructures $\relsys{A}$ and $\relsys{B}$ that are isomorphic
there exists an automorphism $\varphi$ such that $\varphi(\relsys{A})=\relsys{B}$
(so we do not require all isomorphisms of $\relsys{A}$ and $\relsys{B}$ to
extend to automorphisms of $\relsys{S}$).

An interesting recent variant is suggested in \cite{CameronN}.  Consider
classes of relational structures that arise when the definition of ultrahomogeneity
is changed slightly, by replacing `isomorphism' by
`homomorphism' or `monomorphism'. We say that a structure $\relsys{S}$ belongs
to the class {\bf XY} if every x-morphism from an finite induced substructure of
$\relsys{S}$ into $\relsys{S}$ extends to a y-morphism from $\relsys{S}$ to
$\relsys{S}$; where $({\mathbf X},x)$ and $({\mathbf Y},y)$ can be $({\mathbf
I}, iso)$, $({\mathbf M}, mono)$, or $({\mathbf H}, homo)$. The classes that
arise are {\bf IH}, {\bf HM},  {\bf HH},  {\bf IM},  {\bf MM}.  A classification
of partial orders was given in \cite{Masul2, Debbie} and tournaments with loops
\cite{Masul1}.  The results seem to suggest that these relaxed variants of
ultrahomogeneity are easier to work with.

\subsection{The classification of universal structures}
In Section \ref{univsectioncl} we gave an overview of known results about
the existence of a universal structure for a given class $\K$. We outlined known sufficient
conditions for the existence of such a structure (results of \cite{CherlinShelahShi}
and \cite{Covington}) as well as several known examples. While several classes
have been characterized, these are all just special cases and no complete
classification is known. See \cite{CherlinShelah, CherlinShelahShi, restruct}
for a summary of the known results and suggestions for future research.

\subsection{The classification of Ramsey classes}
In Section \ref{ramseysection} we outlined the classification programme of
Ramsey classes based on the classification of ultrahomogeneous
structures. This programme has been suggested by Ne\v set\v ril as a realistic project
despite the fact that proving ages of ultrahomogeneous structures to be Ramsey
already presents interesting and difficult problems (see \cite{HN-Posets}).

\end{document}